\theoremstyle{remark}
  \newtheorem{remark}{Remark}[section]
\theoremstyle{plain}
  \newtheorem{theorem}{Theorem}[section]
  \newtheorem{corollary}{Corollary}[section]
  \newtheorem{proposition}{Proposition}[section]
  \newtheorem{lemma}{Lemma}[section]
\newcommand\restr[2]{{% we make the whole thing an ordinary symbol
  \left.\kern-\nulldelimiterspace % automatically resize the bar with \right
  #1 % the function
  \vphantom{\big|} % pretend it's a little taller at normal size
  \right|_{#2} % this is the delimiter
}}
\newcommand{\lam}{\lambda}
\DeclareMathOperator{\sgn}{sgn}
\DeclareMathOperator{\erfc}{erfc}
\renewcommand{\Re}{\operatorname{Re}}
\renewcommand{\approx}{\sim}
\begin{document}

\title[Long-Time Behavior of a Point Mass in a 1D Viscous Compressible Fluid]{Long-Time Behavior of a Point Mass in a One-Dimensional Viscous Compressible Fluid and Pointwise Estimates of Solutions}
\author{Kai Koike}
\address{School of Fundamental Science and Technology, Keio University, 3-14-1 Hiyoshi, Kohoku-ku, Yokohama, Kanagawa 223-8522, Japan}
\email{koike@math.keio.ac.jp}
\address{Center for Advanced Intelligence Project, RIKEN, 1-4-1 Nihonbashi, Chuo-ku, Tokyo 103-0027, Japan}
\date{\today}

\begin{abstract}
  We consider the motion of a point mass in a one-dimensional viscous compressible barotropic fluid. The fluid--point mass system is governed by the barotropic compressible Navier--Stokes equations and Newton's equation of motion. Our main result concerns the long-time behavior of the fluid and the point mass, and it gives pointwise convergence estimates of the volume ratio and the velocity of the fluid to their equilibrium values. As a corollary, it is shown that the velocity $V(t)$ of the point mass satisfies a decay estimate $|V(t)|=O(t^{-3/2})$ --- a faster decay compared to $t^{-1/2}$ known for the motion of a point mass in the viscous Burgers fluid~[J.~L.~V{\'{a}}zquez and E.~Zuazua, Comm. Partial Differential Equations \textbf{28} (2003), 1705--1738]. The rate $-3/2$ is essentially related to the compressibility and the nonlinearity. As a consequence, it follows that the point mass is convected only a finite distance as opposed to the viscous Burgers case. The main tool used in the proof is the pointwise estimates of Green's function. It turns out that the understanding of the time-decay properties of the transmitted and reflected waves at the point mass is essential for the proof.
\end{abstract}

\maketitle

\tableofcontents

\section{Introduction}
Let us start by recalling a theorem proved by V\'{a}zquez and Zuazua~\cite[Theorem~1.2]{VZ03}. Consider a one-dimensional system consisting of a fluid and a point mass. Let $m$ be the mass of the point mass and $x=h(t)$ its location. Assuming that the fluid velocity $u=u(x,t)$ is governed by the viscous Burgers equation, the fluid--point mass system is described by the following equations:
\begin{equation}
  \begin{dcases}
    u_t+(u^2)_x=u_{xx},                        & x\in \mathbb{R}\backslash \{ h(t) \},\, t>0, \\
    u(h(t)\pm 0,t)=h'(t),                      & t>0, \\
    mh''(t)=\llbracket u_x \rrbracket(h(t),t), & t>0, \\
    h(0)=h_0,\, h'(0)=h_1;\, u(x,0)=u_0(x),    & x\in \mathbb{R}\backslash \{ h_0 \},
  \end{dcases}
\end{equation}
where $\llbracket f \rrbracket(x,t)\coloneqq f(x+0,t)-f(x-0,t)$ for a function $f=f(x,t)$. The theorem of V\'{a}zquez and Zuazua shows that the long-time behavior of $u$ is well approximated by the self-similar solution $\bar{u}$ to the viscous Burgers equation with mass $M=\int_{\mathbb{R}\backslash \{ h_0 \}}u_0(x)\, dx+mh_1$:
\begin{equation}
  t^{(1-1/p)/2}||u(t)-\bar{u}(t)||_{L^{p}(\mathbb{R}\backslash \{ h(t) \})}\to 0 \quad \text{as $t\to \infty$}
\end{equation}
for all $1\leq p\leq \infty$. Thus in particular, if $M$ is non-zero, then $||u(t)||_{L^{\infty}(\mathbb{R}\backslash \{ h(t) \})}\approx t^{-1/2}$ since $\bar{u}$ is so.\footnote{We write $|f(t)|\approx t^{-\alpha}$ ($\alpha>0$) to mean $C^{-1}(t+1)^{-\alpha}\leq |f(t)|\leq C(t+1)^{-\alpha}$ for some constant $C\geq 1$.}~Moreover, they also showed that $|h'(t)|\approx t^{-1/2}$. Thus in this case, $||u(t)||_{L^{\infty}(\mathbb{R}\backslash \{ h(t) \})}$ and $|h'(t)|=|u(h(t)\pm 0,t)|$ both decay as $t^{-1/2}$. We note in particular that the point mass is convected to spatial infinity since $\int_{0}^{\infty}(s+1)^{-1/2}\, ds=\infty$.

In this paper, this theorem is extended to the case when the fluid is governed by the barotropic compressible Navier--Stokes equations instead of the viscous Burgers equation. Contrary to the result above, the following decay estimates are obtained under certain assumptions on the initial data: The fluid velocity $U$ and the point mass velocity $h'$ decay with different rates as
\begin{equation}
  ||U(t)||_{L^{\infty}(\mathbb{R}\backslash \{ h(t) \})}\approx t^{-1/2},\quad |h'(t)|=O(t^{-3/2}).
\end{equation}
It might be of interest to note that, contrary to the Burgers case, the point mass is convected only a finite distance since $\int_{0}^{\infty}(s+1)^{-3/2}\, ds<\infty$. We also note that while $|h'(t)|\approx t^{-3/2}$ does not hold in general,\footnote{For initial data with certain symmetry, it could happen that $h'(t)=0$ for all $t\geq 0$; see the footnote in Remark~\ref{Remark:Optimality}.}~the decay rate $-3/2$ is optimal in the sense that there are several initial data for which we were able to prove that $|h'(t)|\approx t^{-3/2}$; see Remark~\ref{Remark:Optimality}. Furthermore, numerical experiments using a simple finite difference method (that are not presented in this paper) suggest that $|h'(t)|\approx t^{-3/2}$ holds more generally.

We prove these decay estimates by showing pointwise error bounds for the difference between the solution and a linear combination of self-similar solutions to Burgers equations with convection terms (Theorem~\ref{SectionI:Theorem:MainTheorem:PointwiseEstimates}). Although the $L^{\infty}$-norm of the self-similar solutions decay as $t^{-1/2}$, their decay rates at the location of the point mass, $x=h(t)$, are exponential. This is due to the compressibility of the fluid that makes the slowly decaying part of the self-similar solutions to move away from $x=h(t)$. In contrast, since there is no compressibility in the Burgers fluid, the decay rates of $||u(t)||_{L^{\infty}(\mathbb{R}\backslash \{ h(t) \})}$ and $|h'(t)|=|u(h(t)\pm 0,t)|$ are the same. Furthermore, due to the nonlinearity, error bounds are of the order of $O(t^{-3/2})$ at $x=h(t)$, which leads to $|h'(t)|=|U(h(t)\pm 0,t)|=O(t^{-3/2})$; see Remark~\ref{Remark:Nonlinearity_is_Essential}.

The difficulty in proving the result above is that we need to capture the pointwise structure of the solution in order to show that $|h'(t)|$ decays faster than $||U(t)||_{L^{\infty}(\mathbb{R}\backslash \{ h(t) \})}$. The method of Green's function developed for the Cauchy problem by Liu and Zeng~\cite{LZ97,Zeng94} is a natural tool to accomplish this. In this method, we first derive sufficiently precise pointwise estimates of Green's function (fundamental solution) by the use of Fourier transform techniques. The solution has an integral representation in terms of the initial data, the nonlinear terms (thought of as inhomogeneous terms) and Green's function. We first assume that the solution satisfies certain ansätze (the pointwise bounds we wish to prove), which are then used to evaluate the nonlinear terms; then we show, by using the pointwise estimates of Green's function, that the terms in the integral representation satisfy the same ansätze. This will show, by some arguments, that the solution indeed satisfies the desired ansätze. This method was recently extended to the half-space problem with the Robin boundary condition by Du and Wang~\cite{DW18} (see also~\cite{Deng16,DWY15,DW18}). It turns out that their method is also useful to our problem in deriving pointwise estimates of Green's function; however, a major problem is that they have not identified the leading terms of the solution, which turn out to be the self-similar solutions to the Burgers equations with convection terms. Without identifying the leading terms, it will only lead to a suboptimal decay estimate $|h'(t)|=O(t^{-1})$ for our problem. In this paper, we succeeded in identifying the leading terms and as a corollary obtained the decay estimate $|h'(t)|=O(t^{-3/2})$ (Corollary~\ref{Corollary:DecayEstimateV}). A crucial point is the understanding of the nature of transmission and reflection of waves at the point mass. By carefully looking at the structure of Green's function, we found that the reflected waves decay faster than the transmitted waves. Using this observation, we were able to obtain pointwise estimates of solutions which resemble the case of the Cauchy problem.

The organization of the rest of the paper is as follows. In the remaining of this section, we explain the formulation of the problem, first in Eulerian coordinate and then in Lagrangian mass coordinate, and then state the main theorems; relation to other works is also discussed at the end of this section. In Section~\ref{SectionII}, the theorem on the global existence of solutions (Theorem~\ref{SectionI:Theorem:MainTheorem:GlobalExistence}) is proved. The pointwise estimates of solutions (Theorem~\ref{SectionI:Theorem:MainTheorem:PointwiseEstimates}), the main objective of this paper, are proved in Section~\ref{SectionIII}.

\subsection{Motion of a Point Mass in a One-Dimensional Viscous Compressible Fluid}
\label{SectionI:Subsection:Formulation}
We now explain the formulation of the problem. Consider a one-dimensional system consisting of a viscous compressible barotropic fluid and a point mass. Let $m$ be the mass of the point mass and $X=h(t)$ its location, and let $\rho=\rho(X,t)$ and $U=U(X,t)$ be the density and the velocity of the fluid. Then the fluid--point mass system is described by the following equations:
\begin{equation}
  \label{SectionI:CNSwithPointMass}
  \begin{dcases}
    \rho_t+(\rho U)_X=0,                                    & X\in \mathbb{R}\backslash \{ h(t) \},\, t>0, \\
    (\rho U)_t+(\rho U^2)_X+P(\rho)_X=\nu U_{XX},           & X\in \mathbb{R}\backslash \{ h(t) \},\, t>0, \\
    U(h(t)\pm 0,t)=h'(t),                                   & t>0, \\
    mh''(t)=\llbracket -P(\rho)+\nu U_X \rrbracket(h(t),t), & t>0, \\
    h(0)=h_0,\, h'(0)=h_1, \\
    \rho(X,0)=\rho_0(X),\, U(X,0)=U_0(X),                   & X\in \mathbb{R}\backslash \{ h_0 \},
  \end{dcases}
\end{equation}
where the positive constant $\nu$ is the viscosity and $P=P(\rho)$ is the pressure (we assume that $P$ is smooth). The first two equations are the barotropic compressible Navier--Stokes equations; the third one is the requirement that the fluid does not penetrate through the point mass; the fourth equation is Newton's equation of motion, and the rest are the initial conditions. In what follows, we put $m=1$ for simplicity.

Let us rewrite~\eqref{SectionI:CNSwithPointMass} in the Lagrangian mass coordinate to consider the problem in a fixed domain. For $x\in \mathbb{R}_* \coloneqq \mathbb{R}\backslash \{ 0 \}$ and $t\geq 0$, define $X=X(x,t)$ by the equation
\begin{equation}
  x=\int_{h(t)}^{X(x,t)}\rho(X',t)\, dX'.
\end{equation}
Assuming that $\inf_{X\in \mathbb{R}\backslash \{ h(t) \}}\rho(X,t)>0$, this defines a bijection
\begin{equation}
  \mathbb{R}_* \ni x\mapsto X(x,t)\in \mathbb{R}\backslash \{ h(t) \}.
\end{equation}
Then define new dependent variables by $v(x,t)\coloneqq \rho^{-1}(X(x,t),t)$ and $u(x,t)\coloneqq U(X(x,t),t)$. Here, $v$ is the specific volume. Moreover, let $p(v)\coloneqq P(v^{-1})$, $v_0(x)\coloneqq \rho_{0}^{-1}(X(x,0))$ and $u_0(x)\coloneqq U_0(X(x,0))$. Then~\eqref{SectionI:CNSwithPointMass} is equivalent to:
\begin{equation}
  \label{SectionI:CNSwithPointMass:Lagrangian}
  \begin{dcases}
    v_t-u_x=0,                                               & x\in \mathbb{R}_*,\, t>0, \\
    u_t+p(v)_x=\nu \left( \frac{u_x}{v} \right)_x,           & x\in \mathbb{R}_*,\, t>0, \\
    u(\pm 0,t)=h'(t),                                        & t>0, \\
    h''(t)=\llbracket -p(v)+\nu u_x/v \rrbracket(0,t),       & t>0, \\
    h(0)=h_0,\, h'(0)=h_1;\, v(x,0)=v_0(x),\, u(x,0)=u_0(x), & x\in \mathbb{R}_*.
  \end{dcases}
\end{equation}

We shall consider perturbations around the uniform stationary state $(v,u,h')=(\bar{v},0,0)$. For simplicity, we assume that $\bar{v}=1$. Let $\tau \coloneqq v-1$, $V(t)\coloneqq h'(t)$, $\tau_0 \coloneqq v_0-1$ and $V_0 \coloneqq h_1$. Then~\eqref{SectionI:CNSwithPointMass:Lagrangian} is equivalent to:
\begin{equation}
  \label{SectionI:CNSwithPointMass:Lagrangian:Perturbation}
  \begin{dcases}
    \tau_t-u_x=0,                                                  & x\in \mathbb{R}_*,\, t>0, \\
    u_t+p(1+\tau)_x=\nu \left( \frac{u_x}{1+\tau} \right)_x,       & x\in \mathbb{R}_*,\, t>0, \\
    u(\pm 0,t)=V(t),                                               & t>0, \\
    V'(t)=\llbracket -p(1+\tau)+\nu u_x/(1+\tau) \rrbracket(0,t),  & t>0, \\
    V(0)=V_0;\, \tau(x,0)=\tau_0(x),\, u(x,0)=u_0(x),              & x\in \mathbb{R}_*.
  \end{dcases}
\end{equation}
Neglecting nonlinear terms, we obtain the linearized equations:
\begin{equation}
  \label{SectionI:CNSwithPointMass:Lagrangian:Perturbation:Linearized}
  \begin{dcases}
    \tau_t-u_x=0,                                         & x\in \mathbb{R}_*,\, t>0, \\
    u_t-c^2 \tau_x=\nu u_{xx},                            & x\in \mathbb{R}_*,\, t>0, \\
    u(\pm 0,t)=V(t),                                      & t>0, \\
    V'(t)=\llbracket c^2 \tau +\nu u_x \rrbracket(0,t), & t>0, \\
    V(0)=V_0;\, \tau(x,0)=\tau_0(x),\, u(x,0)=u_0(x),     & x\in \mathbb{R}_*,
  \end{dcases}
\end{equation}
where $c>0$ is the speed of sound defined by $c^2=-p'(1)$; we assume that $p'(1)<0$. The first two equations in~\eqref{SectionI:CNSwithPointMass:Lagrangian:Perturbation:Linearized} can be written as
\begin{equation}
  \bm{u}_t+A\bm{u}_x=B\bm{u}_{xx},
\end{equation}
where
\begin{equation}
  \bm{u}=
  \begin{pmatrix}
    \tau \\
    u
  \end{pmatrix}
  ,\quad A=
  \begin{pmatrix}
    0    & -1 \\
    -c^2 & 0
  \end{pmatrix}
  ,\quad B=
  \begin{pmatrix}
    0 & 0 \\
    0 & \nu
  \end{pmatrix}.
\end{equation}
The eigenvalues of $A$ are $\lambda_1=c$ and $\lambda_2=-c$, and the corresponding right and left eigenvectors are
\begin{equation}
  \label{right_ev}
  r_1=\frac{2c}{p''(1)}
  \begin{pmatrix}
    -1 \\
    c
  \end{pmatrix}
  ,\quad r_2=\frac{2c}{p''(1)}
  \begin{pmatrix}
    1 \\
    c
  \end{pmatrix}
\end{equation}
and
\begin{equation}
  \label{left_ev}
  l_1=\frac{p''(1)}{4c}
  \begin{pmatrix}
    -1 & 1/c
  \end{pmatrix}
  ,\quad l_2=\frac{p''(1)}{4c}
  \begin{pmatrix}
    1 & 1/c
  \end{pmatrix},
\end{equation}
respectively. Here, we assume that $p''(1)\neq 0$. Let $u_i=l_i \bm{u}$, so that $\bm{u}=u_1 r_1+u_2 r_2$. Also let $u_{0i}=\restr{u_i}{t=0}$.

\subsection{Main Theorem}
\label{SectionI:Subsection:MainTheorem}
As in the case of the Cauchy problem~\cite{LZ97,Zeng94}, the leading terms of the solution $\bm{u}$ to~\eqref{SectionI:CNSwithPointMass:Lagrangian:Perturbation} can be described by the self-similar solutions to Burgers equations with convection terms: Let
\begin{equation}
  \label{SectionI:Eq:SelfSimilarSolutionMass}
  m_i \coloneqq \int_{-\infty}^{\infty}l_i
  \begin{pmatrix}
    \tau_0 \\
    u_0
  \end{pmatrix}
  (x)\, dx,\quad m_V \coloneqq l_i
  \begin{pmatrix}
    0 \\
    V_0
  \end{pmatrix},
\end{equation}
and let $\Theta_i$ be the self-similar solution to
\begin{equation}
  \label{SectionI:Thetai}
  \partial_t \Theta_i+\lambda_i \partial_x \Theta_i+\partial_x \left( \frac{\Theta_{i}^{2}}{2} \right)=\frac{\nu}{2}\partial_{x}^{2}\Theta_i, \quad x\in \mathbb{R},\, t>0
\end{equation}
with
\begin{equation}
  \lim_{t\to +0}\Theta_i(x,t)=(m_i+m_V)\delta(x),
\end{equation}
where $\delta$ is the Dirac delta function. To avoid the singularity at $t=0$, let $\theta_i(x,t)\coloneqq \Theta_i(x,t+1)$. This $\theta_i$ is the leading term of $u_i$. We note that an explicit formula for $\theta_i$ can be derived using the Cole--Hopf transformation~\cite[p.~11]{LZ97}:
\begin{equation}
  \label{theta_explicit}
  \theta_i(x,t)=\frac{\sqrt{\nu}}{\sqrt{2(t+1)}}\left( e^{\frac{m_i+m_V}{\nu}}-1 \right) e^{-\frac{(x-\lambda_i(t+1))^2}{2\nu(t+1)}}\left[ \sqrt{\pi}+\left( e^{\frac{m_i+m_V}{\nu}}-1 \right) \int_{\frac{x-\lambda_i(t+1)}{\sqrt{2\nu(t+1)}}}^{\infty}e^{-y^2}\, dy \right]^{-1}.
\end{equation}

Let $v_i \coloneqq u_i-\theta_i$. The main theorem gives pointwise estimates of $v_i$. To state the theorem, we need:
\begin{align}
  \psi_{3/2}(x,t;\lambda_i) & =[(x-\lambda_i(t+1))^2+(t+1)]^{-3/4}, \\
  \tilde{\psi}(x,t;\lambda_i) & =[|x-\lambda_i(t+1)|^3+(t+1)^2]^{-1/2}
\end{align}
and
\begin{equation}
  \label{PTW_PSI_i}
  \Psi_i(x,t)\coloneqq \psi_{3/2}(x,t;\lambda_i)+\tilde{\psi}(x,t;\lambda_{i'}),
\end{equation}
where $i'=3-i$ ($i=1,2$). Note that $\psi_{3/2}(x,t;\lambda_i)$ and $\tilde{\psi}(x,t;\lambda_i)$ are $O(t^{-3/4})$ and $O(t^{-1})$ around the characteristic line $x=\lambda_i t$, respectively, and that they are both $O(t^{-3/2})$ on $x=0$ where the point mass lies.

We first state a theorem on the uniform boundedness of solutions in $H^4$, which is used in the proof of the pointwise estimates. In the following, we use the notation $||\cdot ||_k \coloneqq ||\cdot ||_{H^k(\mathbb{R}_*)}$ for an integer $k\geq 0$. 

\begin{theorem}
  \label{SectionI:Theorem:MainTheorem:GlobalExistence}
  Let $\tau_0,u_0 \in H^4(\mathbb{R}_*)$ and $V_0 \in \mathbb{R}$. Assume that they satisfy the following compatibility conditions:\footnote{We use in the following the notation $\llbracket f \rrbracket(x)\coloneqq f(x+0)-f(x-0)$ for a function $f=f(x)$.}
  \begin{align}
    u_0(\pm 0) & =V_0, \\
    \restr{\left[ -p(1+\tau_0)_x+\nu \left( \frac{u_{0x}}{1+\tau_0} \right)_x \right]}{x=\pm 0} & =\llbracket -p(1+\tau_0)+\nu u_{0x}/(1+\tau_0) \rrbracket(0).
  \end{align}
  Then there exist $\varepsilon_0,C>0$ such that if
  \begin{equation}
    \label{smallness}
    \varepsilon \coloneqq ||\tau_0||_4+||u_0||_4 \leq \varepsilon_0,
  \end{equation}
  then~\eqref{SectionI:CNSwithPointMass:Lagrangian:Perturbation} has a unique classical solution
  \begin{align}
    \tau & \in C([0,\infty);H^4(\mathbb{R}_*))\cap C^1([0,\infty);H^3(\mathbb{R}_*)), \\
    u    & \in C([0,\infty);H^4(\mathbb{R}_*))\cap C^1([0,\infty);H^2(\mathbb{R}_*)), \\
    u_x  & \in L^2(0,\infty;H^4(\mathbb{R}_*)), \\
    V    & \in C^2([0,\infty))
  \end{align}
  satisfying
  \begin{equation}
    \label{solution_smallness}
    ||\tau(t)||_4+||u(t)||_4+\left( \int_{0}^{\infty}||u_x(s)||_{4}^{2}\, ds \right)^{1/2}+\sum_{k=0}^{2}|\partial_{t}^{k}V(t)|\leq C\varepsilon \quad (t\geq 0).
  \end{equation}
\end{theorem}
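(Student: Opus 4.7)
The standard strategy for such global existence results in the small-data regime is to combine a local existence theorem with a uniform a priori estimate via a continuation argument. First, I would establish local-in-time existence in the stated class over some short interval $[0,T_0]$ via a linearization and fixed-point scheme: one freezes $1+\tau$ in the viscosity coefficient and the pressure term at the previous iterate, solves the resulting linear parabolic problem on $\mathbb{R}_*$ coupled with Newton's equation as the boundary law, and contracts in a lower-order norm. The stated compatibility conditions are precisely what is needed to ensure that the linear problem admits an $H^4$-regular solution: the first condition is continuity of $u$ at $x=0^\pm$, and the second matches $u_t(\pm 0,0)$ with $V'(0)$. Higher-order compatibilities (needed to have $u, \tau \in C([0,T_0]; H^4)$) are then obtained by repeated time-differentiation of the equations.

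The core of the proof is to derive a uniform a priori estimate of the form
\begin{equation}
  \mathcal{E}(t) + \int_{0}^{t}\mathcal{D}(s)\,ds \le C\mathcal{E}(0) + C\sqrt{\sup_{[0,t]}\mathcal{E}}\int_{0}^{t}\bigl(\mathcal{E}(s)+\mathcal{D}(s)\bigr)\,ds, \quad t\in [0,T],
\end{equation}
where $\mathcal{E}(t)\sim \|\tau(t)\|_4^2+\|u(t)\|_4^2+\sum_{k=0}^{2}|\partial_t^k V(t)|^2$ and $\mathcal{D}(t)\sim \|u_x(t)\|_4^2$. The basic energy estimate follows from multiplying the first equation by $-(p(1+\tau)-p(1))$ and the second by $u$, integrating over $\mathbb{R}_*$, and exploiting the cancellation
\begin{equation}
  V(t)\bigl[\llbracket p(1+\tau)\rrbracket(0,t)-\nu\llbracket u_x/(1+\tau)\rrbracket(0,t)\bigr]=-V(t)V'(t)=-\tfrac{d}{dt}\bigl(V(t)^2/2\bigr),
\end{equation}
which is an immediate consequence of $u(\pm 0,t)=V(t)$ together with Newton's equation. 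Higher-order estimates are obtained by differentiating the system in $t$ and repeating the multiplier argument; because $\partial_t^k u(\pm 0,t)=\partial_t^k V(t)$, the analogous boundary cancellation produces a contribution $\partial_t^k V\cdot \partial_t^{k+1}V$ that integrates in time to a controlled $|\partial_t^k V|^2/2$ and supplies the $V$-components of $\mathcal{E}$. Pure spatial derivative bounds are then recovered from time-derivative bounds by using the equations themselves to convert $\partial_t$ into $\partial_x$ modulo lower-order nonlinear terms controlled by $\mathcal{E}$.

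The main technical obstacle is the careful treatment of the boundary terms at $x=0^\pm$ in the higher-order estimates. Because $\tau$ (and hence most pointwise values of $\tau_x$, $u_{xx}$, etc.) may genuinely jump at $x=0$, it is only the particular combination appearing on the right-hand side of Newton's equation that is directly tied to $V$; this forces all additional differentiations in the energy argument to be done in $t$ rather than $x$, since an $x$-differentiation would destroy the link between the boundary jumps and $V$. The stated compatibility conditions propagate inductively (via the equations) to the compatibilities of orders two through four required to justify the $H^4$ energy identities. All nonlinear contributions are handled through $H^1(\mathbb{R}_*)\hookrightarrow L^\infty(\mathbb{R}_*)$, Moser-type product estimates, and the smallness of $\mathcal{E}$, so that the cross-term on the right-hand side of the a priori inequality can be absorbed into the left-hand side once $\varepsilon_0$ is chosen sufficiently small. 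The resulting closed bound $\mathcal{E}(t)\le C\varepsilon^2$, combined with the local existence result, yields the global solution with the announced estimate by a standard continuation argument.
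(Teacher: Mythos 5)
Your strategy is essentially the one the paper follows: local existence by an iteration scheme with the viscosity coefficient and boundary value frozen at the previous iterate (Theorem~\ref{SectionII:LocalExistence:Theorem:MainTheorem}, with the boundary condition lifted by a cut-off multiple of $V^{(n)}$), the physical-energy identity obtained from the multipliers $(-(p(1+\tau)-p(1)),u,V)$ together with the boundary cancellation you describe (Proposition~\ref{SectionII:GlobalEnergyEstimate:Proposition:EnergyConservation}), higher-order estimates for $u$ and $V$ built by repeated $t$-differentiation, and a standard continuation argument. The one place where your description diverges from what is actually done is the assertion that \emph{all} additional differentiations in the energy argument must be taken in $t$. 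The paper does this only for $u$ and $V$ (Propositions~\ref{SectionII:GlobalEnergyEstimate:Proposition:Energy_Estimate_of_ut}, \ref{SectionII:GlobalEnergyEstimate:Proposition:Energy_Estimate_of_ux}, \ref{SectionII:GlobalEnergyEstimate:Proposition:Energy_Estimate_of_utt}, \ref{SectionII:GlobalEnergyEstimate:Proposition:Energy_Estimate_of_uxxx}), where an $x$-differentiation would indeed sever the link between the boundary jump and $V$. For $\tau$ it differentiates directly in $x$: eliminating $u_{xx}$ from the momentum equation yields the transport relation $\tau_{xt}=\tfrac{1+\tau}{\nu}p'(1+\tau)\tau_x+\tfrac{\tau_x}{1+\tau}u_x+\tfrac{1+\tau}{\nu}u_t$, and testing $\partial_x^k$ of this against $\partial_x^{k+1}\tau$ produces no boundary contribution because there is no diffusion operator on $\tau$ to integrate by parts; Propositions~\ref{SectionII:GlobalEnergyEstimate:Proposition:Energy_Estimate_of_taux}, \ref{SectionII:GlobalEnergyEstimate:Proposition:Energy_Estimate_of_tauxx}, \ref{SectionII:GlobalEnergyEstimate:Proposition:Energy_Estimate_of_tauxxx}, \ref{SectionII:GlobalEnergyEstimate:Proposition:Energy_Estimate_of_tauxxxx} close the $\|\tau\|_4$ bound in exactly this way. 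This step is not optional: your proposed alternative of converting $\partial_t$ into $\partial_x$ via the equations after obtaining time-derivative bounds cannot by itself control $\|\tau_{xxx}\|$ and $\|\tau_{xxxx}\|$ without requiring more spatial regularity of $u$ than the $H^4$ class provides, so some form of these genuine $x$-differentiated transport estimates for $\tau$ would still be needed in your argument.
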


To state the assumptions required for the pointwise estimates, we introduce:
\begin{equation}
  u_{0i}^{-}(x)\coloneqq \int_{-\infty}^{x}u_{0i}(y)\, dy,\quad u_{0i}^{+}(x)\coloneqq \int_{x}^{\infty}u_{0i}(y)\, dy.
\end{equation}

\begin{theorem}
  \label{SectionI:Theorem:MainTheorem:PointwiseEstimates}
  Let $\tau_0,u_0 \in H^4(\mathbb{R}_*)$ and $V_0 \in \mathbb{R}$. Assume that they satisfy the compatibility conditions stated in Theorem~\ref{SectionI:Theorem:MainTheorem:GlobalExistence}. Then there exist $\delta_0,C>0$ such that if
  \begin{align}
    \label{SmallnessDelta}
    \begin{aligned}
      \delta
      & \coloneqq \varepsilon+\sum_{i=1}^{2}\left[ ||u_{0i}^{-}||_{L^1(-\infty,0)}+||u_{0i}^{+}||_{L^1(0,\infty)}+\sup_{x\in \mathbb{R}_*}\left\{ (|x|+1)^{3/2}|u_{0i}(x)| \right\} \right. \\
      & \phantom{\coloneqq \delta+\sum_{i=1}^{2}[} \quad +\left. \sup_{x>0}\left\{ (|x|+1)(|u_{0i}^{-}(-x)|+|u_{0i}^{+}(x)|) \right\} \right] \leq \delta_0,
    \end{aligned}
  \end{align}
  then
  \begin{equation}
    \label{SectionI:Theorem:MainTheorem:PointwiseEstimates:PointwiseEstimates}
    |v_i(x,t)|\leq C\delta \Psi_i(x,t) \quad (x\in \mathbb{R}_*,\, t>0)
  \end{equation}
  for $i=1,2$.
\end{theorem}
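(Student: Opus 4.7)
The plan is to follow the Green's function method of Liu--Zeng~\cite{LZ97,Zeng94}, as adapted to half-space problems by Du--Wang~\cite{DW18}, but making the transmission and reflection of waves at the point mass $x=0$ explicit. First I would construct the Green's matrix $G(x,t;y)$ of the linearized system~\eqref{SectionI:CNSwithPointMass:Lagrangian:Perturbation:Linearized} coupled to Newton's equation at $x=0$, via Fourier--Laplace transform and contour deformation of the resolvent. The key structural point is to decompose $G$ as a free-space piece (the Cauchy Green's function on $\mathbb{R}$, whose bounds are essentially those of~\cite{LZ97}) plus a boundary-interaction piece. A low-frequency expansion of the resolvent, using the coupling $V'(t)=\llbracket c^2\tau+\nu u_x\rrbracket(0,t)$, should show that the reflection coefficient vanishes at zero frequency, so the reflected kernel gains an extra derivative relative to the transmitted one; this forces the reflected contribution on the opposite characteristic line $x=\lambda_{i'}t$ to decay like $\tilde{\psi}(x,t;\lambda_{i'})$ rather than $\psi_{3/2}$, which is exactly the second term in~\eqref{PTW_PSI_i}.

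Next I would write the Duhamel representation for $\bm{u}$ and project onto the $i$-th characteristic using $l_i$. The resulting equation for $u_i$ has quadratic inhomogeneities consisting of the self-interaction producing $\partial_x(u_i^2/2)$, the cross term between $u_1$ and $u_2$, and higher-order remainders in $\tau,u$, together with analogous boundary contributions from the nonlinear parts of Newton's equation. Convolving the free-space Green's function against the self-interaction source and against $(m_i+m_V)\delta(x)$ reproduces $\theta_i$ up to errors of order $\tilde{\psi}$, by the Burgers analysis of~\cite[Sec.~3]{LZ97}. This identification of $\theta_i$ as the leading term --- absent from~\cite{DW18} --- is what upgrades the decay of $V$ from $O(t^{-1})$ to $O(t^{-3/2})$.

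Subtracting, $v_i=u_i-\theta_i$ satisfies an integral equation whose inhomogeneities are: the initial-data contribution after removing $\theta_i$, the cross-coupling $u_1 u_2$, the reflected trace of $\theta_i$ at the point mass, and nonlinear remainders quadratic in $v_i$ and $\theta_i$. I would close the estimate by a continuity argument: assume a priori that $|v_i(x,t)|\leq M\delta\Psi_i(x,t)$ on $[0,T]$, insert this ansatz into the integral representation, and use Liu--Zeng-type convolution inequalities of the form $\int_0^t\!\int_{\mathbb{R}_*}|G|\cdot(\Psi_1\Psi_2+\Psi_i^2+\theta_i\Psi_i)\,dy\,ds\leq C\delta\Psi_i(x,t)$ to reproduce the same bound with $C$ independent of $M$. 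The anti-derivative hypotheses $u_{0i}^{\pm}\in L^1$ and the weighted bound $(|x|+1)^{3/2}|u_{0i}|$ from~\eqref{SmallnessDelta} are what allow an integration by parts in the initial-data contribution to yield a $\tilde{\psi}$ correction consistent with the ansatz. Taking $\delta_0$ small then closes the bootstrap on $[0,T]$, and Theorem~\ref{SectionI:Theorem:MainTheorem:GlobalExistence} provides the global existence needed to extend the estimate to all $t\geq 0$.

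The hard part will be the first step: establishing that the reflection coefficient vanishes at zero frequency, so the reflected contribution enters $\Psi_i$ through $\tilde{\psi}(x,t;\lambda_{i'})$ rather than the slower $\psi_{3/2}(x,t;\lambda_{i'})$. Without this extra derivative gain the ansatz cannot close at $x=0$, and the resulting rate for $V(t)=u(\pm 0,t)$ would degrade to $O(t^{-1})$ as in~\cite{DW18}. Rigorously quantifying this gain requires a careful analysis of the symbol of the coupled linear problem at low frequency, where the point-mass equation should interact with the acoustic modes to produce the necessary cancellation.
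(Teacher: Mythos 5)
Your outline reproduces the paper's strategy almost exactly: derive a Green's-function representation with a transmission/reflection split at the point mass, show the reflected kernel is gain-of-one-derivative smaller than the transmitted one, identify $\theta_i$ as the leading term and set up a bootstrap for $v_i=u_i-\theta_i$ using Liu--Zeng convolution lemmas. You also correctly identify the load-bearing step and candidly leave it open --- the quantitative smallness of the reflected kernel at low frequency.

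The paper closes exactly that gap, and the mechanism is worth recording because it is more concrete than the ``symbol analysis at zero frequency'' you propose. After taking the Laplace transform and solving the boundary correction $(\tau_2,u_2)$ explicitly, the representation involves $\tilde{G}$ multiplied by rational functions of $\lambda=s/\sqrt{\nu s+c^2}$; defining $\tilde{G}_T=\frac{2}{\lambda+2}\tilde{G}$ and $G_R=(G-G_T)\operatorname{diag}(1,-1)$, the relation $\partial_x \tilde{G}=-\lambda\tilde{G}$ (for $x>0$) gives the ODE $\partial_x G_T=-2G+2G_T$ and thence the explicit convolution $G_T(x,t)=2\int_{-\infty}^0 e^{2z}G(x-z,t)\,dz$. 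From this one reads off both the pointwise bound on $G_T$ (Gaussian profiles centred on $x=\pm ct$ plus an exponentially small tail, with no short-time singularity) and the identity $G_R=-\tfrac12\,\partial_x G_T\cdot\operatorname{diag}(1,-1)$. The latter is precisely the ``one extra derivative'' you anticipated, and it is an exact algebraic identity, not merely an asymptotic low-frequency statement, which makes the subsequent Liu--Zeng estimates straightforward. Your phrase ``the reflection coefficient vanishes at zero frequency'' corresponds to the factor $\frac{\lambda}{\lambda+2}$ in $\tilde{G}_R$, so the intuition is right; the paper's contribution is to turn that factor into the clean differential/integral representation above, which is what makes the decay bound $\eqref{SectionIII:Bounds_of_Gb}$ and the closing of the ansatz at $x=0$ tractable. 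Everything else in your proposal --- the Duhamel projection onto $l_i$, the use of $u_{0i}^\pm\in L^1$ and weighted decay to integrate by parts in the initial-data term, identification of $\theta_i$ via the Burgers self-similar solution, and the continuity argument on $P(t)$ --- matches the paper's execution.
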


By Theorem~\ref{SectionI:Theorem:MainTheorem:PointwiseEstimates}, if either $m_1+m_V$ or $m_2+m_V$ is non-zero, then $||u(t)||_{L^{\infty}(\mathbb{R}_*)}\approx t^{-1/2}$ since $||\theta_j(t)||_{L^{\infty}(\mathbb{R}_*)}\approx t^{-1/2}$ for at least one of $j=1,2$ and $||\Psi_i(t)||_{L^{\infty}(\mathbb{R}_*)}=O(t^{-3/4})$. Thus in the original Eulerian coordinate, we have $||U(t)||_{L^{\infty}(\mathbb{R}\backslash \{ h(t) \})}\approx t^{-1/2}$ as stated in the introduction. For the velocity of the point mass $V(t)$, on the other hand, we have the following.

\begin{corollary}
  \label{Corollary:DecayEstimateV}
  Let $\tau_0,u_0 \in H^4(\mathbb{R}_*)$ and $V_0 \in \mathbb{R}$. Assume that they satisfy the compatibility conditions stated in Theorem~\ref{SectionI:Theorem:MainTheorem:GlobalExistence}. Then there exist $\delta_0,C>0$ such that if~\eqref{SmallnessDelta} holds, then
  \begin{equation}
    \label{V(t)_decay_rate}
    |V(t)|\leq C\delta (t+1)^{-3/2} \quad (t>0).
  \end{equation}
\end{corollary}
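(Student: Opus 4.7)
The plan is to read off $V(t)$ directly from the trace of $u$ at $x=\pm 0$ and then use the decomposition $u_i = \theta_i + v_i$ together with the main pointwise theorem, exploiting the fact that the self-similar parts $\theta_i(0,t)$ decay exponentially at the location of the point mass while the error parts $v_i(0,t)$ are controlled by $\Psi_i(0,t) = O((t+1)^{-3/2})$.

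Concretely, since $\bm{u} = u_1 r_1 + u_2 r_2$ and the second components of $r_1,r_2$ from~\eqref{right_ev} coincide, the second component of $\bm{u}$ equals $\frac{2c^{2}}{p''(1)}(u_1 + u_2)$. The boundary condition $u(\pm 0,t)=V(t)$ in~\eqref{SectionI:CNSwithPointMass:Lagrangian:Perturbation} therefore gives
\begin{equation}
V(t) \;=\; \frac{2c^{2}}{p''(1)}\bigl(u_1(\pm 0,t) + u_2(\pm 0,t)\bigr) \;=\; \frac{2c^{2}}{p''(1)}\sum_{i=1}^{2}\bigl(\theta_i(\pm 0,t) + v_i(\pm 0,t)\bigr),
\end{equation}
so the task reduces to bounding each $\theta_i(0,t)$ and each $v_i(0,t)$ by $C\delta(t+1)^{-3/2}$.

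For the self-similar part, I would substitute $x=0$ into the explicit Cole--Hopf expression~\eqref{theta_explicit}. The Gaussian factor there becomes $\exp(-\lambda_i^{2}(t+1)/(2\nu))$ and the bracket in the denominator stays bounded away from $0$ (the complementary error function is $O(1)$ and $\sqrt\pi>0$), so one concludes that $|\theta_i(0,t)|\leq C\delta e^{-\lambda_i^{2}(t+1)/(2\nu)}/\sqrt{t+1}$. Since $\lambda_i = \pm c \neq 0$, this is exponentially small and in particular $O(\delta(t+1)^{-3/2})$. For the error part, Theorem~\ref{SectionI:Theorem:MainTheorem:PointwiseEstimates} gives $|v_i(0,t)|\leq C\delta\,\Psi_i(0,t)$, and at $x=0$ one has
\begin{equation}
\psi_{3/2}(0,t;\lambda_i) = [\lambda_i^{2}(t+1)^{2}+(t+1)]^{-3/4},\qquad \tilde\psi(0,t;\lambda_{i'}) = [|\lambda_{i'}|^{3}(t+1)^{3}+(t+1)^{2}]^{-1/2},
\end{equation}
both of which are $O((t+1)^{-3/2})$ as already noted in the paragraph following~\eqref{PTW_PSI_i}. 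Summing the contributions for $i=1,2$ yields~\eqref{V(t)_decay_rate}.

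There is essentially no obstacle here: Corollary~\ref{Corollary:DecayEstimateV} is a direct consequence of the main pointwise theorem once one recognizes that evaluation at the location of the point mass kills the Burgers-type $t^{-1/2}$ contribution (via the compressibility-induced propagation of $\theta_i$ along $x=\lambda_i t$) and leaves only the faster error term $\Psi_i(0,t) = O(t^{-3/2})$. The only small point to be careful about is that $\theta_i(\pm 0,t)$ is the same from both sides (since $\theta_i$ is defined on all of $\mathbb{R}$), so that the jump conditions remain consistent, and that $p''(1)\neq 0$ so the prefactor $2c^{2}/p''(1)$ in the eigenvector normalization is finite, both of which are assumed in the setup.
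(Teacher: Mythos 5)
Your proof is correct and takes essentially the same route as the paper: express $V(t)=u(\pm 0,t)$ via the characteristic decomposition $u=\frac{2c^2}{p''(1)}(u_1+u_2)$ with $u_i=\theta_i+v_i$, observe that $\theta_i(0,t)$ decays exponentially (because the Gaussian in the Cole--Hopf formula is centered on $x=\lambda_i(t+1)$ with $\lambda_i=\pm c\neq 0$), and invoke Theorem~\ref{SectionI:Theorem:MainTheorem:PointwiseEstimates} together with $\Psi_i(0,t)=O((t+1)^{-3/2})$. You simply supply more detail (the eigenvector algebra and the explicit evaluation of $\theta_i$ and $\Psi_i$ at $x=0$) than the paper's one-line proof, and the added remark on smallness of $m_i+m_V$ ensuring the Cole--Hopf denominator stays away from zero is a point worth making explicit.
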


\begin{proof}
  This easily follows from~\eqref{SectionI:Theorem:MainTheorem:PointwiseEstimates:PointwiseEstimates} by noting that $V(t)=u(\pm 0,t)$, $|\theta_i(0,t)|\leq C\delta e^{-c^2 t/(2\nu)}$ and $|\Psi_i(0,t)|\leq C\delta (t+1)^{-3/2}$.
\end{proof}

Note that the parts of the solution of the order of $O(t^{-1/2})$ move with velocities $\pm c$; and since the point mass is situated at $x=0$ far away from the characteristics $x=\pm ct$, its velocity $V(t)$ decays faster than $||u(t)||_{L^{\infty}(\mathbb{R}_*)}$.

\begin{remark}
  We note that the smallness condition on $\delta$ in Theorem~\ref{SectionI:Theorem:MainTheorem:PointwiseEstimates} is satisfied if for some $\alpha>0$,
  \begin{equation}
    \delta_{\alpha}\coloneqq \varepsilon+\sum_{i=1}^{2}\sup_{x\in \mathbb{R}_*}(|x|+1)^{2+\alpha}|u_{0i}(x)|
  \end{equation}
  is sufficiently small.
\end{remark}

\begin{remark}
  The pointwise estimates~\eqref{SectionI:Theorem:MainTheorem:PointwiseEstimates:PointwiseEstimates} are almost identical to those for the solutions to the Cauchy problem~\cite[Theorem~2.6]{LZ97}; the effect of the point mass appears only as the added mass $m_V$ in the definition of $\theta_i$. Physically, this is due to the fact that waves entering the point mass are mostly transmitted to the other side and the reflected waves decay faster than the transmitted waves. Thus, the point mass is almost invisible to the fluid in the long run except for its contribution to the total momentum. We also note that the situation is completely different when we consider the motion of a pendulum (cf. Section~\ref{SectionI:Subsection:RelationOtherWorks}), where waves are mostly reflected and transmission is weak.
\end{remark}

\begin{remark}
  \label{Remark:Optimality}
  We can also show a lower bound $|V(t)|\geq C^{-1}\delta (t+1)^{-3/2}$ for some special initial data.\footnote{We cannot expect this to hold in general: For example, if $\tau_0$ is symmetric and $u_0$ is anti-symmetric around $x=0$ and $V_0=0$, then $V(t)=u(\pm 0,t)=0$ for all $t\geq 0$.}~One class of such initial data is those with $m_1+m_V \neq 0$ and $m_2+m_V=0$ (or the other way around) with sufficiently fast spatial decay. We do not present the proof in this paper since it is quite lengthy, but we outline here the basic strategy of the proof: Taking advantage of the fact that $\theta_2=0$ in this case, we can replace $\tilde{\psi}(x,t;-c)$ in $\Psi_1(x,t)$ by
  \begin{equation}
    \bar{\psi}(x,t;-c)=[|x+c(t+1)|^3+(t+1)^{5/2}]^{-1/2}.
  \end{equation}
  Then we can show that $|v_1(\pm 0,t)|\leq C\delta (t+1)^{-7/4}$ and that the only term in the integral representation of $v_2(\pm 0,t)$ (see~\eqref{SectionIII:viIntegralEquation} and~\eqref{SectionIII:NonlinearTerms}) of the order of $O(t^{-3/2})$ is
  \begin{equation}
    -\frac{1}{2}\int_{0}^{t}\int_{-\infty}^{\infty}g_{2}^{*}(-y,t-s)
    \begin{pmatrix}
      0 \\
      \theta_{1}^{2}
    \end{pmatrix}_x
    (y,s)\, dyds
  \end{equation}
  for which we can obtain the lower bound. The proof requires several additional elements together with those presented in this paper, and we wish to present this in a future publication, hopefully dealing with more general initial data. We also note that numerical experiments conducted by the author suggest that $|V(t)|\approx t^{-3/2}$ holds for a broader class of initial data.
\end{remark}

The outline of the proof is as follows. We prove Theorem~\ref{SectionI:Theorem:MainTheorem:GlobalExistence} in Section~\ref{SectionII}. Local-in-time solutions are constructed by an iteration scheme (Theorem~\ref{SectionII:LocalExistence:Theorem:MainTheorem}), and these solutions are extended to global-in-time solutions by proving global energy estimates (Theorem~\ref{SectionII:GlobalEnergyEstimate:Theorem:MainTheorem}). This part is rather standard except for the complication due to the presence of the point mass, and we basically follow the idea in~\cite{MN80,MN82,MN83}. We prove Theorem~\ref{SectionI:Theorem:MainTheorem:PointwiseEstimates} in Section~\ref{SectionIII}. We first derive an integral equation satisfied by the solution (Proposition~\ref{SectionIII:Proposition:IntegralEquation}). The main idea is to write the solution in terms of the fundamental solution in the Laplace transformed side, which is the idea developed and used in~\cite{Deng16,DWY15,DW18}. Then using the pointwise estimates of the fundamental solution~\cite{LZ97,Zeng94} and Green's functions describing transmission and reflection (see~\eqref{SectionIII:Bound_of_G-G*},~\eqref{SectionIII:GbIntByParts} and~\eqref{SectionIII:Bounds_of_Gb}), we prove the pointwise estimates in Section~\ref{SectionIII:PointwiseEstimates}.

\subsection{Relation to Other Works}
\label{SectionI:Subsection:RelationOtherWorks}
The original motivation of the present work is to rigorously understand the numerical results by Tsuji and Aoki on the motion of a pendulum (a point mass attached to a linear spring) in a rarefied gas~\cite{TA13,TA14a}. They showed numerically that, in the one-dimensional case, when the gas is described by the BGK model of the Boltzmann equation, the displacement $X(t)$ of the pendulum from its equilibrium position decays as $|X(t)|\approx t^{-3/2}$. Since the BGK model and the Boltzmann equation are closely related to the compressible Navier--Stokes equations via fluid dynamic limits~\cite{Sone02}, we decided to analyze the compressible Navier--Stokes equations case first. In fact, since the gas eventually approaches thermal equilibrium, it is quite natural to expect that the long-time behavior can be understood using macroscopic fluid dynamical equations. Note that we have considered the motion of a point mass and not of a pendulum; the analysis of the latter will be presented in another paper. We also expect that by using pointwise estimates of the fundamental solution of the Boltzmann equation~\cite{LY04,LY07}, it might become possible to treat the problem discussed in~\cite{TA13,TA14a} directly based on the Boltzmann equation. Lastly, we briefly mention that for simpler models in the kinetic theory of gases, such as the free-molecular or a special Lorentz gas, the problem of the long-time behavior of a moving object has been extensively studied. It was started by the work of Caprino, Marchioro and Pulvirenti~\cite{CMP06} and extended by various authors. See~\cite{IS19,Koike18b} and the references therein.

When the fluid domain is bounded and one-dimensional (hence a bounded interval), there are several studies on the coupled system of a compressible fluid and a moving body. Note that in the references below, they prefer to call the point mass the piston instead. Then the fluid--piston system is described by~\eqref{SectionI:CNSwithPointMass} (or related equations without the barotropic assumption) posed in a finite interval with appropriate boundary conditions. Shelukhin~\cite{Shelukhin77,Shelukhin78} considered the motion of a piston in a viscous compressible barotropic fluid with no inflow conditions at the ends of the fluid domain. He showed global existence and uniqueness of solutions and that the fluid and the piston cease their motion as time grows. The rate of convergence does not seem to be given, but it is likely that it is exponential since the fluid domain is bounded. He also considered the motion of an infinitely light piston (motion of a contact discontinuity) in a barotropic or a heat conducting viscous fluid and showed similar results~\cite{Shelukhin82,Shelukhin83}. Antman and Wilber~\cite{AW07} studied the springlike motion of a heavy piston in a viscous compressible barotropic fluid. Their emphasis is on the asymptotic expansion with respect to a small parameter $\varepsilon$ characterizing the ratio of the mass of the fluid to that of the piston (heavy piston regime). Maity, Takahashi and Tucsnak~\cite{MTT17} considered the motion of a piston in a viscous compressible polytropic fluid with non-vanishing fluid velocity at both or at least one of the ends of the fluid domain, and they proved global existence and uniqueness of solutions. Feireisl et al.~\cite{FMNT18} considered the motion of a piston in a viscous heat conducting ideal gas with no inflow and no heat flux conditions at the ends of the fluid domain and no heat flux condition also at the piston (adiabatic piston). They proved global existence and uniqueness of solutions and that the gas--piston system approaches to equilibrium (again the convergence rate does not seem to be given).

There are several results on the long-time behavior of a rigid body moving in a multi-dimensional unbounded fluid domain. We cite in particular two works: Munnier and Zuazua~\cite{MZ05} considered the motion of a ball in a fluid simply modeled by the $d$-dimensional heat equation in $\mathbb{R}^d$ ($d\geq 2$), which can be thought of as a natural extension of the result by V\'{a}zquez and Zuazua~\cite{VZ03}; Ervedoza, Hillairet and Lacave~\cite{EHL14} considered, on the other hand, the motion of a disk in an incompressible viscous fluid described by the two-dimensional Navier--Stokes equations in $\mathbb{R}^2$.

We also mention that the motion of a rigid body in a viscous compressible fluid, where both the rigid body and the fluid is contained in a bounded domain in $\mathbb{R}^3$, has been studied by several authors~\cite{BG09,DE2000,Feireisl03,HMTT17,HM15}. Similar problems are considered when the rigid body is replaced by an elastic structure~\cite{Boulakia05a,Boulakia05b}. Galdi, M\'{a}cha and Ne\v{c}asov\'{a}~\cite{GMN18} considered the motion of a rigid body with a cavity filled with a compressible fluid.

\section{Global Existence of Solutions}
\label{SectionII}
Let us prove Theorem~\ref{SectionI:Theorem:MainTheorem:GlobalExistence} in this section. Global-in-time solutions are constructed by extending local-in-time solutions by energy estimates. Most of the analysis is standard, but there are several complications due to the presence of the point mass.

\subsection{Local Existence Theorem}
The construction of local-in-time solutions is based on an iteration scheme as in~\cite{MN80,MN82,MN83}. We only give the main ingredients of the proof since it is rather standard and tedious.

\subsubsection{Notations}
Let us first introduce some notations. Fix $0<T<+\infty$. Assume that $-1<\inf_{x\in \mathbb{R}_*}\tau_0(x)$, and let $\tau_1 \coloneqq (-1+\inf_{x\in \mathbb{R}_*}\tau_0(x))/2$. Let
\begin{equation}
  X_{T}\coloneqq \{ \tau\in C([0,T];H^4(\mathbb{R}_*))\cap C^{1}([0,T];H^3(\mathbb{R}_*)) \mid \tau_1 \leq \inf_{(x,t)\in \mathbb{R}_* \times [0,T]}\tau(x,t) \}
\end{equation}
and
\begin{equation}
  Y_{T}\coloneqq \{ u\in C([0,T];H^4(\mathbb{R}_*))\cap C^{1}([0,T];H^2(\mathbb{R}_*)) \mid u_x \in L^2(0,T;H^4(\mathbb{R}_*)) \}.
\end{equation}
For $(\tau,u)\in X_{T}\times Y_{T}$, let
\begin{equation}
  L_{\tau}u\coloneqq u_t-\nu\left( \frac{u_x}{1+\tau} \right)_x,\quad f_{\tau}\coloneqq -p'(1+\tau)\tau_x.
\end{equation}
Using these notations, we can rewrite~\eqref{SectionI:CNSwithPointMass:Lagrangian:Perturbation} as
\begin{equation}
  \begin{dcases}
    \tau_t-u_x=0,                                                 & x\in \mathbb{R}_*,\, t>0, \\
    L_{\tau}u=f_\tau,                                             & x\in \mathbb{R}_*,\, t>0, \\
    u(\pm 0,t)=V(t),                                              & t>0, \\
    V'(t)=\llbracket -p(1+\tau)+\nu u_x/(1+\tau) \rrbracket(0,t), & t>0, \\
    V(0)=V_0;\, \tau(x,0)=\tau_0(x),\, u(x,0)=u_0(x),               & x\in \mathbb{R}_*.
  \end{dcases}
\end{equation}

In what follows, we use $C$ to denote a large positive constant, whose value may change from place to place.

\subsubsection{Iteration Scheme}
\label{SectionII:Iteration}
Let us describe the iteration scheme we use to construct local-in-time solutions. Let $\tau^{(0)}(x,t)\coloneqq \tau_0(x)$, and let $V^{(0)}\in C^2([0,T])$ with $V^{(0)}(0)=V_0$ and $(dV^{(0)}/dt)(0)=\llbracket -p(1+\tau_0)+\nu u_{0x}/(1+\tau_0) \rrbracket(0)$. Then let $u^{(0)}$ be the solution to:
\begin{equation}
  \begin{dcases}
    L_{\tau^{(0)}}u^{(0)}=f_{\tau^{(0)}}, & x\in \mathbb{R}_*,\, 0<t\leq T, \\
    u^{(0)}(\pm 0,t)=V^{(0)}(t),          & 0<t\leq T, \\
    u^{(0)}(x,0)=u_0(x),                  & x\in \mathbb{R}_*.
  \end{dcases}
\end{equation}
Since the compatibility conditions stated in Theorem~\ref{SectionI:Theorem:MainTheorem:GlobalExistence} are satisfied, this parabolic initial-boundary value problem possesses a unique classical solution $u^{(0)}$ and we have $(\tau^{(0)},u^{(0)},V^{(0)})\in X_{T}\times Y_{T}\times C^2([0,T])$ by~\cite[Chapter~4\, \S 6.4]{LM72}.

Suppose now that we are given $(\tau^{(n)},u^{(n)},V^{(n)})\in X_{T}\times Y_{T}\times C^2([0,T])$ satisfying
\begin{equation}
  \label{SectionII:Eq:n-CompatibilityConditions}
  \restr{\tau^{(n)}}{t=0}=\tau_0, \quad \restr{u^{(n)}}{t=0}=u_0, \quad V^{(n)}(0)=V_0, \quad \frac{dV^{(n)}}{dt}(0)=\llbracket -p(1+\tau_0)+\nu u_{0x}/(1+\tau_0) \rrbracket(0).
\end{equation}
Then we define
\begin{equation}
 (\tau^{(n+1)},u^{(n+1)},V^{(n+1)})\in X_{T}\times Y_{T}\times C^2([0,T]) 
\end{equation}
satisfying~\eqref{SectionII:Eq:n-CompatibilityConditions} with $n$ replaced by $n+1$ as follows. First, solve the following parabolic initial-boundary value problem to define $u^{(n+1)}$:
\begin{equation}
  \label{SectionII:LocalExistence:IterationScheme:u}
  \begin{dcases}
    L_{\tau^{(n)}}u^{(n+1)}=f_{\tau^{(n)}}, & x\in \mathbb{R}_*,\, 0<t\leq T, \\
    u^{(n+1)}(\pm 0,t)=V^{(n)}(t),          & 0<t\leq T, \\
    u^{(n+1)}(x,0)=u_0(x),                  & x\in \mathbb{R}_*.
  \end{dcases}
\end{equation}
Since the compatibility conditions stated in Theorem~\ref{SectionI:Theorem:MainTheorem:GlobalExistence} are satisfied,~\eqref{SectionII:LocalExistence:IterationScheme:u} possesses a unique classical solution $u^{(n+1)}\in Y_T$ by~\cite[Chapter~4\, \S 6.4]{LM72}. Next, let
\begin{equation}
  \label{SectionII:LocalExistence:IterationScheme:tau}
  \tau^{(n+1)}\coloneqq \tau_0+\int_{0}^{t}u^{(n)}_{x}\, ds
\end{equation}
and
\begin{equation}
  \label{SectionII:LocalExistence:IterationScheme:V}
  V^{(n+1)}\coloneqq V_0+\int_{0}^{t}\llbracket -p(1+\tau^{(n)})+\nu u^{(n)}_{x}/(1+\tau^{(n)}) \rrbracket(0,s)\, ds.
\end{equation}
In this way, we define a sequence of approximate solutions
\begin{equation}
  \{ (\tau^{(n)},u^{(n)},V^{(n)}) \}_{n=0}^{\infty} \subset X_{T}\times Y_{T}\times C^2([0,T]).
\end{equation}

\subsubsection{Convergence of the Sequence of Approximate Solutions}
We explain here how the convergence of the approximate solutions is proved.

To give energy estimates for the solution $u^{(n+1)}$ to~\eqref{SectionII:LocalExistence:IterationScheme:u}, we change the dependent variables as follows. Let $\phi \colon \mathbb{R}\to \mathbb{R}$ be a smooth compactly supported function satisfying $\phi(0)=1$. Then let $\bar{V}^{(n)}(x,t)\coloneqq V^{(n)}(t)\phi(x)\colon \mathbb{R}_* \times [0,T]\to \mathbb{R}$, $v^{(n+1)}\coloneqq u^{(n+1)}-\bar{V}^{(n)}$, $v_0\coloneqq u_0-V_0 \phi$ and $g_{\tau^{(n)},V^{(n)}}\coloneqq f_{\tau^{(n)}}-L_{\tau^{(n)}}\bar{V}^{(n)}$. By~\eqref{SectionII:LocalExistence:IterationScheme:u}, $v^{(n+1)}$ satisfies
\begin{equation}
  \label{SectionII:LocalExistence:Def_of_v}
  \begin{dcases}
    L_{\tau^{(n)}}v^{(n+1)}=g_{\tau^{(n)},V^{(n)}}, & x\in \mathbb{R}_*,\, 0<t\leq T, \\
    v^{(n+1)}(\pm 0,t)=0,                           & 0<t\leq T, \\
    v^{(n+1)}(x,0)=v_0(x),                          & x\in \mathbb{R}_*.
  \end{dcases}
\end{equation}
This is now a homogeneous initial-boundary value problem, and we can obtain energy estimates in a standard way (cf. \cite[Proposition~3.3]{MN80}).

\begin{proposition}
  \label{SectionII:LocalExistence:Proposition:Energy_Estimate_of_v}
  Let $\tau \in X_{T}$, $g\in C([0,T];H^3(\mathbb{R}_*))$ and $v_0 \in H^4(\mathbb{R}_*)$, and suppose that $v\in Y_T$ satisfies
  \begin{equation}
    \begin{dcases}
      L_{\tau}v=g,   & x\in \mathbb{R}_*,\, 0<t\leq T, \\
      v(\pm 0,t)=0,  & 0<t\leq T, \\
      v(x,0)=v_0(x), & x\in \mathbb{R}_*.
    \end{dcases}
  \end{equation} 
   If $\sup_{0\leq t\leq T}||\tau(t)||_4 \leq E$ for some positive constant $E$, then there exist positive constants $c$, $C$ and $\bar{C}$ depending only on $\nu$, $\tau_1$ and $E$ such that $v$ satisfies the following estimate:
  \begin{equation}
    ||v(t)||_{4}^{2}+c\int_{0}^{t}||v_x(s)||_{4}^{2}\, ds\leq e^{\bar{C}t}\left( ||v_0||_{4}^{2}+C\int_{0}^{t}||g(s)||_{3}^{2}\, ds \right)
  \end{equation}
  for $0\leq t\leq T$.
\end{proposition}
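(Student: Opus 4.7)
The plan is a standard parabolic energy-estimate argument adapted to the domain $\mathbb{R}_*$ with the Dirichlet condition $v(\pm 0,t)=0$, following the scheme of~\cite{MN80}. The key structural feature is that since $v$, and hence every pure time derivative $\partial_{t}^{k}v$, vanishes at $x=\pm 0$, the energy method closes cleanly when we test against $v$, $\partial_{t}v$, $\partial_{t}^{2}v$ or against even-order $x$-derivatives combined with the boundary condition, but not against bare odd-order $x$-derivatives of $v$, whose traces at $x=\pm 0$ are in general nonzero. We therefore perform the energy estimates with respect to $\partial_{t}^{k}v$ and $-\partial_{t}^{k}v_{xx}$, and recover the remaining spatial derivatives by using the PDE itself as an elliptic identity in $x$.

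For the baseline $L^{2}$ estimate, multiply $L_{\tau}v=g$ by $v$ and integrate over $\mathbb{R}_*$: since $v(\pm 0,t)=0$, the boundary contributions from integrating $-\nu\int(v_x/(1+\tau))_x v\,dx$ by parts vanish, producing
\begin{equation}
  \tfrac{1}{2}\tfrac{d}{dt}||v||_{0}^{2}+\nu\int_{\mathbb{R}_*}\frac{v_{x}^{2}}{1+\tau}\,dx=\int_{\mathbb{R}_*}gv\,dx,
\end{equation}
which after $1+\tau\geq 1+\tau_1>0$ and Young's inequality yields a Gronwall control of $||v||_{0}^{2}+c\int||v_x||_{0}^{2}\,ds$. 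For the $H^{1}$-in-$x$ estimate, multiply by $-v_{xx}$: integrating the time part $-\int v_t v_{xx}\,dx$ by parts in $x$ once, the boundary contribution $-v_t(\pm 0)v_x(\pm 0)$ vanishes because $v_t(\pm 0)=0$, so this part becomes $\tfrac{1}{2}\tfrac{d}{dt}||v_x||_{0}^{2}$; the spatial part yields the dissipation $\nu\int v_{xx}^{2}/(1+\tau)\,dx$ up to a commutator in $\tau_x v_x v_{xx}$ that is absorbed by Young. Iterating with the equations $L_{\tau}(\partial_{t}^{k}v)=\partial_{t}^{k}g+R_k$ for $k=1,2$, for which $\partial_{t}^{k}v(\pm 0,t)=0$ persists and $R_k$ is a commutator built from $\partial_{t}^{j}\tau$ and lower derivatives of $v$ times factors $(1+\tau)^{-j}$, produces analogous bounds on $||\partial_{t}^{k}v||_{0}^{2}$ and $\int||\partial_{t}^{k}v_x||_{0}^{2}\,ds$. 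The remaining higher spatial derivatives are then extracted algebraically from
\begin{equation}
  v_{xx}=\frac{1+\tau}{\nu}(v_t-g)+\frac{\tau_x v_x}{1+\tau}
\end{equation}
together with its $\partial_x$-derivatives and the analogous identity read off from $L_{\tau}v_t=\partial_t g+R_1$, which express $v_{xx},v_{xxx},v_{xxxx},v_{txx},v_{txxx}$ in terms of the time-derivative norms already controlled plus spatial derivatives of $g$ up to order $3$; this closes the $H^{4}$ estimate for $v$ and the $L^{2}_{t}H^{4}$ estimate for $v_x$.

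The main obstacle I anticipate is the systematic bookkeeping of the commutators $R_k$ and of the nonlinear factors $(1+\tau)^{-j}$ generated at every differentiation, which couple derivatives of $v$ and $\tau$ up to top order. These have to be estimated by Moser-type product inequalities together with the Sobolev embedding $H^{1}(\mathbb{R}_*)\hookrightarrow L^{\infty}(\mathbb{R}_*)$ in such a way that the top-order $v$-contributions are absorbed into the parabolic dissipation and only lower-order quadratic terms remain for Gronwall; only in this arrangement does the final constant $\bar{C}$ depend solely on $\nu$, $\tau_1$, and $E$ as claimed. A secondary technical point is that naive $t$-differentiation of the equation formally introduces $\partial_t g$, which is not part of the hypotheses; this is circumvented by moving such contributions onto $v$ via integration by parts in time or by replacing the required $t$-differentiated $x$-derivatives of $v$ with purely $x$-differentiated versions of the algebraic inversion identity, so that only spatial norms $||g||_{3}$ enter the final bound.
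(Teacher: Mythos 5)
The paper does not actually prove this proposition: it only remarks that \eqref{SectionII:LocalExistence:Def_of_v} is a homogeneous initial-boundary value problem for a scalar parabolic equation and cites \cite[Proposition~3.3]{MN80} for ``standard'' energy estimates, so there is no explicit argument in the paper against which to compare yours. Your outline --- test against $v$ and $-v_{xx}$, then against $\partial_t^k v$ for $k=1,2$ (for which the Dirichlet condition is inherited), and read off the remaining $\partial_x$-derivatives from the elliptic identity $v_{xx}=\frac{1+\tau}{\nu}(v_t-g)+\frac{\tau_x v_x}{1+\tau}$ and its derivatives --- is the expected adaptation of the Matsumura--Nishida scheme to a half-line Dirichlet problem, and your observation that $\partial_t^k v(\pm 0,t)=0$ is exactly what makes these boundary terms vanish is the right structural point.

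The step I would not accept as written is your treatment of $\partial_t g$. To obtain the \emph{pointwise-in-time} bound on $||v(t)||_4$, the elliptic inversion forces control of $||v_{txx}(t)||_0$ pointwise, which in turn requires $||v_{tt}(t)||_0$, and the energy estimate producing $||v_{tt}(t)||_0$ involves the space-time integral of $g_t\,v_{tt}$. Neither of your two proposed circumventions actually removes $g_t$: integrating the $g_t v_t$ term by parts in time produces a $g\,v_{tt}$ term, and $v_{tt}$ is defined through the equation only if $g_t$ exists; while the ``purely $x$-differentiated version'' of the algebraic inversion yields $||v_{txx}||_0\lesssim ||v_{xxxx}||_0+\cdots$, which is circular against the quantity you are trying to bound. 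So either the hypothesis must implicitly carry the time-regularity of $g$ (which indeed holds for the iterates $g_{\tau^{(n)},V^{(n)}}$ to which the proposition is applied, and is arguably what the author intends under ``standard''), or one must replace the time-derivative tests by even-order spatial tests $\partial_x^{2j}(L_\tau v)=\partial_x^{2j}g$ against $\partial_x^{2j}v$ and control the resulting nonzero boundary traces (e.g.\ $v_{tx}(\pm 0)v_{xx}(\pm 0)$) by evaluating the equation at $x=\pm 0$ together with trace and interpolation inequalities --- more delicate, but it avoids $g_t$ entirely. In either case this should be surfaced as the main technical issue rather than filed as a ``secondary technical point.''
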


The following proposition is needed to give estimates for $\tau^{(n+1)}$ defined by~\eqref{SectionII:LocalExistence:IterationScheme:tau}. We write $||\cdot ||$ to mean $||\cdot ||_0$ in the following.

\begin{proposition}
  \label{SectionII:LocalExistence:Proposition:EstimatesApproximateSolutions:tau}
  Let $\tau_0 \in H^4(\mathbb{R}_*)$ and $u\in Y_{T}$, and assume that $-1<\inf_{x\in \mathbb{R}_*}\tau_0(x)$. If $T$ satisfies
  \begin{equation}
    \label{SectionII:LocalExistence:EstimatesApproximateSolutions:tau:Assumption_on_T}
    2T^{1/2}\left( \int_{0}^{T}||u_x(s)||^2\, ds \right)^{1/2}\leq 1+\inf_{x\in \mathbb{R}_*}\tau_0(x),
  \end{equation}
  then $\tau$ defined by
  \begin{equation}
    \label{SectionII:LocalExistence:EstimatesApproximateSolutions:tau:Def_of_tau}
    \tau \coloneqq \tau_0+\int_{0}^{t}u_x\, ds
  \end{equation}
  satisfies $\tau \in X_{T}$ and
  \begin{equation}
    \label{SectionII:LocalExistence:EstimatesApproximateSolutions:tau:Estimate_of_tau}
    ||\partial_{x}^{k}\tau(t)||\leq ||\partial_{x}^{k}\tau_0||+t^{1/2}\left( \int_{0}^{t}||\partial_{x}^{k+1}u(s)||^2\, ds \right)^{1/2} \quad (0\leq k\leq 4)
  \end{equation}
  for $0\leq t\leq T$.
\end{proposition}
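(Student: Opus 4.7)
The statement has three separate claims to verify: (i) the norm estimate \eqref{SectionII:LocalExistence:EstimatesApproximateSolutions:tau:Estimate_of_tau} for $0\leq k\leq 4$; (ii) the space--time regularity $\tau\in C([0,T];H^{4}(\mathbb{R}_{*}))\cap C^{1}([0,T];H^{3}(\mathbb{R}_{*}))$; and (iii) the pointwise lower bound $\tau_{1}\leq \inf_{\mathbb{R}_{*}\times[0,T]}\tau$. Together, (ii) and (iii) place $\tau$ in $X_{T}$. The smallness hypothesis \eqref{SectionII:LocalExistence:EstimatesApproximateSolutions:tau:Assumption_on_T} is used only in (iii).

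For (i), I apply $\partial_{x}^{k}$ to the defining identity \eqref{SectionII:LocalExistence:EstimatesApproximateSolutions:tau:Def_of_tau} and commute the derivative past the $s$-integral; this is legitimate because $u\in Y_{T}$ furnishes $\partial_{x}^{k+1}u\in L^{2}(0,T;L^{2}(\mathbb{R}_{*}))$ for $0\leq k\leq 4$. Minkowski's integral inequality followed by the Cauchy--Schwarz inequality in $s$ then yields the stated bound.

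For (ii), since $[0,T]$ is bounded we have $u_{x}\in L^{2}(0,T;H^{4})\subset L^{1}(0,T;H^{4})$, so the primitive $t\mapsto \int_{0}^{t}u_{x}(s)\,ds$ is continuous into $H^{4}(\mathbb{R}_{*})$; hence $\tau\in C([0,T];H^{4}(\mathbb{R}_{*}))$. Differentiating \eqref{SectionII:LocalExistence:EstimatesApproximateSolutions:tau:Def_of_tau} in $t$ gives $\tau_{t}=u_{x}$, and the inclusion $u\in C([0,T];H^{4})$ supplies $u_{x}\in C([0,T];H^{3})$, so $\tau\in C^{1}([0,T];H^{3}(\mathbb{R}_{*}))$.

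For (iii), set $w(x,t)\coloneqq \int_{0}^{t}u_{x}(x,s)\,ds$, so that $\tau=\tau_{0}+w$. The goal is to show
\begin{equation}
||w(t)||_{L^{\infty}(\mathbb{R}_{*})}\leq \tfrac{1}{2}\bigl(1+\inf_{\mathbb{R}_{*}}\tau_{0}\bigr)\quad \text{for all $t\in [0,T]$},
\end{equation}
whence $\tau(x,t)\geq \inf_{\mathbb{R}_{*}}\tau_{0}-(1+\inf_{\mathbb{R}_{*}}\tau_{0})/2=(\inf_{\mathbb{R}_{*}}\tau_{0}-1)/2=\tau_{1}$. I would establish this $L^{\infty}$ bound by applying the one-dimensional Sobolev embedding $|f(x)|^{2}\leq 2\,||f||\,||f_{x}||$ separately on each half-line $(-\infty,0)$ and $(0,\infty)$ to $w(\cdot,t)$, combined with Cauchy--Schwarz in $s$ (notably $||w(t)||\leq t^{1/2}(\int_{0}^{t}||u_{x}(s)||^{2}\,ds)^{1/2}$ and the analogue for $||w_{x}(t)||$), and then inserting the hypothesis \eqref{SectionII:LocalExistence:EstimatesApproximateSolutions:tau:Assumption_on_T}, whose explicit factor of $2$ is tuned to close this chain with the precise constant $(1+\inf_{\mathbb{R}_{*}}\tau_{0})/2$.

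The principal obstacle is the constant bookkeeping in step (iii): the Sobolev embedding constant on each half-line, the constants from each Cauchy--Schwarz application, and the factor $2$ in the hypothesis must compound consistently to yield the sharp bound $(1+\inf_{\mathbb{R}_{*}}\tau_{0})/2$. A minor subtlety is that $\mathbb{R}_{*}$ is disconnected, so the Sobolev embedding must be applied on each half-line separately before passing to the supremum over $\mathbb{R}_{*}$.
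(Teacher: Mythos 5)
Your treatment of (i) and (ii) matches the paper's proof: differentiate \eqref{SectionII:LocalExistence:EstimatesApproximateSolutions:tau:Def_of_tau} $k$ times and apply Cauchy--Schwarz in $s$, and read off the regularity directly from $u\in Y_T$.

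For (iii), however, your proposed Sobolev route does not close, and the obstruction is not merely ``constant bookkeeping.'' Applying $|w(x,t)|^2\leq 2\,||w(t)||\,||w_x(t)||$ on each half-line and then Cauchy--Schwarz in $s$ gives
\begin{equation}
  ||w(t)||_{L^\infty(\mathbb{R}_*)}^2\leq 2t\left(\int_0^t||u_x(s)||^2\,ds\right)^{1/2}\left(\int_0^t||u_{xx}(s)||^2\,ds\right)^{1/2},
\end{equation}
and the hypothesis \eqref{SectionII:LocalExistence:EstimatesApproximateSolutions:tau:Assumption_on_T} controls only the first factor; $\int_0^T||u_{xx}(s)||^2\,ds$ appears nowhere in it. So the factor $2$ in \eqref{SectionII:LocalExistence:EstimatesApproximateSolutions:tau:Assumption_on_T} cannot be ``tuned'' to close your chain --- the second factor is simply uncontrolled, and no choice of constants fixes that. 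For comparison, the paper's proof is quite terse at exactly this step: it passes directly from \eqref{SectionII:LocalExistence:EstimatesApproximateSolutions:tau:Assumption_on_T} and \eqref{SectionII:LocalExistence:EstimatesApproximateSolutions:tau:Def_of_tau} to the pointwise lower bound $\inf\tau\geq\inf\tau_0-T^{1/2}\left(\int_0^T||u_x(s)||^2\,ds\right)^{1/2}$, effectively reading the hypothesis as an $L^\infty$-type bound on $\int_0^t u_x\,ds$. If you insist on deriving an honest $L^\infty$ estimate via Sobolev embedding (which is what you set out to do), you need in addition a bound on $T^{1/2}\left(\int_0^T||u_{xx}(s)||^2\,ds\right)^{1/2}$ --- equivalently, the hypothesis should involve $||u_x(s)||_1$ rather than $||u_x(s)||$ --- and this extra control is indeed available in the only place the proposition is invoked, namely the iteration scheme of Proposition~\ref{SectionII:LocalExistence:Proposition:UniformBoundedness}, but it is not a consequence of \eqref{SectionII:LocalExistence:EstimatesApproximateSolutions:tau:Assumption_on_T} alone.
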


\begin{proof}
  That $\tau \in C([0,T];H^4(\mathbb{R}_*))\cap C^1([0,T];H^3(\mathbb{R}_*))$ follows from $u\in Y_{T}$. The estimates~\eqref{SectionII:LocalExistence:EstimatesApproximateSolutions:tau:Estimate_of_tau} follow by applying the Cauchy--Schwarz inequality to~\eqref{SectionII:LocalExistence:EstimatesApproximateSolutions:tau:Def_of_tau} differentiated $k$ times. Now, by~\eqref{SectionII:LocalExistence:EstimatesApproximateSolutions:tau:Assumption_on_T} and~\eqref{SectionII:LocalExistence:EstimatesApproximateSolutions:tau:Def_of_tau}, we have
  \begin{equation}
    \inf_{(x,t)\in \mathbb{R}_* \times [0,T]}\tau(x,t)\geq \inf_{x\in \mathbb{R}_*}\tau_0(x)-T^{1/2}\left( \int_{0}^{T}||u_x(s)||^2\, ds \right)^{1/2}\geq \tau_1.
  \end{equation}
  Thus $\tau \in X_{T}$.
\end{proof}

The following proposition is needed to give estimates for $V^{(n+1)}$ defined by~\eqref{SectionII:LocalExistence:IterationScheme:V}.

\begin{proposition}
  \label{SectionII:LocalExistence:Proposition:EstimatesApproximateSolutions:V}
  Let $(\tau,u)\in X_{T}\times Y_{T}$. If $\sup_{0\leq t\leq T}||\tau(t)||\leq E$ for some positive constant $E$, then $V$ defined by
  \begin{equation}
    V\coloneqq V_0+\int_{0}^{t}\llbracket -p(1+\tau)+\nu u_x/(1+\tau) \rrbracket(0,s)\, ds
  \end{equation}
  satisfies $V\in C^2([0,T])$ and
  \begin{align}
    |V(t)| & \leq |V_0|+Ct\sup_{0\leq t\leq T}||\tau(t)||_1+\frac{2\nu}{1+\tau_1}t^{1/2}\left( \int_{0}^{t}||u_x(s)||_{1}^{2}\, ds \right)^{1/2}, \\
    |V'(t)| & \leq C||\tau(t)||_1+\frac{2\nu}{1+\tau_1}||u_x(t)||_1
  \end{align}
  for $0\leq t\leq T$, where $C$ is a positive constant depending only on $\tau_1$ and $E$.
\end{proposition}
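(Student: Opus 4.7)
The plan is to establish $C^2$-regularity of $V$ and then derive both pointwise estimates, each of which reduces to controlling the jump of $-p(1+\tau)+\nu u_x/(1+\tau)$ at $x=0$ in terms of $\|\tau\|_1$ and $\|u_x\|_1$.

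First, I would verify $V\in C^2([0,T])$. Since $\tau\in X_T\subset C^1([0,T];H^3(\mathbb{R}_*))$ and $u\in Y_T\subset C^1([0,T];H^2(\mathbb{R}_*))$, the one-sided Sobolev embedding $H^1(\mathbb{R}_\pm)\hookrightarrow L^\infty(\mathbb{R}_\pm)$ makes the traces $\tau(\pm 0,t)$ and $u_x(\pm 0,t)$ into $C^1$ functions of $t$. Because $p$ is smooth and $1+\tau(\pm 0,t)\geq 1+\tau_1>0$ by the definition of $X_T$, the integrand
\[
F(s)\coloneqq \llbracket -p(1+\tau)+\nu u_x/(1+\tau)\rrbracket(0,s)
\]
is $C^1$ in $s$, so $V'=F\in C^1([0,T])$ and hence $V\in C^2([0,T])$.

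Second, for the pointwise bounds I would rely on the sharp half-line trace inequality $|f(\pm 0)|\leq \|f\|_{H^1(\mathbb{R}_\pm)}$, which applied to $\tau(\cdot,s)$ and $u_x(\cdot,s)$ yields $|\tau(\pm 0,s)|\leq \|\tau(s)\|_1$ and $|u_x(\pm 0,s)|\leq \|u_x(s)\|_1$. A first-order Taylor expansion gives
\[
\llbracket p(1+\tau)\rrbracket(0,s)=\llbracket \tau\rrbracket(0,s)\int_0^1 p'\bigl(1+\tau(-0,s)+\theta\,\llbracket \tau\rrbracket(0,s)\bigr)\,d\theta,
\]
and since the traces of $\tau$ stay in a compact subset of $(\tau_1,\infty)$ whose diameter is controlled by $\tau_1$ and $E$, the smoothness of $p$ gives $|\llbracket p(1+\tau)\rrbracket(0,s)|\leq C\|\tau(s)\|_1$ with $C=C(\tau_1,E)$. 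For the viscous jump, the uniform bound $1+\tau\geq 1+\tau_1$ yields
\[
\bigl|\llbracket \nu u_x/(1+\tau)\rrbracket(0,s)\bigr|\leq \frac{\nu(|u_x(+0,s)|+|u_x(-0,s)|)}{1+\tau_1}\leq \frac{2\nu}{1+\tau_1}\|u_x(s)\|_1.
\]

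The bound on $|V'(t)|=|F(t)|$ then follows by evaluating these estimates at $s=t$. For $|V(t)|$, integrating $F$ over $[0,t]$ gives the zeroth-order term $|V_0|$, the contribution of the pressure jump bounded by $Ct\,\sup_{[0,T]}\|\tau(t)\|_1$, and the viscous contribution, which after Cauchy--Schwarz,
\[
\int_0^t \|u_x(s)\|_1\,ds\leq t^{1/2}\left(\int_0^t \|u_x(s)\|_1^2\,ds\right)^{1/2},
\]
produces the desired third term. The whole argument is essentially routine; the only mildly delicate point, which I would flag as the main bookkeeping task, is verifying that the arguments of $p$ and $p'$ remain in a range on which these smooth functions are bounded, which is guaranteed by $\tau\in X_T$ together with the hypothesis $\sup_{0\leq t\leq T}\|\tau(t)\|\leq E$ via the resulting control on the $H^1$-traces.
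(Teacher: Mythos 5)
Your proof is correct and follows essentially the same route as the paper. The paper's proof consists of the single hint that one should rewrite $\llbracket -p(1+\tau) \rrbracket(0,t) = \llbracket -p(1+\tau)+p(1) \rrbracket(0,t)$ before applying Taylor/trace bounds; your mean-value representation $\llbracket p(1+\tau)\rrbracket(0,s)=\llbracket\tau\rrbracket(0,s)\int_0^1 p'(\cdot)\,d\theta$ implements exactly the same cancellation, and the remaining steps (one-sided $H^1$ trace bounds, $1+\tau\geq 1+\tau_1$, Cauchy--Schwarz in time) match what the paper intends.
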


\begin{proof}
  It should be sufficient to note that
  \begin{equation}
    \llbracket -p(1+\tau) \rrbracket(0,t)=\llbracket -p(1+\tau)+p(1) \rrbracket(0,t).
  \end{equation}
  Using this, we can obtain the desired bounds.
\end{proof}

The following proposition follows easily from Propositions~\ref{SectionII:LocalExistence:Proposition:Energy_Estimate_of_v}--\ref{SectionII:LocalExistence:Proposition:EstimatesApproximateSolutions:V}.

\begin{proposition}
  \label{SectionII:LocalExistence:Proposition:UniformBoundedness}
  Suppose that $\tau_0,u_0 \in H^4(\mathbb{R}_*)$ and $V_0 \in \mathbb{R}$ satisfy the compatibility conditions stated in Theorem~\ref{SectionI:Theorem:MainTheorem:GlobalExistence}. Assume that $-1<\inf_{x\in \mathbb{R}_*}\tau_0(x)$. Define the sequence of approximate solutions $\{ (\tau^{(n)},u^{(n)},V^{(n)}) \}_{n=0}^{\infty}$ as described in Section~\ref{SectionII:Iteration}. Let $E\coloneqq 2(||\tau_0||_4+||u_0||_4)$. Then if $T$ is sufficiently small (depending only on $\nu$, $\tau_1$ and $E$), the approximate solutions satisfy:
  \begin{equation}
    \label{SectionII:LocalExistence:Proposition:UniformBoundedness:UniformBoundedness}
    ||\tau^{(n)}(t)||_4, \quad ||u^{(n)}(t)||_4, \quad \left( \int_{0}^{t}||u_{x}^{(n)}(s)||_{4}^{2}\, ds \right)^{1/2}, \quad |V^{(n)}(t)|\leq E
  \end{equation}
  for $0\leq t\leq T$.
\end{proposition}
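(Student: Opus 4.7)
The plan is to establish~\eqref{SectionII:LocalExistence:Proposition:UniformBoundedness:UniformBoundedness} by induction on $n$. The base case $n=0$ reduces to a special instance of the inductive step below once $V^{(0)}$ is prescribed explicitly (for example as the linear polynomial $V_0+t\,\llbracket -p(1+\tau_0)+\nu u_{0x}/(1+\tau_0)\rrbracket(0)$, so that $V^{(0)}_t$ is constant in $t$); the bound $\|\tau^{(0)}\|_4=\|\tau_0\|_4\leq E/2$ is trivial, and $|V^{(0)}(t)|\leq E$ for $T$ small because $|V_0|\leq C\|u_0\|_1\leq CE$ by Sobolev embedding together with the compatibility condition $u_0(\pm 0)=V_0$. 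I therefore focus on propagating the bounds from step $n$ to step $n+1$.

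The bounds on $\tau^{(n+1)}$ and $V^{(n+1)}$ follow directly from Propositions~\ref{SectionII:LocalExistence:Proposition:EstimatesApproximateSolutions:tau} and~\ref{SectionII:LocalExistence:Proposition:EstimatesApproximateSolutions:V}. The inductive hypothesis $\int_0^T\|u^{(n)}_x(s)\|^2\,ds\leq E^2$ makes~\eqref{SectionII:LocalExistence:EstimatesApproximateSolutions:tau:Assumption_on_T} valid for $T\leq (1+\inf_{x}\tau_0(x))^2/(4E^2)$; summing~\eqref{SectionII:LocalExistence:EstimatesApproximateSolutions:tau:Estimate_of_tau} over $0\leq k\leq 4$ then yields $\|\tau^{(n+1)}(t)\|_4\leq \|\tau_0\|_4+CT^{1/2}E\leq E/2+CT^{1/2}E$, which is $\leq E$ for $T$ sufficiently small. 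Similarly, Proposition~\ref{SectionII:LocalExistence:Proposition:EstimatesApproximateSolutions:V} applied to $(\tau^{(n)},u^{(n)})$ gives $|V^{(n+1)}(t)|\leq |V_0|+CTE+CT^{1/2}E\leq E$ for small $T$.

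The control of $u^{(n+1)}$ is the crux. I would apply Proposition~\ref{SectionII:LocalExistence:Proposition:Energy_Estimate_of_v} to $v^{(n+1)}\coloneqq u^{(n+1)}-\bar V^{(n)}$, which solves~\eqref{SectionII:LocalExistence:Def_of_v}; the task then reduces to the uniform-in-$t$ bound $\|g_{\tau^{(n)},V^{(n)}}(t)\|_3\leq CE$ on the source
\begin{equation*}
  g_{\tau^{(n)},V^{(n)}}=-p'(1+\tau^{(n)})\tau^{(n)}_x-V^{(n)}_t(t)\phi(x)+\nu\left(\frac{V^{(n)}(t)\phi'(x)}{1+\tau^{(n)}}\right)_x.
\end{equation*}
The $\tau^{(n)}$-dependent factors are controlled via $\|\tau^{(n)}\|_4\leq E$ and $\tau^{(n)}\geq \tau_1$. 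The essential new ingredient is a pointwise-in-$t$ bound on $V^{(n)}_t$: for $n\geq 1$, formula~\eqref{SectionII:LocalExistence:IterationScheme:V} gives $V^{(n)}_t(t)=\llbracket -p(1+\tau^{(n-1)})+\nu u^{(n-1)}_x/(1+\tau^{(n-1)})\rrbracket(0,t)$, which by Sobolev embedding and the inductive bounds on $(\tau^{(n-1)},u^{(n-1)})$ is $\leq CE$ uniformly in $t$ (for $n=0$ the analogous estimate holds by our explicit choice of $V^{(0)}$). The energy estimate then returns $\|v^{(n+1)}(t)\|_4^2+c\int_0^t\|v^{(n+1)}_x(s)\|_4^2\,ds\leq e^{\bar CT}(\|v_0\|_4^2+CTE^2)$, and undoing the change of variables via $\|\bar V^{(n)}(t)\|_4\leq C|V^{(n)}(t)|\leq CE$ yields the desired bounds on $u^{(n+1)}$ and $\int_0^t\|u^{(n+1)}_x(s)\|_4^2\,ds$ for $T$ small.

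The main obstacle is the uniform-in-$n$ bookkeeping: the final choice of $T$ must depend only on $\nu$, $\tau_1$, and $E$, and in particular the factor $e^{\bar CT}$ from Proposition~\ref{SectionII:LocalExistence:Proposition:Energy_Estimate_of_v} (whose $\bar C$ depends on $E$ but not on $n$) must be kept close enough to $1$ that the bound $\|v_0\|_4\leq \|u_0\|_4+|V_0|\,\|\phi\|_4\leq CE/2$ together with the $\bar V^{(n)}$-contribution still closes to $\|u^{(n+1)}(t)\|_4\leq E$. All other smallness requirements on $T$ collected above are of the same flavor and combine to give a single choice of $T$ valid throughout the iteration.
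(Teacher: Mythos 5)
Your strategy --- induction in $n$, Propositions~\ref{SectionII:LocalExistence:Proposition:EstimatesApproximateSolutions:tau} and~\ref{SectionII:LocalExistence:Proposition:EstimatesApproximateSolutions:V} for $\tau^{(n+1)}$ and $V^{(n+1)}$, and Proposition~\ref{SectionII:LocalExistence:Proposition:Energy_Estimate_of_v} for $u^{(n+1)}$ after the change of variables --- is the intended one, and the observation that $V^{(n)}_t$ is controlled uniformly in $t$ via~\eqref{SectionII:LocalExistence:IterationScheme:V}, Sobolev embedding, and the inductive bounds on $(\tau^{(n-1)},u^{(n-1)})$ is exactly the right way to dominate $\|g_{\tau^{(n)},V^{(n)}}(t)\|_3$.

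However, the ``bookkeeping'' issue you mention in the last paragraph is not ``of the same flavor'' as the smallness-in-$T$ requirements, and it is not resolved by taking $e^{\bar C T}$ close to $1$. Converting back to $u^{(n+1)}$ gives
\begin{equation}
  \|u^{(n+1)}(t)\|_4 \le \|v^{(n+1)}(t)\|_4 + |V^{(n)}(t)|\,\|\phi\|_4
  \le e^{\bar C T/2}\bigl(\|u_0\|_4 + |V_0|\,\|\phi\|_4\bigr) + |V^{(n)}(t)|\,\|\phi\|_4 + o(1),
\end{equation}
and the terms proportional to $\|\phi\|_4$ cannot be shrunk by choosing $T$ small, since they stem from the initial data and from the uniform bound $|V^{(n)}(t)|\approx|V_0|+o(1)$. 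Moreover $\|\phi\|_4$ cannot be made small: a smooth compactly supported $\phi$ with $\phi(0)=1$ satisfies $\|\phi\|_4\gtrsim 1$ (make the support small and $\|\phi^{(k)}\|_{L^2}$ blows up; make it large and $\|\phi\|_{L^2}$ does). Consequently, if $\|\tau_0\|_4$ is much smaller than $\|u_0\|_4$, then $E=2(\|\tau_0\|_4+\|u_0\|_4)\approx 2\|u_0\|_4$, but the right-hand side above is of size $\|u_0\|_4(1+2\|\phi\|_4)$, which exceeds $E$. So as written your argument does not close with the exact constant $E$; you need either to accept a bound of the form $\le C_\phi E$ (harmless for Theorem~\ref{SectionII:LocalExistence:Theorem:MainTheorem} and the continuation argument, but not what the proposition states), or a sharper route that estimates the increment $\|u^{(n+1)}(t)-u_0\|_4$ rather than passing twice through the triangle inequality with $\phi$. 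Note also that writing $\|u^{(n+1)}(t)-u_0\|_4 \le \|v^{(n+1)}(t)-v_0\|_4 + |V^{(n)}(t)-V_0|\,\|\phi\|_4$ is tempting, but Proposition~\ref{SectionII:LocalExistence:Proposition:Energy_Estimate_of_v} controls $\|v^{(n+1)}(t)\|_4$, not the modulus of continuity $\|v^{(n+1)}(t)-v_0\|_4$, and applying it to $w=v^{(n+1)}-v_0$ fails because $L_{\tau^{(n)}}v_0$ involves $v_{0xx}$ and so does not lie in $H^3$ when $v_0\in H^4$ only. This is the genuine gap; the rest of the proposal is sound.
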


Using this uniform boundedness, the convergence of the approximate solutions can be proved as in~\cite{MN80}. We omit the proof.

\begin{theorem}
  \label{SectionII:LocalExistence:Theorem:MainTheorem}
  Suppose that $\tau_0,u_0 \in H^4(\mathbb{R}_*)$ and $V_0 \in \mathbb{R}$ satisfy the compatibility conditions stated in Theorem~\ref{SectionI:Theorem:MainTheorem:GlobalExistence}. Assume that $-1<\inf_{x\in \mathbb{R}_*}\tau_0(x)$. Define the sequence of approximate solutions $\{ (\tau^{(n)},u^{(n)},V^{(n)}) \}_{n=0}^{\infty}$ as described in Section~\ref{SectionII:Iteration}. Let $E\coloneqq 2(||\tau_0||_4+||u_0||_4)$. Then if $T$ is sufficiently small (depending only on $\nu$, $\tau_1$ and $E$), there exists $(\tau,u,V)\in X_T \times Y_T \times C^2([0,T])$ such that
  \begin{equation}
    (\tau^{(n)},u^{(n)},V^{(n)})\to (\tau,u,V) \quad \text{as $n\to \infty$}
  \end{equation}
  in the space
  \begin{equation}
    C([0,T];H^3(\mathbb{R}_*))\times C([0,T];H^3(\mathbb{R}_*))\times C^1([0,T]).
  \end{equation}
  The limit $(\tau,u,V)$ is the unique classical solution to~\eqref{SectionI:CNSwithPointMass:Lagrangian:Perturbation} and satisfies
  \begin{equation}
    \label{SectionII:LocalExistence:Theorem:MainTheorem:UniformBounds}
    ||\tau(t)||_4, \quad ||u(t)||_4, \quad \left( \int_{0}^{t}||u_x(s)||_{4}^{2}\, ds \right)^{1/2}, \quad |V(t)|\leq E
  \end{equation}
  for $0\leq t\leq T$.
\end{theorem}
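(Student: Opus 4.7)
The plan is to use the uniform $H^4$-bound~\eqref{SectionII:LocalExistence:Proposition:UniformBoundedness:UniformBoundedness} from Proposition~\ref{SectionII:LocalExistence:Proposition:UniformBoundedness} as a compactness reservoir, and to prove convergence by a contraction argument in the weaker space $C([0,T];H^3(\mathbb{R}_*))\times C([0,T];H^3(\mathbb{R}_*))\times C^1([0,T])$. The loss of one derivative is inevitable because the coefficient $1/(1+\tau^{(n)})$ in $L_{\tau^{(n)}}$ depends on $\tau^{(n)}$, so differences of iterates can only be estimated if one spare derivative is available to bound these coefficients.

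Concretely, I would set $\delta\tau^{(n)}\coloneqq \tau^{(n+1)}-\tau^{(n)}$, $\delta u^{(n)}\coloneqq u^{(n+1)}-u^{(n)}$ and $\delta V^{(n)}\coloneqq V^{(n+1)}-V^{(n)}$, subtract consecutive iteration schemes~\eqref{SectionII:LocalExistence:IterationScheme:u}--\eqref{SectionII:LocalExistence:IterationScheme:V}, and lift the non-homogeneous boundary condition as was done in~\eqref{SectionII:LocalExistence:Def_of_v}, introducing $\delta v^{(n)}\coloneqq \delta u^{(n)}-\delta V^{(n-1)}\phi$. The resulting problem is of the form
\begin{equation}
  \begin{dcases}
    L_{\tau^{(n)}}\delta v^{(n)}=\bigl(L_{\tau^{(n-1)}}-L_{\tau^{(n)}}\bigr)u^{(n)}+f_{\tau^{(n)}}-f_{\tau^{(n-1)}}-L_{\tau^{(n)}}\bigl(\delta V^{(n-1)}\phi\bigr), \\
    \delta v^{(n)}(\pm 0,t)=0,\quad \delta v^{(n)}(x,0)=0,
  \end{dcases}
\end{equation}
to which Proposition~\ref{SectionII:LocalExistence:Proposition:Energy_Estimate_of_v} applies at the $H^3$-level; the right-hand side is controlled by $\|\delta\tau^{(n-1)}\|_3$, $\|\delta V^{(n-1)}\|$, and $\|\delta V^{(n-1)\prime}\|$ thanks to the uniform $H^4$-bound on $\tau^{(n)}$ and $u^{(n)}$. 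Combined with the obvious bound $\|\delta\tau^{(n)}(t)\|_3\le t^{1/2}(\int_0^t\|\delta u_x^{(n-1)}(s)\|_3^2\,ds)^{1/2}$ derived exactly as in Proposition~\ref{SectionII:LocalExistence:Proposition:EstimatesApproximateSolutions:tau}, and an analogous control on $\delta V^{(n)}$ from Proposition~\ref{SectionII:LocalExistence:Proposition:EstimatesApproximateSolutions:V}, Gronwall and a further smallness of $T$ (depending only on $\nu$, $\tau_1$, $E$) then yield a contraction estimate of the form
\begin{equation}
  \mathcal{N}_n \le \tfrac12 \mathcal{N}_{n-1},\qquad \mathcal{N}_n\coloneqq \sup_{0\le t\le T}\bigl(\|\delta\tau^{(n)}(t)\|_3+\|\delta u^{(n)}(t)\|_3+|\delta V^{(n)}(t)|+|\delta V^{(n)\prime}(t)|\bigr),
\end{equation}
whence the sequence is Cauchy in the advertised space and converges to some $(\tau,u,V)$.

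To identify the limit as a solution in $X_T\times Y_T\times C^2([0,T])$, I would appeal to the uniform bounds~\eqref{SectionII:LocalExistence:Proposition:UniformBoundedness:UniformBoundedness}: extract a subsequence converging weakly-$\ast$ in $L^\infty(0,T;H^4(\mathbb{R}_*))$ and weakly in $L^2(0,T;H^4(\mathbb{R}_*))$ for $u_x^{(n)}$, then invoke the lower semicontinuity of these norms to transfer the bound $E$ to the limit. Strong $C([0,T];H^3)$ convergence of $\tau^{(n)},u^{(n)}$ and $C^1$ convergence of $V^{(n)}$ permit passage to the limit in each (nonlinear) term of~\eqref{SectionII:LocalExistence:IterationScheme:u}--\eqref{SectionII:LocalExistence:IterationScheme:V}, including the jump bracket $\llbracket -p(1+\tau^{(n)})+\nu u_x^{(n)}/(1+\tau^{(n)})\rrbracket(0,\cdot)$ which is continuous on $H^3(\mathbb{R}_*)$ by Sobolev trace; this yields~\eqref{SectionI:CNSwithPointMass:Lagrangian:Perturbation}. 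Weak-$\ast$ continuity combined with strong continuity in $H^3$ and uniform $H^4$-bounds upgrades $\tau,u$ to $C([0,T];H^4(\mathbb{R}_*))$ in the usual way (Lions--Magenes); time regularity $\tau_t\in C([0,T];H^3)$, $u_t\in C([0,T];H^2)$ is then read off from the equations. Uniqueness is proved by essentially repeating the difference estimate above with $(\tau^{(n)},u^{(n)},V^{(n)})$ and $(\tau^{(n-1)},u^{(n-1)},V^{(n-1)})$ replaced by two solutions and invoking Gronwall.

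The main obstacle I anticipate is bookkeeping around the coupling through $V^{(n)}$: because the boundary data of $u^{(n+1)}$ involves $V^{(n)}$ and $V^{(n+1)}$ is defined through the jump $\llbracket\cdots\rrbracket(0,\cdot)$ of $(\tau^{(n)},u^{(n)})$, the contraction must simultaneously control $\delta V^{(n)}$ and $\delta V^{(n)\prime}$ in $L^\infty_t$, which is exactly why the Cauchy estimate must be carried out in $H^3$ (so that the jump of $u_x$ makes sense pointwise in time via the Sobolev trace) rather than in the naive $H^2$-space. Once this is correctly set up and a small enough $T$ is chosen so that the Gronwall constant is absorbed, the remainder of the proof is a standard soft-analysis passage to the limit.
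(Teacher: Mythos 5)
Your proposal is correct and is essentially the argument the paper has in mind: the paper explicitly omits this proof, citing Matsumura--Nishida~\cite{MN80}, and your plan --- subtract consecutive iterates, lift the boundary data via $\phi$ as in~\eqref{SectionII:LocalExistence:Def_of_v}, run the $H^3$-level energy estimate on the difference, combine it with the $\tau$- and $V$-propositions and Gronwall to obtain a contraction, pass to the limit via weak-$\ast$ compactness in $H^4$ plus strong $H^3$ convergence, and reprove uniqueness with the same difference estimate --- is precisely the standard MN80 mechanism, and your uniform-boundedness input (Proposition~\ref{SectionII:LocalExistence:Proposition:UniformBoundedness}) is the right reservoir.

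One small inaccuracy in your rationalization: you assert the contraction ``must'' be done in $H^3$ rather than $H^2$ so that the jump of $u_x$ makes sense by Sobolev trace, but in one space dimension $u\in H^2(\mathbb{R}_*)$ already gives $u_x\in H^1(\mathbb{R}_*)\hookrightarrow C^0(\overline{\mathbb{R}_\pm})$, so the trace is already available at the $H^2$ level. The actual reason one works in $H^3$ (or at worst any level $\le 3$) is the loss of a derivative in the commutator $(L_{\tau^{(n-1)}}-L_{\tau^{(n)}})u^{(n)}$: to put this source in $C([0,T];H^2)$ for Proposition~\ref{SectionII:LocalExistence:Proposition:Energy_Estimate_of_v} one needs $\delta\tau^{(n-1)}\in H^3$ multiplied against $u_x^{(n)}\in H^3$, which is exactly what the uniform $H^4$-bound allows. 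This is a justification issue only; it does not affect the correctness of the contraction itself.
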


\subsection{Global Existence Theorem}
To extend the local-in-time solutions obtained in the previous section, we next derive global-in-time energy estimates. The first energy estimate is the conservation of physical energy.

\begin{proposition}
  \label{SectionII:GlobalEnergyEstimate:Proposition:EnergyConservation}
  Let $(\tau,u,V)\in X_{T}\times Y_{T}\times C^2([0,T])$ be the solution to~\eqref{SectionI:CNSwithPointMass:Lagrangian:Perturbation}. Let
  \begin{equation}
    P(\tau)\coloneqq  -\int_{0}^{\tau}(p(1+\eta)-p(1))\, d\eta.
  \end{equation}
  Then we have
  \begin{align}
    \label{SectionII:GlobalEnergyEstimate:Proposition:EnergyConservation:EnergyConservation}
    \begin{aligned}
      & 2\int_{\mathbb{R}_*}P(\tau)\, dx+||u(t)||^2+|V(t)|^2+2\nu \int_{0}^{t}\int_{\mathbb{R}_*}\frac{u_{x}^{2}}{1+\tau}\, dxds \\
      & =2\int_{\mathbb{R}_*}P(\tau_0)\, dx+||u_0||^2+|V_0|^2.
    \end{aligned}
  \end{align}
  Moreover, there exist $\varepsilon_0>0$ and $C\geq 1$ such that if  $\sup_{0\leq t\leq T}|\tau(t)|_{\infty}\leq \varepsilon_0$,\footnote{We write $|\cdot |_{\infty}$ to mean $||\cdot ||_{L^{\infty}(\mathbb{R}_*)}$.}~then
  \begin{equation}
    \label{SectionII:GlobalEnergyEstimate:Proposition:EnergyConservation:P_is_Equivalent_to_tauL2}
    C^{-1}||\tau(t)||^2 \leq \int_{\mathbb{R}_*}P(\tau)\, dx\leq C||\tau(t)||^2.
  \end{equation}
\end{proposition}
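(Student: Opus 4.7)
The plan is to derive the identity by the standard energy method, adapted to account for the jump at $x=0$ and the point-mass boundary condition, and then to obtain the equivalence by Taylor expansion of $P$ near $\tau=0$.

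\medskip

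\noindent\textbf{Step 1 (Multiplier for the momentum equation).} Multiply the equation $u_t+p(1+\tau)_x=\nu(u_x/(1+\tau))_x$ by $u$ and integrate over $\mathbb{R}_*$. The time derivative contributes $\tfrac{1}{2}(d/dt)\|u\|^2$. For the pressure term I would integrate by parts on each of $(-\infty,0)$ and $(0,\infty)$, replacing $p(1+\tau)_x$ by $(p(1+\tau)-p(1))_x$ (which does not change the derivative), obtaining
\begin{equation}
\int_{\mathbb{R}_*} u\,p(1+\tau)_x\,dx = -V(t)\llbracket p(1+\tau)\rrbracket(0,t) - \int_{\mathbb{R}_*} u_x (p(1+\tau)-p(1))\,dx,
\end{equation}
where I have used $u(\pm 0,t)=V(t)$ and decay at infinity. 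The viscous term is handled identically:
\begin{equation}
\nu\int_{\mathbb{R}_*} u\bigl(u_x/(1+\tau)\bigr)_x\,dx = -\nu V(t)\llbracket u_x/(1+\tau)\rrbracket(0,t) - \nu\int_{\mathbb{R}_*} \frac{u_x^2}{1+\tau}\,dx.
\end{equation}

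\medskip

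\noindent\textbf{Step 2 (Multiplier for the continuity equation).} Multiply $\tau_t-u_x=0$ by $-(p(1+\tau)-p(1))$. Since $P'(\tau)=-(p(1+\tau)-p(1))$, this gives $P(\tau)_t + (p(1+\tau)-p(1))u_x=0$; integrating in $x$ yields
\begin{equation}
\frac{d}{dt}\int_{\mathbb{R}_*} P(\tau)\,dx + \int_{\mathbb{R}_*} u_x(p(1+\tau)-p(1))\,dx = 0.
\end{equation}

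\medskip

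\noindent\textbf{Step 3 (Cancellation and use of Newton's law).} Adding the results of Steps 1 and 2, the interior pressure terms cancel and we obtain
\begin{equation}
\frac{d}{dt}\!\left[\int_{\mathbb{R}_*}\!P(\tau)\,dx+\tfrac{1}{2}\|u\|^2\right] + \nu\!\int_{\mathbb{R}_*}\!\frac{u_x^2}{1+\tau}\,dx = -V(t)\,\llbracket -p(1+\tau)+\nu u_x/(1+\tau)\rrbracket(0,t).
\end{equation}
By Newton's equation, the right-hand side equals $-V(t)V'(t) = -\tfrac{1}{2}(d/dt)|V(t)|^2$. Rearranging and integrating from $0$ to $t$, then multiplying by $2$, produces the claimed identity~\eqref{SectionII:GlobalEnergyEstimate:Proposition:EnergyConservation:EnergyConservation}. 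The regularity $\tau,u\in C([0,T];H^4)$, $V\in C^2([0,T])$ from Theorem~\ref{SectionII:LocalExistence:Theorem:MainTheorem} justifies all the manipulations. The main (mild) obstacle is simply careful bookkeeping of the boundary signs at $0\pm$ so that the combination lines up precisely with $\llbracket\,\cdot\,\rrbracket(0,t)$ on the right-hand side of Newton's equation.

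\medskip

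\noindent\textbf{Step 4 (Equivalence of $\int P(\tau)\,dx$ and $\|\tau\|^2$).} From $c^2=-p'(1)$ and Taylor expansion,
\begin{equation}
P(\tau)=\frac{c^2}{2}\tau^2 + R(\tau),\qquad |R(\tau)|\leq C|\tau|^3
\end{equation}
for $|\tau|\le 1/2$, say. Hence there is $\varepsilon_0>0$ such that for $|\tau|\le\varepsilon_0$ one has $\tfrac{c^2}{4}\tau^2\le P(\tau)\le c^2\tau^2$. Taking $\sup_{0\le t\le T}|\tau(t)|_\infty\le\varepsilon_0$ and integrating in $x$ yields~\eqref{SectionII:GlobalEnergyEstimate:Proposition:EnergyConservation:P_is_Equivalent_to_tauL2}. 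This is a straightforward pointwise estimate and presents no difficulty.
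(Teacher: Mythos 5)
Your proof is correct and follows essentially the same route as the paper's: multiply the continuity and momentum equations by $-(p(1+\tau)-p(1))$ and $u$ respectively, integrate by parts over $\mathbb{R}_*$ collecting the jump terms at $x=0$, use $u(\pm 0,t)=V(t)$ together with Newton's equation to turn the boundary contribution into $-\tfrac12\tfrac{d}{dt}|V|^2$, and finally Taylor-expand $P(\tau)=\tfrac{c^2}{2}\tau^2+O(\tau^3)$. The only cosmetic difference is that the paper multiplies Newton's law by $V(t)$ and adds the resulting equation to the other two, whereas you substitute Newton's law directly on the right-hand side; these are the same computation.
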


\begin{proof}
  First, multiply $\tau_t-u_x=0$ and $L_{\tau}u-f_{\tau}=0$ by $-p(1+\tau)+p(1)$ and $u$, respectively, and integrate the resulting equations with respect to $x$; then multiply $V'(t)-\llbracket -p(1+\tau)+\nu u_x/(1+\tau) \rrbracket(0,t)=0$ by $V(t)$. Adding the obtained equations and applying integration by parts, we obtain
  \begin{equation}
    \label{SectionII:GlobalEnergyEstimate:Proposition:EnergyConservation:Proof:DifferentialForm}
    \frac{d}{dt}\left( \int_{\mathbb{R}_*}P(\tau)\, dx+\frac{1}{2}||u(t)||^2+\frac{1}{2}|V(t)|^2 \right) +\nu \int_{\mathbb{R}_*}\frac{u_{x}^{2}}{1+\tau}\, dx=0.
  \end{equation}
  Note that we used $u(\pm 0,t)=V(t)$ here. Integrating~\eqref{SectionII:GlobalEnergyEstimate:Proposition:EnergyConservation:Proof:DifferentialForm} with respect to $t$ gives~\eqref{SectionII:GlobalEnergyEstimate:Proposition:EnergyConservation:EnergyConservation}. By taking the Taylor expansion,
  \begin{equation}
    P(\tau)=-\frac{1}{2}p'(1)\tau^2+O(\tau^3) \quad \text{as $\tau \to 0$},
  \end{equation}
  and~\eqref{SectionII:GlobalEnergyEstimate:Proposition:EnergyConservation:P_is_Equivalent_to_tauL2} follows from this (note that $p'(1)<0$).
\end{proof}

We continue to prove higher order energy estimates in the following.

\begin{proposition}
  \label{SectionII:GlobalEnergyEstimate:Proposition:Energy_Estimate_of_ut}
  Let $(\tau,u,V)\in X_{T}\times Y_{T}\times C^2([0,T])$ be the solution to~\eqref{SectionI:CNSwithPointMass:Lagrangian:Perturbation}. Then there exist $\varepsilon_0>0$ and $C\geq 1$ such that if
  \begin{equation}
    \label{AposterioriSmallness}
    \sup_{0\leq t\leq T}|\tau(t)|_{\infty}+\sup_{0\leq t\leq T}|u_x(t)|_{\infty}\leq \varepsilon_0,
  \end{equation}
   then
  \begin{equation}
    \label{SectionII:GlobalEnergyEstimate:Proposition:Energy_Estimate_of_ut:Energy_Estimate_of_ut}
    ||u_x(t)||^2+||u_t(t)||^2+|V'(t)|^2+C^{-1}\int_{0}^{t}||u_{xt}(s)||^2 \, ds\leq C\left( ||\tau_0||_{1}^{2}+||u_0||_{2}^{2} \right).
  \end{equation}
\end{proposition}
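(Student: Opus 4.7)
The plan is to combine two complementary energy estimates: one obtained by differentiating the momentum equation in $t$ and testing with $u_t$, which controls $||u_t(t)||^2+|V'(t)|^2+C^{-1}\int_0^t ||u_{xt}(s)||^2\,ds$; the other obtained by testing the momentum equation directly with $u_t$, which controls $||u_x(t)||^2$. The principal difficulty throughout is handling the boundary contributions at $x=0^\pm$; these are tamed by exploiting Newton's law in differentiated or undifferentiated form.

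For the first estimate, differentiate $u_t+p(1+\tau)_x=\nu(u_x/(1+\tau))_x$ in $t$, multiply by $u_t$, and integrate over $\mathbb{R}_*$. Since $u_t(\pm 0,t)=V'(t)$, integration by parts produces the boundary contribution
\[
V'(t)\bigl[-\llbracket p'(1+\tau)u_x\rrbracket(0,t)+\nu\llbracket (u_x/(1+\tau))_t\rrbracket(0,t)\bigr]=V'(t)V''(t)=\frac{1}{2}\frac{d}{dt}|V'(t)|^2,
\]
where the first equality is the time derivative of Newton's law. Expanding $(u_x/(1+\tau))_t=u_{xt}/(1+\tau)-u_x^2/(1+\tau)^2$ via $\tau_t=u_x$ yields the dissipation $\nu\int u_{xt}^2/(1+\tau)\,dx$ plus cubic corrections $\int p'(1+\tau)u_x u_{xt}\,dx$ and $\nu\int u_x^2 u_{xt}/(1+\tau)^2\,dx$. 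These are bounded by $\eta ||u_{xt}||^2+C_\eta ||u_x||^2$ via Young's inequality together with the smallness condition~\eqref{AposterioriSmallness}; choosing $\eta$ small enough to absorb $\eta ||u_{xt}||^2$ into the dissipation gives
\[
\frac{d}{dt}(||u_t||^2+|V'|^2)+C^{-1}||u_{xt}||^2\leq C||u_x||^2.
\]
Integrating in $t$, bounding $\int_0^t||u_x||^2\,ds\leq C(||\tau_0||_1^2+||u_0||_1^2)$ by Proposition~\ref{SectionII:GlobalEnergyEstimate:Proposition:EnergyConservation}, and reading off $||u_t(0)||,|V'(0)|\leq C(||\tau_0||_1+||u_0||_2)$ from the momentum equation and Newton's law at $t=0$ (the latter using the Sobolev trace $|w(\pm 0)|\leq C||w||_1$) yields the first estimate.

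For the second estimate, test the momentum equation itself with $u_t$ and integrate. Integration by parts on the viscous term produces a boundary contribution $-\nu V'(t)\llbracket u_x/(1+\tau)\rrbracket(0,t)$; applying Newton's law in the undifferentiated form $\nu\llbracket u_x/(1+\tau)\rrbracket(0,t)=V'(t)+\llbracket p(1+\tau)\rrbracket(0,t)$ converts this into $-|V'(t)|^2$ plus a pressure-jump term that cancels exactly the analogous boundary contribution from the pressure gradient. The viscous bulk term yields $-\frac{\nu}{2}\frac{d}{dt}\int u_x^2/(1+\tau)\,dx$ plus a cubic remainder of size $|u_x|_\infty ||u_x||^2$. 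Integrating in $t$ and removing the residual $\int_0^t\int (p(1+\tau)-p(1))u_{xt}\,dx\,ds$ by integration by parts in $t$ (using $\tau_t=u_x$) leaves only boundary-in-time terms bounded by $||\tau(t)||\,||u_x(t)||$, $||\tau_0||\,||u_0||_1$, and $\int_0^t||u_x||^2\,ds$. Absorbing $\eta||u_x(t)||^2$ into the left-hand side and bounding the remaining terms by Proposition~\ref{SectionII:GlobalEnergyEstimate:Proposition:EnergyConservation} produces $||u_x(t)||^2\leq C(||\tau_0||_1^2+||u_0||_2^2)$.

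Summing the two estimates yields~\eqref{SectionII:GlobalEnergyEstimate:Proposition:Energy_Estimate_of_ut:Energy_Estimate_of_ut}. The hard part is the identification, via Newton's law, of the sign-indefinite jump terms at $x=0$ either as a perfect time derivative of $|V'(t)|^2$ (first estimate) or as an algebraic cancellation that extracts $|V'(t)|^2$ with no surviving boundary flux (second estimate). Without these identifications, the boundary terms generated by the point mass would be sign-indefinite and the estimate could not be closed; this is where the coupling with Newton's equation of motion becomes essential.
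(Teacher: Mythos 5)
Your proof is correct, but it takes a genuinely different route from the paper's. The paper's argument is a single unified identity: it differentiates \emph{all three} equations in $t$ and tests them with the triple $(-p(1+\tau)_t,\, u_t,\, V')$. The multiplier $-p(1+\tau)_t$ on the continuity equation converts the pressure cross term into an exact time derivative via $\int p(1+\tau)_t\tau_{tt}\,dx=\tfrac12\tfrac{d}{dt}\int p'(1+\tau)\tau_t^2\,dx-\tfrac12\int p''(1+\tau)\tau_t^3\,dx$; since $\tau_t=u_x$ and $p'(1)<0$, the quantity $-\tfrac12\int p'(1+\tau)u_x^2\,dx$ has the good sign and yields $\|u_x(t)\|^2$ directly inside the same identity. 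You avoid this multiplier entirely: in your first estimate you discard the sign structure of $\int p'(1+\tau)u_x u_{xt}\,dx$ by Young's inequality and absorb into the dissipation, getting only $\|u_t\|^2+|V'|^2+\int_0^t\|u_{xt}\|^2$; then you recover $\|u_x(t)\|^2$ from a second, separate computation (which is essentially the paper's proof of the next proposition, Energy\_Estimate\_of\_ux, with a somewhat cleaner treatment of the viscous term that writes $\nu\int u_xu_{xt}/(1+\tau)\,dx$ as a total derivative of $\tfrac{\nu}{2}\int u_x^2/(1+\tau)\,dx$ plus a genuinely cubic remainder, so that your second estimate does not itself need $\int\|u_{xt}\|^2$). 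What you do buys modularity: your two estimates are independent, whereas the paper's later estimates depend on this one. What the paper's approach buys is economy: $\|u_x(t)\|^2$, $\|u_t(t)\|^2$, $|V'(t)|^2$ and $\int_0^t\|u_{xt}\|^2\,ds$ all fall out of one multiplication, without needing to invoke a variant of the next proposition. One minor mislabeling: you call $\int p'(1+\tau)u_xu_{xt}\,dx$ a ``cubic correction,'' but it is quadratic (its coefficient is $\approx -c^2$, not small); your Young absorption is still valid because the dissipation constant $\nu$ dominates the fixed factor $|p'(1)|$ in the Cauchy--Schwarz split, but the term is not higher order. Your identification of the boundary terms as $\tfrac12\tfrac{d}{dt}|V'|^2$ in the first estimate and as $-|V'|^2$ in the second via Newton's law (differentiated, resp.\ undifferentiated) is exactly right and is the same cancellation mechanism the paper uses.
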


\begin{proof}
  First, differentiate $\tau_t-u_x=0$, $L_{\tau}u-f_{\tau}=0$ and $V'(t)-\llbracket -p(1+\tau)+\nu u_x/(1+\tau) \rrbracket(0,t)=0$ with respect to $t$, then multiply the resulting equations by $-p(1+\tau)_t$, $u_t$ and $V'(t)$, respectively. Adding and integrating these equations, we obtain
  \begin{align}
    \label{SectionII:GlobalEnergyEstimate:Proposition:Energy_Estimate_of_ut:Proof:AddedIntegratedEq}
    \begin{aligned}
      & \frac{1}{2}\left( ||u_t(t)||^2+|V'(t)|^2 \right)-\int_{0}^{t}\int_{\mathbb{R}_*}p(1+\tau)_t \tau_{tt}\, dxds+\nu \int_{0}^{t}\int_{\mathbb{R}_*}\frac{u_{xt}^{2}}{1+\tau}\, dxds \\
      & =\frac{1}{2}\left( ||u_t(0)||^2+|V'(0)|^2 \right)+\nu \int_{0}^{t}\int_{\mathbb{R}_*}\frac{\tau_t u_x}{(1+\tau)^2}u_{xt}\, dxds.
    \end{aligned}
  \end{align}
  Next, note that
  \begin{equation}
    \label{SectionII:GlobalEnergyEstimate:Proposition:Energy_Estimate_of_ut:Proof:a(b_t)=(ab)_t-(a_t)b}
    \int_{\mathbb{R}_*}p(1+\tau)_t \tau_{tt}\, dx=\frac{1}{2}\frac{d}{dt}\int_{\mathbb{R}_*}p'(1+\tau)\tau_{t}^{2}-\frac{1}{2}\int_{\mathbb{R}_*}p''(1+\tau)\tau_{t}^{3}\, dx.
  \end{equation}
  By~\eqref{SectionII:GlobalEnergyEstimate:Proposition:Energy_Estimate_of_ut:Proof:AddedIntegratedEq} and~\eqref{SectionII:GlobalEnergyEstimate:Proposition:Energy_Estimate_of_ut:Proof:a(b_t)=(ab)_t-(a_t)b}, we obtain
  \begin{align}
    & \frac{1}{2}\left( -\int_{\mathbb{R}_*}p'(1+\tau)u_{x}^{2}\, dx+||u_t(t)||^2+|V'(t)|^2 \right)+\nu \int_{0}^{t}\int_{\mathbb{R}_*}\frac{u_{xt}^{2}}{1+\tau}\, dxds \\
    & =\frac{1}{2}\left( -\int_{\mathbb{R}_*}p'(1+\tau_0)u_{0x}^{2}\, dx+||u_t(0)||^2+|V'(0)|^2 \right) \\
    & \quad +\nu \int_{0}^{t}\int_{\mathbb{R}_*}\frac{u_{x}^{2}}{(1+\tau)^2}u_{xt}\, dxds-\frac{1}{2}\int_{0}^{t}\int_{\mathbb{R}_*}p''(1+\tau)u_{x}^{3}\, dxds,
  \end{align}
  from which~\eqref{SectionII:GlobalEnergyEstimate:Proposition:Energy_Estimate_of_ut:Energy_Estimate_of_ut} follows by using~\eqref{SectionI:CNSwithPointMass:Lagrangian:Perturbation} and Proposition~\ref{SectionII:GlobalEnergyEstimate:Proposition:EnergyConservation}.
\end{proof}

\begin{proposition}
  \label{SectionII:GlobalEnergyEstimate:Proposition:Energy_Estimate_of_ux}
  Let $(\tau,u,V)\in X_{T}\times Y_{T}\times C^2([0,T])$ be the solution to~\eqref{SectionI:CNSwithPointMass:Lagrangian:Perturbation}. Then there exist $\varepsilon_0>0$ and $C\geq 1$ such that if~\eqref{AposterioriSmallness} holds, then
  \begin{equation}
    \label{SectionII:GlobalEnergyEstimate:Proposition:Energy_Estimate_of_ux:Energy_Estimate_of_ux}
    ||u_x(t)||^2+C^{-1}\int_{0}^{t}\left( ||u_t(s)||^2+|V'(s)|^2 \right)\, ds\leq C\left( ||\tau_0||_{1}^{2}+||u_0||_{2}^{2} \right).
  \end{equation}
\end{proposition}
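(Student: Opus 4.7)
The plan is to multiply the momentum equation $u_t + p(1+\tau)_x = \nu(u_x/(1+\tau))_x$ by $u_t$ and integrate over $\mathbb{R}_*$, splitting the integral at $x=0$. Integration by parts produces boundary contributions at $x = 0^{\pm}$ which, combined with Newton's law, yield $|V'(t)|^2$ on the left-hand side. Concretely, using $u_t(\pm 0,t) = V'(t)$ and rewriting $p(1+\tau)_x$ as $(p(1+\tau)-p(1))_x$, the pressure integral equals $-V'(t)\llbracket p(1+\tau)\rrbracket - \int u_{xt}(p(1+\tau)-p(1))\,dx$, while the viscous integral equals $-\nu V'(t)\llbracket u_x/(1+\tau)\rrbracket - \nu\int u_{xt} u_x/(1+\tau)\,dx$. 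Newton's law gives $\nu\llbracket u_x/(1+\tau)\rrbracket = V'(t) + \llbracket p(1+\tau)\rrbracket$, so $-\nu V'(t)\llbracket u_x/(1+\tau)\rrbracket = -|V'(t)|^2 - V'(t)\llbracket p(1+\tau)\rrbracket$; the two cross terms involving $\llbracket p(1+\tau)\rrbracket$ cancel and precisely $|V'(t)|^2$ remains.

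Using $\tau_t = u_x$, each bulk integral is then converted into a total time derivative plus a lower-order residual:
\begin{align}
\int u_{xt}(p(1+\tau)-p(1))\,dx & = \frac{d}{dt}\int u_x(p(1+\tau)-p(1))\,dx - \int u_x^2\, p'(1+\tau)\,dx, \\
\nu\int \frac{u_{xt} u_x}{1+\tau}\,dx & = \frac{\nu}{2}\frac{d}{dt}\int \frac{u_x^2}{1+\tau}\,dx + \frac{\nu}{2}\int \frac{u_x^3}{(1+\tau)^2}\,dx.
\end{align}
Collecting everything yields the pointwise-in-$t$ identity
\begin{equation}
||u_t(t)||^2 + |V'(t)|^2 + \frac{d}{dt}\mathcal{E}(t) = -\int u_x^2\, p'(1+\tau)\,dx - \frac{\nu}{2}\int \frac{u_x^3}{(1+\tau)^2}\,dx,
\end{equation}
with $\mathcal{E}(t) \coloneqq \frac{\nu}{2}\int u_x^2/(1+\tau)\,dx - \int u_x(p(1+\tau)-p(1))\,dx$.

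Integrating in time from $0$ to $t$, Young's inequality bounds the cross term in $\mathcal{E}(t)$ by $\epsilon||u_x(t)||^2 + C\epsilon^{-1}||\tau(t)||^2$; choosing $\epsilon$ small, the first is absorbed into the leading $\frac{\nu}{2}\int u_x^2/(1+\tau)\,dx \geq C^{-1}||u_x(t)||^2$ (using $|\tau|_\infty$ small), while $||\tau(t)||^2$ is dominated by the initial data through Proposition~\ref{SectionII:GlobalEnergyEstimate:Proposition:EnergyConservation}. Since $-p'(1+\tau)$ stays bounded, the first right-hand-side contribution is at most $C\int_0^t ||u_x(s)||^2\,ds$, again controlled by Proposition~\ref{SectionII:GlobalEnergyEstimate:Proposition:EnergyConservation}. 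The cubic residual obeys $\frac{\nu}{2}\int_0^t\!\int u_x^3/(1+\tau)^2\,dxds \leq C\sup_{[0,t]}|u_x|_\infty \int_0^t ||u_x(s)||^2\,ds$, which is small by~\eqref{AposterioriSmallness}. Finally, the initial contribution $\mathcal{E}(0)$ together with $|V_0|^2 = |u_0(\pm 0)|^2 \leq C||u_0||_1^2$ (Sobolev embedding) fit inside $C(||\tau_0||_1^2 + ||u_0||_2^2)$, yielding~\eqref{SectionII:GlobalEnergyEstimate:Proposition:Energy_Estimate_of_ux:Energy_Estimate_of_ux}.

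The main obstacle is the bookkeeping of boundary terms: one must verify carefully that the $V'(t)\llbracket p(1+\tau)\rrbracket$ contributions generated by the pressure and viscous integrations by parts cancel exactly through Newton's law, producing a clean $|V'(t)|^2$. Once this algebraic structure is in place, the rest is routine, relying on Proposition~\ref{SectionII:GlobalEnergyEstimate:Proposition:EnergyConservation} to absorb all quadratic-in-$u_x$ quantities and on~\eqref{AposterioriSmallness} to dominate the cubic correction.
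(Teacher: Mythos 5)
Your proposal is correct, and the core mechanism is the same as the paper's: multiply the momentum equation by $u_t$, integrate in $x$, and use Newton's law so that the boundary contributions at $x=0^\pm$ collapse to $|V'(t)|^2$ with the $\llbracket p(1+\tau)\rrbracket$ cross-terms cancelling exactly. Where you genuinely diverge is in the treatment of the viscous integral $\nu\int u_{xt}u_x/(1+\tau)\,dx$. The paper splits off the constant-coefficient part, $\nu\int u_x u_{xt}\,dx = \tfrac{\nu}{2}\tfrac{d}{dt}\|u_x\|^2$, leaving a residual $\nu\int_0^t\!\int \tfrac{\tau}{1+\tau}u_x u_{xt}\,dxds$ that still contains $u_{xt}$; controlling it requires the time-integrated bound on $\|u_{xt}\|$ from Proposition~\ref{SectionII:GlobalEnergyEstimate:Proposition:Energy_Estimate_of_ut}, which is why that proposition is cited in the paper's proof. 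You instead keep the full weighted energy $\tfrac{\nu}{2}\int u_x^2/(1+\tau)\,dx$ inside $\mathcal{E}(t)$ and use $\tau_t=u_x$ to convert the residual into $\tfrac{\nu}{2}\int_0^t\!\int u_x^3/(1+\tau)^2\,dxds$, which is cubic in $u_x$ and hence absorbed with only the a priori smallness~\eqref{AposterioriSmallness} and the time-integrated bound on $\|u_x\|^2$ from Proposition~\ref{SectionII:GlobalEnergyEstimate:Proposition:EnergyConservation}. In effect, you have performed the integration by parts in $t$ that the paper implicitly defers to the earlier proposition, so your route is slightly more self-contained (it does not need Proposition~\ref{SectionII:GlobalEnergyEstimate:Proposition:Energy_Estimate_of_ut}), while the paper's route gives a visually cleaner identity at the cost of invoking the $u_{xt}$ dissipation. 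Both are valid; your bookkeeping of the boundary terms and the Young-inequality absorption of the cross term in $\mathcal{E}(t)$ are carried out correctly.
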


\begin{proof}
  First, multiply $L_{\tau}u-f_{\tau}=0$ by $u_t$, and integrate the resulting equation with respect to $x$ and $t$; then multiply $V'(t)-\llbracket -p(1+\tau)+\nu u_x/(1+\tau) \rrbracket(0,t)=0$ by $V'(t)$, and integrate the resulting equation with respect to $t$. Adding the obtained equations and applying integration by parts, we obtain
  \begin{align}
    & \frac{\nu}{2}||u_x(t)||^2+\int_{0}^{t}\left( ||u_t(s)||^2+|V'(s)|^2 \right)\, ds \\
    & =\frac{\nu}{2}||u_{0x}||^2+\int_{0}^{t}\int_{\mathbb{R}_*}(p(1+\tau)-p(1))u_{xt}\, dxds+\nu \int_{0}^{t}\int_{\mathbb{R}_*}\frac{\tau}{1+\tau}u_x u_{xt}\, dxds.
  \end{align}
  Next, note that
  \begin{align}
    \int_{\mathbb{R}_*}(p(1+\tau)-p(1))u_{xt}\, dx
    & =\frac{d}{dt}\int_{\mathbb{R}_*}(p(1+\tau)-p(1))u_x \, dx-\int_{\mathbb{R}_*}p'(1+\tau)\tau_t u_x \, dx \\
    & =\frac{d}{dt}\int_{\mathbb{R}_*}(p(1+\tau)-p(1))u_x \, dx-\int_{\mathbb{R}_*}p'(1+\tau)u_{x}^{2} \, dx.
  \end{align}
  By the Taylor expansion, we have
  \begin{equation}
    \left| \int_{\mathbb{R}_*}(p(1+\tau)-p(1))u_x \, dx \right| \leq C\int_{\mathbb{R}_*}|\tau u_x|\, dx\leq C||\tau(t)||^2+\frac{\nu}{4}||u_x(t)||^2.
  \end{equation}
  Combining these calculations and using Propositions~\ref{SectionII:GlobalEnergyEstimate:Proposition:EnergyConservation} and~\ref{SectionII:GlobalEnergyEstimate:Proposition:Energy_Estimate_of_ut}, we obtain~\eqref{SectionII:GlobalEnergyEstimate:Proposition:Energy_Estimate_of_ux:Energy_Estimate_of_ux}.
\end{proof}

\begin{proposition}
  \label{SectionII:GlobalEnergyEstimate:Proposition:Energy_Estimate_of_taux}
  Let $(\tau,u,V)\in X_{T}\times Y_{T}\times C^2([0,T])$ be the solution to~\eqref{SectionI:CNSwithPointMass:Lagrangian:Perturbation}. Then there exist $\varepsilon_0>0$ and $C\geq 1$ such that if~\eqref{AposterioriSmallness} holds, then
  \begin{equation}
    \label{SectionII:GlobalEnergyEstimate:Proposition:Energy_Estimate_of_taux:Energy_Estimate_of_taux}
    ||\tau_x(t)||^2+C^{-1}\int_{0}^{t}||\tau_x(s)||^2 \, ds\leq C\left( ||\tau_0||_{1}^{2}+||u_0||_{2}^{2} \right).
  \end{equation}
\end{proposition}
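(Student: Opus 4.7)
My plan is to implement the classical Matsumura--Nishida trick, which extracts a dissipation-like estimate for $\tau_x$ from the momentum equation by exploiting the identity $u_{xx}=\tau_{xt}$ that follows from the continuity equation $\tau_t=u_x$. Specifically, I would multiply the momentum equation in~\eqref{SectionI:CNSwithPointMass:Lagrangian:Perturbation} by $\tau_x$ and integrate over $\mathbb{R}_*$. Since only pointwise product-rule calculus (and no integration by parts in $x$) is used in the key step, no jump terms at $x=\pm 0$ are produced, so the argument proceeds essentially as on the whole line.

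The central computation is the manipulation of the viscosity contribution: expanding $(u_x/(1+\tau))_x$ by the quotient rule, substituting $u_{xx}=\tau_{xt}$, and then using $\tau_t=u_x$ to recognize $\partial_t[\tau_x^2/(1+\tau)]$, one obtains an identity of the form
\begin{equation*}
  \frac{\nu}{2}\frac{d}{dt}\int_{\mathbb{R}_*}\frac{\tau_x^2}{1+\tau}\,dx - \int_{\mathbb{R}_*}p'(1+\tau)\tau_x^2\,dx = \int_{\mathbb{R}_*}u_t\tau_x\,dx + \frac{\nu}{2}\int_{\mathbb{R}_*}\frac{u_x\tau_x^2}{(1+\tau)^2}\,dx.
\end{equation*}
Under the smallness assumption~\eqref{AposterioriSmallness}, one has $-p'(1+\tau)\geq c^2/2$, so the second term on the left dominates a constant multiple of $||\tau_x(t)||^2$. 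The right-hand side is bounded by $\epsilon ||\tau_x||^2+C(\epsilon)||u_t||^2$ via Cauchy--Schwarz, plus $C|u_x|_\infty ||\tau_x||^2\leq C\varepsilon_0 ||\tau_x||^2$ for the cubic term. Choosing $\epsilon$ and $\varepsilon_0$ sufficiently small allows both $||\tau_x||^2$ contributions to be absorbed into the left-hand side.

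Integrating in time from $0$ to $t$ and observing that $\int_{\mathbb{R}_*}\tau_x^2/(1+\tau)\,dx$ is comparable to $||\tau_x||^2$ yields
\begin{equation*}
  ||\tau_x(t)||^2 + C^{-1}\int_0^t ||\tau_x(s)||^2\,ds \leq C||\tau_{0x}||^2 + C\int_0^t ||u_t(s)||^2\,ds,
\end{equation*}
and Proposition~\ref{SectionII:GlobalEnergyEstimate:Proposition:Energy_Estimate_of_ux} controls the remaining time integral by $||\tau_0||_1^2+||u_0||_2^2$, giving~\eqref{SectionII:GlobalEnergyEstimate:Proposition:Energy_Estimate_of_taux:Energy_Estimate_of_taux}. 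The one thing to watch out for --- that the point-mass boundary conditions at $x=0$ might force boundary terms to appear and spoil the estimate --- is avoided precisely because every step above is purely algebraic and no integration by parts in $x$ is performed; the only $x$-derivatives acted on by integration are already in the integrand, so the $L^2$-pairing over $\mathbb{R}_*$ never picks up jump contributions at the origin.
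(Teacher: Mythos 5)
Your proposal is correct and follows essentially the same route as the paper: both exploit the identity $u_{xx}=\tau_{xt}$ coming from $\tau_t=u_x$, multiply by $\tau_x$ without any integration by parts in $x$ (so no jump terms at $x=0$ arise), use the sign of $p'(1+\tau)$ near $\tau=0$ to produce the dissipative term $\int_0^t\|\tau_x\|^2\,ds$, and close via Cauchy--Schwarz together with Proposition~\ref{SectionII:GlobalEnergyEstimate:Proposition:Energy_Estimate_of_ux}. The only cosmetic difference is that you multiply the momentum equation directly by $\tau_x$, yielding the weighted time-derivative $\frac{\nu}{2}\frac{d}{dt}\int\tau_x^2/(1+\tau)\,dx$, whereas the paper first solves for $\tau_{xt}$ and then multiplies by $\tau_x$, giving the unweighted $\frac{1}{2}\frac{d}{dt}\|\tau_x\|^2$; the two identities differ by a bounded multiplicative weight and lead to the same estimate.
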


\begin{proof}
  First, note that by $L_{\tau}u=f_{\tau}$, we have
  \begin{equation}
    \label{SectionII:GlobalEnergyEstimate:Proposition:Energy_Estimate_of_taux:Proof:uxx}
    u_{xx}=\frac{1+\tau}{\nu}p'(1+\tau)\tau_x+\frac{\tau_x}{1+\tau}u_x+\frac{1+\tau}{\nu}u_t.
  \end{equation}
  Then differentiating $\tau_t-u_x=0$ with respect to $x$ and using~\eqref{SectionII:GlobalEnergyEstimate:Proposition:Energy_Estimate_of_taux:Proof:uxx}, we obtain
  \begin{equation}
    \label{SectionII:GlobalEnergyEstimate:Proposition:Energy_Estimate_of_taux:Proof:tauxt}
    \tau_{xt}=\frac{1+\tau}{\nu}p'(1+\tau)\tau_x+\frac{\tau_x}{1+\tau}u_x+\frac{1+\tau}{\nu}u_t.
  \end{equation}
  Multiplying this equation by $\tau_x$ and integrating with respect to $x$ and $t$, we obtain
  \begin{align}
    \label{SectionII:GlobalEnergyEstimate:Proposition:Energy_Estimate_of_taux:Proof:ApplicantCauchySchwarz}
    \begin{aligned}
      & \frac{1}{2}||\tau_x(t)||^2-\frac{1}{\nu}\int_{0}^{t}\int_{\mathbb{R}_*}(1+\tau)p'(1+\tau)\tau_{x}^{2}\, dxds \\
      & =\frac{1}{2}||\tau_{0x}||^2+\int_{0}^{t}\int_{\mathbb{R}_*}\frac{\tau_{x}^{2}}{1+\tau}u_x \, dxds+\frac{1}{\nu}\int_{0}^{t}\int_{\mathbb{R}_*}(1+\tau)\tau_x u_t \, dxds.
    \end{aligned}
  \end{align}
  Then~\eqref{SectionII:GlobalEnergyEstimate:Proposition:Energy_Estimate_of_taux:Energy_Estimate_of_taux} is obtained by applying the Cauchy--Schwarz inequality to~\eqref{SectionII:GlobalEnergyEstimate:Proposition:Energy_Estimate_of_taux:Proof:ApplicantCauchySchwarz} and using Proposition~\ref{SectionII:GlobalEnergyEstimate:Proposition:Energy_Estimate_of_ux}.
\end{proof}

\begin{proposition}
  \label{SectionII:GlobalEnergyEstimate:Proposition:Energy_Estimate_of_tauxx}
  Let $(\tau,u,V)\in X_{T}\times Y_{T}\times C^2([0,T])$ be the solution to~\eqref{SectionI:CNSwithPointMass:Lagrangian:Perturbation}. Then there exist $\varepsilon_0>0$ and $C\geq 1$ such that if~\eqref{AposterioriSmallness} holds, then
  \begin{equation}
    \label{SectionII:GlobalEnergyEstimate:Proposition:Energy_Estimate_of_tauxx:Energy_Estimate_of_tauxx}
    ||\tau_{xx}(t)||^2+C^{-1}\int_{0}^{t}||\tau_{xx}(s)||^2 \, ds\leq C\left( ||\tau_0||_{2}^{2}+||u_0||_{2}^{2} \right).
  \end{equation}
\end{proposition}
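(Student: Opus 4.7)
The strategy mirrors the proof of Proposition~\ref{SectionII:GlobalEnergyEstimate:Proposition:Energy_Estimate_of_taux}: one derives an evolution equation for $\tau_{xx}$ whose leading linear part on the right has a dissipative sign, then performs an $L^2$ energy estimate. Specifically, I would differentiate the identity~\eqref{SectionII:GlobalEnergyEstimate:Proposition:Energy_Estimate_of_taux:Proof:tauxt} once more in $x$ to obtain a relation of the schematic form
\[
  \tau_{xxt} = \frac{(1+\tau)p'(1+\tau)}{\nu}\tau_{xx} + \frac{1+\tau}{\nu}u_{xt} + R,
\]
where the remainder $R$ is a sum of cubic terms in $\tau_x$, $\tau_{xx}$, $u_x$, $u_{xx}$, and $u_t$ (such as $\tau_x^2$, $\tau_x u_{xx}$, $\tau_{xx}u_x$, $\tau_x u_t$, and $\tau_x^2 u_x$), each multiplied by a smooth bounded function of $\tau$. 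The crucial point is that, because $p'(1)<0$, the coefficient $(1+\tau)p'(1+\tau)/\nu$ is uniformly bounded above by $-\kappa<0$ when $|\tau|_{\infty}$ is sufficiently small.

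Next, I would multiply by $\tau_{xx}$ and integrate in $x$ over $\mathbb{R}_*$; no integration by parts in $x$ is required, so the jump point $x=0$ never enters. This gives
\[
  \frac{1}{2}\frac{d}{dt}||\tau_{xx}(t)||^2 + \kappa||\tau_{xx}(t)||^2 \leq \int_{\mathbb{R}_*}\frac{1+\tau}{\nu}\tau_{xx}u_{xt}\,dx + \int_{\mathbb{R}_*}R\,\tau_{xx}\,dx.
\]
The $u_{xt}$ term is handled by Young's inequality as $(\kappa/2)||\tau_{xx}||^2 + C||u_{xt}||^2$, and $\int_0^t ||u_{xt}(s)||^2\,ds$ is controlled by Proposition~\ref{SectionII:GlobalEnergyEstimate:Proposition:Energy_Estimate_of_ut}. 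For the remainder integral, I would first use~\eqref{SectionII:GlobalEnergyEstimate:Proposition:Energy_Estimate_of_taux:Proof:uxx} to eliminate $u_{xx}$, turning every contribution into a product of $\tau$, $\tau_x$, $\tau_{xx}$, $u_x$, and $u_t$, and then invoke the a priori smallness $|\tau|_{\infty}+|u_x|_{\infty}\leq \varepsilon_0$ together with the one-dimensional Sobolev bound $|\tau_x|_{\infty}\leq C(||\tau_x||+||\tau_{xx}||)$ applied separately on $(-\infty,0)$ and $(0,\infty)$. This splits each nonlinear integral into a piece absorbable into $(\kappa/4)||\tau_{xx}||^2$ plus a piece bounded by a small constant times $||\tau_x||^2+||u_x||^2+||u_t||^2$, all of which are integrable in time by Propositions~\ref{SectionII:GlobalEnergyEstimate:Proposition:EnergyConservation}--\ref{SectionII:GlobalEnergyEstimate:Proposition:Energy_Estimate_of_taux}.

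Integrating the resulting differential inequality in $t$ and noting that $||\tau_{0xx}||\leq ||\tau_0||_2$ then yields~\eqref{SectionII:GlobalEnergyEstimate:Proposition:Energy_Estimate_of_tauxx:Energy_Estimate_of_tauxx}. The main technical hurdle is the bookkeeping of the several cubic and quartic terms generated by differentiating the quasilinear expression twice: each must be split so that at most one factor carries an $L^2$ norm involving $\tau_{xx}$, with the remaining factors absorbed either by the smallness constant $\varepsilon_0$ or by norms already controlled by the preceding energy estimates.
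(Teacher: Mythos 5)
Your proposal is correct and follows the paper's argument essentially verbatim: differentiate the identity~\eqref{SectionII:GlobalEnergyEstimate:Proposition:Energy_Estimate_of_taux:Proof:tauxt} once more in $x$, multiply by $\tau_{xx}$, integrate over $\mathbb{R}_*\times(0,t)$ (no integration by parts, so no boundary terms at $x=0$), let the $(1+\tau)p'(1+\tau)\tau_{xx}^2$ term provide dissipation, absorb the $u_{xt}$ term by Young's inequality and control $\int_0^t\|u_{xt}\|^2\,ds$ via Proposition~\ref{SectionII:GlobalEnergyEstimate:Proposition:Energy_Estimate_of_ut}, eliminate $u_{xx}$ using~\eqref{SectionII:GlobalEnergyEstimate:Proposition:Energy_Estimate_of_taux:Proof:uxx}, and bound the remaining cubic terms using Propositions~\ref{SectionII:GlobalEnergyEstimate:Proposition:EnergyConservation}--\ref{SectionII:GlobalEnergyEstimate:Proposition:Energy_Estimate_of_taux}. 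The remainder $R$ you list matches the paper's explicit terms exactly; the only nuance not quite captured is that the genuine use of the smallness~\eqref{AposterioriSmallness} is for the term $\int\frac{\tau_{xx}^2}{1+\tau}u_x$, which is quadratic in $\tau_{xx}$ and so must be absorbed by the smallness of $|u_x|_\infty$ rather than by a free Young parameter, whereas the other cubic terms need only boundedness (not smallness) of $|\tau_x|_\infty$.
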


\begin{proof}
  First, differentiate~\eqref{SectionII:GlobalEnergyEstimate:Proposition:Energy_Estimate_of_taux:Proof:tauxt} with respect to $x$, and multiply the resulting equation by $\tau_{xx}$; then integrating the obtained equation with respect to $x$ and $t$, we have
  \begin{align}
    & \frac{1}{2}||\tau_{xx}(t)||^2-\frac{1}{\nu}\int_{0}^{t}\int_{\mathbb{R}_*}(1+\tau)p'(1+\tau)\tau_{xx}^{2}\, dxds \\
    & =\frac{1}{2}||\tau_{0xx}||^2+\frac{1}{\nu}\int_{0}^{t}\int_{\mathbb{R}_*} \left\{ p'(1+\tau)+(1+\tau)p''(1+\tau) \right\} \tau_{x}^{2}\tau_{xx}\, dxds \\
    & \quad +\int_{0}^{t}\int_{\mathbb{R}_*}\frac{\tau_{xx}^{2}}{1+\tau}u_x \, dxds-\int_{0}^{t}\int_{\mathbb{R}_*}\frac{\tau_{x}^{2}}{(1+\tau)^2}\tau_{xx}u_x \, dxds \\
    & \quad +\int_{0}^{t}\int_{\mathbb{R}_*}\frac{\tau_x}{1+\tau}\tau_{xx}u_{xx}\, dxds+\frac{1}{\nu}\int_{0}^{t}\int_{\mathbb{R}_*}\tau_x \tau_{xx}u_t \, dxds \\
    & \quad +\frac{1}{\nu}\int_{0}^{t}\int_{\mathbb{R}_*}(1+\tau)\tau_{xx}u_{xt}\, dxds,
  \end{align}
  from which~\eqref{SectionII:GlobalEnergyEstimate:Proposition:Energy_Estimate_of_tauxx:Energy_Estimate_of_tauxx} follows by using~\eqref{SectionII:GlobalEnergyEstimate:Proposition:Energy_Estimate_of_taux:Proof:uxx} and Propositions~\ref{SectionII:GlobalEnergyEstimate:Proposition:Energy_Estimate_of_ut}--\ref{SectionII:GlobalEnergyEstimate:Proposition:Energy_Estimate_of_taux}.
\end{proof}

\begin{proposition}
  \label{SectionII:GlobalEnergyEstimate:Proposition:Energy_Estimate_of_utt}
  Let $(\tau,u,V)\in X_{T}\times Y_{T}\times C^2([0,T])$ be the solution to~\eqref{SectionI:CNSwithPointMass:Lagrangian:Perturbation}. Then there exist $\varepsilon_0>0$ and $C\geq 1$ such that if~\eqref{AposterioriSmallness} holds, then
  \begin{equation}
    \label{SectionII:GlobalEnergyEstimate:Proposition:Energy_Estimate_of_utt:Energy_Estimate_of_utt}
    ||u_{xt}(t)||^2+||u_{tt}(t)||^2+|V''(t)|^2+C^{-1}\int_{0}^{t}||u_{xtt}(s)||^2 \, ds\leq C\left( ||\tau_0||_{3}^{2}+||u_0||_{4}^{2} \right).
  \end{equation}
\end{proposition}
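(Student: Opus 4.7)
The plan is to differentiate the three equations in~\eqref{SectionI:CNSwithPointMass:Lagrangian:Perturbation} twice in $t$, multiply them by $-p(1+\tau)_{tt}$, $u_{tt}$ and $V''(t)$ respectively, and add and integrate over $\mathbb{R}_*\times [0,t]$, mimicking the proof of Proposition~\ref{SectionII:GlobalEnergyEstimate:Proposition:Energy_Estimate_of_ut} with one additional time derivative. Since $u(\pm 0,s)=V(s)$ implies $u_{tt}(\pm 0,s)=V''(s)$, integration by parts in $x$ of the diffusion term $-\nu\int_{\mathbb{R}_*}(u_x/(1+\tau))_{xtt}\,u_{tt}\,dx$ produces the boundary contribution $\nu V''(s)\llbracket (u_x/(1+\tau))_{tt}\rrbracket(0,s)$, and the analogous boundary term from the pressure piece $\int_{\mathbb{R}_*}(p(1+\tau))_{xtt}u_{tt}\,dx$ is $-V''(s)\llbracket p(1+\tau)_{tt}\rrbracket(0,s)$. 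These are exactly what appears, with opposite sign, when $V''(s)$ is multiplied by the twice-differentiated $V'$-equation, so the two boundary contributions cancel exactly; this is the point-mass analogue of the cancellation used in Proposition~\ref{SectionII:GlobalEnergyEstimate:Proposition:Energy_Estimate_of_ut}.

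After this cancellation, using $\tau_t=u_x$ and $\tau_{tt}=u_{xt}$ together with the identity $p'(1+\tau)u_{xt}u_{xtt}=\tfrac{1}{2}\partial_t\bigl(p'(1+\tau)u_{xt}^2\bigr)-\tfrac{1}{2}p''(1+\tau)u_x u_{xt}^2$ applied to the pressure term $-\int u_{xtt}\,p(1+\tau)_{tt}\,dx$, the sum of the three integrated equations takes the form
\begin{equation}
  \frac{1}{2}\frac{d}{ds}\!\left(-\int_{\mathbb{R}_*}p'(1+\tau)u_{xt}^2\,dx+\|u_{tt}(s)\|^2+|V''(s)|^2\right)+\nu\int_{\mathbb{R}_*}\frac{u_{xtt}^2}{1+\tau}\,dx=\mathcal{R}(s),
\end{equation}
where $\mathcal{R}(s)$ collects cubic and higher nonlinear remainders coming from the expansions of $(u_x/(1+\tau))_{tt}$ and $p(1+\tau)_{tt}$. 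Since $p'(1)<0$ and $|\tau|_{\infty}\leq \varepsilon_0$ by~\eqref{AposterioriSmallness}, the coefficient $-p'(1+\tau)$ is bounded below by a positive constant, so the bracketed expression controls $c(\|u_{xt}(s)\|^2+\|u_{tt}(s)\|^2+|V''(s)|^2)$.

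Every term in $\mathcal{R}(s)$ contains at most one factor from $\{u_{xtt},u_{tt},u_{xt}\}$ multiplied by products of lower-order quantities such as $u_x$, $u_{xx}$ or $\tau_x$; the worst ones, of the form $\nu\int u_{xtt}\,u_x u_{xt}/(1+\tau)^2\,dx$, are handled by Cauchy--Schwarz with a small parameter so that $\varepsilon_0\int u_{xtt}^2\,dx$ can be absorbed into the left-hand dissipation, and the remaining low-order factors are bounded in $L^\infty$ using~\eqref{AposterioriSmallness} and Sobolev embedding, and in $L^2_t$ using Propositions~\ref{SectionII:GlobalEnergyEstimate:Proposition:EnergyConservation}--\ref{SectionII:GlobalEnergyEstimate:Proposition:Energy_Estimate_of_tauxx}. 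Integration in $s\in [0,t]$ then yields~\eqref{SectionII:GlobalEnergyEstimate:Proposition:Energy_Estimate_of_utt:Energy_Estimate_of_utt}, provided the initial values on the right are bounded by $C(\|\tau_0\|_3^2+\|u_0\|_4^2)$.

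Those initial contributions are the main subtlety: $u_{tt}(x,0)$, $u_{xt}(x,0)$ and $V''(0)$ must each be expressed purely in terms of $\tau_0,u_0$ and their spatial derivatives. Using $u_t=\nu(u_x/(1+\tau))_x-p'(1+\tau)\tau_x$ at $t=0$ and differentiating once more in $t$ while substituting $\tau_t=u_x$, one obtains $u_{tt}(x,0)$ as a polynomial in $\tau_0,u_0$ and their spatial derivatives up to order $3$ in $\tau_0$ and order $4$ in $u_0$; the bound on $|V''(0)|$ follows by applying the Sobolev trace at $x=0^\pm$ to the $t$-derivative of the $V'$-equation. This is precisely why $\tau_0\in H^3$ and $u_0\in H^4$ appear on the right-hand side, and why Theorem~\ref{SectionI:Theorem:MainTheorem:GlobalExistence} is stated at $H^4$ regularity. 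I expect the principal obstacle to be not any single step but the careful bookkeeping of the many nonlinear terms in $\mathcal{R}$ and the verification that each can be absorbed; structurally, however, the argument faithfully reproduces that of Proposition~\ref{SectionII:GlobalEnergyEstimate:Proposition:Energy_Estimate_of_ut}.
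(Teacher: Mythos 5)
Your proposal follows the same approach as the paper: differentiate the three equations twice in $t$, test against $-p(1+\tau)_{tt}$, $u_{tt}$, $V''$, use $u_{tt}(\pm 0,t)=V''(t)$ to cancel the boundary terms, and rewrite the leading pressure piece as a time derivative of $-\int p'(1+\tau)u_{xt}^2\,dx$ (your identity $p'(1+\tau)u_{xt}u_{xtt}=\tfrac12\partial_t(p'(1+\tau)u_{xt}^2)-\tfrac12 p''(1+\tau)u_xu_{xt}^2$ is exactly the paper's $\int p(1+\tau)_{tt}\tau_{ttt}\,dx$ identity written via $\tau_t=u_x$, $\tau_{tt}=u_{xt}$). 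Your accounting of the cubic remainders (e.g.\ $\nu\int u_{xtt}u_xu_{xt}/(1+\tau)^2\,dx$) and of the initial-data terms matches the paper's equation~(2.51) and its subsequent use of the earlier propositions, so the argument is correct and essentially identical in structure.
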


\begin{proof}
  First, differentiate $\tau_t-u_x=0$ and $L_{\tau}u-f_{\tau}=0$ twice with respect to $t$, and multiply the resulting equations by $-p(1+\tau)_{tt}$ and $u_{tt}$, respectively; then integrate the obtained equations with respect to $x$ and $t$. Next, differentiate $V'(t)-\llbracket -p(1+\tau)+\nu u_x/(1+\tau) \rrbracket(0,t)=0$ twice with respect to $t$, and multiply the resulting equation by $V''(t)$ and integrate the obtained equation with respect to $t$. Now, add the resulting equations to obtain
  \begin{align}
    \label{SectionII:GlobalEnergyEstimate:Proposition:Energy_Estimate_of_utt:Proof:AddedEq}
    \begin{aligned}
      & \frac{1}{2}\left( ||u_{tt}(t)||^2+|V''(t)|^2 \right) -\int_{0}^{t}\int_{\mathbb{R}_*}p(1+\tau)_{tt}\tau_{ttt}\, dyds+\nu \int_{0}^{t}\int_{\mathbb{R}_*}\frac{u_{xtt}^{2}}{1+\tau}\, dxds \\
      & =\frac{1}{2}\left( ||u_{tt}(0)||^2+|V''(0)|^2 \right) \\
      & \quad -2\nu \int_{0}^{t}\int_{\mathbb{R}_*}\frac{u_{x}^{3}}{(1+\tau)^3}u_{xtt}\, dxds+3\nu \int_{0}^{t}\int_{\mathbb{R}_*}\frac{u_x}{(1+\tau)^2}u_{xt}u_{xtt}\, dxds.
    \end{aligned}
  \end{align}
  Note that
  \begin{equation}
    \int_{\mathbb{R}_*}p(1+\tau)_{tt}\tau_{ttt}\, dx=\frac{1}{2}\frac{d}{dt}\int_{\mathbb{R}_*}p'(1+\tau)\tau_{tt}^{2}\, dx-\frac{1}{2}\int_{\mathbb{R}_*}p''(1+\tau)\tau_t \tau_{tt}^{2}\, dx+\int_{\mathbb{R}_*}p''(1+\tau)\tau_{tt}\tau_{ttt}\, dx.
  \end{equation}
  Now~\eqref{SectionII:GlobalEnergyEstimate:Proposition:Energy_Estimate_of_utt:Energy_Estimate_of_utt} follows from~\eqref{SectionII:GlobalEnergyEstimate:Proposition:Energy_Estimate_of_utt:Proof:AddedEq} by using~\eqref{SectionI:CNSwithPointMass:Lagrangian:Perturbation}, Propositions~\ref{SectionII:GlobalEnergyEstimate:Proposition:EnergyConservation} and \ref{SectionII:GlobalEnergyEstimate:Proposition:Energy_Estimate_of_ut}.
\end{proof}

\begin{proposition}
  \label{SectionII:GlobalEnergyEstimate:Proposition:Energy_Estimate_of_uxxx}
  Let $(\tau,u,V)\in X_{T}\times Y_{T}\times C^2([0,T])$ be the solution to~\eqref{SectionI:CNSwithPointMass:Lagrangian:Perturbation}. Then there exist $\varepsilon_0>0$ and $C\geq 1$ such that if~\eqref{AposterioriSmallness} holds, then
  \begin{equation}
    \label{SectionII:GlobalEnergyEstimate:Proposition:Energy_Estimate_of_uxxx:Energy_Estimate_of_uxxx}
    ||u_{xt}(t)||^2+C^{-1}\int_{0}^{t}\left( ||u_{tt}(s)||^2+|V''(s)|^2 \right) \, ds\leq C\left( ||\tau_0||_{3}^{2}+||u_0||_{4}^{2} \right).
  \end{equation}
\end{proposition}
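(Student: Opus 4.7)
The plan is to mimic the proof of Proposition~\ref{SectionII:GlobalEnergyEstimate:Proposition:Energy_Estimate_of_ux} applied to the once-time-differentiated system, in the same way that Proposition~\ref{SectionII:GlobalEnergyEstimate:Proposition:Energy_Estimate_of_utt} is the once-time-differentiated analog of Proposition~\ref{SectionII:GlobalEnergyEstimate:Proposition:Energy_Estimate_of_ut}. Differentiating $L_\tau u - f_\tau = 0$ once in $t$ and using $\tau_t = u_x$, we obtain
\begin{equation*}
u_{tt} - \nu\Bigl(\frac{u_{xt}}{1+\tau}\Bigr)_x = -\nu\Bigl(\frac{u_x^2}{(1+\tau)^2}\Bigr)_x - p'(1+\tau) u_{xx} - p''(1+\tau) u_x \tau_x \eqdef h_{\tau, u},
\end{equation*}
together with the boundary condition $u_t(\pm 0, t) = V'(t)$, so that $u_{tt}(\pm 0, t) = V''(t)$.

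Multiply this equation by $u_{tt}$, integrate over $\mathbb{R}_*$, and integrate by parts on the viscous term; this produces the diffusion $\frac{\nu}{2}\frac{d}{dt}\int u_{xt}^2/(1+\tau)\,dx$ together with a cross term involving $\tau_t = u_x$ and a boundary contribution $\nu V''(t)\llbracket u_{xt}/(1+\tau)\rrbracket(0,t)$. Differentiating the $V$-ODE once in $t$ and multiplying by $V''(t)$ gives
\begin{equation*}
|V''(t)|^2 = V''(t)\Bigl\llbracket -p'(1+\tau) u_x + \frac{\nu u_{xt}}{1+\tau} - \frac{\nu u_x^2}{(1+\tau)^2}\Bigr\rrbracket(0,t).
\end{equation*}
Exactly as in the proof of Proposition~\ref{SectionII:GlobalEnergyEstimate:Proposition:Energy_Estimate_of_ux}, adding the two identities makes the $\nu V''(t)\llbracket u_{xt}/(1+\tau)\rrbracket(0,t)$ contributions cancel, leaving
\begin{equation*}
\|u_{tt}(t)\|^2 + |V''(t)|^2 + \frac{\nu}{2}\frac{d}{dt}\int_{\mathbb{R}_*}\frac{u_{xt}^2}{1+\tau}\,dx = \int_{\mathbb{R}_*} h_{\tau, u}\,u_{tt}\,dx + R_{\partial}(t) + R_{\mathrm{cross}}(t),
\end{equation*}
where $R_{\partial}(t) = V''(t)\llbracket -p'(1+\tau) u_x - \nu u_x^2/(1+\tau)^2\rrbracket(0,t)$ is the residual boundary term and $R_{\mathrm{cross}}(t) = -\frac{\nu}{2}\int u_{xt}^2 u_x/(1+\tau)^2\,dx$ comes from the $\tau$-dependence of the diffusion coefficient.

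Integrating in $t$ from $0$ to $t$, the volume remainder $\int h_{\tau,u}\,u_{tt}\,dx$ is bounded by $\|u_{tt}(s)\|\cdot(\|u_{xx}(s)\| + \|u_x(s)\|_{L^\infty}\|\tau_x(s)\|)$, and Cauchy--Schwarz together with the a~posteriori smallness \eqref{AposterioriSmallness} and Propositions~\ref{SectionII:GlobalEnergyEstimate:Proposition:EnergyConservation}--\ref{SectionII:GlobalEnergyEstimate:Proposition:Energy_Estimate_of_utt} reduces its time integral to a bound of the form $\varepsilon_0\int_0^t \|u_{tt}\|^2\,ds + C(\|\tau_0\|_3^2 + \|u_0\|_4^2)$, so that the first summand is absorbed into the left-hand side. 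The boundary remainder is controlled using the one-dimensional Sobolev trace bound $|\llbracket g\rrbracket(0)|\leq C\|g\|_{H^1}$ and Young's inequality, yielding $|R_\partial(s)|\leq \frac{1}{2}|V''(s)|^2 + C(\|u_x(s)\|_{H^1}^2 + \|u_x(s)\|_{H^1}^4)$, again absorbable by Proposition~\ref{SectionII:GlobalEnergyEstimate:Proposition:Energy_Estimate_of_ux}. The cross term $R_{\mathrm{cross}}$ is controlled by $|u_x|_\infty\int u_{xt}^2\,dx$, absorbable by \eqref{AposterioriSmallness} into the diffusion. Finally, $u_{xt}(0)$ is computed from the momentum equation at $t=0$, giving $\|u_{xt}(0)\|\leq C(\|\tau_0\|_2 + \|u_0\|_3)$, which is bounded by $C(\|\tau_0\|_3^2 + \|u_0\|_4^2)^{1/2}$.

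The main obstacle is the careful bookkeeping of the boundary terms at $x = \pm 0$: one must check that after the $\nu V''(t)\llbracket u_{xt}/(1+\tau)\rrbracket(0,t)$ cancellation, every remaining $V''$-trace contribution is either quadratic in $V''$ (absorbable on the left) or factorizable as $V''(t)$ times a trace of a quantity already controlled in $H^1$ by Proposition~\ref{SectionII:GlobalEnergyEstimate:Proposition:Energy_Estimate_of_ux}. Any uncontrolled boundary contribution would force the right-hand side to involve $\|u_0\|_5$ or higher, violating the stated bound.
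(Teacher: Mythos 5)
Your proposal is correct, but it follows a genuinely different bookkeeping from the paper. In the paper's proof, every $x$-derivative in the time-differentiated momentum equation is integrated by parts against $u_{tt}$ --- not only the viscous term $\nu(u_{xt}/(1+\tau))_x$, but also $(p'(1+\tau)u_x)_x$ and $\nu(u_x^2/(1+\tau)^2)_x$ --- so that the interior integrals all pair $u_{xtt}$ with lower-order quantities, and so that \emph{all} the boundary traces at $x=0$ cancel against the terms coming from the differentiated $V$-ODE. As a consequence there is no residual $R_\partial$ in the paper's identity; the identity is
\begin{align}
  & \frac{\nu}{2}\|u_{xt}(t)\|^2+\int_0^t\bigl(\|u_{tt}\|^2+|V''|^2\bigr)\,ds \\
  & =\frac{\nu}{2}\|u_{xt}(0)\|^2+\nu\int_0^t\!\!\int\frac{\tau}{1+\tau}u_{xt}u_{xtt}+\nu\int_0^t\!\!\int\frac{u_x^2}{(1+\tau)^2}u_{xtt}+\int_0^t\!\!\int p'(1+\tau)u_xu_{xtt},
\end{align}
and the price is that $\int_0^t\|u_{xtt}\|^2$ must be available, which comes from the dissipation term in Proposition~\ref{SectionII:GlobalEnergyEstimate:Proposition:Energy_Estimate_of_utt}. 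Your route keeps the nonlinear sources $h_{\tau,u}$ un-integrated, avoids $u_{xtt}$ entirely, but in exchange retains the boundary remainder $R_\partial(t)=V''(t)\llbracket -p'(1+\tau)u_x-\nu u_x^2/(1+\tau)^2\rrbracket(0,t)$, which you then control with one-dimensional trace estimates and Young's inequality. Both close: the trade-off is that your route replaces the paper's appeal to the $u_{xtt}$-dissipation by trace estimates and Propositions~\ref{SectionII:GlobalEnergyEstimate:Proposition:EnergyConservation}--\ref{SectionII:GlobalEnergyEstimate:Proposition:Energy_Estimate_of_taux}.

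Two small inaccuracies in your write-up. First, the cancellation in the paper's Proposition~\ref{SectionII:GlobalEnergyEstimate:Proposition:Energy_Estimate_of_ux} is total --- the pressure boundary trace cancels there too, precisely because the pressure source is also integrated by parts --- so it is not ``exactly as in'' that proof that one ends up with a residual $R_\partial$; rather, that residual is specific to your choice not to integrate those terms by parts. Second, the claim that $R_{\mathrm{cross}}=-\frac{\nu}{2}\int u_{xt}^2u_x/(1+\tau)^2\,dx$ is ``absorbable into the diffusion'' is imprecise: your identity has no $u_{xt}$-dissipation (only $\frac{d}{dt}\int u_{xt}^2/(1+\tau)\,dx$), so there is nothing to absorb it into; what actually finishes it is the already-proved bound $\int_0^t\|u_{xt}\|^2\,ds\leq C(\|\tau_0\|_1^2+\|u_0\|_2^2)$ from Proposition~\ref{SectionII:GlobalEnergyEstimate:Proposition:Energy_Estimate_of_ut} together with the smallness of $|u_x|_\infty$. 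With these two points corrected, your argument is a valid alternative to the paper's.
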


\begin{proof}
  First, differentiate $L_{\tau}u-f_{\tau}=0$ with respect to $t$, and multiply the resulting equation by $u_{tt}$; then integrate the obtained equation with respect to $x$ and $t$. Next, differentiate $V'(t)-\llbracket -p(1+\tau)+\nu u_x/(1+\tau) \rrbracket(0,t)=0$ with respect to $t$, and multiply the resulting equation by $V''(t)$ and then integrate the obtained equation with respect to $t$. Now, add the resulting two equations to obtain
  \begin{align}
    & \frac{\nu}{2}||u_{xt}(t)||^2+\int_{0}^{t}\left( ||u_{tt}(s)||^2+|V''(s)|^2 \right) \, ds \\
    & =\frac{\nu}{2}||u_{xt}(0)||^2+\nu \int_{0}^{t}\int_{\mathbb{R}_*}\frac{\tau}{1+\tau}u_{xt}u_{xtt}\, dxds \\
    & \quad +\nu \int_{0}^{t}\int_{\mathbb{R}_*}\frac{u_{x}^{2}}{(1+\tau)^2}u_{xtt}\, dxds+\int_{0}^{t}\int_{\mathbb{R}_*}p'(1+\tau)u_x u_{xtt}\, dxds,
  \end{align}
  from which~\eqref{SectionII:GlobalEnergyEstimate:Proposition:Energy_Estimate_of_uxxx:Energy_Estimate_of_uxxx} follows by using~\eqref{SectionI:CNSwithPointMass:Lagrangian:Perturbation}, Propositions~\ref{SectionII:GlobalEnergyEstimate:Proposition:EnergyConservation}, \ref{SectionII:GlobalEnergyEstimate:Proposition:Energy_Estimate_of_ut} and~\ref{SectionII:GlobalEnergyEstimate:Proposition:Energy_Estimate_of_utt}.
\end{proof}

\begin{proposition}
  \label{SectionII:GlobalEnergyEstimate:Proposition:Energy_Estimate_of_tauxxx}
  Let $(\tau,u,V)\in X_{T}\times Y_{T}\times C^2([0,T])$ be the solution to~\eqref{SectionI:CNSwithPointMass:Lagrangian:Perturbation}. Then there exist $\varepsilon_0>0$ and $C\geq 1$ such that if~\eqref{AposterioriSmallness} holds, then
  \begin{equation}
    \label{SectionII:GlobalEnergyEstimate:Proposition:Energy_Estimate_of_tauxxx:Energy_Estimate_of_tauxxx}
    ||\tau_{xxx}(t)||^2+C^{-1}\int_{0}^{t}||\tau_{xxx}(s)||^2 \, ds\leq C\left( ||\tau_0||_{3}^{2}+||u_0||_{4}^{2} \right).
  \end{equation}
\end{proposition}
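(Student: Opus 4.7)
The plan is to proceed in direct analogy with the proof of Proposition~\ref{SectionII:GlobalEnergyEstimate:Proposition:Energy_Estimate_of_tauxx}. I would start from the identity~\eqref{SectionII:GlobalEnergyEstimate:Proposition:Energy_Estimate_of_taux:Proof:tauxt},
\begin{equation}
  \tau_{xt}=\frac{1+\tau}{\nu}p'(1+\tau)\tau_x+\frac{\tau_x}{1+\tau}u_x+\frac{1+\tau}{\nu}u_t,
\end{equation}
and differentiate it twice with respect to $x$. The resulting evolution equation for $\tau_{xxx}$ has the schematic form
\begin{equation}
  \tau_{xxxt}=\frac{1+\tau}{\nu}p'(1+\tau)\tau_{xxx}+R,
\end{equation}
where the remainder $R$ collects nonlinear products of $\tau$-derivatives of order at most $3$ together with $u$-derivatives of order at most $u_{xxx}$ and $u_{xxt}$.

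Next, I would multiply this equation by $\tau_{xxx}$ and integrate over $\mathbb{R}_* \times [0,t]$. Since $(1+\tau)p'(1+\tau)/\nu$ is strictly negative under~\eqref{AposterioriSmallness} (because $p'(1)<0$), transposing the leading term yields the dissipative bound
\begin{equation}
  -\frac{1}{\nu}\int_{0}^{t}\int_{\mathbb{R}_*}(1+\tau)p'(1+\tau)\tau_{xxx}^{2}\,dxds \geq C^{-1}\int_{0}^{t}||\tau_{xxx}(s)||^{2}\,ds,
\end{equation}
exactly as in the previous two propositions. No boundary contributions at $x=\pm 0$ arise because we do not integrate by parts in $x$.

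To handle the remainder $R$, I would deliberately avoid integration by parts in $x$, which would introduce the currently-uncontrolled $\tau_{xxxx}$ as well as boundary traces at $x=\pm 0$. Instead, I would use the pointwise identity~\eqref{SectionII:GlobalEnergyEstimate:Proposition:Energy_Estimate_of_taux:Proof:uxx} to re-express $u_{xxx}$ (by differentiating once in $x$) and $u_{xxt}$ (by differentiating once in $t$, eliminating the $\tau$-time-derivatives through $\tau_t=u_x$ and $\tau_{xt}=u_{xx}$) as algebraic combinations of $\tau,\tau_x,\tau_{xx},u_x,u_t,u_{xt},u_{tt}$ and their products. All of these quantities are controlled in $L^{\infty}_{t}L^{2}_{x}$ or $L^{2}_{t}L^{2}_{x}$ by Propositions~\ref{SectionII:GlobalEnergyEstimate:Proposition:EnergyConservation}--\ref{SectionII:GlobalEnergyEstimate:Proposition:Energy_Estimate_of_uxxx}, and the cubic and higher-order factors in $R$ are absorbed using the Sobolev embedding $H^{1}(\mathbb{R}_*)\hookrightarrow L^{\infty}(\mathbb{R}_*)$ together with the smallness assumption~\eqref{AposterioriSmallness}. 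An application of Cauchy--Schwarz with a small parameter $\eta>0$, absorbing the $\tau_{xxx}$-factor on the right-hand side into the dissipation on the left, closes the estimate and yields~\eqref{SectionII:GlobalEnergyEstimate:Proposition:Energy_Estimate_of_tauxxx:Energy_Estimate_of_tauxxx}.

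The main obstacle is the bookkeeping required to verify that every nonlinear term in $R$ is indeed controlled by the propositions already established. The crucial observation that makes the scheme work is that although an $L^{2}_{t}L^{2}_{x}$ bound on $u_{xxt}$ is not directly available, expressing $u_{xxt}$ algebraically through the equation reduces the control of the worst term in $R$ to the bound on $\int_{0}^{t}||u_{tt}(s)||^{2}\,ds$ furnished by Proposition~\ref{SectionII:GlobalEnergyEstimate:Proposition:Energy_Estimate_of_uxxx}; without this identification the estimate could not be closed within the present hierarchy.
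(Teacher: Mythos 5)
Your proposal is correct and follows essentially the same route as the paper: differentiate the identity $\tau_{xt}=\frac{1+\tau}{\nu}p'(1+\tau)\tau_x+\frac{\tau_x}{1+\tau}u_x+\frac{1+\tau}{\nu}u_t$ twice in $x$, multiply by $\tau_{xxx}$, integrate (with no integration by parts in $x$ so as to avoid $\tau_{xxxx}$ and boundary traces), extract dissipation from the $(1+\tau)p'(1+\tau)\tau_{xxx}^2$ term, and bound the remainder by rewriting $u_{xxx}$ and $u_{xxt}$ via the parabolic identity and invoking Propositions~\ref{SectionII:GlobalEnergyEstimate:Proposition:Energy_Estimate_of_ut}--\ref{SectionII:GlobalEnergyEstimate:Proposition:Energy_Estimate_of_uxxx}. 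The paper's proof is stated only as ``similar to Proposition~\ref{SectionII:GlobalEnergyEstimate:Proposition:Energy_Estimate_of_tauxx}''; your explicit identification of $\int_0^t\|u_{tt}\|^2\,ds$ from Proposition~\ref{SectionII:GlobalEnergyEstimate:Proposition:Energy_Estimate_of_uxxx} as the ingredient controlling $u_{xxt}$ is exactly the point that closes the estimate.
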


\begin{proof}
  The proof is similar to that of Proposition~\ref{SectionII:GlobalEnergyEstimate:Proposition:Energy_Estimate_of_tauxx}: We first differentiate~\eqref{SectionII:GlobalEnergyEstimate:Proposition:Energy_Estimate_of_taux:Proof:tauxt} twice with respect to $x$, and multiply the resulting equation by $\tau_{xxx}$; then integrate the obtained equation with respect to $x$ and $t$. By a simple but somewhat lengthy computation, we obtain~\eqref{SectionII:GlobalEnergyEstimate:Proposition:Energy_Estimate_of_tauxxx:Energy_Estimate_of_tauxxx}.
\end{proof}

\begin{proposition}
  \label{SectionII:GlobalEnergyEstimate:Proposition:Energy_Estimate_of_tauxxxx}
  Let $(\tau,u,V)\in X_{T}\times Y_{T}\times C^2([0,T])$ be the solution to~\eqref{SectionI:CNSwithPointMass:Lagrangian:Perturbation}. Then there exist $\varepsilon_0>0$ and $C\geq 1$ such that if~\eqref{AposterioriSmallness} holds, then
  \begin{equation}
    \label{SectionII:GlobalEnergyEstimate:Proposition:Energy_Estimate_of_tauxxxx:Energy_Estimate_of_tauxxxx}
    ||\tau_{xxxx}(t)||^2+C^{-1}\int_{0}^{t}||\tau_{xxxx}(s)||^2 \, ds\leq C\left( ||\tau_0||_{4}^{2}+||u_0||_{4}^{2} \right).
  \end{equation}
\end{proposition}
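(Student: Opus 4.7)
The plan is to continue the pattern established in Propositions~\ref{SectionII:GlobalEnergyEstimate:Proposition:Energy_Estimate_of_taux}--\ref{SectionII:GlobalEnergyEstimate:Proposition:Energy_Estimate_of_tauxxx}, taking as the starting point the identity \eqref{SectionII:GlobalEnergyEstimate:Proposition:Energy_Estimate_of_taux:Proof:tauxt} for $\tau_{xt}$. First I would differentiate that identity three times with respect to $x$, isolating the principal part,
\[
\tau_{xxxxt}=\frac{1+\tau}{\nu}p'(1+\tau)\,\tau_{xxxx}+\mathcal{R},
\]
where the remainder $\mathcal{R}$ is a polynomial in $\tau,\tau_{x},\tau_{xx},\tau_{xxx}$, $u_{x},u_{xx},u_{xxx}$, and $u_{t},u_{xt},u_{xxt},u_{xxxt}$ with smooth coefficients depending on $\tau$.

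Next I multiply by $\tau_{xxxx}$ and integrate over $\mathbb{R}_{*}\times[0,t]$. Because $p'(1)<0$ and $|\tau|_{\infty}$ is small by \eqref{AposterioriSmallness}, the coefficient $-(1+\tau)p'(1+\tau)/\nu$ is uniformly bounded below by a positive constant, producing a dissipation $c\int_{0}^{t}\|\tau_{xxxx}(s)\|^{2}\,ds$ on the left-hand side. It then suffices to bound the remainder integral $\int_{0}^{t}\!\int_{\mathbb{R}_{*}}\mathcal{R}\,\tau_{xxxx}\,dx\,ds$ by $\eta\int_{0}^{t}\|\tau_{xxxx}(s)\|^{2}\,ds+C(\|\tau_{0}\|_{4}^{2}+\|u_{0}\|_{4}^{2})$ for $\eta$ small enough to be absorbed into the dissipation.

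Each summand of $\mathcal{R}\,\tau_{xxxx}$ that does not contain $u_{xxxt}$ can be handled exactly as in the proofs of Propositions~\ref{SectionII:GlobalEnergyEstimate:Proposition:Energy_Estimate_of_tauxx}--\ref{SectionII:GlobalEnergyEstimate:Proposition:Energy_Estimate_of_tauxxx}: after Cauchy--Schwarz and Young's inequality, the $L^{\infty}_{x}$ factors are controlled by the one-dimensional Sobolev embedding $H^{1}(\mathbb{R}_{*})\hookrightarrow L^{\infty}(\mathbb{R}_{*})$ and \eqref{AposterioriSmallness}, while the remaining $L^{2}_{x}$ factors are controlled by the uniform bounds on $\|\tau\|_{3}$, $\|u_{x}\|$, $\|u_{xt}\|$, and $\|u_{tt}\|$ from Propositions~\ref{SectionII:GlobalEnergyEstimate:Proposition:EnergyConservation}--\ref{SectionII:GlobalEnergyEstimate:Proposition:Energy_Estimate_of_tauxxx}, together with the bounds on $\|u_{xx}\|$ and $\|u_{xxx}\|$ that follow by solving \eqref{SectionII:GlobalEnergyEstimate:Proposition:Energy_Estimate_of_taux:Proof:uxx} for $u_{xx}$ and differentiating once in $x$.

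The main obstacle is the term $\int_{0}^{t}\!\int_{\mathbb{R}_{*}}\frac{1+\tau}{\nu}u_{xxxt}\,\tau_{xxxx}\,dx\,ds$, since $u_{xxxt}$ is a mixed derivative not controlled by any earlier proposition directly. My plan is to eliminate it algebraically: applying $\partial_{x}\partial_{t}$ to \eqref{SectionII:GlobalEnergyEstimate:Proposition:Energy_Estimate_of_taux:Proof:uxx} expresses $u_{xxxt}$ as a polynomial in $\tau,\tau_{x},\tau_{xx}$, $u_{x},u_{xx},u_{xxx}$, $u_{t},u_{xt},u_{xxt},u_{tt},u_{xtt}$ with coefficients smooth in $\tau$ and $(1+\tau)^{-1}$. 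Cauchy--Schwarz then dominates the problematic integral by $\eta\int_{0}^{t}\|\tau_{xxxx}\|^{2}\,ds+C_{\eta}\int_{0}^{t}\|u_{xxxt}\|^{2}\,ds$, and the algebraic expression for $u_{xxxt}$ bounds the second integral by $\int_{0}^{t}(\|u_{xt}\|^{2}+\|u_{tt}\|^{2}+\|u_{xtt}\|^{2})\,ds$ multiplied by uniformly bounded $L^{\infty}_{x}$ coefficients. All three of these time integrals are finite by Propositions~\ref{SectionII:GlobalEnergyEstimate:Proposition:Energy_Estimate_of_ut}, \ref{SectionII:GlobalEnergyEstimate:Proposition:Energy_Estimate_of_utt}, and \ref{SectionII:GlobalEnergyEstimate:Proposition:Energy_Estimate_of_uxxx}, each controlled by $C(\|\tau_{0}\|_{3}^{2}+\|u_{0}\|_{4}^{2})$. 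Choosing $\eta$ small enough then yields \eqref{SectionII:GlobalEnergyEstimate:Proposition:Energy_Estimate_of_tauxxxx:Energy_Estimate_of_tauxxxx}.
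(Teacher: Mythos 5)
Your proposal follows the paper's route exactly: differentiate~\eqref{SectionII:GlobalEnergyEstimate:Proposition:Energy_Estimate_of_taux:Proof:tauxt} three times in $x$, multiply by $\tau_{xxxx}$, integrate over $\mathbb{R}_*\times[0,t]$, and close the estimate using the sign of $p'(1)$ and the lower-order propositions; the paper itself gives no further detail beyond this, so your elaboration of the remainder is a reasonable reconstruction of the ``simple but quite lengthy computation.''

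There is, however, one small hole in the final step. Applying $\partial_x\partial_t$ to~\eqref{SectionII:GlobalEnergyEstimate:Proposition:Energy_Estimate_of_taux:Proof:uxx} produces (as you note) the term $\frac{\tau_x}{1+\tau}u_{xxt}$, yet $u_{xxt}$ is then silently dropped from your list $\int_0^t(\|u_{xt}\|^2+\|u_{tt}\|^2+\|u_{xtt}\|^2)\,ds$. Unlike $\tau_{xx}$ and $u_{xxx}$, whose sup-norms are a priori under control via the Sobolev embedding and Propositions~\ref{SectionII:GlobalEnergyEstimate:Proposition:Energy_Estimate_of_tauxx}--\ref{SectionII:GlobalEnergyEstimate:Proposition:Energy_Estimate_of_tauxxx} together with~\eqref{SectionII:GlobalEnergyEstimate:Proposition:Energy_Estimate_of_taux:Proof:uxx}, $u_{xxt}$ cannot simply be absorbed into the ``uniformly bounded $L^\infty_x$ coefficients.'' To fix this, apply $\partial_t$ alone to~\eqref{SectionII:GlobalEnergyEstimate:Proposition:Energy_Estimate_of_taux:Proof:uxx} and substitute $\tau_t=u_x$, $\tau_{xt}=u_{xx}$, which writes $u_{xxt}$ as a polynomial in $\tau,\tau_x,u_x,u_{xx},u_t,u_{xt},u_{tt}$; then $\int_0^t\|u_{xxt}(s)\|^2\,ds$ is controlled by Propositions~\ref{SectionII:GlobalEnergyEstimate:Proposition:EnergyConservation}, \ref{SectionII:GlobalEnergyEstimate:Proposition:Energy_Estimate_of_ut}, \ref{SectionII:GlobalEnergyEstimate:Proposition:Energy_Estimate_of_ux}, \ref{SectionII:GlobalEnergyEstimate:Proposition:Energy_Estimate_of_taux} and~\ref{SectionII:GlobalEnergyEstimate:Proposition:Energy_Estimate_of_uxxx}. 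With this added substitution your argument closes.
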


\begin{proof}
  The proof is again similar to that of Propositions~\ref{SectionII:GlobalEnergyEstimate:Proposition:Energy_Estimate_of_tauxx}: We first differentiate~\eqref{SectionII:GlobalEnergyEstimate:Proposition:Energy_Estimate_of_taux:Proof:tauxt} three times with respect to $x$, and multiply the resulting equation by $\tau_{xxxx}$; then integrate the obtained equation with respect to $x$ and $t$. By a simple but quite lengthy computation, we obtain~\eqref{SectionII:GlobalEnergyEstimate:Proposition:Energy_Estimate_of_tauxxxx:Energy_Estimate_of_tauxxxx}.
\end{proof}

Now the following global-in-time energy estimate is an immediate consequence of Propositions~\ref{SectionII:GlobalEnergyEstimate:Proposition:EnergyConservation}--\ref{SectionII:GlobalEnergyEstimate:Proposition:Energy_Estimate_of_tauxxxx}.

\begin{theorem}
  \label{SectionII:GlobalEnergyEstimate:Theorem:MainTheorem}
  Let $(\tau,u,V)\in X_{T}\times Y_{T}\times C^2([0,T])$ be the solution to~\eqref{SectionI:CNSwithPointMass:Lagrangian:Perturbation}. Let
  \begin{align}
    M(t)
    & \coloneqq ||\tau(t)||_{4}^{2}+||u(t)||_{4}^{2}+\sum_{k=0}^{2}|\partial_{t}^{k}V(t)|^2 \\
    & \quad +\int_{0}^{t}||\tau_x(s)||_{3}^{2}\, ds+\int_{0}^{t}||u_x(s)||_{4}^{2}\, ds+\int_{0}^{t}\left( |V'(s)|^2+|V''(s)|^2 \right) \, ds.
  \end{align}
  There exist $\varepsilon_0,C>0$ independent of $T$ such that if
  \begin{equation}
    \varepsilon \coloneqq \sup_{0\leq t\leq T}||\tau(t)||_4+\sup_{0\leq t\leq T}||u(t)||_4 \leq \varepsilon_0,
  \end{equation}
  then
  \begin{equation}
    M(t)\leq C\varepsilon \quad (0\leq t\leq T).
  \end{equation}
\end{theorem}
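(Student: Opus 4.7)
The plan is to combine Propositions~\ref{SectionII:GlobalEnergyEstimate:Proposition:EnergyConservation}--\ref{SectionII:GlobalEnergyEstimate:Proposition:Energy_Estimate_of_tauxxxx}, after first checking that their common hypothesis \eqref{AposterioriSmallness} is automatically in force under the a priori bound on $\varepsilon$. The one-dimensional Sobolev embedding $H^1(\mathbb{R}_*)\hookrightarrow L^{\infty}(\mathbb{R}_*)$, applied on each of the two half-lines separately, gives
\begin{equation}
|\tau(t)|_{\infty}\leq C||\tau(t)||_{1}\leq C\varepsilon,\qquad |u_{x}(t)|_{\infty}\leq C||u_{x}(t)||_{1}\leq C||u(t)||_{2}\leq C\varepsilon,
\end{equation}
so that \eqref{AposterioriSmallness} holds once $\varepsilon_0$ is small enough and \emph{independently of $T$}, making every proposition applicable.

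With this in hand, I would simply add up the bounds that the propositions furnish. Pointwise in $t$, Proposition~\ref{SectionII:GlobalEnergyEstimate:Proposition:EnergyConservation} together with Propositions~\ref{SectionII:GlobalEnergyEstimate:Proposition:Energy_Estimate_of_taux}, \ref{SectionII:GlobalEnergyEstimate:Proposition:Energy_Estimate_of_tauxx}, \ref{SectionII:GlobalEnergyEstimate:Proposition:Energy_Estimate_of_tauxxx}, and \ref{SectionII:GlobalEnergyEstimate:Proposition:Energy_Estimate_of_tauxxxx} control $||\tau(t)||_{4}^{2}$; Propositions~\ref{SectionII:GlobalEnergyEstimate:Proposition:EnergyConservation}, \ref{SectionII:GlobalEnergyEstimate:Proposition:Energy_Estimate_of_ut}, and \ref{SectionII:GlobalEnergyEstimate:Proposition:Energy_Estimate_of_utt} deliver $\sum_{k=0}^{2}|\partial_{t}^{k}V(t)|^{2}$; the integrals $\int_{0}^{t}||\tau_x(s)||_{3}^{2}\,ds$ come from Propositions~\ref{SectionII:GlobalEnergyEstimate:Proposition:Energy_Estimate_of_taux}, \ref{SectionII:GlobalEnergyEstimate:Proposition:Energy_Estimate_of_tauxx}, \ref{SectionII:GlobalEnergyEstimate:Proposition:Energy_Estimate_of_tauxxx}, and~\ref{SectionII:GlobalEnergyEstimate:Proposition:Energy_Estimate_of_tauxxxx}, while Propositions~\ref{SectionII:GlobalEnergyEstimate:Proposition:Energy_Estimate_of_ux} and \ref{SectionII:GlobalEnergyEstimate:Proposition:Energy_Estimate_of_uxxx} supply $\int_{0}^{t}(|V'|^{2}+|V''|^{2})\,ds$. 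The pointwise piece $||u(t)||_4^2$ is already under control by the standing a priori bound $||u(t)||_4\leq\varepsilon$.

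The only summand of $M(t)$ not handed over directly is $\int_{0}^{t}||u_{x}(s)||_{4}^{2}\,ds$. To recover it, I would solve the momentum equation in \eqref{SectionI:CNSwithPointMass:Lagrangian:Perturbation} algebraically for $u_{xx}$,
\begin{equation}
u_{xx}=\frac{1+\tau}{\nu}\bigl[u_{t}+p'(1+\tau)\tau_{x}\bigr]+\frac{\tau_{x}}{1+\tau}u_{x},
\end{equation}
and differentiate this identity $k$ times in $x$ for $k=0,1,2,3$, expressing $\partial_{x}^{k+2}u$ as a polynomial expression in $\tau,u$ and their lower $x$- and mixed $xt$-derivatives. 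Using Sobolev algebra bounds together with the already-established $L^\infty$ smallness and Propositions~\ref{SectionII:GlobalEnergyEstimate:Proposition:Energy_Estimate_of_ut}, \ref{SectionII:GlobalEnergyEstimate:Proposition:Energy_Estimate_of_utt}, \ref{SectionII:GlobalEnergyEstimate:Proposition:Energy_Estimate_of_taux}, \ref{SectionII:GlobalEnergyEstimate:Proposition:Energy_Estimate_of_tauxx}, \ref{SectionII:GlobalEnergyEstimate:Proposition:Energy_Estimate_of_tauxxx}, and \ref{SectionII:GlobalEnergyEstimate:Proposition:Energy_Estimate_of_tauxxxx} to control the right-hand side, each $\int_{0}^{t}||\partial_{x}^{k+2}u(s)||^{2}\,ds$ for $k=0,1,2,3$ is bounded by $C\varepsilon^{2}$; combined with $\int_{0}^{t}||u_x(s)||^{2}\,ds$ from Proposition~\ref{SectionII:GlobalEnergyEstimate:Proposition:EnergyConservation}, this closes $\int_{0}^{t}||u_x(s)||_4^{2}\,ds\leq C\varepsilon^2$.

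Summing all contributions gives $M(t)\leq C\varepsilon^{2}\leq C\varepsilon$ (the last step using $\varepsilon\leq\varepsilon_0\leq 1$), as claimed. There is no real conceptual obstacle---the proof is entirely bookkeeping---but the crucial point to monitor is the $T$-independence of the constants. This is precisely why each proposition was stated under the running smallness \eqref{AposterioriSmallness} rather than under a time-integrated one: once that smallness is closed by Sobolev embedding, no step in the chain introduces any factor depending on $T$, and the bound on $M(t)$ is therefore genuinely global in time.
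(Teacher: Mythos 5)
Your proposal is correct and follows essentially the same route as the paper, which simply declares the theorem to be ``an immediate consequence'' of Propositions~\ref{SectionII:GlobalEnergyEstimate:Proposition:EnergyConservation}--\ref{SectionII:GlobalEnergyEstimate:Proposition:Energy_Estimate_of_tauxxxx}. You correctly identify the two pieces of bookkeeping that the paper leaves implicit: verifying~\eqref{AposterioriSmallness} via Sobolev embedding so that every proposition applies with a $T$-independent smallness threshold, and recovering the term $\int_{0}^{t}\|u_x(s)\|_4^2\,ds$ from the algebraic identity~\eqref{SectionII:GlobalEnergyEstimate:Proposition:Energy_Estimate_of_taux:Proof:uxx} (iterated in $x$, with $\tau_{xt}=u_{xx}$ used to eliminate mixed derivatives) together with the time-integrated bounds on $u_t,u_{xt},u_{tt},u_{xtt}$ and $\tau_x,\ldots,\tau_{xxxx}$ that the propositions supply.
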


Combining the local-in-time existence theorem (Theorem~\ref{SectionII:LocalExistence:Theorem:MainTheorem}) and the global-in-time energy estimate (Theorem~\ref{SectionII:GlobalEnergyEstimate:Theorem:MainTheorem}), we obtain the global-in-time existence theorem (Theorem~\ref{SectionI:Theorem:MainTheorem:GlobalExistence}) by the usual argument of continuation; see~\cite[Theorem~7.1]{MN80}.

\section{Pointwise Estimates of Solutions}
\label{SectionIII}
In this section, we prove Theorem~\ref{SectionI:Theorem:MainTheorem:PointwiseEstimates} on the pointwise estimates of solutions to~\eqref{SectionI:CNSwithPointMass:Lagrangian:Perturbation}.

\subsection{Preliminaries}
Denote by $G=G(x,t)\in \mathbb{R}^{2\times 2}$ the fundamental solution of the Cauchy problem corresponding to~\eqref{SectionI:CNSwithPointMass:Lagrangian:Perturbation:Linearized}:
\begin{equation}
  \label{SectionIII:CNS_Lag_per_Fundamental_Solution}
  \begin{dcases}
    \partial_t G+
    \begin{pmatrix}
      0    & -1 \\
      -c^2 & 0
    \end{pmatrix}
    \partial_x G=
    \begin{pmatrix}
      0 & 0 \\
      0 & \nu
    \end{pmatrix}
    \partial_{x}^{2}G,   & x\in \mathbb{R},\, t>0, \\
    G(x,0)=\delta(x)I_2, & x\in \mathbb{R},
  \end{dcases}
\end{equation}
where $\delta$ is the Dirac delta function and $I_2$ is the $2\times 2$ identify matrix. Let us recall the pointwise estimates of $G$ proved in~\cite{LZ97,Zeng94}. First, let $G^*=G^*(x,t)\in \mathbb{R}^{2\times 2}$ be the modified fundamental solution defined by the following equations:
\begin{equation}
  \label{SectionIII:CNS_Lag_per_Fundamental_Solution_Modified}
  \begin{dcases}
    \partial_t G^*+
    \begin{pmatrix}
      0    & -1 \\
      -c^2 & 0
    \end{pmatrix}
    \partial_x G^* =\frac{\nu}{2}\partial_{x}^{2}G^*, & x\in \mathbb{R},\, t>0, \\
    G^*(x,0)=\delta(x)I_2,                            & x\in \mathbb{R}.
  \end{dcases}
\end{equation}
According to~\cite[p.~1060]{Zeng94}, $G^*$ has the following form:\footnote{The formula for $G^*$ in~\cite[p.~47]{LZ97} needs to be divided by $(2\pi)^{1/2}$.}
\begin{equation}
  \label{SectionIII:CNS_Lag_per_Fundamental_Solution_Modified_Explicit_Formula}
  G^*(x,t)=\frac{1}{2(2\pi \nu t)^{1/2}}e^{-\frac{(x-ct)^2}{2\nu t}}
  \begin{pmatrix}
    1  & -\frac{1}{c} \\
    -c & 1
  \end{pmatrix}
  +\frac{1}{2(2\pi \nu t)^{1/2}}e^{-\frac{(x+ct)^2}{2\nu t}}
  \begin{pmatrix}
    1 & \frac{1}{c} \\
    c & 1
  \end{pmatrix}.
\end{equation}
The difference $G-G^*$ has the following estimates~\cite[Theorem~5.8]{LZ97}: for any integer $l\geq 0$,
\begin{equation}
  \label{SectionIII:Bound_of_G-G*}
  \left| \partial_{x}^{l}G(x,t)-\partial_{x}^{l}G^*(x,t)-e^{-\frac{c^2}{\nu}t}\sum_{j=0}^{l}\delta^{(l-j)}(x)Q_j(t) \right| \leq C(t+1)^{-\frac{1}{2}}t^{-\frac{l+1}{2}}\left( e^{-\frac{(x-ct)^2}{Ct}}+e^{-\frac{(x+ct)^2}{Ct}} \right),
\end{equation}
where $\delta^{(k)}$ is the $k$-th derivative of the Dirac delta function and $Q_j=Q_j(t)$ is a $2\times 2$ polynomial matrix. Additionally, we have
\begin{equation}
  \label{SectionIII:Q0Q1}
  Q_0=
  \begin{pmatrix}
    1 & 0 \\
    0 & 0
  \end{pmatrix}
  , \quad Q_1=
  \begin{pmatrix}
    0                & -\frac{1}{\nu} \\
    -\frac{c^2}{\nu} & 0
  \end{pmatrix}.
\end{equation}
Note that the form of $Q_1$ is not explicitly stated in~\cite[Theorem~5.8]{LZ97}, but a closer look at the proof reveals that $Q_1$ has the form given above ($Q_1$ is exactly the matrix $C_M$ given in the proof of~\cite[Lemma~5.4]{LZ97}).

Next, we give a remark on the Laplace transform $\tilde{G}(x,s)\coloneqq \mathcal{L}[G](x,s)$.\footnote{We use $s$ to denote the Laplace variable; to save the symbol, we also use $s$ as the time integration variable as in $\int_{0}^{t}f(t-s)\, ds$.}~Let $\lambda=s/\sqrt{\nu s+c^2}$; we take the branch such that $\Re \lambda>0$ when $\Re s>0$. Then we have
\begin{equation}
  \label{SectionIII:Laplace_Transform_of_G}
  \tilde{G}(x,s)=\frac{1}{\nu s+c^2}
  \begin{pmatrix}
    \nu \delta(x)+\frac{c^2}{2\sqrt{\nu s+c^2}}e^{-\lambda |x|} & -\frac{\sgn(x)}{2}e^{-\lambda |x|} \\
    -\frac{c^2 \sgn(x)}{2}e^{-\lambda |x|}                      & \frac{s}{2\lambda}e^{-\lambda |x|}
  \end{pmatrix}.
\end{equation}
Note that a similar formula is derived in~\cite{DW18}. To show~\eqref{SectionIII:Laplace_Transform_of_G}, first, take the Fourier--Laplace transform of~\eqref{SectionIII:CNS_Lag_per_Fundamental_Solution} to obtain
\begin{equation}
  \label{SectionIII:Fourier_Lapalace_Transform_of_G}
  \mathcal{F}[\tilde{G}](\xi,s)=
  \begin{pmatrix}
    s         & -i\xi \\
    -ic^2 \xi & s+\nu \xi^2 
  \end{pmatrix}^{-1}
  =\frac{1}{s^2+(\nu s+c^2)\xi^2}
  \begin{pmatrix}
    s+\nu \xi^2 & i\xi \\
    ic^2 \xi    & s
  \end{pmatrix}.
\end{equation}
By the residue theorem, we can calculate the following integrals: when $\Re s>0$ and $x\neq 0$,
\begin{align}
  \frac{1}{2\pi}\int_{-\infty}^{\infty}\frac{e^{i\xi x}}{\xi^2+\lam^2}\, d\xi & =\frac{e^{-\lam |x|}}{2\lam}, \\
  \frac{1}{2\pi}\int_{-\infty}^{\infty}\frac{\xi^2}{\xi^2+\lam^2}e^{i\xi x}\, d\xi & =\delta(x)-\frac{\lam e^{-\lam |x|}}{2}, \\
  \frac{1}{2\pi}\int_{-\infty}^{\infty}\frac{i\xi}{\xi^2+\lam^2}e^{i\xi x}\, d\xi & =\frac{d}{dx}\frac{e^{-\lam |x|}}{2\lam}=-\frac{\sgn(x)}{2}e^{-\lam |x|}.
\end{align}
Applying these formulae, we can compute the inverse Fourier transform of~\eqref{SectionIII:Fourier_Lapalace_Transform_of_G} to obtain~\eqref{SectionIII:Laplace_Transform_of_G}.

Next, we introduce some notations. Let
\begin{equation}
  N(x,t)\coloneqq -p(1+\tau)+p(1)-c^2 \tau-\nu \frac{\tau}{1+\tau}u_x
\end{equation}
and
\begin{equation}
  \label{SectionIII:Gb}
  G_T(x,t)\coloneqq \mathcal{L}^{-1}\left[ \frac{2}{\lambda+2}\tilde{G} \right](x,t), \quad G_R(x,t)\coloneqq (G-G_T)(x,t)
  \begin{pmatrix}
    1 & 0 \\
    0 & -1
  \end{pmatrix},
\end{equation}
where $\mathcal{L}^{-1}$ is the inverse Laplace transform. The subscripts ``T'' and ``R'' stand for ``transmission'' and ``reflection'' since $G_T$ and $G_R$ can be understood as Green's functions describing transmission and reflection at the point mass; see Remark~\ref{Remark:GT_GR}.

Next, we give a formula for $G_T$. To show this, we use the differential equation method as in~\cite{Deng16,DWY15,DW18}. By~\eqref{SectionIII:Laplace_Transform_of_G}, for $x>0$, $G_T$ satisfies
\begin{equation}
  \label{SectionIII:Eq:derGT}
  \partial_x G_T(x,t)=\mathcal{L}\left[ \frac{-2\lambda}{\lambda+2}\tilde{G} \right](x,t)=-2G(x,t)+2G_T(x,t).
\end{equation}
Solving  this equation, we obtain
\begin{equation}
  \label{SectionIII:GbFormula}
  G_T(x,t)=2\int_{-\infty}^{0}e^{2z}G(x-z,t)\, dz \quad (x>0).
\end{equation}
By~\eqref{SectionIII:Eq:derGT}, we also have
\begin{equation}
  \label{SectionIII:GbIntByParts}
  G_R(x,t)=-\frac{1}{2}\partial_x G_T(x,t)
  \begin{pmatrix}
    1 & 0 \\
    0 & -1
  \end{pmatrix}
  \quad (x>0).
\end{equation}
A similar formula can be derived for $x<0$. By using~\eqref{SectionIII:GbFormula}, we can show that $G_T$ satisfies the following bounds:
\begin{align}
  \label{SectionIII:Bounds_of_Gb}
  |\partial_{x}^{k}G_T(x,t)|\leq C(t+1)^{-1/2}t^{-k/2}\left( e^{-\frac{(x-ct)^2}{Ct}}+e^{-\frac{(x+ct)^2}{Ct}} \right)+Ce^{-\frac{|x|+t}{C}}
\end{align}
for $(x,t)\in \mathbb{R}_*\times (0,\infty)$ and for any integer $k\geq 0$. See Appendix~\ref{AppendixA} for the proof. We note that $\partial_{x}^{k}G_T$ has weaker singularity at $t=0$ than $\partial_{x}^{k}G$ and contains no singularity for $t>0$.

\subsection{Derivation of an Integral Equation}
The following proposition gives an integral equation satisfied by the solution $(\tau,u,V)$ to~\eqref{SectionI:CNSwithPointMass:Lagrangian:Perturbation}. The integral equation is derived by solving~\eqref{SectionI:CNSwithPointMass:Lagrangian:Perturbation} using the Laplace transform (treating the nonlinear terms as source terms) and~\eqref{SectionIII:Laplace_Transform_of_G} to rewrite the obtained formula in terms of $G$, which is a technique developed and used in~\cite{Deng16,DWY15,DW18}.

\begin{proposition}
  \label{SectionIII:Proposition:IntegralEquation}
  Let $(\tau,u,V)$ be the solution to~\eqref{SectionI:CNSwithPointMass:Lagrangian:Perturbation} guaranteed to exist under the assumptions of Theorem~\ref{SectionI:Theorem:MainTheorem:GlobalExistence}. Then $(\tau,u)$ satisfies the following integral equation:\footnote{In what follows, $V$ only appears implicitly through the relation $V(t)=u(\pm 0,t)$.}~for $x>0$,
  \begin{align}
    \label{SectionIII:Proposition:IntegralEquation:IntegralEquation}
    \begin{aligned}
      \begin{pmatrix}
        \tau \\
        u
      \end{pmatrix}
      (x,t) & =\int_{0}^{\infty}G(x-y,t)
      \begin{pmatrix}
        \tau_0 \\
        u_0
      \end{pmatrix}
      (y)\, dy+\int_{0}^{\infty}G_R(x+y,t)
      \begin{pmatrix}
        \tau_0 \\
        u_0
      \end{pmatrix}
      (y)\, dy \\
      & \quad +\int_{-\infty}^{0}G_T(x-y,t)
      \begin{pmatrix}
        \tau_0 \\
        u_0
      \end{pmatrix}
      (y)\, dy+G_T(x,t)
      \begin{pmatrix}
        0 \\
        V_0
      \end{pmatrix} \\
      & \quad +\int_{0}^{t}\int_{0}^{\infty}G(x-y,t-s)
      \begin{pmatrix}
        0 \\
        N_x
      \end{pmatrix}
      (y,s)\, dyds \\
      & \quad +\int_{0}^{t}\int_{0}^{\infty}G_R(x+y,t-s)
      \begin{pmatrix}
        0 \\
        N_x
      \end{pmatrix}
      (y,s)\, dyds \\
      & \quad +\int_{0}^{t}\int_{-\infty}^{0}G_T(x-y,t-s)
      \begin{pmatrix}
        0 \\
        N_x
      \end{pmatrix}
      (y,s)\, dyds \\
      & \quad +\int_{0}^{t}G_T(x,t-s)
      \begin{pmatrix}
        0 \\
        \llbracket N \rrbracket
      \end{pmatrix}
      (0,s)\, ds.
    \end{aligned}
  \end{align}
  A similar formula holds for $x<0$.
\end{proposition}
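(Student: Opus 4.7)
The plan is to follow the Laplace-transform strategy of Deng, Wang, and Du--Wang--Yin~\cite{Deng16,DWY15,DW18}, treating the nonlinearity as forcing. First I would rewrite~\eqref{SectionI:CNSwithPointMass:Lagrangian:Perturbation} in the equivalent form $\tau_t - u_x = 0$, $u_t - c^2\tau_x - \nu u_{xx} = N_x$ on $\mathbb{R}_*$, with $u(\pm 0,t) = V(t)$ and $V'(t) = \llbracket c^2\tau + \nu u_x\rrbracket(0,t) + \llbracket N\rrbracket(0,t)$; a short Taylor computation verifies that $N$ is defined precisely to produce this splitting, and the regularity from Theorem~\ref{SectionI:Theorem:MainTheorem:GlobalExistence} makes the Laplace transform in $t$ well defined.

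Applying the Laplace transform, for $x \neq 0$ one obtains a linear second-order ODE system in $x$ with source $\bm{u}_0(x) + (0, \tilde{N}_x(x,s))^{T}$; its bounded homogeneous modes on the half-lines have the form $C_\pm(s)\, e^{-\lambda|x|}$ times a right eigenvector, with $\lambda = s/\sqrt{\nu s + c^2}$. A particular solution is obtained as a half-line convolution against $\tilde{G}$ from~\eqref{SectionIII:Laplace_Transform_of_G}. The three unknowns $C_+(s)$, $C_-(s)$, and $\tilde{V}(s)$ are fixed by the three matching conditions at the origin: $\tilde{u}(0\pm,s) = \tilde{V}(s)$ and the Laplace-transformed Newton equation $s\tilde{V}(s) - V_0 = \llbracket c^2\tilde{\tau} + \nu\tilde{u}_x\rrbracket(0,s) + \llbracket\tilde{N}\rrbracket(0,s)$.

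The crux is the resulting $3\times 3$ algebraic system. Solving it, the answer organizes around the factor $2/(\lambda + 2)$, which by definition~\eqref{SectionIII:Gb} is the Laplace multiplier producing $G_T$; physically this factor is the transmission coefficient at the point mass. Writing $2/(\lambda+2) = 1 - \lambda/(\lambda+2)$ splits every contribution into a \emph{full-line} part that reproduces $G$ and a \emph{reflected image} part that, after $y \mapsto -y$ and absorbing the sign-flip matrix from~\eqref{SectionIII:Gb}, becomes $G_R(x+y,\cdot)$. Source by source, $\bm{u}_0$ on $y > 0$ contributes $G(x-y,t)\bm{u}_0(y) + G_R(x+y,t)\bm{u}_0(y)$ (direct plus reflected), $\bm{u}_0$ on $y < 0$ enters only through $G_T(x-y,t)\bm{u}_0(y)$ (transmitted), $V_0$ produces $G_T(x,t)(0,V_0)^{T}$, the source $(0,\tilde{N}_x)^{T}$ splits identically in $(y,s)$, and $\llbracket\tilde{N}\rrbracket(0,s)$ from Newton's equation yields the last term $\int_0^t G_T(x,t-s)(0,\llbracket N\rrbracket(0,s))^{T}\, ds$.

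Inverting the Laplace transform and assembling the pieces gives~\eqref{SectionIII:Proposition:IntegralEquation:IntegralEquation} for $x > 0$; the $x < 0$ case is symmetric. I expect the main obstacle to be organizational rather than analytic: correctly routing each of the seven source contributions through the $3\times 3$ matching system and recognizing every resulting expression as a kernel in the $G$/$G_R$/$G_T$ trichotomy. Once the factor $2/(\lambda+2)$ has been isolated and the identity $G - G_T = G_R\cdot\mathrm{diag}(1,-1)$ (immediate from~\eqref{SectionIII:Gb}) is in hand, the rest of the derivation is a direct verification.
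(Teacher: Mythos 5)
Your approach is the same Laplace-transform machinery as the paper's, but organized differently, and the difference is worth noting. The paper first forms the full-line Cauchy solution $(\tau_1,u_1)=\int_{\mathbb R}G(\cdot-y,\cdot)\bm u_0(y)\,dy+\text{nonlinear}$ and then seeks a correction $(\tau_2,u_2)$ with \emph{zero} initial data and \emph{zero} interior source; after the Laplace transform the correction is a pure exponential $C_\pm e^{\mp\lambda x}$, and crucially the boundary data $\Psi(s)$ is the same at $0+$ and $0-$, which forces $C_+=C_-$ and collapses the matching to a single scalar. Your proposal instead takes half-line convolutions against $\tilde G$ as the particular solutions; since $\int_0^\infty\tilde G(0{+}-y,s)\cdots\,dy\neq\int_{-\infty}^0\tilde G(0{-}-y,s)\cdots\,dy$ in general, the coefficients $C_+$ and $C_-$ will differ, and your $3\times3$ matching system (or $2\times2$ after eliminating $\tilde V$ via $\tilde V=\tilde u(0\pm,s)$) remains genuinely coupled, requiring more algebra to unscramble — though uniqueness of the solution guarantees you arrive at~\eqref{SectionIII:Proposition:IntegralEquation:IntegralEquation}. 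There is a compensating advantage on your side: the paper's derivation invokes the regularity identities $\llbracket c^2\tau_1+\nu\partial_x u_1\rrbracket(0,t)=0$ and $u_1(0+,t)=u_1(0-,t)$ for the Cauchy solution (which it acknowledges in a footnote as an optional simplification), whereas your direct half-line setup never forms $(\tau_1,u_1)$ and so does not need them. If you carry out your route, you should make explicit how the terms with source on $y<0$ — which enter only through the matching constants since they are absent from the particular solution on $x>0$ — recombine via $\tilde G(x-y,s)\propto e^{-\lambda x}e^{\lambda y}$ into the $G_T(x-y,t)$ form of the third term; this is the step where the $2/(\lambda+2)$ factor must be isolated cleanly, and it is less automatic than your sketch suggests when $C_+\neq C_-$.
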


\begin{proof}
  First, let $(\tau_1,u_1)$ be the generalized solution to the following Cauchy problem:\footnote{Since the initial data $(\tau_0,u_0)$ may have discontinuity across $x=0$, the solution $(\tau_1,u_1)$ should be sought in the distributional sense (cf.~\cite{Kim83}).}
  \begin{equation}
    \begin{dcases}
      \partial_t \tau_1-\partial_x u_1=0,                                        & x\in \mathbb{R},\, t>0, \\
      \partial_t u_1-c^2 \partial_x \tau_1=\nu \partial_{x}^{2}u_1+\partial_x N, & x\in \mathbb{R},\, t>0, \\
      \tau_1(x,0)=\tau_0(x),\, u_1(x,0)=u_0(x),                                  & x\in \mathbb{R}.
    \end{dcases}
  \end{equation}
  Here, $\tau$ and $u$, hence also $N$ are considered to be known functions. By using the fundamental solution $G$, the solution $(\tau_1,u_1)$ can be expressed as follows:
  \begin{equation}
    \label{SectionIII:Proposition:IntegralEquation:Proof:tau1_u1_Integral_Representation}
    \begin{pmatrix}
      \tau_1 \\
      u_1
    \end{pmatrix}
    (x,t)=\int_{-\infty}^{\infty}G(x-y,t)
    \begin{pmatrix}
      \tau_0 \\
      u_0
    \end{pmatrix}
    (y)\, dy+\int_{0}^{t}\int_{-\infty}^{\infty}G(x-y,t-s)
    \begin{pmatrix}
      0 \\
      N_x
    \end{pmatrix}
    (y,s)\, dyds.
  \end{equation}
  Next, let $(\tau_2,u_2)$ be the solution to the following system:
  \begin{equation}
    \label{SectionIII:Proposition:IntegralEquation:Proof:tau2_u2}
    \begin{dcases}
      \partial_t \tau_2-\partial_x u_2=0,                                                                                       & x\in \mathbb{R}_*,\, t>0, \\
      \partial_t u_2-c^2 \partial_x \tau_2=\nu \partial_{x}^{2}u_2,                                                             & x\in \mathbb{R}_*,\, t>0, \\
      \partial_t u_2(\pm 0,t)=\llbracket c^2 \tau+\nu u_x \rrbracket(0,t)-\partial_t u_1(\pm 0,t)+\llbracket N \rrbracket(0,t), & t>0, \\
      \tau_2(x,0)=0,\, u_2(x,0)=0,                                                                                              & x\in \mathbb{R}_*.
    \end{dcases}
  \end{equation}
  Then we have $(\tau,u)=(\tau_1+\tau_2,u_1+u_2)$ by the uniqueness of solutions to~\eqref{SectionIII:Proposition:IntegralEquation:Proof:tau2_u2}.\footnote{Here, we do not attempt to prove the unique existence theorem for~\eqref{SectionIII:Proposition:IntegralEquation:Proof:tau2_u2} since it is rather standard.}

  By the first equation in~\eqref{SectionIII:Proposition:IntegralEquation:Proof:tau2_u2}, the third equation in~\eqref{SectionIII:Proposition:IntegralEquation:Proof:tau2_u2} can be written as
  \begin{equation}
    \partial_t u_2(\pm 0,t)-\llbracket c^2 \tau_2+\nu \partial_t \tau_2 \rrbracket(0,t)=\llbracket c^2 \tau_1+\nu \partial_x u_1 \rrbracket(0,t)-\partial_t u_1(\pm 0,t)+\llbracket N \rrbracket(0,t).
  \end{equation}
  By using~\eqref{SectionIII:CNS_Lag_per_Fundamental_Solution_Modified_Explicit_Formula}, \eqref{SectionIII:Bound_of_G-G*} and~\eqref{SectionIII:Q0Q1}, we see that
  \begin{equation}
    \label{SectionIII:Proposition:IntegralEquation:Proof:Structure_of_Singularity}
    \llbracket c^2 \tau_1+\nu \partial_x u_1 \rrbracket(0,t)=0, \quad u_1(+0,t)=u_1(-0,t).\footnote{We use these equalities in the following to simplify the calculations; but we can carry out similar computations without using these to arrive at the same conclusion.}
  \end{equation}
   Now, let
  \begin{equation}
    \label{SectionIII:Proposition:IntegralEquation:Proof:Psi}
    \Psi(s)\coloneqq -s\tilde{u}_1(+0,s)+V_0+\llbracket \tilde{N} \rrbracket(0,s),
  \end{equation}
  where the Laplace transformed variables are denoted with tildes. Taking the Laplace transform of~\eqref{SectionIII:Proposition:IntegralEquation:Proof:tau2_u2} and using~\eqref{SectionIII:Proposition:IntegralEquation:Proof:Structure_of_Singularity}, we obtain
  \begin{align}
    \label{SectionIII:Proposition:IntegralEquation:Proof:tau2_u2_Laplace_Transformed}
    \begin{dcases}
      s\tilde{\tau}_2-\partial_x \tilde{u}_2=0,                                           & x\in \mathbb{R}_*,\, \Re s>0, \\
      s\tilde{u}_2-c^2 \partial_x \tilde{\tau}_2=\nu \partial_{x}^{2}\tilde{u}_2,         & x\in \mathbb{R}_*,\, \Re s>0, \\
      s\tilde{u}_2(\pm 0,s)-(\nu s+c^2)\llbracket \tilde{\tau}_2 \rrbracket(0,s)=\Psi(s), & \Re s>0.
    \end{dcases}
  \end{align}
  The general solutions to~\eqref{SectionIII:Proposition:IntegralEquation:Proof:tau2_u2_Laplace_Transformed} are given by
  \begin{equation}
    \label{SectionIII:Proposition:IntegralEquation:Proof:tau2_u2_General_Solutions}
    \begin{pmatrix}
      \tilde{\tau}_2 \\
      \tilde{u}_2
    \end{pmatrix}
    (x,s)=
    \begin{dcases}
      C_+
      \begin{pmatrix}
        -\lambda/s \\
        1
      \end{pmatrix}
      e^{-\lambda x} & (x>0), \\
      C_-
      \begin{pmatrix}
        \lambda/s \\
        1
      \end{pmatrix}
      e^{\lambda x} & (x<0),
    \end{dcases}
  \end{equation}
  and the constants $C_+$ and $C_-$ are determined by the third equation in~\eqref{SectionIII:Proposition:IntegralEquation:Proof:tau2_u2_Laplace_Transformed}:
  \begin{equation}
    \begin{dcases}
      C_+ s+(C_+ +C_-)\lambda(\nu s+c^2)/s=\Psi(s), \\
      C_- s+(C_+ +C_-)\lambda(\nu s+c^2)/s=\Psi(s).
    \end{dcases}
  \end{equation}
  Solving these equations, we obtain
  \begin{equation}
    C_+=C_-=\frac{s\Psi(s)}{s^2+2\lambda(\nu s+c^2)}.
  \end{equation}
  Substituting these into~\eqref{SectionIII:Proposition:IntegralEquation:Proof:tau2_u2_General_Solutions}, we obtain, for $x>0$,
  \begin{equation}
    \label{SectionIII:Proposition:IntegralEquation:Proof:tau2_u2_Specific_Solution}
    \begin{pmatrix}
      \tilde{\tau}_2 \\
      \tilde{u}_2
    \end{pmatrix}
    (x,s)=\frac{s}{s^2+2\lambda(\nu s+c^2)}
    \begin{pmatrix}
      -\lambda/s \\
      1
    \end{pmatrix}
    \Psi(s)e^{-\lambda x}.
  \end{equation}
  Substituting~\eqref{SectionIII:Proposition:IntegralEquation:Proof:tau1_u1_Integral_Representation} into~\eqref{SectionIII:Proposition:IntegralEquation:Proof:Psi} and using~\eqref{SectionIII:Laplace_Transform_of_G}, we obtain, for $x>0$,
  \begin{align}
    \begin{pmatrix}
      -\lambda/s \\
      1
    \end{pmatrix}
    \Psi(s)e^{-\lambda x}
    & =
    \begin{pmatrix}
      -\lambda/s \\
      1
    \end{pmatrix}
    \begin{pmatrix}
      0 & -s
    \end{pmatrix}
    e^{-\lambda x}\int_{-\infty}^{\infty}\tilde{G}(+0-y,s)
    \begin{pmatrix}
      \tau_0(y) \\
      u_0(y)+\tilde{N}_x(y,s)
    \end{pmatrix}
    \, dy \\
    & \quad +
    \begin{pmatrix}
      -\lambda/s \\
      1
    \end{pmatrix}
    e^{-\lambda x}\left( V_0+\llbracket \tilde{N} \rrbracket(0,s) \right) \\
    & =\int_{-\infty}^{\infty}\frac{e^{-\lambda(x+|y|)}}{2(\nu s+c^2)}
    \begin{pmatrix}
      c^2 \lambda \sgn(y) & s \\
      -c^2 s\sgn(y)       & -s^2/\lambda
    \end{pmatrix}
    \begin{pmatrix}
      \tau_0(y) \\
      u_0(y)+\tilde{N}_x(y,s)
    \end{pmatrix}
    \, dy \\
    & \quad +
    \begin{pmatrix}
      -\lambda/s \\
      1
    \end{pmatrix}
    e^{-\lambda x}\left( V_0+\llbracket \tilde{N} \rrbracket(0,s) \right) \\
    & =s\int_{0}^{\infty}\tilde{G}(x+y,s)
    \begin{pmatrix}
      1 & 0 \\
      0 & -1
    \end{pmatrix}
    \begin{pmatrix}
      \tau_0(y) \\
      u_0(y)+\tilde{N}_x(y,s)
    \end{pmatrix}
    \, dy \\
    & \quad -s\int_{-\infty}^{0}\tilde{G}(x-y,s)
    \begin{pmatrix}
      \tau_0(y) \\
      u_0(y)+\tilde{N}_x(y,s)
    \end{pmatrix}
    \, dy \\
    & \quad +2\sqrt{\nu s+c^2}\tilde{G}(x,s)
    \begin{pmatrix}
      0 \\
      V_0+\llbracket \tilde{N} \rrbracket(0,s)
    \end{pmatrix}.
  \end{align}
  Plugging this into~\eqref{SectionIII:Proposition:IntegralEquation:Proof:tau2_u2_Specific_Solution}, we obtain
  \begin{align}
    \begin{pmatrix}
      \tilde{\tau}_2 \\
      \tilde{u}_2
    \end{pmatrix}
    (x,s)
    & =\frac{\lambda}{\lambda+2}\int_{0}^{\infty}\tilde{G}(x+y,s)
    \begin{pmatrix}
      1 & 0 \\
      0 & -1
    \end{pmatrix}
    \begin{pmatrix}
      \tau_0(y) \\
      u_0(y)+\tilde{N}_x(y,s)
    \end{pmatrix}
    \, dy \\
    & \quad -\frac{\lambda}{\lambda+2}\int_{-\infty}^{0}\tilde{G}(x-y,s)
    \begin{pmatrix}
      \tau_0(y) \\
      u_0(y)+\tilde{N}_x(y,s)
    \end{pmatrix}
    \, dy \\
    & \quad +\frac{2}{\lambda+2}\tilde{G}(x,s)
    \begin{pmatrix}
      0 \\
      V_0+\llbracket \tilde{N} \rrbracket(0,s)
    \end{pmatrix}.
  \end{align}
  Taking the inverse Laplace transform of this equation and noting that $G_T$ and $G_R$ are defined by~\eqref{SectionIII:Gb}, we obtain~\eqref{SectionIII:Proposition:IntegralEquation:IntegralEquation}.
\end{proof}

\begin{remark}
  \label{Remark:GT_GR}
  Looking at~\eqref{SectionIII:Proposition:IntegralEquation:IntegralEquation}, we can interpret $G_T$ and $G_R$ as Green's functions describing transmission and reflection of waves at the point mass. To see this, fix $x>0$. Then the first term on the right-hand side of~\eqref{SectionIII:Proposition:IntegralEquation:IntegralEquation} is the contribution of the initial data coming directly from $y>0$ to $x$; the second term is the contribution from $y>0$ to $x$ that has travelled the distance $x+y$, which can be interpreted as having travelled from $y$ to the origin and then reflected back to $x$; the third term is the contribution from $y<0$ to $x$ that has travelled the distance $x-y$, which can be interpreted as having travelled from $y$ to the origin and transmitted to reach $x$. For the forth term, consider an initial condition of the form $(\tau_0,u_0)=(0,V_0 \delta_{x_*})$, where $\delta_{x_*}$ is the delta function concentrating at $x_*>0$; then by the interpretation given above, its contribution is
  \begin{equation}
    G(x-x_*,t)
    \begin{pmatrix}
      0 \\
      V_0
    \end{pmatrix}
    +G_R(x+x_*,t)
    \begin{pmatrix}
      0 \\
      V_0
    \end{pmatrix}.
  \end{equation}
  Thus the forth term is obtained by taking the limit $x_* \to +0$ in this expression, and it can be interpreted as the influence of giving a momentum of $mV_0$ ($m=1$) to the fluid at $x=0$ at the initial time. The remaining nonlinear terms can be interpreted similarly. Note that by~\eqref{SectionIII:GbIntByParts} and~\eqref{SectionIII:Bounds_of_Gb}, we can see that the reflected waves decay faster than the transmitted waves.
\end{remark}

\subsection{Proof of the Pointwise Estimates of Solutions}
\label{SectionIII:PointwiseEstimates}
Recall the definition of $v_i$ and $\Psi_i$ given in Section~\ref{SectionI:Subsection:MainTheorem}, and let
\begin{equation}
  P(t)\coloneqq \sup_{0\leq s\leq t}\sum_{i=1}^{2}\left\{ |v_i(\cdot,s)\Psi_i(\cdot,s)^{-1}|_{\infty}+|v_{ix}(\cdot,s)|_{\infty}(s+1) \right\}+\sup_{0\leq s\leq t}|u_{xx}(\cdot,s)|_{\infty}(s+1)^{1/2}.
\end{equation}
It should be noted that we do not know a priori that $P(t)$ is finite. In what follows, as in the previous works~\cite{DW18,LZ97}, we shall tacitly assume that $P(t)$ is already known to be finite. One way to justify this assumption, as was done in~\cite[p.~296]{IK02}, is to first derive estimates for suitably weighted versions of $v_i$ (for which the corresponding $P(t)$ is trivially finite) and then take the limit to the original $v_i$ afterwards. We omit the details for brevity.

In order to prove Theorem~\ref{SectionI:Theorem:MainTheorem:PointwiseEstimates}, it suffices to prove that there exists a positive constant $C$ such that $P(t)\leq C\delta$ for all $t\geq 0$. See Theorem~\ref{SectionI:Theorem:MainTheorem:PointwiseEstimates} for the definition of $\delta$. To show this, we aim to prove that there exists a positive constant $C$ such that
\begin{equation}
  \label{SectionIII:GoalInequality}
  P(t)\leq C\delta +C(\delta+P(t))^2 \quad (t\geq 0).
\end{equation}
Then by taking $\delta$ sufficiently small, we can conclude that $P(t)\leq C\delta$ for all $t\geq 0$.

To show~\eqref{SectionIII:GoalInequality}, we first rewrite~\eqref{SectionIII:Proposition:IntegralEquation:IntegralEquation} to get an integral equation for $v_i$. Let
\begin{equation}
  g_i \coloneqq l_i G
  \begin{pmatrix}
    r_1 & r_2
  \end{pmatrix}
  , \quad g_{i}^{*}\coloneqq l_i G^*
  \begin{pmatrix}
    r_1 & r_2
  \end{pmatrix}
  , \quad g_{T,i}\coloneqq l_i G_T
  \begin{pmatrix}
    r_1 & r_2
  \end{pmatrix}
  , \quad g_{R,i}\coloneqq l_i G_R
  \begin{pmatrix}
    r_1 & r_2
  \end{pmatrix}.
\end{equation}
See Section~\ref{SectionI:Subsection:Formulation} for the definition of $r_i$ and $l_i$. We note that $(r_1\, r_2)(l_1\, l_2)^{T}=I_2$. Next, let
\begin{equation}
  N_i\coloneqq l_i
  \begin{pmatrix}
    0 \\
    N
  \end{pmatrix}
  , \quad N_{i}^{*}\coloneqq -\frac{1}{2}\theta_{i}^{2}.
\end{equation}
In fact, $N_i$ does not depend on the index $i$, but we shall keep the index $i$ just to discriminate it from $N$.

Multiplying $l_i$ to~\eqref{SectionIII:Proposition:IntegralEquation:IntegralEquation} from the left, we obtain, for $x>0$,
\begin{align}
  \label{SectionIII:uiIntegralEquation}
  \begin{aligned}
    u_i(x,t)
    & =\int_{0}^{\infty}g_i(x-y,t)
    \begin{pmatrix}
      u_{01} \\
      u_{02}
    \end{pmatrix}
    (y)\, dy+\int_{0}^{\infty}g_{R,i}(x+y,t)
    \begin{pmatrix}
      u_{01} \\
      u_{02}
    \end{pmatrix}
    (y)\, dy \\
    & \quad +\int_{-\infty}^{0}g_{T,i}(x-y,t)
    \begin{pmatrix}
      u_{01} \\
      u_{02}
    \end{pmatrix}
    (y)\, dy+m_V g_{T,i}(x,t)\bm{1} \\
    & \quad +\int_{0}^{t}\int_{0}^{\infty}g_i(x-y,t-s)
    \begin{pmatrix}
      N_1 \\
      N_2
    \end{pmatrix}_x
    (y,s)\, dyds \\
    & \quad +\int_{0}^{t}\int_{0}^{\infty}g_{R,i}(x+y,t-s)
    \begin{pmatrix}
      N_1 \\
      N_2
    \end{pmatrix}_x
    (y,s)\, dyds \\
    & \quad +\int_{0}^{t}\int_{-\infty}^{0}g_{T,i}(x-y,t-s)
    \begin{pmatrix}
      N_1 \\
      N_2
    \end{pmatrix}_x
    (y,s)\, dyds \\
    & \quad +\int_{0}^{t}g_{T,i}(x,t-s)
    \begin{pmatrix}
      \llbracket N_1 \rrbracket \\
      \llbracket N_2 \rrbracket
    \end{pmatrix}
    (0,s)\, ds,
  \end{aligned}
\end{align}
where $\bm{1}=(1\, 1)^{T}$. A similar formula holds for $x<0$.

Next, note that $(\tau^* \, u^*)^{T}\coloneqq (r_1 \, r_2)(\theta_1 \, \theta_2)^{T}$ solves
\begin{equation}
  \label{SectionIII:CNS_Lag_per_Modified}
  \begin{pmatrix}
    \tau^* \\
    u^*
  \end{pmatrix}_t
  +
  \begin{pmatrix}
    0    & -1 \\
    -c^2 & 0
  \end{pmatrix}
  \begin{pmatrix}
    \tau^* \\
    u^*
  \end{pmatrix}_x
  =\frac{\nu}{2}
  \begin{pmatrix}
    \tau^* \\
    u^*
  \end{pmatrix}_{xx}
  +
  \begin{pmatrix}
    r_1 & r_2
  \end{pmatrix}
  \begin{pmatrix}
    N_{1}^{*} \\
    N_{2}^{*}
  \end{pmatrix}_x
\end{equation}
since multiplying $l_i$ to~\eqref{SectionIII:CNS_Lag_per_Modified} from the left yields
\begin{equation}
  \label{SectionIII:Burgers_for_thetai}
  \partial_t \theta_i+\lambda_i \partial_x \theta_i=\frac{\nu}{2}\partial_{x}^{2}\theta_i+\partial_x N_{i}^{*},
\end{equation}
which is just~\eqref{SectionI:Thetai} with $\Theta_i$ replaced by $\theta_i$. Hence, by~\eqref{SectionIII:CNS_Lag_per_Fundamental_Solution_Modified}, we have
\begin{equation}
  \label{SectionIII:thetaiIntegralEquation}
  \theta_i(x,t)=\int_{-\infty}^{\infty}g_{i}^{*}(x-y,t)
  \begin{pmatrix}
    \theta_1 \\
    \theta_2
  \end{pmatrix}
  (y,0)\, dy+\int_{0}^{t}\int_{-\infty}^{\infty}g_{i}^{*}(x-y,t-s)
  \begin{pmatrix}
    N_{1}^{*} \\
    N_{2}^{*}
  \end{pmatrix}_x
  (y,s)\, dyds.
\end{equation}
Note that
\begin{equation}
  \label{SectionIII:gi*_Explicit_Formula}
  g_{1}^{*}=\frac{1}{(2\pi \nu t)^{1/2}}e^{-\frac{(x-ct)^2}{2\nu t}}
  \begin{pmatrix}
    1 & 0
  \end{pmatrix}
  , \quad  g_{2}^{*}=\frac{1}{(2\pi \nu t)^{1/2}}e^{-\frac{(x+ct)^2}{2\nu t}}
  \begin{pmatrix}
    0 & 1
  \end{pmatrix},
\end{equation}
which are natural since $\theta_i$ solves~\eqref{SectionIII:Burgers_for_thetai}.

Now, combining~\eqref{SectionIII:uiIntegralEquation} and~\eqref{SectionIII:thetaiIntegralEquation}, we obtain, for $x>0$,
\begin{align}
  \label{SectionIII:viIntegralEquation}
  \begin{aligned}
    v_i(x,t)
    & =\int_{-\infty}^{\infty}g_i(x-y,t)
    \begin{pmatrix}
      u_{01} \\
      u_{02}
    \end{pmatrix}
    (y)\, dy-\int_{-\infty}^{\infty}g_{i}^{*}(x-y,t)
    \begin{pmatrix}
      \theta_1 \\
      \theta_2
    \end{pmatrix}
    (y,0)\, dy \\
    & \quad +\int_{0}^{\infty}g_{R,i}(x+y,t)
    \begin{pmatrix}
      u_{01} \\
      u_{02}
    \end{pmatrix}
    (y)\, dy \\
    & \quad +\int_{-\infty}^{0}g_{R,i}(x-y,t)
    \begin{pmatrix}
      u_{02} \\
      u_{01}
    \end{pmatrix}
    (y)\, dy+m_V g_{T,i}(x,t)\bm{1} \\
    & \quad +\int_{0}^{t}\int_{-\infty}^{\infty}g_{i}^{*}(x-y,t-s)
    \begin{pmatrix}
      N_1-N_{1}^{*} \\
      N_2-N_{2}^{*}
    \end{pmatrix}_x
    (y,s)\, dyds \\
    & \quad +\int_{0}^{t}\int_{-\infty}^{\infty}(g_i-g_{i}^{*})(x-y,t-s)
    \begin{pmatrix}
      N_1 \\
      N_2
    \end{pmatrix}_x
    (y,s)\, dyds \\
    & \quad +\int_{0}^{t}\int_{0}^{\infty}g_{R,i}(x+y,t-s)
    \begin{pmatrix}
      N_1 \\
      N_2
    \end{pmatrix}_x
    (y,s)\, dyds \\
    & \quad +\int_{0}^{t}\int_{-\infty}^{0}g_{R,i}(x-y,t-s)
    \begin{pmatrix}
      N_1 \\
      N_2
    \end{pmatrix}_x
    (y,s)\, dyds \\
    & \quad +\int_{0}^{t}g_{T,i}(x,t-s)
    \begin{pmatrix}
      \llbracket N_1 \rrbracket \\
      \llbracket N_2 \rrbracket
    \end{pmatrix}
    (0,s)\, ds.
  \end{aligned}
\end{align}
Note that we used $g_{R,i}=l_i(G_T-G)(r_2\, r_1)$ and $N_1=N_2$. A similar formula holds for $x<0$. Denote the sum of the first five terms on the right-hand side by $\mathcal{I}_i(x,t)$ and the sum of the next five terms by $\mathcal{N}_i(x,t)$.

We first give a bound for $\mathcal{I}_i(x,t)$.

\begin{lemma}
  \label{SectionIII:Lemma:Bound_of_Ii}
  Under the assumptions of Theorem~\ref{SectionI:Theorem:MainTheorem:PointwiseEstimates}, there exists a positive constant $C$ such that
  \begin{equation}
    \label{SectionIII:Lemma:Bound_of_Ii:Bound_of_Ii}
    |\mathcal{I}_i(x,t)|\leq C\delta \Psi_i(x,t)
  \end{equation}
  for $(x,t)\in \mathbb{R}_* \times (0,\infty)$.
\end{lemma}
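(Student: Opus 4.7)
The plan is to decompose $\mathcal{I}_i$ into a Cauchy-problem-like contribution, formed from terms~1--2 together with the heat-kernel leading part of the transmission term $m_V g_{T,i}(x,t)\bm{1}$, and boundary-correction contributions coming from the reflection integrals and the singular or exponentially localized residuals of $g_i - g_i^*$ and $g_{T,i} - g_i^*$. The Cauchy-like part will be treated essentially as in the Cauchy-problem analysis of Liu--Zeng~\cite[Theorem~2.6]{LZ97}, while the boundary contributions will be controlled using the pointwise bounds \eqref{SectionIII:Bound_of_G-G*}, \eqref{SectionIII:GbIntByParts} and \eqref{SectionIII:Bounds_of_Gb}.

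For the Cauchy-like part, I split $g_i = g_i^* + (g_i - g_i^*)$ in term~1 and group the $g_i^*$ piece with term~2 to obtain a single convolution of $g_i^*$ against $(u_{01},u_{02})^T - (\theta_1,\theta_2)^T(\cdot,0)$. Since $\int u_{0i}\, dy = m_i$ and $\int \theta_i(y,0)\, dy = m_i + m_V$, the integrand has total mass $-m_V$, which is precisely compensated by the leading Gaussian part of $m_V g_{T,i}(x,t)\bm{1}$: by \eqref{SectionIII:Bounds_of_Gb} and \eqref{SectionIII:gi*_Explicit_Formula}, $g_{T,i}\bm{1}$ equals the heat kernel $g_i^*\bm{1}$ (a Gaussian centered at $\lambda_i t$) modulo a Gaussian centered at $-\lambda_i t$ and an $O(e^{-(|x|+t)/C})$ correction, the former of which can be absorbed into $\delta\tilde{\psi}(x,t;\lambda_{i'})$. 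After this cancellation, the resulting convolution has the structure analyzed in~\cite[Theorem~2.6]{LZ97}; integration by parts against the antiderivatives $u_{0i}^{\pm}$, combined with the weighted $L^1$ and $L^\infty$ bounds on $u_{0i}$ and $u_{0i}^{\pm}$ in \eqref{SmallnessDelta}, then yields the desired $\psi_{3/2}(x,t;\lambda_i)$ decay.

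For the boundary contributions, $(g_i - g_i^*) * u_{0i}$ is estimated using \eqref{SectionIII:Bound_of_G-G*} with $l = 0$: the $\delta(x) Q_0(t)$ term gives $|u_{0i}(x)|\, e^{-c^2 t/\nu}$, absorbed into $\delta\Psi_i$ via $(|x|+1)^{3/2}|u_{0i}(x)|\leq \delta$, while the smooth Gaussian remainder of rate $(t+1)^{-1/2}t^{-1/2}$ centered at $\pm ct$ is dominated by $\delta\Psi_i(x,t)$ (the Gaussian near $\lambda_i t$ by $\psi_{3/2}(x,t;\lambda_i)$, the other by $\tilde{\psi}(x,t;\lambda_{i'})$). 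The reflection integrals (terms~3--4) are handled by rewriting $g_{R,i}$ as a spatial derivative of $g_{T,i}$ via \eqref{SectionIII:GbIntByParts} and integrating by parts against $u_{0j}^{\pm}$; the boundary terms at $y = 0$ and the resulting $\partial_x g_{T,i} * u_{0j}^{\pm}$ integrals are then controlled using \eqref{SectionIII:Bounds_of_Gb} and the weighted bounds in \eqref{SmallnessDelta}, yielding $\psi_{3/2}$-type decay.

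The main technical obstacle is the bookkeeping of the $m_V$ cancellation: extracting the $\psi_{3/2}(x,t;\lambda_i)$ rate rather than a weaker $(t+1)^{-1/2}$ rate requires matching the mass defect in the Cauchy-like convolution exactly against $m_V g_{T,i}\bm{1}$, and carefully sorting the various residual Gaussians (centered at $\pm\lambda_i t$) into the $\psi_{3/2}$ and $\tilde{\psi}$ components of $\Psi_i$. A secondary point is the non-compact support of $\theta_i(\cdot,0)$ entering the Cauchy-like convolution, but its Gaussian tails visible in \eqref{theta_explicit} are more than sufficient for the integration-by-parts arguments.
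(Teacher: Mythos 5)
Your decomposition matches the paper's: you form the Cauchy-like piece $g_i^* * (u_0 - \theta(\cdot,0)) + m_V g_i^*\bm{1}$ (which has zero net mass and is estimated by integration by parts), and you isolate the corrections $(g_i-g_i^*)*u_0$, the two reflection integrals, and the residual $m_V(g_{T,i}-g_i^*)\bm{1}$. The first three of these are treated essentially as in the paper (though for the Cauchy-like piece the object you should integrate by parts against is $\eta_j(x)=\int_{-\infty}^{x}v_{0j}+m_V H(x)$, which is the antiderivative of $v_{0j}$ \emph{together with} the $m_V\delta$ mass correction, not $u_{0i}^{\pm}$ alone; the $\theta_j(\cdot,0)$ antiderivative also enters and supplies the $(|x|+1)^{-1}$ decay of $\eta_j$).

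There is however a genuine gap in your treatment of the last term $m_V(g_{T,i}-g_i^*)\bm{1}$. You claim that \eqref{SectionIII:Bounds_of_Gb} together with \eqref{SectionIII:gi*_Explicit_Formula} show that $g_{T,i}-g_i^*$ consists of a Gaussian centred at $\lambda_{i'}t=-\lambda_i t$ plus an $O(e^{-(|x|+t)/C})$ tail, absorbable into $\tilde\psi(x,t;\lambda_{i'})$. This does not follow. Estimate \eqref{SectionIII:Bounds_of_Gb} with $k=0$ is only a one-sided bound of rate $(t+1)^{-1/2}$ on $|G_T|$: it says nothing about the internal structure of $G_T$ near $x=\pm ct$, so the naive difference $|g_{T,i}|+|g_i^*|$ produces $(t+1)^{-1/2}$-rate Gaussians at \emph{both} $\pm ct$. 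In particular the Gaussian at $\lambda_{i'}t$ has amplitude $\sim (t+1)^{-1/2}$ there, whereas $\tilde\psi(\lambda_{i'}(t+1),t;\lambda_{i'})\sim (t+1)^{-1}$, so the absorption you claim fails by a factor $(t+1)^{1/2}$. The fix, which the paper uses, is to write
\begin{equation}
  g_{T,i}-g_i^* = (g_{T,i}-g_i) + (g_i-g_i^*),
\end{equation}
where $g_{T,i}-g_i=\tfrac12\partial_x g_{T,i}$ by \eqref{SectionIII:GbIntByParts}, hence is controlled by \eqref{SectionIII:Bounds_of_Gb} with $k=1$ (which supplies the extra $t^{-1/2}$ factor), and $g_i-g_i^*$ is controlled by \eqref{SectionIII:Bound_of_G-G*} with $l=0$ (again rate $\sim t^{-1}$, since the $\delta(x)Q_0$ term drops out for $x\neq 0$). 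Only after this splitting is each Gaussian piece of the requisite $O(t^{-1})$ rate and thus dominated by $\psi_{3/2}(x,t;\lambda_i)$ near $\lambda_i t$ and by $\tilde\psi(x,t;\lambda_{i'})$ near $\lambda_{i'}t$.
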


\begin{proof}
  Since the case when $t\leq 1$ can be handled easily by using the assumptions of Theorem~\ref{SectionI:Theorem:MainTheorem:PointwiseEstimates}, \eqref{SectionIII:CNS_Lag_per_Fundamental_Solution_Modified_Explicit_Formula}, \eqref{SectionIII:Bound_of_G-G*}, \eqref{SectionIII:GbIntByParts} and~\eqref{SectionIII:Bounds_of_Gb}, we assume that $t\geq 1$ in the following. We also assume that $x>0$; the case when $x<0$ is similar. Let $v_{0i}\coloneqq \restr{v_i}{t=0}$. First, we rewrite $\mathcal{I}_{i}(x,t)$ as follows:
  \begin{align}
    \mathcal{I}_i(x,t)
    & =\int_{-\infty}^{\infty}g_{i}^{*}(x-y,t)
    \begin{pmatrix}
      v_{01} \\
      v_{02}
    \end{pmatrix}
    (y)\, dy+m_V g_{i}^{*}(x,t)\bm{1} \\
    & \quad +\int_{-\infty}^{\infty}(g_i-g_{i}^{*})(x-y,t)
    \begin{pmatrix}
      u_{01} \\
      u_{02}
    \end{pmatrix}
    (y)\, dy \\
    & \quad +\int_{0}^{\infty}g_{R,i}(x+y,t)
    \begin{pmatrix}
      u_{01} \\
      u_{02}
    \end{pmatrix}
    (y)\, dy \\
    & \quad +\int_{-\infty}^{0}g_{R,i}(x-y,t)
    \begin{pmatrix}
      u_{02} \\
      u_{01}
    \end{pmatrix}
    (y)\, dy \\
    & \quad +m_V (g_{T,i}-g_{i}^{*})(x,t)\bm{1}.
  \end{align}
  Let
  \begin{align}
    \mathcal{I}_{i,1}(x,t) & \coloneqq \int_{-\infty}^{\infty}g_{i}^{*}(x-y,t)
    \begin{pmatrix}
      v_{01} \\
      v_{02}
    \end{pmatrix}
    (y)\, dy+m_V g_{i}^{*}(x,t)\bm{1}, \\
    \mathcal{I}_{i,2}(x,t) & \coloneqq \int_{-\infty}^{\infty}(g_i-g_{i}^{*})(x-y,t)
    \begin{pmatrix}
      u_{01} \\
      u_{02}
    \end{pmatrix}
    (y)\, dy, \\
    \mathcal{I}_{i,3}(x,t) & \coloneqq \int_{0}^{\infty}g_{R,i}(x+y,t)
    \begin{pmatrix}
      u_{01} \\
      u_{02}
    \end{pmatrix}
    (y)\, dy+\int_{-\infty}^{0}g_{R,i}(x-y,t)
    \begin{pmatrix}
      u_{02} \\
      u_{01}
    \end{pmatrix}
    (y)\, dy, \\
    \mathcal{I}_{i,4}(x,t) & \coloneqq m_V (g_{T,i}-g_{i}^{*})(x,t)\bm{1}.
  \end{align}

  We first give a bound for $\mathcal{I}_{i,1}(x,t)$. Define $\eta_j$ by
  \begin{equation}
    \eta_j(x)\coloneqq \int_{-\infty}^{x}v_{0j}(y)\, dy+m_V H(x),
  \end{equation}
  where $H(x)$ is the Heaviside function. Let $\eta=(\eta_1 \, \eta_2)^{T}$. Then we have
  \begin{equation}
    \mathcal{I}_{i,1}(x,t)=\int_{-\infty}^{\infty}g_{i}^{*}(x-y,t)\partial_x \eta(y)\, dy.
  \end{equation}
  Note that by~\eqref{SectionI:Eq:SelfSimilarSolutionMass}, we have
  \begin{equation}
    \eta_j(x)=
    \begin{dcases}
      \int_{-\infty}^{x}v_{0j}(y)\, dy & (x<0), \\
      -\int_{x}^{\infty}v_{0j}(y)\, dy & (x>0).
    \end{dcases}
  \end{equation}
  Thus, by the assumptions of Theorem~\ref{SectionI:Theorem:MainTheorem:PointwiseEstimates}, we have
  \begin{equation}
    \label{SectionIII:Lemma:Bound_of_Ii:Proof:Bound_of_etaj}
    ||\eta_j||_{L^1(\mathbb{R}_*)}\leq C\delta,\quad |\eta_j(x)|\leq C\delta(|x|+1)^{-1}.
  \end{equation}
  We prove the following bound case-by-case:
  \begin{equation}
    \label{SectionIII:Lemma:Bound_of_Ii:Proof:Bound_of_Ii1}
    |\mathcal{I}_{i,1}(x,t)|\leq C\delta \Psi_i(x,t).
  \end{equation}
  Case (i): $|x-\lambda_i t|\leq (t+1)^{1/2}$. By integration by parts, \eqref{SectionIII:CNS_Lag_per_Fundamental_Solution_Modified_Explicit_Formula} and~\eqref{SectionIII:Lemma:Bound_of_Ii:Proof:Bound_of_etaj}, we have
  \begin{align}
    |\mathcal{I}_{i,1}(x,t)|
    & =\left| \int_{-\infty}^{\infty}\partial_x g_{i}^{*}(x-y,t)\eta(y)\, dy \right| \\
    & \leq C(t+1)^{-1}\int_{-\infty}^{\infty}|\eta(x)|\, dx \\
    & \leq C\delta (t+1)^{-1}\leq C\delta \Psi_i(x,t).
  \end{align}
  Case (ii): $(t+1)^{1/2}<|x-\lambda_i t|<t+1$. Suppose that $x-\lambda_i t>0$; the case when $x-\lambda_i t \leq 0$ can be treated in a similar manner. By integration by parts, \eqref{SectionIII:CNS_Lag_per_Fundamental_Solution_Modified_Explicit_Formula} and~\eqref{SectionIII:Lemma:Bound_of_Ii:Proof:Bound_of_etaj}, we have
  \begin{align}
    |\mathcal{I}_{i,1}(x,t)|
    & \leq C(t+1)^{-1}\int_{-\infty}^{(x-\lambda_i t)/2}e^{-\frac{(x-\lambda_i t)^2}{Ct}}|\eta_i(y)|\, dy \\
    & \quad +C\delta (t+1)^{-1}\int_{(x-\lambda_i t)/2}^{\infty}e^{-\frac{(x-y-\lambda_i t)^2}{Ct}}(y+1)^{-1}dy \\
    & \leq C\delta(t+1)^{-1}e^{-\frac{(x-\lambda_i t)^2}{Ct}}+C\delta (|x-\lambda_i t|+1)^{-1}(t+1)^{-1/2} \\
    & \leq C\delta(t+1)^{-1}e^{-\frac{(x-\lambda_i t)^2}{Ct}}+C\delta (|x-\lambda_i t|+1)^{-3/2} \\
    & \leq C\delta \Psi_i(x,t).
  \end{align}
  Case (iii): $|x-\lambda_i t|\geq t+1$. Again, let us only consider the case when $x-\lambda_i t>0$. By the assumptions of Theorem~\ref{SectionI:Theorem:MainTheorem:PointwiseEstimates} and~\eqref{SectionIII:CNS_Lag_per_Fundamental_Solution_Modified_Explicit_Formula}, we have
  \begin{align}
    |\mathcal{I}_{i,1}(x,t)|
    & \leq C(t+1)^{-1/2}\int_{-\infty}^{(x-\lambda_i t)/2}e^{-\frac{(x-\lambda_i t)^2}{Ct}}|v_{0i}(y)|\, dy \\
    & \quad +C\delta (t+1)^{-1/2}\int_{(x-\lambda_i t)/2}^{\infty}e^{-\frac{(x-y-\lambda_i t)^2}{Ct}}(y+1)^{-3/2}dy \\
    & \quad +C\delta (t+1)^{-1/2}e^{-\frac{(x-\lambda_i t)^2}{2\nu t}} \\
    & \leq C\delta (t+1)^{-1/2}e^{-\frac{(x-\lambda_i t)^2}{Ct}}+C\delta (|x-\lambda_i t|+1)^{-3/2}.
  \end{align}
  Since
  \begin{equation}
    e^{-\frac{(x-\lambda_i t)^2}{Ct}}\leq e^{-\frac{|x-\lambda_i t|}{C}},
  \end{equation}
  we obtain~\eqref{SectionIII:Lemma:Bound_of_Ii:Proof:Bound_of_Ii1}.

  We next show that
  \begin{equation}
    \label{SectionIII:Lemma:Bound_of_Ii:Proof:Bound_of_Ii2}
    |\mathcal{I}_{i,2}(x,t)|\leq C\delta \Psi_i(x,t).
  \end{equation}
  By the assumptions of Theorem~\ref{SectionI:Theorem:MainTheorem:PointwiseEstimates}, \eqref{SectionIII:Bound_of_G-G*} and~\eqref{SectionIII:Q0Q1}, we have
  \begin{align}
    |\mathcal{I}_{i,2}(x,t)|
    & \leq C\sum_{j=1}^{2}(t+1)^{-1}\int_{-\infty}^{\infty}e^{-\frac{(x-y-\lambda_j t)^2}{Ct}}\left|
    \begin{pmatrix}
      u_{01} \\
      u_{02}
    \end{pmatrix}
    \right| (y)\, dy+Ce^{-\frac{c^2}{\nu}t}|u_{01}(x)| \\
    & \leq C\sum_{j=1}^{2}(t+1)^{-1}\int_{-\infty}^{\infty}e^{-\frac{(x-y-\lambda_j t)^2}{Ct}}\left|
    \begin{pmatrix}
      u_{01} \\
      u_{02}
    \end{pmatrix}
    \right| (y)\, dy+C\delta e^{-\frac{c^2}{\nu}t}(|x|+1)^{-3/2}.
  \end{align}
  Let
  \begin{equation}
    I_j(x,t)\coloneqq (t+1)^{-1}\int_{-\infty}^{\infty}e^{-\frac{(x-y-\lambda_j t)^2}{Ct}}\left|
    \begin{pmatrix}
      u_{01} \\
      u_{02}
    \end{pmatrix}
    \right| (y)\, dy.
  \end{equation}
  To show~\eqref{SectionIII:Lemma:Bound_of_Ii:Proof:Bound_of_Ii2}, it suffices to show the following case-by-case:
  \begin{equation}
    |I_j(x,t)|\leq C\delta \tilde{\psi}(x,t;\lambda_j).
  \end{equation}
  Case (i): $|x-\lambda_j t|\leq (t+1)^{1/2}$. By the assumptions of Theorem~\ref{SectionI:Theorem:MainTheorem:PointwiseEstimates}, we have
  \begin{equation}
    |I_j(x,t)|\leq C\delta(t+1)^{-1}\leq C\delta \tilde{\psi}(x,t;\lambda_j).
  \end{equation}
  Case (ii): $(t+1)^{1/2}<|x-\lambda_j t|$. Suppose that $x-\lambda_j t>0$; the case when $x-\lambda_j t\leq 0$ can be treated in a similar manner. By the assumptions of Theorem~\ref{SectionI:Theorem:MainTheorem:PointwiseEstimates}, we have
  \begin{align}
    |I_j(x,t)|
    & \leq C(t+1)^{-1}\int_{-\infty}^{(x-\lambda_j t)/2}e^{-\frac{(x-\lambda_j t)^2}{Ct}}\left|
    \begin{pmatrix}
      u_{01} \\
      u_{02}
    \end{pmatrix}
    \right| (y)\, dy \\
    & \quad +C\delta(t+1)^{-1}\int_{(x-\lambda_j t)/2}^{\infty}e^{-\frac{(x-y-\lambda_j t)^2}{Ct}}(y+1)^{-3/2}dy \\
    & \leq C\delta(t+1)^{-1}e^{-\frac{(x-\lambda_j t)^2}{Ct}}+C\delta(|x-\lambda_j t|+1)^{-3/2}(t+1)^{-1/2} \\
    & \leq C\delta \tilde{\psi}(x,t;\lambda_j).
  \end{align}
  This proves~\eqref{SectionIII:Lemma:Bound_of_Ii:Proof:Bound_of_Ii2}.

  By using~\eqref{SectionIII:GbIntByParts} and~\eqref{SectionIII:Bounds_of_Gb}, we can show that
  \begin{equation}
    |\mathcal{I}_{i,3}(x,t)|\leq C\delta \Psi_i(x,t).
  \end{equation}
  The proof is similar to that of~\eqref{SectionIII:Lemma:Bound_of_Ii:Proof:Bound_of_Ii2} except that we also need to handle integrals of the form
  \begin{equation}
    \int_{0}^{\infty}e^{-\frac{|x+y|+t}{C}}|f(y)|\, dy, \quad \int_{-\infty}^{0}e^{-\frac{|x-y|+t}{C}}|f(y)|\, dy,
  \end{equation}
  where $f$ is a function satisfying $|f(x)|\leq C\delta$. These are easily seen to be bounded by
  \begin{equation}
    C\delta e^{-\frac{|x|+t}{C}}\int_{-\infty}^{\infty}e^{-\frac{|y|}{C}}\, dy\leq C\delta e^{-\frac{|x|+t}{C}}\leq C\delta \Psi_i(x,t).
  \end{equation}

  Finally, we need to show that
  \begin{equation}
    \label{SectionIII:Lemma:Bound_of_Ii:Proof:Bound_of_Ii3}
    |\mathcal{I}_{i,4}(x,t)|\leq C\delta \Psi_i(x,t).
  \end{equation}
  Since
  \begin{equation}
    g_{T,i}-g_{i}^{*}=(g_{T,i}-g_i)+(g_i-g_{i}^{*}),
  \end{equation}
  \eqref{SectionIII:Lemma:Bound_of_Ii:Proof:Bound_of_Ii3} follows from~\eqref{SectionIII:Bound_of_G-G*}, \eqref{SectionIII:GbIntByParts} and~\eqref{SectionIII:Bounds_of_Gb}.
\end{proof}

\begin{remark}
  \label{Remark:Nonlinearity_is_Essential}
  A careful look at the proof above reveals the following: If we impose stronger and stronger spatial decay conditions on $u_{0i}$ ($i=1,2$), then $\mathcal{I}_i(\pm 0,t)$ ($i=1,2$) decay faster and faster. For example, if we assume that $u_{0i}$ ($i=1,2$) decay exponentially, then $\mathcal{I}_i(\pm 0,t)$ ($i=1,2$) also decay exponentially; therefore, the decay rate $-3/2$ in the decay estimate $|V(t)|=O(t^{-3/2})$ (Corollary~\ref{Corollary:DecayEstimateV}) comes from the nonlinear contribution $\mathcal{N}_i(\pm 0,t)$; see Lemma~\ref{SectionIII:Lemma:Bound_of_Ni}.
\end{remark}

We next give a bound for the nonlinear term $\mathcal{N}_i(x,t)$. By integration by parts, we have
\begin{align}
  \mathcal{N}_i(x,t)
  & =\int_{0}^{t}\int_{-\infty}^{\infty}g_{i}^{*}(x-y,t-s)
  \begin{pmatrix}
    N_1-N_{1}^{*} \\
    N_2-N_{2}^{*}
  \end{pmatrix}_x
  (y,s)\, dyds \\
  & \quad +\int_{0}^{t}\int_{-\infty}^{\infty}\partial_x(g_i-g_{i}^{*})(x-y,t-s)
  \begin{pmatrix}
    N_1 \\
    N_2
  \end{pmatrix}
  (y,s)\, dyds \\
  & \quad -\int_{0}^{t}\int_{0}^{\infty}\partial_x g_{R,i}(x+y,t-s)
  \begin{pmatrix}
    N_1 \\
    N_2
  \end{pmatrix}
  (y,s)\, dyds \\
  & \quad +\int_{0}^{t}\int_{-\infty}^{0}\partial_x g_{R,i}(x-y,t-s)
  \begin{pmatrix}
    N_1 \\
    N_2
  \end{pmatrix}
  (y,s)\, dyds \\
  & \quad +\int_{0}^{t}g_{i}^{*}(x,t-s)
  \begin{pmatrix}
    \llbracket N_1 \rrbracket \\
    \llbracket N_2 \rrbracket
  \end{pmatrix}
  (0,s)\, ds.
\end{align}
We note that
\begin{equation}
  N_i=-\frac{1}{2}\left( \theta_{1}^{2}+\theta_{2}^{2} \right)+\frac{\nu c}{p''(1)}\left( \theta_1 \theta_{1x}-\theta_2 \theta_{2x} \right)+R_i,
\end{equation}
where
\begin{align}
  \label{SectionIII:Bound_of_Ri}
  \begin{aligned}
    & |R_i-\left( -v_1 \theta_1+v_1 \theta_2+v_2 \theta_1-v_2 \theta_2  \right)| \\
    & \leq C\sum_{j,k=1}^{2}|v_j|\left( |v_k|+|v_{kx}|+|\theta_{kx}| \right)+C\sum_{j\neq k}|\theta_j|\left( |\theta_k|+|\theta_{kx}| \right) \\
    & \quad +C\sum_{j,k=1}^{2}|v_{jx}||\theta_k|+C\sum_{j,k,l=1}^{2}\left( |v_j|+|\theta_j|+|\theta_{jx}| \right) |\theta_k||\theta_l|.
  \end{aligned}
\end{align}
Here, we used the formulae $\tau=(2c/p''(1))(-\theta_1-v_1+\theta_2+v_2)$ and $u=(2c^2/p''(1))(\theta_1+v_1+\theta_2+v_2)$. Since $\theta_i$ is smooth, we have $\llbracket N_i-N_{i}^{*} \rrbracket(0,t)=\llbracket N_i \rrbracket(0,t)=\llbracket R_i \rrbracket(0,t)$. Thus, by integration by parts,
\begin{align}
  \label{SectionIII:NonlinearTerms}
  \begin{aligned}
    \mathcal{N}_i(x,t)
    & =-\frac{1}{2}\int_{0}^{t}\int_{-\infty}^{\infty}g_{i}^{*}(x-y,t-s)
    \begin{pmatrix}
      \theta_{2}^{2} \\
      \theta_{1}^{2}
    \end{pmatrix}_x
    (y,s)\, dyds \\
    & \quad +\frac{\nu c}{2p''(1)}\int_{0}^{t}\int_{-\infty}^{\infty}g_{i}^{*}(x-y,t-s)
    \begin{pmatrix}
      \theta_{1}^{2}-\theta_{2}^{2} \\
      \theta_{1}^{2}-\theta_{2}^{2}
    \end{pmatrix}_{xx}
    (y,s)\, dyds \\
    & \quad +\int_{0}^{t}\int_{-\infty}^{\infty}\partial_x g_{i}^{*}(x-y,t-s)
    \begin{pmatrix}
      R_1 \\
      R_2
    \end{pmatrix}
    (y,s)\, dyds \\
    & \quad +\int_{0}^{t}\int_{-\infty}^{\infty}\partial_x(g_i-g_{i}^{*})(x-y,t-s)
    \begin{pmatrix}
      N_1 \\
      N_2
    \end{pmatrix}
    (y,s)\, dyds \\
    & \quad -\int_{0}^{t}\int_{0}^{\infty}\partial_x g_{R,i}(x+y,t-s)
    \begin{pmatrix}
      N_1 \\
      N_2
    \end{pmatrix}
    (y,s)\, dyds \\
    & \quad +\int_{0}^{t}\int_{-\infty}^{0}\partial_x g_{R,i}(x-y,t-s)
    \begin{pmatrix}
      N_1 \\
      N_2
    \end{pmatrix}
    (y,s)\, dyds.
  \end{aligned}
\end{align}

\begin{lemma}
  \label{SectionIII:Lemma:Bound_of_Ni}
  Under the assumptions of Theorem~\ref{SectionI:Theorem:MainTheorem:PointwiseEstimates}, there exists a positive constant $C$ such that
  \begin{equation}
    \label{SectionIII:Lemma:Bound_of_Ni:Bound_of_Ni}
    |\mathcal{N}_i(x,t)|\leq C(\delta+P(t))^2 \Psi_i(x,t)
  \end{equation}
  for $(x,t)\in \mathbb{R}_* \times (0,\infty)$.
\end{lemma}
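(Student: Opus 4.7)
\noindent\textbf{Proof proposal for Lemma~\ref{SectionIII:Lemma:Bound_of_Ni}.} The plan is to bound each of the six integrals in~\eqref{SectionIII:NonlinearTerms} separately by $C(\delta+P(t))^2 \Psi_i(x,t)$ using (a) pointwise decay estimates of the kernels $g_i^*$, $g_i-g_i^*$ and $g_{R,i}$ already recorded in~\eqref{SectionIII:CNS_Lag_per_Fundamental_Solution_Modified_Explicit_Formula}, \eqref{SectionIII:Bound_of_G-G*}, \eqref{SectionIII:GbIntByParts} and~\eqref{SectionIII:Bounds_of_Gb}, and (b) pointwise bounds on the source terms $\theta_i,\theta_{ix}, v_i, v_{ix}, R_i$. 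From the explicit formula~\eqref{theta_explicit} one reads off
\begin{equation}
  |\theta_i(x,t)|\leq C\delta(t+1)^{-1/2}e^{-\frac{(x-\lambda_i(t+1))^2}{C(t+1)}}, \quad |\partial_x^k \theta_i(x,t)|\leq C\delta(t+1)^{-(k+1)/2}e^{-\frac{(x-\lambda_i(t+1))^2}{C(t+1)}},
\end{equation}
while by the definition of $P(t)$ we have $|v_i(x,s)|\leq P(t)\Psi_i(x,s)$ and $|v_{ix}(x,s)|\leq P(t)(s+1)^{-1}$ for $0\le s\le t$. Combining these with~\eqref{SectionIII:Bound_of_Ri} yields $|R_i(y,s)|\leq C(\delta+P(t))^2\bigl[\Psi_1(y,s)+\Psi_2(y,s)\bigr](s+1)^{-1/2}$ (up to adjusting the exponents in the weights), which in particular dominates the full nonlinear source $N_i$ together with the purely quadratic $\theta$-terms.

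The first two terms in~\eqref{SectionIII:NonlinearTerms}, i.e.\ the convolution of $g_i^*$ with $(\theta_j^2)_x$ and $(\theta_1^2-\theta_2^2)_{xx}$, are precisely the convolutions that appear in the Cauchy-problem analysis of Liu--Zeng~\cite{LZ97,Zeng94}; one bounds them using the explicit Gaussian form~\eqref{SectionIII:gi*_Explicit_Formula} of $g_i^*$ and the standard case analysis (i) $|x-\lambda_i t|\le (t+1)^{1/2}$, (ii) $(t+1)^{1/2}<|x-\lambda_i t|\le t+1$, (iii) $|x-\lambda_i t|\ge t+1$, distinguishing whether the characteristic $\lambda_i$ of the kernel agrees or disagrees with that of the source $\theta_j$. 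When they agree, the quadratic structure gives an extra $(s+1)^{-1/2}$ inside the integral and produces the $\psi_{3/2}(x,t;\lambda_i)$ piece of $\Psi_i$; when they differ, the transversality of the two characteristics together with integration against the Gaussian produces $\tilde\psi(x,t;\lambda_{i'})$. For the third term involving $\partial_x g_i^*$ convolved with $R_i$, the extra spatial derivative on $g_i^*$ is absorbed by the faster decay $(s+1)^{-1/2}$ built into our bound on $R_i$, and the same case analysis delivers $C(\delta+P(t))^2\Psi_i$.

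For the fourth integral, split $g_i-g_i^*$ using~\eqref{SectionIII:Bound_of_G-G*}: the exponentially decaying delta and its derivative produce a boundary term $e^{-c^2 t/\nu}|N_i(x,t)|$, which is harmless, while the Gaussian remainder carries the enhanced factor $(t-s+1)^{-1/2}(t-s)^{-1}$ after taking the $x$-derivative, exactly the extra decay needed to keep the convolution in $\Psi_i$. The last two (reflection) integrals are the genuinely new ones: here we use~\eqref{SectionIII:GbIntByParts}, which relates $g_{R,i}$ to $\partial_x g_{T,i}\bigl(\begin{smallmatrix}1&0\\0&-1\end{smallmatrix}\bigr)$, together with~\eqref{SectionIII:Bounds_of_Gb} applied with $k=2$. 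The extra $t^{-1/2}$ from the second $x$-derivative, combined with the fact that the argument of the kernel is $x\pm y$ with $x,y$ of the same sign (so the Gaussian is evaluated far from $\lambda_i(t-s)$ in the physically relevant region), makes the reflected contribution decay strictly faster than the transmitted one; the localized exponential piece $e^{-(|x|+t)/C}$ is trivially absorbed into $\Psi_i(x,t)$.

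The main obstacle is the case analysis in the convolution estimates of the reflection terms: one has to verify, carefully splitting according to whether $y$ is close to $0$ or close to a characteristic, that the product of the kernel bound from~\eqref{SectionIII:Bounds_of_Gb} with the $\Psi_j$-type bound on $N_i$ integrates to something controlled by $\Psi_i(x,t)$, and not merely by a fixed power of $t+1$. This requires re-running the three-case argument of Lemma~\ref{SectionIII:Lemma:Bound_of_Ii} now with $x$ replaced by $x+y$ (respectively $x-y$ for sources in $\mathbb{R}_{<0}$), and tracking in which regime the Gaussian decay of the kernel dominates, and in which the polynomial weight in $\Psi_j$ dominates; the interaction between the two characteristics $\lambda_i$ and $\lambda_{i'}$ at the reflection point $y=0$ is what ultimately produces the $\tilde\psi(x,t;\lambda_{i'})$ summand in $\Psi_i$ rather than the slower $\psi_{3/2}(x,t;\lambda_{i'})$.
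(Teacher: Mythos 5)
Your high-level decomposition of $\mathcal{N}_i$ into the six integrals, and your recognition that the transmitted and reflected kernels obey different decay rates, are both in line with the paper. However, there is a genuine gap in how you propose to treat the transverse-characteristic convolutions, and this gap is fatal to the sharp bound $|\mathcal{N}_i(x,t)|\leq C(\delta+P(t))^2\Psi_i(x,t)$.

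You claim that for the first two terms of \eqref{SectionIII:NonlinearTerms} the "standard case analysis (i)--(iii)" used in Lemma~\ref{SectionIII:Lemma:Bound_of_Ii} suffices, and that when the kernel characteristic $\lambda_i$ disagrees with the source characteristic $\lambda_j$ "the transversality of the two characteristics together with integration against the Gaussian produces $\tilde\psi(x,t;\lambda_{i'})$." That is not the case. Take, for instance, $\int_0^t\!\int g_i^*(x-y,t-s)(\theta_j^2)_x(y,s)\,dy\,ds$ with $j\neq i$, evaluated at $x=0$. Using only the pointwise Gaussian bounds $|g_i^*|\lesssim(t-s)^{-1/2}e^{-(x-y-\lambda_i(t-s))^2/C(t-s)}$ and $|(\theta_j^2)_x|\lesssim\delta^2(s+1)^{-3/2}e^{-(y-\lambda_j(s+1))^2/C(s+1)}$ and carrying out the Gaussian convolution in $y$ and then the $s$-integral (the two Gaussians overlap only for $s$ within $O(\sqrt{t})$ of $t/2$), one obtains a bound of order $\delta^2(t+1)^{-1}$ at $x=0$. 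But $\Psi_i(0,t)\sim(t+1)^{-3/2}$, so this is off by $(t+1)^{1/2}$; this is precisely the suboptimal decay rate mentioned in the introduction that one must avoid. The paper closes this gap by invoking Lemma~\ref{LZlemma:3.4}: after writing the term as $\int_0^t\!\int\partial_x g_i^*(x-y,t-s)\,\theta_j^2(y,s)\,dy\,ds$, it uses the fact that $h=\theta_j^2$ satisfies $L_j h=\partial_x(-2\theta_j^3/3)+\nu\theta_{jx}^2$, i.e.\ approximately a Burgers-type equation along $\lambda_j$, so that the transport--diffusion operator $L_j$ can be moved onto the kernel, which regenerates a spatial derivative and gains an extra factor of $(t+1)^{-1/2}$ in the transverse direction. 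The same device is used for the $v_j\theta_j$ contribution in $\mathcal{N}_{i,3}$, where the paper computes $L_j(\theta_j v_j)$ explicitly. Without this structural input (or some equivalent integration-by-parts-in-time argument), the convolution estimates you outline cannot yield the $\tilde\psi(x,t;\lambda_{i'})$ piece of $\Psi_i$, only the weaker rate $(t+1)^{-1}$ on the line $x=0$.

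Two secondary points. First, for $\mathcal{N}_{i,4}$ you write that "the Gaussian remainder carries the enhanced factor $(t-s+1)^{-1/2}(t-s)^{-1}$, exactly the extra decay needed." The singularity $(t-s)^{-1}$ is not time-integrable near $s=t$ against $|N_i(\cdot,s)|\lesssim(s+1)^{-1}$, so near $s=t$ one must integrate by parts in $y$, which produces a boundary term $\llbracket N_i\rrbracket(0,s)$; the paper handles this by splitting $N_i=N_i^a+N_i^b$ and exploiting that $|N_i^a(\pm 0,s)|\lesssim(\delta+P)^2(s+1)^{-3}$. Second, the $k=0$ case of Lemma~\ref{LZlemma:3.4} must be verified to tolerate the jump of the source at $x=0$ (the paper does this in Appendix~B). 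These are bookkeeping details, but the omission of the Lemma~\ref{LZlemma:3.4} mechanism is a conceptual gap that your proposed route would not close.
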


\begin{proof}
  We treat the case when $x>0$; the case when $x<0$ is similar. Let $\mathcal{N}_{i,1}(x,t)$ be the first term on the right-hand side of~\eqref{SectionIII:NonlinearTerms}. We show that
  \begin{equation}
    \label{SectionIII:Lemma:Bound_of_Ni1}
    |\mathcal{N}_{i,1}(x,t)|\leq C\delta^2 \Psi_i(x,t).
  \end{equation}
  By~Lemma~\ref{LZlemma:3.4} ($\alpha=2$, $k=0$), we have for $j\neq i$,
  \begin{align}
    & \left| \int_{0}^{t}\int_{-\infty}^{\infty}(t-s)^{-1/2}e^{-\frac{(x-y-\lambda_i(t-s))^2}{2\nu(t-s)}}(\theta_{j}^{2})_x(y,s)\, dyds \right| \\
    & \leq C\delta^2 [\psi_{3/2}(x,t;\lambda_i)+\Theta_2(x,t;\lambda_j,\nu^*) \\
    & \phantom{\leq C\delta^2 [} \quad +|x-\lambda_i(t+1)|^{-1}|x-\lambda_j(t+1)|^{-1/2}\chi_K(x,t)],
  \end{align}
  where $\nu^*$, $K$ are large positive constants, $\Theta_{\alpha}(x,t;\lambda,\mu)$ is defined by~\eqref{Def:Theta} and
  \begin{equation}
    \chi_K(x,t)\coloneqq \mathrm{char}\left\{ -c(t+1)+K(t+1)^{1/2}\leq x\leq c(t+1)-K(t+1)^{1/2} \right\}.
  \end{equation}
  Here, $\mathrm{char}\{ S \}$ is the characteristic function of a set $S$. We note that
  \begin{equation}
    |x-\lambda_i(t+1)|^{-1}|x-\lambda_j(t+1)|^{-1/2}\chi_K(x,t)\leq C\psi_{3/2}(x,t;\lambda_i)+C[|x-\lambda_j(t+1)|^3+(t+1)^{5/2}]^{-1/2}\leq C\Psi_i(x,t),
  \end{equation}
  which is easily shown by considering the cases $|x-\lambda_j(t+1)|\lesseqgtr c(t+1)$. The bound~\eqref{SectionIII:Lemma:Bound_of_Ni1} then follows from these and~\eqref{SectionIII:gi*_Explicit_Formula}.

  Next, let $\mathcal{N}_{i,2}(x,t)$ be the second term on the right-hand side of~\eqref{SectionIII:NonlinearTerms}. We show that
  \begin{equation}
    \label{SectionIII:Lemma:Bound_of_Ni2}
    |\mathcal{N}_{i,2}(x,t)|\leq C\delta^2 \Psi_i(x,t).
  \end{equation}
  By~Lemma~\ref{LZlemma:3.4} ($\alpha=2$, $k=1$), we have for $j\neq i$,
  \begin{align}
    & \left| \int_{0}^{t}\int_{-\infty}^{\infty}(t-s)^{-1/2}e^{-\frac{(x-y-\lambda_i(t-s))^2}{2\nu(t-s)}}(\theta_{j}^{2})_{xx}(y,s)\, dyds \right| \\
    & \leq C\delta^2(t+1)^{-1/2}[\psi_{3/2}(x,t;\lambda_i)+\Theta_2(x,t;\lambda_j,\nu^*)+|x-\lambda_i(t+1)|^{-1}|x-\lambda_j(t+1)|^{-1/2}\chi_K(x,t)] \\
    & \leq C\delta^2(t+1)^{-1/2}\Psi_i(x,t).
  \end{align}
  By~Lemma~\ref{LZlemma:3.2} ($\alpha=0$, $\beta=3$),
  \begin{align}
    & \left| \int_{0}^{t}\int_{-\infty}^{\infty}(t-s)^{-1/2}e^{-\frac{(x-y-\lambda_i(t-s))^2}{2\nu(t-s)}}(\theta_{i}^{2})_{xx}(y,s)\, dyds \right| \\
    & \leq C\int_{0}^{t}\int_{-\infty}^{\infty}(t-s)^{-1}e^{-\frac{(x-y-\lambda_i(t-s))^2}{C(t-s)}}|(\theta_i \theta_{ix})(y,s)|\, dyds \\
    & \leq C\delta^2 \Theta_2(x,t;\lambda_i,\nu^*)\log(t+2)\leq C\delta^2 \Psi_i(x,t).
  \end{align}
  These and~\eqref{SectionIII:gi*_Explicit_Formula} show~\eqref{SectionIII:Lemma:Bound_of_Ni2}.
  
  Next, let $\mathcal{N}_{i,3}(x,t)$ be the third term on the right-hand side of~\eqref{SectionIII:NonlinearTerms}. We show that
  \begin{equation}
    \label{SectionIII:Lemma:Bound_of_Ni3}
    |\mathcal{N}_{i,3}(x,t)|\leq C(\delta+P(t))^2 \Psi_i(x,t).
  \end{equation}
  Let us first consider the terms involving $v_j \theta_k$ appearing on the left-hand side of~\eqref{SectionIII:Bound_of_Ri}. Since $|v_i(x,t)|\leq CP(t)(t+1)^{-3/4}$, by~Lemma~\ref{LZlemma:3.2} ($\alpha=0$, $\beta=5/2$), we have for any $j$,
  \begin{equation}
    \left| \int_{0}^{t}\int_{-\infty}^{\infty}(t-s)^{-1}e^{-\frac{(x-y-\lambda_i(t-s))^2}{C(t-s)}}(\theta_i v_j)(y,s)\, dyds \right| \leq C\delta P(t)\Theta_{3/2}(x,t;\lambda_i,\nu^*) \leq C\delta P(t)\Psi_i(x,t).
  \end{equation}
  Next, note that $|\tilde{\psi}(x,t;\lambda_j)|\leq C(t+1)^{-1}$. By Lemmas~\ref{LZlemma:3.3} ($\alpha=0$, $\beta=3$) and~\ref{LZlemma:3.5} ($\alpha=0$, $\beta=1$), we have for $j\neq i$,
  \begin{align}
    & \left| \int_{0}^{t}\int_{-\infty}^{\infty}(t-s)^{-1}e^{-\frac{(x-y-\lambda_i(t-s))^2}{C(t-s)}}(v_i \theta_j)(y,s)\, dyds \right| \\
    & \leq C\delta P(t)\int_{0}^{t}\int_{-\infty}^{\infty}(t-s)^{-1}e^{-\frac{(x-y-\lambda_i(t-s))^2}{C(t-s)}}(s+1)^{-1/2}\psi_{3/2}(y,s;\lambda_i)\, dyds \\
    & \quad +C\delta P(t)\int_{0}^{t}\int_{-\infty}^{\infty}(t-s)^{-1}e^{-\frac{(x-y-\lambda_i(t-s))^2}{C(t-s)}}\Theta_{3}(y,s;\lambda_j,2\nu)\, dyds \\
    & \leq C\delta P(t)[\psi_{3/2}(x,t;\lambda_i)+\Theta_2(x,t;\lambda_i,\nu^*)\log(t+2)+\Theta_2(x,t;\lambda_j,\nu^*) \\
    & \phantom{\leq C\delta P(t)[} \quad +|x-\lambda_i(t+1)|^{-1}|x-\lambda_j(t+1)|^{-1/2}\chi_K(x,t)] \\
    & \leq C\delta P(t)\Psi_i(x,t).
  \end{align}
  Next, let $L_j \coloneqq \partial_t+\lambda_j \partial_x-(\nu/2)\partial_{x}^{2}$. Then
  \begin{equation}
    L_j\left( \theta_j v_j \right)=v_j L_j \theta_j+\theta_j L_j u_j-\theta_j L_j \theta_j -\nu \theta_{jx}v_{jx}.
  \end{equation}
  Note that $L_j \theta_j=-(\theta_{j}^{2}/2)_x$ and
  \begin{align}
    \theta_j L_j u_j
    & =(\nu/2)\theta_j u_{j'xx}+\theta_j N_{jx} \\
    & =(\nu/2)(\theta_j u_{j'x})_x-(\nu/2)\theta_{jx}u_{j'x}+(\theta_j N_j)_x-\theta_{jx}N_j,
  \end{align}
  where $j'=3-j$. Since $|N_j(x,t)|\leq C(\delta+P(t))^2(t+1)^{-1}$, we can apply~Lemma~\ref{LZlemma:3.4} ($\alpha=2$, $k=0$) to obtain, for $j\neq i$,
  \begin{align}
    & \left| \int_{0}^{t}\int_{-\infty}^{\infty}\partial_x \left\{ (t-s)^{-1/2}e^{-\frac{(x-y-\lambda_i(t-s))^2}{2\nu(t-s)}} \right\} (v_j \theta_j)(y,s)\, dyds \right| \\
    & \leq C(\delta+P(t))^2 [\psi_{3/2}(x,t;\lambda_i)+\Theta_2(x,t;\lambda_j,\nu^*)+|x-\lambda_i(t+1)|^{-1}|x-\lambda_j(t+1)|^{-1/2}\chi_K(x,t)] \\
    & \leq C(\delta+P(t))^2 \Psi_i(x,t).
  \end{align}
  Next, let us consider the terms involving $v_j v_k$ appearing on the right-hand side of~\eqref{SectionIII:Bound_of_Ri}. Since
  \begin{align}
    & \left| \int_{0}^{t}\int_{-\infty}^{\infty}(t-s)^{-1}e^{-\frac{(x-y-\lambda_i(t-s))^2}{C(t-s)}}(v_j v_k)(y,s)\, dyds \right| \\
    & \leq C(\delta+P(t))^2 \sum_{l,m=1}^{2}\int_{0}^{t}\int_{-\infty}^{\infty}(t-s)^{-1}e^{-\frac{(x-y-\lambda_i(t-s))^2}{C(t-s)}}\psi_{3/2}(y,s;\lambda_l)\psi_{3/2}(y,s;\lambda_m)\, dyds,
  \end{align}
  we can apply~Lemmas~\ref{LZlemma:3.5} and~\ref{LZlemma:3.6} ($\alpha=0$, $\beta=3/2$) to obtain
  \begin{align}
    & \left| \int_{0}^{t}\int_{-\infty}^{\infty}(t-s)^{-1}e^{-\frac{(x-y-\lambda_i(t-s))^2}{C(t-s)}}(v_j v_k)(y,s)\, dyds \right| \\
    & \leq C(\delta+P(t))^2(t+1)^{-1/4}[\log(t+2)\psi_{3/2}(x,t;\lambda_i)+\psi_{3/2}(x,t;\lambda_{i'})] \\
    & \quad +C(\delta+P(t))^2 |x-\lambda_i(t+1)|^{-1}|x-\lambda_{i'}(t+1)|^{-1/2}\chi_K(x,t) \\
    & \leq C(\delta+P(t))^2 \Psi_i(x,t),
  \end{align}
  where $i'=3-i$. By conducting similar calculations, we can treat other integrals involving the terms appearing on the right-hand side of~\eqref{SectionIII:Bound_of_Ri}. Thus, we obtain~\eqref{SectionIII:Lemma:Bound_of_Ni3}.
  
  Next, let $\mathcal{N}_{i,4}(x,t)$ be the fourth term on the right-hand side of~\eqref{SectionIII:NonlinearTerms}. We show that
  \begin{equation}
    \label{SectionIII:Lemma:Bound_of_Ni4}
    |\mathcal{N}_{i,4}(x,t)|\leq C(\delta+P(t))^2 \Psi_i(x,t).
  \end{equation}
  Note first that
  \begin{equation}
    \label{SectionIII:Lemma:Q0N}
    Q_0
    \begin{pmatrix}
      r_1 & r_2
    \end{pmatrix}
    \begin{pmatrix}
      N_1 \\
      N_2
    \end{pmatrix}
    =Q_0
    \begin{pmatrix}
      r_1 & r_2
    \end{pmatrix}
    \begin{pmatrix}
      l_1 \\
      l_2
    \end{pmatrix}
    \begin{pmatrix}
      0 \\
      N
    \end{pmatrix}
    =Q_0
    \begin{pmatrix}
      0 \\
      N
    \end{pmatrix}
    =0,
  \end{equation}
  where $Q_0$ is given by~\eqref{SectionIII:Q0Q1}. Let $N_i=N_{i}^{a}+N_{i}^{b}$, where
  \begin{equation}
    \label{Nia_Nib}
    N_{i}^{a}\coloneqq l_i
    \begin{pmatrix}
      0 \\
      -p(1+\tau)+p(1)-c^2 \tau
    \end{pmatrix}
    , \quad N_{i}^{b}\coloneqq l_i
    \begin{pmatrix}
      0 \\
      -\nu\frac{\tau}{1+\tau}u_x
    \end{pmatrix}.
  \end{equation}
  Denote by $\mathcal{N}_{i,4}^{a}(x,t)$ and $\mathcal{N}_{i,4}^{b}(x,t)$ the corresponding terms in $\mathcal{N}_{i,4}(x,t)$. Let us first consider $\mathcal{N}_{i,4}^{a}(x,t)$. Divide the domain of integration in $t$ into $[0,t/2]$ and $[t/2,t]$, and denote by $\mathcal{N}_{i,4,1}^{a}(x,t)$ and $\mathcal{N}_{i,4,2}^{a}(x,t)$ the corresponding terms in $\mathcal{N}_{i,4}^{a}(x,t)$. Since
  \begin{equation}
    |N_{i}^{a}(x,t)|\leq C(\delta+P(t))^2(t+1)^{-1/2}\sum_{j=1}^{2}\left( \Theta_1(x,t;\lambda_j,2\nu)+\psi_{3/2}(x,t;\lambda_j) \right),
  \end{equation}
  by~\eqref{SectionIII:Bound_of_G-G*}, \eqref{SectionIII:Lemma:Q0N}, Lemmas~\ref{LZlemma:3.2} ($\alpha=1$, $\beta=2$), \ref{LZlemma:3.3} ($\alpha=1$, $\beta=2$; Remark~\ref{Remark:LZlemma:3.3}), \ref{LZlemma:3.5} ($\alpha=1$, $\beta=1$), \ref{LZlemma:3.6} ($\alpha=1$, $\beta=1$) and~\ref{LZlemma:3.9}, we have
  \begin{align}
    |\mathcal{N}_{i,4,1}^{a}(x,t)|
    & \leq C(\delta+P(t))^2 \sum_{j,k=1}^{2}\int_{0}^{t/2}\int_{-\infty}^{\infty}(t-s)^{-1}(t+1-s)^{-1/2}e^{-\frac{(x-y-\lambda_j(t-s))^2}{C(t-s)}} \\
    & \quad \cdot (s+1)^{-1/2}\left( \Theta_1(y,s;\lambda_k,2\nu)+\psi_{3/2}(y,s;\lambda_k) \right) \, dyds \\
    & \quad +C\sum_{j=1}^{2}\int_{0}^{t/2}e^{-\frac{c^2}{\nu}(t-s)}|N_{j}^{a}(x,s)|\, ds \\
    & \leq C(\delta+P(t))^2 \sum_{j=1}^{2}\Theta_2(x,t;\lambda_j,\nu^*) \\
    & \quad +C(\delta+P(t))^2 \sum_{j\neq k}|x-\lambda_j(t+1)|^{-1}|x-\lambda_k(t+1)|^{-1/2}\chi_K(x,t;\lambda_j,\lambda_k) \\
    & \quad +C(\delta+P(t))^2(t+1)^{-1/2}\log(t+2)\sum_{j=1}^{2}\psi_{3/2}(x,t;\lambda_j) \\
    & \quad +C(\delta+P(t))^2 \sum_{j=1}^{2}\int_{0}^{t/2}e^{-\frac{c^2}{\nu}(t-s)}(s+1)^{-1/4}\psi_{3/2}(x,s;\lambda_j)\, ds \\
    & \leq C(\delta+P(t))^2 \Psi_i(x,t).
  \end{align}
  On the other hand, by integration by parts,
  \begin{align}
    \mathcal{N}_{i,4,2}^{a}(x,t)
    & =\int_{t/2}^{t}\int_{-\infty}^{\infty}(g_i-g_{i}^{*})(x-y,t-s)
    \begin{pmatrix}
      N_{1}^{a} \\
      N_{2}^{a}
    \end{pmatrix}_x
    (y,s)\, dyds \\
    & \quad +\int_{t/2}^{t}(g_i-g_{i}^{*})(x,t-s)
    \begin{pmatrix}
      \llbracket N_{1}^{a} \rrbracket \\
      \llbracket N_{2}^{a} \rrbracket
    \end{pmatrix}
    (0,s)\, ds.
  \end{align}
  Since
  \begin{equation}
    |N_{ix}^{a}(x,t)|\leq C(\delta+P(t))^2(t+1)^{-1}\sum_{j=1}^{2}\left( \Theta_1(x,t;\lambda_j,2\nu)+\psi_{3/2}(x,t;\lambda_j) \right)
  \end{equation}
  and
  \begin{equation}
    |N_{i}^{a}(\pm 0,t)|\leq C(\delta+P(t))^2(t+1)^{-3},
  \end{equation}
  by~\eqref{SectionIII:Bound_of_G-G*}, \eqref{SectionIII:Lemma:Q0N}, Lemmas~\ref{LZlemma:3.2} ($\alpha=0$, $\beta=3$), \ref{LZlemma:3.3} ($\alpha=0$, $\beta=3$; Remark~\ref{Remark:LZlemma:3.3}), \ref{LZlemma:3.5} ($\alpha=0$, $\beta=2$) and \ref{LZlemma:3.6} ($\alpha=0$, $\beta=2$), we have
  \begin{align}
    \label{SectionIII:Bound_of_Ni42a}
    \begin{aligned}
      & |\mathcal{N}_{i,4,2}^{a}(x,t)| \\
      & \leq C(\delta+P(t))^2 \\
      & \quad \cdot \sum_{j,k=1}^{2}\int_{t/2}^{t}\int_{-\infty}^{\infty}(t-s)^{-1}e^{-\frac{(x-y-\lambda_j(t-s))^2}{C(t-s)}}(s+1)^{-1}\left( \Theta_1(y,s;\lambda_k,2\nu)+\psi_{3/2}(y,s;\lambda_k) \right) \, dyds \\
      & \quad +C(\delta+P(t))^2 \sum_{j=1}^{2}\int_{t/2}^{t}(t-s)^{-1/2}(t+1-s)^{-1/2}e^{-\frac{(x-\lambda_j(t-s))^2}{C(t-s)}}(s+1)^{-3}\, ds \\
      & \leq C(\delta+P(t))^2 \sum_{j=1}^{2}\Theta_2(x,t;\lambda_j,\nu^*) \\
      & \quad +C(\delta+P(t))^2 \sum_{j\neq k}|x-\lambda_j(t+1)|^{-1}|x-\lambda_k(t+1)|^{-1/2}\chi_K(x,t;\lambda_j,\lambda_k) \\
      & \quad +C(\delta+P(t))^2(t+1)^{-1/4}\sum_{j=1}^{2}\psi_{3/2}(x,t;\lambda_j) \\
      & \quad +C(\delta+P(t))^2 \sum_{j=1}^{2}\int_{t/2}^{t}(t-s)^{-1/2}e^{-\frac{(x-\lambda_j(t-s))^2}{C(t-s)}}(s+1)^{-3}\, ds \\
      & \leq C(\delta+P(t))^2 \Psi_i(x,t)+C(\delta+P(t))^2 \sum_{j=1}^{2}\int_{t/2}^{t}(t-s)^{-1/2}e^{-\frac{(x-\lambda_j(t-s))^2}{C(t-s)}}(s+1)^{-3}\, ds.
    \end{aligned}
  \end{align}
  The last term is bounded as follows. Case (i): $|x-\lambda_j t|\leq 2(t+1)^{1/2}$. In this case, it is bounded by
  \begin{align}
    C(\delta+P(t))^2(t+1)^{-3}\int_{t/2}^{t}(t-s)^{-1/2}\, ds
    & \leq C(\delta+P(t))^2(t+1)^{-5/2} \\
    & \leq C(\delta+P(t))^2 \Psi_i(x,t).
  \end{align}
  Case (ii): $|x-\lambda_j t|>2(t+1)^{1/2}$. Let
  \begin{equation}
    A_1 \coloneqq \{ t/2\leq s\leq t \mid cs\leq |x-\lambda_j t|/2 \}, \quad A_2 \coloneqq \{ t/2\leq s\leq t \mid cs>|x-\lambda_j t|/2 \}.
  \end{equation}
  If $s\in A_1$, we have
  \begin{equation}
    |x-\lambda_j(t-s)|\geq |x-\lambda_j t|/2;
  \end{equation}
  and if $s\in A_2$, we have
  \begin{equation}
    (s+1)^{-3}\leq C|x-\lambda_j t|^{-3}.
  \end{equation}
  Thus, the last term in~\eqref{SectionIII:Bound_of_Ni42a} is bounded by
  \begin{align}
    C(\delta+P(t))^2 \sum_{j=1}^{2}\left[ (t+1)^{-5/2}e^{-\frac{(x-\lambda_j t)^2}{Ct}}+(t+1)^{-1/2}|x-\lambda_j t|^{-3} \right] \leq C(\delta+P(t))^2 \Psi_i(x,t).
  \end{align}
  Therefore, we have
  \begin{equation}
    |\mathcal{N}_{i,4,2}^{a}(x,t)|\leq C(\delta+P(t))^2 \Psi_i(x,t).
  \end{equation}
  $\mathcal{N}_{i,4}^{b}(x,t)$ can be treated in a way similar to $\mathcal{N}_{i,4,1}^{a}(x,t)$ without splitting the domain of integration in $t$.

  Let $\mathcal{N}_{i,5}(x,t)$ be the fifth term on the right-hand side of~\eqref{SectionIII:NonlinearTerms}. This can be handled in a way similar to $\mathcal{N}_{i,4}(x,t)$ except that we need to consider integrals of the form
  \begin{equation}
    \int_{0}^{t/2}\int_{0}^{\infty}e^{-\frac{|x+y|+t-s}{C}}|N_j(y,s)|\, dyds,
  \end{equation}
  \begin{equation}
    \int_{t/2}^{t}\int_{0}^{\infty}e^{-\frac{|x+y|+t-s}{C}}|N_{j}^{b}(y,s)|\, dyds, \quad \int_{t/2}^{t}\int_{0}^{\infty}e^{-\frac{|x+y|+t-s}{C}}|N_{jx}^{a}(y,s)|\, dyds
  \end{equation}
  and
  \begin{equation}
    \int_{t/2}^{t}e^{-\frac{|x|+t-s}{C}}|N_{j}^{a}(+0,s)|\, ds.
  \end{equation}
  By using
  \begin{equation}
    |N_{j}^{b}(x,t)|, \quad |N_{jx}^{a}(x,t)|\leq C(\delta+P(t))^2(t+1)^{-3/2}, \quad |N_{j}^{a}(+0,t)|\leq C(\delta+P(t))^2(t+1)^{-3},
  \end{equation}
  these are easily seen to be bounded by
  \begin{equation}
    C(\delta+P(t))^2(t+1)^{-3/2}e^{-\frac{|x|}{C}}\leq C(\delta+P(t))^2 \Psi_i(x,t).
  \end{equation}
  The last term on the right-hand side of~\eqref{SectionIII:NonlinearTerms} can be treated completely analogous to $\mathcal{N}_{i,5}(x,t)$.

  This completes the proof that $|\mathcal{N}_i(x,t)|\leq C(\delta+P(t))^2 \Psi_i(x,t)$.
\end{proof}

Next, we give a bound for $\partial_x v_i(x,t)$.

\begin{lemma}
  \label{SectionIII:Lemma:Bound_of_Derivative_of_vi}
  Under the assumptions of Theorem~\ref{SectionI:Theorem:MainTheorem:PointwiseEstimates}, there exists a positive constant $C$ such that
  \begin{equation}
    |\partial_x v_i(x,t)|\leq C\delta(t+1)^{-1}+C(\delta+P(t))^2(t+1)^{-1}
  \end{equation}
  for $(x,t)\in \mathbb{R}_* \times (0,\infty)$.
\end{lemma}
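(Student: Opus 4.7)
The plan is to differentiate the integral equation~\eqref{SectionIII:viIntegralEquation} with respect to $x$, writing
\begin{equation}
  \partial_x v_i(x,t) = \partial_x \mathcal{I}_i(x,t) + \partial_x \mathcal{N}_i(x,t),
\end{equation}
and to estimate each piece by repeating the arguments in the proofs of Lemmas~\ref{SectionIII:Lemma:Bound_of_Ii} and~\ref{SectionIII:Lemma:Bound_of_Ni} with one additional spatial derivative. The heuristic is that the pointwise bound $\Psi_i(x,t)\leq C(t+1)^{-3/4}$ proved before carries an extra $(t+1)^{-1/4}$ margin over the target uniform decay $(t+1)^{-1}$, while each extra derivative on the Green's functions $g_i^*$, $g_i-g_i^*$, $g_{T,i}$ and $g_{R,i}$ (cf.~\eqref{SectionIII:Bound_of_G-G*}, \eqref{SectionIII:GbIntByParts}, \eqref{SectionIII:Bounds_of_Gb}) costs only an additional factor of roughly $(t-s)^{-1/2}$, which is integrable away from $s=t$.

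For $\partial_x \mathcal{I}_i$ I would revisit the decomposition $\mathcal{I}_i=\mathcal{I}_{i,1}+\mathcal{I}_{i,2}+\mathcal{I}_{i,3}+\mathcal{I}_{i,4}$ used in Lemma~\ref{SectionIII:Lemma:Bound_of_Ii}. In $\mathcal{I}_{i,1}$ I would now keep exactly one derivative on $g_i^*$ (i.e., not perform the extra integration by parts that moved a second derivative), using the bound $|\partial_x g_i^*(x,t)|\leq C(t+1)^{-1}$ together with $\Vert\eta_j\Vert_{L^1(\mathbb{R}_*)}\leq C\delta$. For $\mathcal{I}_{i,2}$ and $\mathcal{I}_{i,4}$ one uses~\eqref{SectionIII:Bound_of_G-G*} with $l=1$ and~\eqref{SectionIII:Bounds_of_Gb} with $k=1$, both of which already carry an additional $(t+1)^{-1/2}$ decay over the non-differentiated bounds; for $\mathcal{I}_{i,3}$ one uses~\eqref{SectionIII:GbIntByParts} together with~\eqref{SectionIII:Bounds_of_Gb} to extract the extra derivative from $G_R$. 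In each case the $L^\infty$ estimate collapses to $C\delta(t+1)^{-1}$.

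For $\partial_x \mathcal{N}_i$ I would work from~\eqref{SectionIII:NonlinearTerms} and split the time integral at $s=t/2$. On $[0,t/2]$ the derivative can be placed directly on the Green's function, invoking Lemmas~\ref{LZlemma:3.2}--\ref{LZlemma:3.6} with the exponent $\alpha$ increased by one; since $t-s\geq t/2$, the resulting extra factor $(t-s)^{-1/2}$ is uniformly bounded by $C(t+1)^{-1/2}$, which combined with the $(t+1)^{-3/4}$-type bounds already produced in Lemma~\ref{SectionIII:Lemma:Bound_of_Ni} gives a $(\delta+P(t))^2(t+1)^{-1}$ contribution. On $[t/2,t]$ I would integrate by parts in $x$, transferring the derivative onto the nonlinear source (with boundary contributions at $y=\pm 0$ matching the jump term in~\eqref{SectionIII:NonlinearTerms}), and use the bounds $|N_{ix}^a|,|N_i^b|\lesssim (\delta+P(t))^2(t+1)^{-3/2}$ near the characteristics together with $|N_i^a(\pm 0,s)|\lesssim (\delta+P(t))^2(s+1)^{-3}$, exactly the bounds exploited in the treatment of $\mathcal{N}_{i,4,2}^a$ in the proof of Lemma~\ref{SectionIII:Lemma:Bound_of_Ni}. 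The remaining $g_i^*$ (or $g_i-g_i^*$) factor carries only an integrable $(t-s)^{-1/2}$ singularity on this time window.

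The main obstacle is the borderline $(t-s)^{-1}$ singularity that one would encounter if one naively placed the extra $x$-derivative on the Green's function without time-splitting, because the $L^\infty$-in-space bounds on $N_i$ are not small enough to absorb it. This is exactly the reason for splitting at $t/2$ and using integration by parts on the second piece, combined with the cancellation identity~\eqref{SectionIII:Lemma:Q0N} (which ensures that the leading $\delta$-singular part of $G-G^*$ does not contribute). Beyond this one subtle point, the calculation is a direct if lengthy differentiation of each term already controlled in Lemmas~\ref{SectionIII:Lemma:Bound_of_Ii} and~\ref{SectionIII:Lemma:Bound_of_Ni}.
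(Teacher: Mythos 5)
Your plan identifies the right tools (the Green's function bounds, the $N^a/N^b$ split, the cancellation $Q_0(r_1\,r_2)(N_1,N_2)^T=0$, and the time-splitting idea), but there are two issues worth flagging, one of which is a genuine gap.

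First, a structural difference: the paper does \emph{not} differentiate the $v_i$ equation~\eqref{SectionIII:viIntegralEquation} (and hence never needs to differentiate the decomposition~\eqref{SectionIII:NonlinearTerms} with the $(\theta_j^2)_x$, $(\theta_j^2)_{xx}$ and $R_i$ pieces). It instead first observes that $|\partial_x\theta_i(x,t)|\leq C\delta(t+1)^{-1}$, reducing the task to a bound for $\partial_x u_i$, and then differentiates the $u_i$ integral equation~\eqref{SectionIII:uiIntegralEquation} directly. This is cleaner: the nonlinear source appears simply as $(N_1,N_2)^T_x$, and one only needs $L^\infty$ and $L^1$ bounds on $N$, $N_x$, $N^a$, $N^b$ and their traces at $x=\pm0$, never on $R_i$ or its derivative. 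Your route via $\partial_x\mathcal{N}_i$ would force you to control $\partial_x R_i$, which involves $v_{ixx}$, a quantity not controlled by $P(t)$; this is avoidable, but your proposal does not address it.

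Second, and more importantly, the two-way split at $s=t/2$ is not sufficient; the paper splits the time integral into \emph{three} pieces $[0,t/2]$, $[t/2,t-1]$, $[t-1,t]$, and this is essential for the viscous part $N^b=-\nu\tfrac{\tau}{1+\tau}u_x$. On $[t/2,t]$ you have only two options for the $N^b$ contribution, and both fail on the whole interval: (i) keeping $\partial_x^2 g$ paired with $N^b$ gives a convolution bound proportional to $(t-s)^{-1}\,|N^b(\cdot,s)|_\infty$, and $\int_{t/2}^t(t-s)^{-1}\,ds$ diverges at $s=t$; (ii) integrating by parts to pair $\partial_x g$ with $N_x^b$ gives $(t-s)^{-1/2}\,|N_x^b(\cdot,s)|_\infty$, but $|N_x^b|_\infty\lesssim(\delta+P(t))^2(t+1)^{-1}$ only (the $\tau\,u_{xx}$ term, with $|u_{xx}|\lesssim P(t)(t+1)^{-1/2}$, is the bottleneck), so $\int_{t/2}^t(t-s)^{-1/2}\,ds\cdot(t+1)^{-1}\lesssim(t+1)^{-1/2}$, short of the required $(t+1)^{-1}$. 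Your statement ``use the bounds $|N_{ix}^a|,|N_i^b|\lesssim(\delta+P(t))^2(t+1)^{-3/2}$'' papers over this: it is $N_{ix}^b$, not $N_i^b$, that you need after integrating by parts, and that quantity decays too slowly. The paper resolves this precisely by splitting once more: on $[t/2,t-1]$ it uses $\partial_x^2 g\cdot N^b$ (safe because $t-s\geq1$, with the $\log t$ loss absorbed by $|N^b|_\infty\lesssim(t+1)^{-3/2}$), while only on the short final window $[t-1,t]$ does it use $\partial_x g\cdot N_x$, where $\int_{t-1}^t(t-s)^{-1/2}\,ds$ is uniformly bounded. Without this further split your estimate does not close.
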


\begin{proof}
  Since
  \begin{equation}
    |\partial_x \theta_i(x,t)|\leq C\delta(t+1)^{-1},
  \end{equation}
  it is enough to show that
  \begin{equation}
    \label{SectionIII:Lemma:Proof:Bound_of_Derivative_of_vi:Bound_of_Derivative_of_ui}
    |\partial_x u_i(x,t)|\leq C\delta(t+1)^{-1}+C(\delta+P(t))^2 (t+1)^{-1}.
  \end{equation}
  By Theorem~\ref{SectionI:Theorem:MainTheorem:GlobalExistence}, \eqref{SectionIII:Lemma:Proof:Bound_of_Derivative_of_vi:Bound_of_Derivative_of_ui} clearly holds for $t\leq 2$; therefore, we assume that $t\geq 2$ in the following. We also assume that $x>0$ since the case when $x<0$ is similar.

  By the assumptions of Theorem~\ref{SectionI:Theorem:MainTheorem:PointwiseEstimates}, \eqref{SectionIII:CNS_Lag_per_Fundamental_Solution_Modified_Explicit_Formula} and~\eqref{SectionIII:Bound_of_G-G*}, the first derivative of the first term on the right-hand side of~\eqref{SectionIII:uiIntegralEquation} is bounded as follows:
  \begin{align}
    \left| \int_{0}^{\infty}\partial_x g_i(x-y,t)
    \begin{pmatrix}
      u_{01} \\
      u_{02}
    \end{pmatrix}
    (y)\, dy \right|
    & \leq C(t+1)^{-1}\sum_{j=1}^{2}||u_{0j}||_{L^1(\mathbb{R}_*)}+Ce^{-\frac{t}{C}}\sum_{j=1}^{2}||u_{0j}||_2 \\
    & \leq C\delta(t+1)^{-1}.
  \end{align}
  Here, we used the Sobolev embedding theorem. Similar calculations show that the first derivatives of the second to fourth terms on the right-hand side of~\eqref{SectionIII:uiIntegralEquation} are also bounded by $C\delta(t+1)^{-1}$.

  Next, let us consider the first derivative of the sum of the last four terms on the right-hand side of~\eqref{SectionIII:uiIntegralEquation}. Split the domain of integration in $t$ into $[0,t/2]$, $[t/2,t-1]$ and $[t-1,t]$, and denote by $A_i(x,t)$, $B_i(x,t)$ and $C_i(x,t)$ the corresponding terms.
  
  Let us first consider $A_i(x,t)$. Integration by parts gives
  \begin{align}
    A_i(x,t)
    & =\int_{0}^{t/2}\int_{-\infty}^{\infty}\partial_{x}^{2}g_i(x-y,t-s)
    \begin{pmatrix}
      N_1 \\
      N_2
    \end{pmatrix}
    (y,s)\, dyds \\
    & \quad -\int_{0}^{t/2}\int_{0}^{\infty}\partial_{x}^{2}g_{R,i}(x+y,t-s)
    \begin{pmatrix}
      N_1 \\
      N_2
    \end{pmatrix}
    (y,s)\, dyds \\
    & \quad +\int_{0}^{t/2}\int_{-\infty}^{0}\partial_{x}^{2}g_{R,i}(x-y,t-s)
    \begin{pmatrix}
      N_1 \\
      N_2
    \end{pmatrix}
    (y,s)\, dyds.
  \end{align}
  Note that
  \begin{equation}
    \label{SectionIII:Lemma:Proof:Bound_of_Derivative_of_vi:NkL1}
    ||N_k(\cdot,t)||_{L^1(\mathbb{R}_*)}\leq C(\delta+P(t))^2(t+1)^{-1/2}.
  \end{equation}
  By Theorem~\ref{SectionI:Theorem:MainTheorem:GlobalExistence}, \eqref{SectionIII:CNS_Lag_per_Fundamental_Solution_Modified_Explicit_Formula}, \eqref{SectionIII:Bound_of_G-G*}, \eqref{SectionIII:GbIntByParts}, \eqref{SectionIII:Bounds_of_Gb}, \eqref{SectionIII:Lemma:Q0N} and~\eqref{SectionIII:Lemma:Proof:Bound_of_Derivative_of_vi:NkL1},
  \begin{align}
    |A_i(x,t)|
    & \leq C(\delta+P(t))^2(t+1)^{-3/2}\int_{0}^{t/2}(s+1)^{-1/2}\, ds+C\int_{0}^{t/2}e^{-\frac{t-s}{C}}\, ds\sum_{k=1}^{2}\sup_{0\leq s\leq t/2}||N_k(\cdot,s)||_2 \\
    & \leq C(\delta+P(t))^2(t+1)^{-1}.
  \end{align}

  We next consider $B_i(x,t)$, which is defined by
  \begin{align}
    B_i(x,t)
    & =\int_{t/2}^{t-1}\int_{-\infty}^{\infty}\partial_x g_i(x-y,t-s)
    \begin{pmatrix}
      N_1 \\
      N_2
    \end{pmatrix}_x
    (y,s)\, dyds \\
    & \quad +\int_{t/2}^{t-1}\int_{0}^{\infty}\partial_x g_{R,i}(x+y,t-s)
    \begin{pmatrix}
      N_1 \\
      N_2
    \end{pmatrix}_x
    (y,s)\, dyds \\
    & \quad +\int_{t/2}^{t-1}\int_{-\infty}^{0}\partial_x g_{R,i}(x-y,t-s)
    \begin{pmatrix}
      N_1 \\
      N_2
    \end{pmatrix}_x
    (y,s)\, dyds \\
    & \quad +\int_{t/2}^{t-1}\partial_x g_{T,i}(x,t-s)
    \begin{pmatrix}
      \llbracket N_1 \rrbracket \\
      \llbracket N_2 \rrbracket
    \end{pmatrix}
    (0,s)\, ds.
  \end{align}
  Remember that $N_i=N_{i}^{a}+N_{i}^{b}$, where $N_{i}^{a}$ and $N_{i}^{b}$ are given by~\eqref{Nia_Nib}. Denote by $B_{i}^{a}(x,t)$ and $B_{i}^{b}(x,t)$ the corresponding terms in $B_i(x,t)$. Note that
  \begin{equation}
    \label{SectionIII:Lemma:Proof:Bound_of_Derivative_of_vi:Bound_of_Nixa}
    |N_{ix}^{a}(x,t)|\leq C(\delta+P(t))^2(t+1)^{-3/2}
  \end{equation}
  and
  \begin{equation}
    \label{SectionIII:Lemma:Proof:Bound_of_Derivative_of_vi:Bound_of_NiaBoundary}
    |N_{i}^{a}(\pm 0,t)|\leq C(\delta+P(t))^2(t+1)^{-3}.
  \end{equation}
  By \eqref{SectionIII:CNS_Lag_per_Fundamental_Solution_Modified_Explicit_Formula}, \eqref{SectionIII:Bound_of_G-G*}, \eqref{SectionIII:GbIntByParts}, \eqref{SectionIII:Bounds_of_Gb}, \eqref{SectionIII:Lemma:Q0N}, \eqref{SectionIII:Lemma:Proof:Bound_of_Derivative_of_vi:Bound_of_Nixa} and~\eqref{SectionIII:Lemma:Proof:Bound_of_Derivative_of_vi:Bound_of_NiaBoundary},
  \begin{align}
    |B_{i}^{a}(x,t)|
    & \leq C(\delta+P(t))^2(t+1)^{-3/2}\int_{t/2}^{t-1}(t-s)^{-1/2}\, ds+C\sum_{j=1}^{2}\sup_{t/2\leq s\leq t-1}|\partial_x N_{j}^{a}(\cdot,s)|_{\infty} \\
    & \quad +C(\delta+P(t))^2(t+1)^{-3}\int_{t/2}^{t-1}(t-s)^{-1}\, ds \\
    & \leq C(\delta+P(t))^2(t+1)^{-1}.
  \end{align}
  Next, by integration by parts,
  \begin{align}
    B_{i}^{b}(x,t)
    & =\int_{t/2}^{t-1}\int_{-\infty}^{\infty}\partial_{x}^{2}g_i(x-y,t-s)
    \begin{pmatrix}
      N_{1}^{b} \\
      N_{2}^{b}
    \end{pmatrix}
    (y,s)\, dyds \\
    & \quad -\int_{t/2}^{t-1}\int_{0}^{\infty}\partial_{x}^{2}g_{R,i}(x+y,t-s)
    \begin{pmatrix}
      N_{1}^{b} \\
      N_{2}^{b}
    \end{pmatrix}
    (y,s)\, dyds \\
    & \quad +\int_{t/2}^{t-1}\int_{-\infty}^{0}\partial_{x}^{2}g_{R,i}(x-y,t-s)
    \begin{pmatrix}
      N_{1}^{b} \\
      N_{2}^{b}
    \end{pmatrix}
    (y,s)\, dyds.
  \end{align}
  Note that
  \begin{equation}
    \label{SectionIII:Lemma:Proof:Bound_of_Derivative_of_vi:Bound_of_Nib}
    |N_{i}^{b}(x,t)|\leq C(\delta+P(t))^2(t+1)^{-3/2}
  \end{equation}
  and
  \begin{equation}
    \label{SectionIII:Lemma:Proof:Bound_of_Derivative_of_vi:Bound_of_Nixb}
    |N_{ix}^{b}(x,t)|\leq C(\delta+P(t))^2(t+1)^{-1}.
  \end{equation}
  By \eqref{SectionIII:CNS_Lag_per_Fundamental_Solution_Modified_Explicit_Formula}, \eqref{SectionIII:Bound_of_G-G*}, \eqref{SectionIII:GbIntByParts}, \eqref{SectionIII:Bounds_of_Gb}, \eqref{SectionIII:Lemma:Q0N}, \eqref{SectionIII:Lemma:Proof:Bound_of_Derivative_of_vi:Bound_of_Nib} and~\eqref{SectionIII:Lemma:Proof:Bound_of_Derivative_of_vi:Bound_of_Nixb},
  \begin{align}
    |B_{i}^{b}(x,t)|
    & \leq C(\delta+P(t))^2(t+1)^{-3/2}\int_{t/2}^{t-1}(t-s)^{-1}\, ds+C\sum_{j=1}^{2}\sum_{l=0}^{1}\sup_{t/2\leq s\leq t-1}|\partial_{x}^{l}N_{j}^{b}(\cdot,s)|_{\infty} \\
    & \leq C(\delta+P(t))^2(t+1)^{-1}.
  \end{align}
  Thus $|B_i(x,t)|\leq C(\delta+P(t))^2(t+1)^{-1}$ as desired.

  Let us next consider $C_i(x,t)$, which is defined by
  \begin{align}
    C_i(x,t)
    & =\int_{t-1}^{t}\int_{-\infty}^{\infty}\partial_x g_i(x-y,t-s)
    \begin{pmatrix}
      N_1 \\
      N_2
    \end{pmatrix}_x
    (y,s)\, dyds \\
    & \quad +\int_{t-1}^{t}\int_{0}^{\infty}\partial_x g_{R,i}(x+y,t-s)
    \begin{pmatrix}
      N_1 \\
      N_2
    \end{pmatrix}_x
    (y,s)\, dyds \\
    & \quad +\int_{t-1}^{t}\int_{-\infty}^{0}\partial_x g_{R,i}(x-y,t-s)
    \begin{pmatrix}
      N_1 \\
      N_2
    \end{pmatrix}_x
    (y,s)\, dyds \\
    & \quad +\int_{t-1}^{t}\partial_x g_{T,i}(x,t-s)
    \begin{pmatrix}
      \llbracket N_1 \rrbracket \\
      \llbracket N_2 \rrbracket
    \end{pmatrix}
    (0,s)\, ds.
  \end{align}
  Note that
  \begin{equation}
    \label{SectionIII:Lemma:Proof:Bound_of_Derivative_of_vi:Bound_of_NiBoundary}
    |N_i(\pm 0,t)|\leq C(\delta+P(t))^2(t+1)^{-5/2}.
  \end{equation}
  By \eqref{SectionIII:CNS_Lag_per_Fundamental_Solution_Modified_Explicit_Formula}, \eqref{SectionIII:Bound_of_G-G*}, \eqref{SectionIII:GbIntByParts}, \eqref{SectionIII:Bounds_of_Gb}, \eqref{SectionIII:Lemma:Q0N}, \eqref{SectionIII:Lemma:Proof:Bound_of_Derivative_of_vi:Bound_of_Nixa}, \eqref{SectionIII:Lemma:Proof:Bound_of_Derivative_of_vi:Bound_of_Nixb} and~\eqref{SectionIII:Lemma:Proof:Bound_of_Derivative_of_vi:Bound_of_NiBoundary},
  \begin{align}
    |C_i(x,t)|
    & \leq C(\delta+P(t))^2(t+1)^{-1}\int_{t-1}^{t}(t-s)^{-1/2}\, ds+C\sum_{j=1}^{2}\sup_{t-1\leq s\leq t}|\partial_x N_j(\cdot,s)|_{\infty} \\
    & \quad +C(\delta+P(t))^2(t+1)^{-5/2}\int_{t-1}^{t}(t-s)^{-1/2}\, ds \\
    & \leq C(\delta+P(t))^2(t+1)^{-1}.
  \end{align}
  This completes the proof.
\end{proof}

Let us next give a bound for $u_{xx}(x,t)$.

\begin{lemma}
  \label{SectionIII:Lemma:Bound_of_uxx}
  Under the assumptions of Theorem~\ref{SectionI:Theorem:MainTheorem:PointwiseEstimates}, there exists a positive constant $C$ such that
  \begin{equation}
    |u_{xx}(x,t)|\leq C\delta(t+1)^{-1}+C(\delta+P(t))^2(t+1)^{-1/2}
  \end{equation}
  for $(x,t)\in \mathbb{R}_* \times (0,\infty)$.
\end{lemma}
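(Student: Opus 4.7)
The strategy closely parallels Lemma~\ref{SectionIII:Lemma:Bound_of_Derivative_of_vi} with one extra spatial derivative. Since $\bm{u} = u_1 r_1 + u_2 r_2$ and the second components of $r_1$ and $r_2$ are nonzero constants, $u_{xx}$ is a fixed linear combination of $\partial_x^2 u_1$ and $\partial_x^2 u_2$, so it suffices to bound $|\partial_x^2 u_i(x,t)|$ via the integral representation~\eqref{SectionIII:uiIntegralEquation}. The range $0 \le t \le 2$ is immediate from Theorem~\ref{SectionI:Theorem:MainTheorem:GlobalExistence} and the Sobolev embedding $H^2(\mathbb{R}_*) \hookrightarrow L^\infty(\mathbb{R}_*)$; in what follows I assume $t \ge 2$ and, by symmetry, $x > 0$.

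For the linear part of~\eqref{SectionIII:uiIntegralEquation}, applying $\partial_x^2$ and combining the $L^1$-smallness of $u_{0j}$ from~\eqref{SmallnessDelta} with~\eqref{SectionIII:CNS_Lag_per_Fundamental_Solution_Modified_Explicit_Formula},~\eqref{SectionIII:Bound_of_G-G*},~\eqref{SectionIII:GbIntByParts}, and~\eqref{SectionIII:Bounds_of_Gb} produces decay of order $(t+1)^{-3/2}$ in $L^\infty_x$, stronger than needed; the $\delta^{(k)}(x)$-contributions in~\eqref{SectionIII:Bound_of_G-G*} are absorbed into $e^{-c^2 t/\nu}$ times $|\partial_x^k u_{0j}|_\infty$, controlled uniformly by the Sobolev embedding $H^4(\mathbb{R}_*) \hookrightarrow W^{2,\infty}(\mathbb{R}_*)$ and Theorem~\ref{SectionI:Theorem:MainTheorem:GlobalExistence}. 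The linear contribution is therefore bounded by $C\delta(t+1)^{-1}$.

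For the nonlinear contributions I reuse the three-range splitting $[0, t/2]$, $[t/2, t-1]$, $[t-1, t]$ of the $s$-integral from Lemma~\ref{SectionIII:Lemma:Bound_of_Derivative_of_vi}. On $[0, t/2]$ and on $[t/2, t-1]$ (with the further decomposition $N = N^a + N^b$ from~\eqref{Nia_Nib}), I integrate by parts in $y$ to move the $\partial_x$ from $N_x$ onto the Green's function, yielding convolutions of $\partial_x^2 g_i$, $\partial_x^2 g_{R,i}$, $\partial_x^2 g_{T,i}$ against $N_j$; the $y = 0$ boundary contribution absorbs exactly the jump term $g_{T,i}(x, t-s)\llbracket N_j \rrbracket(0,s)$ appearing in~\eqref{SectionIII:uiIntegralEquation}. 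Using $||N_j(\cdot, s)||_{L^1(\mathbb{R}_*)} \le C(\delta+P(s))^2(s+1)^{-1/2}$, the pointwise bounds~\eqref{SectionIII:Lemma:Proof:Bound_of_Derivative_of_vi:Bound_of_Nixa}--\eqref{SectionIII:Lemma:Proof:Bound_of_Derivative_of_vi:Bound_of_Nixb}, and the spatial $L^\infty$-bounds $|\partial_x^2 g_i(\cdot, t-s)|_\infty \le C(t-s)^{-3/2}$ together with the analogous estimates for $\partial_x^2 g_{R,i}$ and $\partial_x^2 g_{T,i}$ via~\eqref{SectionIII:GbIntByParts}--\eqref{SectionIII:Bounds_of_Gb}, both regimes contribute at most $C(\delta+P(t))^2(t+1)^{-1}$.

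The main obstacle is the short-time regime $[t-1, t]$, where placing both $x$-derivatives on $g_i$ produces a non-integrable $(t-s)^{-1}$ singularity. My plan is to distribute the derivatives between the kernel and $N$ so that the $s$-singularity is at worst $(t-s)^{-1/2}$. For $s \in [t-1, t]$, $s+1 \sim t+1$, and the relevant norms of $N$ are controlled by combining the uniform $H^4$-bound of Theorem~\ref{SectionI:Theorem:MainTheorem:GlobalExistence} with the pointwise decay $|N(\cdot, s)|_\infty \le C(\delta+P(s))^2(s+1)^{-1}$ (a consequence of $|v_i| \le C\delta \Psi_i$, the pointwise decay of $\theta_i$ and $\theta_{ix}$, and the already-established bound on $v_{ix}$), and a Gagliardo--Nirenberg interpolation between these to control $|N_{ix}(\cdot, s)|_\infty$ and the boundary trace $|N_i(\pm 0, s)|$. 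With $|\partial_x g_i(\cdot, t-s)|_\infty \le C(t-s)^{-1}$ and $|g_i(\cdot, t-s)|_\infty \le C(t-s)^{-1/2}$, the short-time contribution is bounded by $C(\delta+P(t))^2(t+1)^{-1/2}\int_{t-1}^t (t-s)^{-1/2}\,ds \le C(\delta+P(t))^2(t+1)^{-1/2}$, with the reflection and boundary pieces handled identically via~\eqref{SectionIII:GbIntByParts}--\eqref{SectionIII:Bounds_of_Gb}. Summing the three time-ranges together with the linear estimate yields the claim.
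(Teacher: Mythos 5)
Your proposal takes a genuinely different route from the paper, and it has a real gap.

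The paper does not apply $\partial_x^2$ to the integral representation~\eqref{SectionIII:uiIntegralEquation}. Instead, it uses the second equation of the system solved for $u_{xx}$, namely~\eqref{SectionII:GlobalEnergyEstimate:Proposition:Energy_Estimate_of_taux:Proof:uxx},
\begin{equation}
  u_{xx}=\frac{1+\tau}{\nu}p'(1+\tau)\tau_x+\frac{\tau_x}{1+\tau}u_x+\frac{1+\tau}{\nu}u_t,
\end{equation}
so that, since $\tau_x$ and $u_x$ are already under control, the lemma reduces to a bound on $u_t$. It then differentiates the system in $t$ and derives a fresh integral equation~\eqref{SectionIII:Lemma:Bound_of_uxx:Proof:IntegralEquationTimeDerivative} for $(\tau_t,u_t)$, whose nonlinear source is $N_{tx}$. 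After one integration by parts the near-time piece only needs $|N_t|_\infty\le C(\delta+P(t))^2(t+1)^{-1/2}$, and this is cheap: $N_t$ involves $u_{xt}$, which has no time decay, but it only enters multiplied by $\tau$ or $u_x$, which do decay; the $\delta$-parts of $\partial_x G$ contracted with $(0,N_t)^T$ produce nothing worse than $N_t$ itself after the cancellation $Q_0(0,N_t)^T=0$. The whole point of this detour is precisely to avoid taking two spatial derivatives of the convolution in the short-time regime.

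Your direct attack runs into exactly the problem the paper's manoeuvre is designed to dodge. When you apply $\partial_x^2$ to $\int_0^t\int g_i(x-y,t-s)\,(N_j)_x(y,s)\,dy\,ds$ you have three $x$-derivatives to distribute among kernel and source. On $[t-1,t]$, putting two on the kernel gives an $L^1$-singularity $(t-s)^{-1}$ (non-integrable, as you note); putting only one gives $(t-s)^{-1/2}$ but forces you to bound $N_{jxx}$, not $N_{jx}$. Your Gagliardo--Nirenberg step is aimed at $|N_{ix}|_\infty$ and $|N_i(\pm 0,s)|$ and so is off-target: it does not furnish the needed decay of $|N_{ixx}|_\infty$. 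The same issue resurfaces through the singular part of Green's function: for $l=2$ the bound~\eqref{SectionIII:Bound_of_G-G*} contains $\delta'(x)Q_1$, which when contracted with $(0,N_x)^T$ (using $Q_1$ from~\eqref{SectionIII:Q0Q1}) yields a term proportional to $\int_0^t e^{-c^2(t-s)/\nu}N_{xx}(x,s)\,ds$ --- again $N_{xx}$. Note that $N_{xx}$ involves $\tau_{xx}$ and $u_{xxx}$, neither of which is tracked by $P(t)$; one can in principle still extract $|N_{xx}|_\infty\le C(\delta+P(t))^2(t+1)^{-1/2}$ by pairing the uniformly bounded third/fourth derivatives from Theorem~\ref{SectionI:Theorem:MainTheorem:GlobalExistence} with the decaying factors $\tau$ and $u_x$ in each product, but you never carry this out, and without it the near-time estimate you display does not follow from what you have written. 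There is also a derivative-counting slip on $[0,t/2]$: after moving the $\partial_x$ from $N_x$ onto the kernel and then applying the outer $\partial_x^2$, the kernel you face is $\partial_x^3 g_i$ (and $\partial_x^3 g_{R,i},\partial_x^3 g_{T,i}$), not $\partial_x^2 g_i$ as you write, so the quoted rates $(t-s)^{-3/2}$ are one power too optimistic. In short, the proposal inherits the structure of Lemma~\ref{SectionIII:Lemma:Bound_of_Derivative_of_vi} ``with one extra derivative'' without accounting for the extra regularity of $N$ and the extra kernel singularity that this entails, which is exactly the obstruction the paper circumvents by reducing $u_{xx}$ to $u_t$.
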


\begin{proof}
  By \eqref{SectionII:GlobalEnergyEstimate:Proposition:Energy_Estimate_of_taux:Proof:uxx}, it suffices to show that
  \begin{equation}
    \label{SectionIII:Lemma:Bound_of_uxx:Proof:Bound_of_ut}
    |u_t(x,t)|\leq C\delta(t+1)^{-1}+C(\delta+P(t))^2(t+1)^{-1/2}.
  \end{equation}
  Theorem~\ref{SectionI:Theorem:MainTheorem:GlobalExistence} shows that this holds for $t\leq 2$. Hence, let $t\geq 2$ in the following. We also assume that $x>0$ since the case when $x<0$ is similar.
  
  Since
  \begin{equation}
    \begin{dcases}
      \partial_t \tau_t-\partial_x u_t=0,                                             & x\in \mathbb{R}_*,\, t>0, \\
      \partial_t u_t -c^2 \partial_x \tau_t=\nu \partial_{x}^{2}u_t+\partial_x N_t,   & x\in \mathbb{R}_*,\, t>0, \\
      \partial_t u_t(\pm 0,t)=\llbracket c^2 \tau_t+\nu \partial_x u_t \rrbracket(0,t)
                            +\llbracket N_t \rrbracket(0,t),                          & t>0,
    \end{dcases}
  \end{equation}
  the same calculations leading to~\eqref{SectionIII:Proposition:IntegralEquation:IntegralEquation} yield
  \begin{align}
    \label{SectionIII:Lemma:Bound_of_uxx:Proof:IntegralEquationTimeDerivative}
    \begin{aligned}
      \begin{pmatrix}
        \tau_t \\
        u_t
      \end{pmatrix}
      (x,t) & =\int_{0}^{\infty}G(x-y,t)
      \begin{pmatrix}
        \tau_t \\
        u_t
      \end{pmatrix}
      (y,0)\, dy+\int_{0}^{\infty}G_R(x+y,t)
      \begin{pmatrix}
        \tau_t \\
        u_t
      \end{pmatrix}
      (y,0)\, dy \\
      & \quad +\int_{-\infty}^{0}G_T(x-y,t)
      \begin{pmatrix}
        \tau_t \\
        u_t
      \end{pmatrix}
      (y,0)\, dy+G_T(x,t)
      \begin{pmatrix}
        0 \\
        V'(0)
      \end{pmatrix} \\
      & \quad +\int_{0}^{t}\int_{0}^{\infty}G(x-y,t-s)
      \begin{pmatrix}
        0 \\
        N_{tx}
      \end{pmatrix}
      (y,s)\, dyds \\
      & \quad +\int_{0}^{t}\int_{0}^{\infty}G_R(x+y,t-s)
      \begin{pmatrix}
        0 \\
        N_{tx}
      \end{pmatrix}
      (y,s)\, dyds \\
      & \quad +\int_{0}^{t}\int_{-\infty}^{0}G_T(x-y,t-s)
      \begin{pmatrix}
        0 \\
        N_{tx}
      \end{pmatrix}
      (y,s)\, dyds \\
      & \quad +\int_{0}^{t}G_T(x,t-s)
      \begin{pmatrix}
        0 \\
        \llbracket N_t \rrbracket
      \end{pmatrix}
      (0,s)\, ds.
    \end{aligned}
  \end{align}

  Let us first consider the sum of the first four terms on the right-hand side of~\eqref{SectionIII:Lemma:Bound_of_uxx:Proof:IntegralEquationTimeDerivative}. By~\eqref{SectionI:CNSwithPointMass:Lagrangian:Perturbation} and integration by parts in $x$, this can be written as
  \begin{align}
    \bm{\mathcal{I}}_t(x,t)
    & \coloneqq \int_{0}^{\infty}\partial_x G(x-y,t)
    \begin{pmatrix}
      u_0 \\
      -p(1+\tau_0)+p(1)+\nu u_{0x}/(1+\tau_0)
    \end{pmatrix}
    (y)\, dy \\
    & \quad -\int_{0}^{\infty}\partial_x G_R(x+y,t)
    \begin{pmatrix}
      u_0 \\
      -p(1+\tau_0)+p(1)+\nu u_{0x}/(1+\tau_0)
    \end{pmatrix}
    (y)\, dy \\
    & \quad +\int_{-\infty}^{0}\partial_x G_T(x-y,t)
    \begin{pmatrix}
      u_0 \\
      -p(1+\tau_0)+p(1)+\nu u_{0x}/(1+\tau_0)
    \end{pmatrix}
    (y)\, dy \\
    & \quad -2G_R(x,t)
    \begin{pmatrix}
      V_0 \\
      0
    \end{pmatrix}.
  \end{align}
  By the assumptions of Theorem~\ref{SectionI:Theorem:MainTheorem:PointwiseEstimates}, \eqref{SectionIII:CNS_Lag_per_Fundamental_Solution_Modified_Explicit_Formula}, \eqref{SectionIII:Bound_of_G-G*}, \eqref{SectionIII:GbIntByParts} and~\eqref{SectionIII:Bounds_of_Gb},
  \begin{equation}
    |\bm{\mathcal{I}}_t(x,t)|\leq C\delta(t+1)^{-1}.
  \end{equation}
  
  Next, let us consider the sum of the last four terms on the right-hand side of~\eqref{SectionIII:Lemma:Bound_of_uxx:Proof:IntegralEquationTimeDerivative}. Split the domain of integration in $t$ into $[0,t-1]$ and $[t-1,t]$, and denote by $\bm{D}(x,t)$ and $\bm{E}(x,t)$ the corresponding terms.

  Let us first consider $\bm{D}(x,t)$. By integration by parts in $x$,
  \begin{align}
    \label{SectionIII:Lemma:Bound_of_uxx:Proof:bmD}
    \begin{aligned}
      \bm{D}(x,t)
      & =\int_{0}^{t-1}\int_{-\infty}^{\infty}\partial_x G(x-y,t-s)
      \begin{pmatrix}
        0 \\
        N_t
      \end{pmatrix}
      (y,s)\, dyds \\
      & \quad -\int_{0}^{t-1}\int_{0}^{\infty}\partial_x G_R(x+y,t-s)
      \begin{pmatrix}
        0 \\
        N_t
      \end{pmatrix}
      (y,s)\, dyds \\
      & \quad +\int_{0}^{t-1}\int_{-\infty}^{0}\partial_x G_R(x-y,t-s)
      \begin{pmatrix}
        0 \\
        N_t
      \end{pmatrix}
      (y,s)\, dyds.
    \end{aligned}
  \end{align}
  By integration by parts in $t$, the first term on the right-hand side of~\eqref{SectionIII:Lemma:Bound_of_uxx:Proof:bmD} can be written as
  \begin{align}
    \label{SectionIII:Lemma:Bound_of_uxx:Proof:bmD1st}
    \begin{aligned}
      & \int_{0}^{t-1}\int_{-\infty}^{\infty}\partial_{tx}G(x-y,t-s)
      \begin{pmatrix}
        0 \\
        N
      \end{pmatrix}
      (y,s)\, dyds \\
      & +\int_{-\infty}^{\infty}\partial_x G(x-y,1)
      \begin{pmatrix}
        0 \\
        N
      \end{pmatrix}
      (y,t-1)\, dy-\int_{-\infty}^{\infty}\partial_x G(x-y,t)
      \begin{pmatrix}
        0 \\
        N
      \end{pmatrix}
      (y,0)\, dy.
    \end{aligned}
  \end{align}
  Note that
  \begin{equation}
    \label{SectionIII:Lemma:Bound_of_uxx:Proof:Bound_of_NLinf}
    \sum_{l=0}^{1}|\partial_{x}^{l}N(x,t)|\leq C(\delta+P(t))^2(t+1)^{-1},
  \end{equation}
  \begin{equation}
    \label{SectionIII:Lemma:Bound_of_uxx:Proof:Bound_of_NL1}
    ||N(\cdot,t)||_{L^1(\mathbb{R}_*)}\leq C(\delta+P(t))^2(t+1)^{-1/2}
  \end{equation}
  and
  \begin{equation}
    \label{SectionIII:Lemma:Q1N}
    Q_1
    \begin{pmatrix}
      0 \\
      N
    \end{pmatrix}
    =
    \begin{pmatrix}
      -N/\nu \\
      0
    \end{pmatrix}
    , \quad
    \begin{pmatrix}
      0 & 0 \\
      0 & \nu
    \end{pmatrix}
    Q_1
    \begin{pmatrix}
      0 \\
      N
    \end{pmatrix}
    =
    \begin{pmatrix}
      0 \\
      0
    \end{pmatrix}.
  \end{equation}
  Thus, by Theorem~\ref{SectionI:Theorem:MainTheorem:GlobalExistence}, \eqref{SectionIII:CNS_Lag_per_Fundamental_Solution}, \eqref{SectionIII:CNS_Lag_per_Fundamental_Solution_Modified_Explicit_Formula}, \eqref{SectionIII:Bound_of_G-G*}, \eqref{SectionIII:Lemma:Q0N}, \eqref{SectionIII:Lemma:Bound_of_uxx:Proof:Bound_of_NLinf}, \eqref{SectionIII:Lemma:Bound_of_uxx:Proof:Bound_of_NL1} and~\eqref{SectionIII:Lemma:Q1N}, the second component of~\eqref{SectionIII:Lemma:Bound_of_uxx:Proof:bmD1st} is bounded by
  \begin{align}
    & C(\delta+P(t))^2(t+1)^{-1}+C(\delta+P(t))^2(t+1)^{-3/2}\int_{0}^{t/2}(s+1)^{-1/2}\, ds \\
    & +C(\delta+P(t))^2(t+1)^{-1}\int_{t/2}^{t-1}(t-s)^{-1}\, ds \\
    & +C\int_{0}^{t/2}e^{-\frac{t-s}{C}}\, ds\sup_{0\leq s\leq t/2}||N(\cdot,s)||_2+C\sum_{l=0}^{1}\sup_{t/2\leq s\leq t-1}|\partial_{x}^{l}N(\cdot,s)|_{\infty} \\
    & \leq C(\delta+P(t))^2(t+1)^{-1/2}.
  \end{align}
  The second components of the second and the third terms on the right-hand side of~\eqref{SectionIII:Lemma:Bound_of_uxx:Proof:bmD} are treated in a similar manner except for the terms involving $e^{-(|x|+t)/C}$ appearing on the right-hand side of~\eqref{SectionIII:Bounds_of_Gb}, which can be handled easily.

  Let us next consider $\bm{E}(x,t)$. By integration by parts in $x$,
  \begin{align}
    \label{SectionIII:Lemma:Bound_of_uxx:Proof:bmE}
    \bm{E}(x,t)
    & =\int_{t-1}^{t}\int_{-\infty}^{\infty}\partial_x G(x-y,t-s)
    \begin{pmatrix}
      0 \\
      N_t
    \end{pmatrix}
    (y,s)\, dyds \\
    & \quad -\int_{t-1}^{t}\int_{0}^{\infty}\partial_x G_R(x+y,t-s)
    \begin{pmatrix}
      0 \\
      N_t
    \end{pmatrix}
    (y,s)\, dyds \\
    & \quad +\int_{t-1}^{t}\int_{-\infty}^{0}\partial_x G_R(x-y,t-s)
    \begin{pmatrix}
      0 \\
      N_t
    \end{pmatrix}
    (y,s)\, dyds.
  \end{align}
  Note that
  \begin{equation}
    \label{SectionIII:Lemma:Bound_of_uxx:Proof:Bound_of_NtLinf}
    |N_t(x,t)|\leq C(\delta+P(t))^2(t+1)^{-1/2}.
  \end{equation}
  By \eqref{SectionIII:CNS_Lag_per_Fundamental_Solution_Modified_Explicit_Formula}, \eqref{SectionIII:Bound_of_G-G*}, \eqref{SectionIII:GbIntByParts}, \eqref{SectionIII:Bounds_of_Gb}, \eqref{SectionIII:Lemma:Q0N} and~\eqref{SectionIII:Lemma:Bound_of_uxx:Proof:Bound_of_NtLinf}, $|\bm{E}(x,t)|$ is bounded by
  \begin{equation}
    C(\delta+P(t))^2(t+1)^{-1/2}\int_{t-1}^{t}(t-s)^{-1/2}\, ds\leq C(\delta+P(t))^2(t+1)^{-1/2}.
  \end{equation}
  This completes the proof.
\end{proof}

Combining Lemmas~\ref{SectionIII:Lemma:Bound_of_Ii}--\ref{SectionIII:Lemma:Bound_of_uxx}, we obtain
\begin{equation}
  P(t)\leq C\delta+C(\delta+P(t))^2.
\end{equation}
Since $P(t)$ is continuous in $t$, by taking $\delta$ sufficiently small, we conclude that
\begin{equation}
  P(t)\leq C\delta.
\end{equation}
This proves Theorem~\ref{SectionI:Theorem:MainTheorem:PointwiseEstimates}.

\section*{Acknowledgements}
This work was supported by the Grant-in-Aid for JSPS Research Fellow (Grant Number 18J20574) and the JSPS Core-to-Core Program ``Foundation of a Global Research Cooperative Center in Mathematics focused on Number Theory and Geometry''. The author thanks Tatsuo Iguchi for the discussion on the subject.

\appendix
\section{Proof of the Bounds of $G_T$}
\label{AppendixA}
In this appendix, we prove~\eqref{SectionIII:Bounds_of_Gb}. The proof is basically the same as that of~\cite[Lemma~2.1]{DW18}.

For $\lambda \neq 0$ and $\mu>0$, let
\begin{equation}
  E(x,t;\lambda,\mu)\coloneqq \int_{-\infty}^{0}e^{2z}e^{-\frac{(x-z-\lambda t)^2}{\mu t}}\, dz.
\end{equation}
Using the complementary error function $\erfc(x)=2\pi^{-1/2}\int_{x}^{\infty}e^{-y^2}\, dy$, we can rewrite $E(x,t;\lambda,\mu)$ as
\begin{equation}
  E(x,t;\lambda,\mu)=\frac{\sqrt{\pi \mu t}}{2}e^{2(x-\lambda t)+\mu t}\erfc\left( \frac{x-\lambda t+\mu t}{\sqrt{\mu t}} \right).
\end{equation}

\begin{lemma}
  \label{Appendix:Gb}
  We have
  \begin{equation}
    t^{-1/2}E(x,t;\lambda,\mu)\leq C(t+1)^{-1/2}e^{-\frac{(x-\lambda t)^2}{Ct}}+Ce^{-\frac{|x|+t}{C}}.
  \end{equation}
\end{lemma}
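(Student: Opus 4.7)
The plan is to combine the explicit $\erfc$-representation of $E$ in the statement with the elementary identity
\[
  2(x-\lambda t)+\mu t-\frac{(x-\lambda t+\mu t)^{2}}{\mu t}=-\frac{(x-\lambda t)^{2}}{\mu t}.
\]
Setting $\eta\coloneqq(x-\lambda t+\mu t)/\sqrt{\mu t}$, this identity lets me factor out a Gaussian:
\[
  t^{-1/2}E(x,t;\lambda,\mu)=\frac{\sqrt{\pi\mu}}{2}\,e^{-(x-\lambda t)^{2}/(\mu t)}\,f(\eta),\qquad f(\eta)\coloneqq e^{\eta^{2}}\erfc(\eta).
\]
The problem then reduces to estimating $f(\eta)$, for which I will use the standard bounds $f(\eta)\leq C/(1+\eta)$ for $\eta\geq 0$ (which follows from $\erfc(\eta)\leq e^{-\eta^{2}}/(\sqrt{\pi}\eta)$ together with $f(0)=1$) and $f(\eta)\leq 2e^{\eta^{2}}$ for $\eta\leq 0$.

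The analysis then splits into three regimes. On $\eta\geq\tfrac{1}{2}\sqrt{\mu t}$ the inequality $(1+\eta)^{-1}\leq C(t+1)^{-1/2}$ holds uniformly in $t>0$, which directly produces the first term $C(t+1)^{-1/2}e^{-(x-\lambda t)^{2}/(Ct)}$ of the claim. On $0\leq\eta\leq\tfrac{1}{2}\sqrt{\mu t}$ one has $-\mu t\leq x-\lambda t\leq-\mu t/2$, hence $(x-\lambda t)^{2}/(\mu t)\geq\mu t/4$, giving $t^{-1/2}E\leq Ce^{-\mu t/4}$; since $|x|\leq|x-\lambda t|+|\lambda|t\leq(\mu+|\lambda|)t$ in this regime, this bound fits under $Ce^{-(|x|+t)/C}$ for sufficiently large $C$ (for $t$ of order unity both terms are anyway bounded below by positive constants, so the estimate is trivial there). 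Finally, on $\eta\leq 0$ the product collapses via the identity to $Ce^{2(x-\lambda t)+\mu t}$; parametrising $x-\lambda t=-\mu t-s$ with $s\geq 0$ yields $Ce^{-\mu t-2s}$, and the bound $|x|\leq(\mu+|\lambda|)t+s$ shows that this is again dominated by $Ce^{-(|x|+t)/C}$ once $C$ is chosen large in terms of $\lambda$ and $\mu$.

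The only delicate point is in the last two regimes: since $\lambda$ is arbitrary and may be large compared to $\mu$, $|x|$ can grow linearly in $t$, so the exponential $t$-decay coming from $e^{-\mu t/4}$ or from $e^{-\mu t-2s}$ has to be traded against this linear growth. The constant $C$ in the second term $Ce^{-(|x|+t)/C}$ must therefore be chosen depending on both $\lambda$ and $\mu$, but once this balancing is made explicit the bound follows from routine computation.
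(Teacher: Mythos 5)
Your proof is correct. It starts from the same $\erfc$-formula for $E$ that the paper uses, but organizes the estimate a bit differently: by introducing $f(\eta)=e^{\eta^2}\erfc(\eta)$ and using the algebraic identity $2(x-\lambda t)+\mu t-\eta^2=-(x-\lambda t)^2/(\mu t)$ you cleanly factor out the Gaussian $e^{-(x-\lambda t)^2/(\mu t)}$ once and for all, and then only need the elementary bound $f(\eta)\leq C/(1+\eta)$ on $\eta\geq 0$ (together with $\erfc\leq 2$ for $\eta\leq 0$). The paper instead carries the exponential prefactor through the case analysis and invokes the asymptotic expansion $\erfc(\eta)=e^{-\eta^2}/(\sqrt{\pi}\eta)+O(\eta^{-3}e^{-\eta^2})$ in its Case~(iii), which it then subdivides again according to the size of $|x-\lambda t|$. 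Your $t$-dependent threshold $\eta=\tfrac12\sqrt{\mu t}$ is chosen so that the Gaussian factor already delivers $e^{-\mu t/4}$ in the middle regime, making that further subdivision unnecessary; the paper's fixed threshold $x-\lambda t+\mu t=Kt^{1/2}$ plays the same role but forces the extra split. Your treatment of the $\eta\leq 0$ case, trading the linear growth $|x|\lesssim(|\lambda|+\mu)t$ against the decay $e^{-\mu t-2s}$ by letting $C$ depend on $\lambda$ and $\mu$, is the same balancing the paper performs in its Case~(i) by writing $x-\lambda t=p(x-\lambda t)+(1-p)(x-\lambda t)$ with $p$ small. Both proofs yield the same conclusion; yours is arguably a little tidier because the Gaussian is isolated at the outset.
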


\begin{proof}
  Case (i): $x-\lambda t+\mu t\leq 0$. Since
  \begin{equation}
    x-\lambda t=p(x-\lambda t)+(1-p)(x-\lambda t)\leq p(x-\lambda t)-(1-p)\mu t
  \end{equation}
  for $0<p<1$, by taking $p$ sufficiently small, we have
  \begin{align}
    t^{-1/2}E(x,t;\lambda,\mu)
    & \leq Ce^{-2p|x-\lambda t|-2\{ (1-p)-1/2 \} \mu t} \\
    & \leq Ce^{-2p|x|+\{ 2p|\lambda|-2(1-p)\mu+\mu \} t} \\
    & \leq Ce^{-\frac{|x|+t}{C}}.
  \end{align}
  Case (ii): $0<x-\lambda t+\mu t<Kt^{1/2}$. Here, $K$ is a positive constant. Since $|x|\leq Ct$ and
  \begin{equation}
    x-\lambda t+\mu t/2<-\mu t/2+Kt^{1/2},
  \end{equation}
  we have
  \begin{equation}
    t^{-1/2}E(x,t;\lambda,\mu)\leq Ce^{-\mu t+2Kt^{1/2}}\leq Ce^{-\frac{t}{C}}\leq Ce^{-\frac{|x|+t}{C}}.
  \end{equation}
  Case (iii): $x-\lambda t+\mu t\geq Kt^{1/2}$. Since
  \begin{equation}
    \erfc(x)=\frac{e^{-x^2}}{\sqrt{\pi}x}+O(x^{-3}e^{-x^2}) \quad \text{as $x\to +\infty$},
  \end{equation}
  by taking $K$ large enough, we have
  \begin{equation}
    t^{-1/2}E(x,t;\lambda,\mu)\leq Ce^{-\frac{(x-\lambda t)^2}{\mu t}}\frac{\sqrt{t}}{x-\lambda t+\mu t}.
  \end{equation}
  Case (iii.a): $|x-\lambda t|\leq \mu t/2-1$. In this case,
  \begin{equation}
    t^{-1/2}E(x,t;\lambda,\mu)\leq C(t+1)^{-1/2}e^{-\frac{(x-\lambda t)^2}{\mu t}}.
  \end{equation}
  Case (iii.b): $|x-\lambda t|>\mu t/2-1$. In this case,
  \begin{equation}
    t^{-1/2}E(x,t;\lambda,\mu)\leq CK^{-1}e^{-\frac{t}{C}}e^{-\frac{(x-\lambda t)^2}{Ct}}\leq C(t+1)^{-1/2}e^{-\frac{(x-\lambda t)^2}{Ct}}.
  \end{equation}
  This completes the proof.
\end{proof}

We can now prove~\eqref{SectionIII:Bounds_of_Gb}. Let $x>0$; the case when $x<0$ is similar. By~\eqref{SectionIII:CNS_Lag_per_Fundamental_Solution_Modified_Explicit_Formula}, \eqref{SectionIII:Bound_of_G-G*}, \eqref{SectionIII:GbFormula} and~Lemma~\ref{Appendix:Gb}, we obtain
\begin{equation}
  \label{Appendix:Bound_of_Gbk}
  |\partial_{x}^{k}G_T(x,t)|\leq C(t+1)^{-1/2}t^{-k/2}\left( e^{-\frac{(x-ct)^2}{Ct}}+e^{-\frac{(x+ct)^2}{Ct}} \right)+Ct^{-k/2}e^{-\frac{|x|+t}{C}}.
\end{equation}
This proves~\eqref{SectionIII:Bounds_of_Gb} when $t\geq 1$. When $t\leq 1$, use~\eqref{SectionIII:CNS_Lag_per_Fundamental_Solution_Modified_Explicit_Formula}, \eqref{SectionIII:Bound_of_G-G*}, \eqref{SectionIII:Eq:derGT} and~\eqref{Appendix:Bound_of_Gbk} ($k=0$) to obtain
\begin{equation}
  |\partial_{x}^{k}G_T(x,t)|\leq Ct^{-k/2}\left( e^{-\frac{(x-ct)^2}{Ct}}+e^{-\frac{(x+ct)^2}{Ct}} \right)+Ce^{-\frac{|x|+t}{C}}.
\end{equation}
This proves~\eqref{SectionIII:Bounds_of_Gb} when $t\leq 1$.

\section{Lemmas Used in the Proof of Lemma~\ref{SectionIII:Lemma:Bound_of_Ni}}
\label{AppendixB}
In this appendix, we collect several lemmas in~\cite{LZ97} for ease of reference. For the proof, we basically just refer to~\cite{LZ97}, but there are few places where comments are needed; this is because the solutions we are dealing with may have discontinuity across $x=0$ and integration by parts produces a boundary term.

For $\lambda \in \mathbb{R}$ and $\alpha,\mu>0$, let
\begin{equation}
  \label{Def:Theta}
  \Theta_{\alpha}(x,t;\lambda,\mu)\coloneqq (t+1)^{-\alpha/2}e^{-\frac{(x-\lambda(t+1))^2}{\mu(t+1)}}
\end{equation}
and
\begin{equation}
  \psi_{\alpha}(x,t;\lambda)\coloneqq [(x-\lambda(t+1))^2+(t+1)]^{-\alpha/2}.
\end{equation}
We note that
\begin{equation}
  |\theta_i(x,t)|\leq C\delta \Theta_1(x,t;\lambda_i,2\nu)
\end{equation}
and
\begin{equation}
  |\Theta_{\alpha}(x,t;\lambda,\mu)|\leq C\psi_{\alpha}(x,t;\lambda).
\end{equation}
We also note that for $\lambda,\lambda' \in \mathbb{R}$ ($\lambda \neq \lambda'$), we have
\begin{equation}
  \label{Appendix:DifferentThetaProduct}
  |\Theta_{\alpha}(x,t;\lambda,\mu)\Theta_{\alpha}(x,t;\lambda',\mu)|\leq Ce^{-t/C}\left( e^{-\frac{(x-\lambda(t+1))^2}{2\mu(t+1)}}e^{-\frac{(x-\lambda'(t+1))^2}{2\mu(t+1)}} \right).
\end{equation}
This follows from
\begin{equation}
  (x-\lambda (t+1))^2+(x-\lambda' (t+1))^2 \geq 2\left( \frac{\lambda-\lambda'}{2} \right)^2(t+1)^2.
\end{equation}

The following lemma is~\cite[Lemma~3.2]{LZ97}.

\begin{lemma}
  \label{LZlemma:3.2}
  Let $\lambda \in \mathbb{R}$, $\alpha \geq 0$, $\beta>0$ and $\mu>0$. Then we have
  \begin{align}
    & \left| \int_{0}^{t/2}\int_{-\infty}^{\infty}(t-s)^{-1}(t+1-s)^{-\alpha/2}e^{-\frac{(x-y-\lambda(t-s))^2}{\mu(t-s)}}\Theta_{\beta}(y,s;\lambda,\mu)\, dyds \right| \\
    & \leq
    \begin{dcases}
      C\Theta_{\gamma}(x,t;\lambda,\mu) & \text{if $\beta \neq 3$}, \\
      C\Theta_{\gamma}(x,t;\lambda,\mu)\log(t+2) & \text{if $\beta=3$},
    \end{dcases}
  \end{align}
  where $\gamma=\alpha+\min(\beta,3)-1$ and
  \begin{align}
    & \left| \int_{t/2}^{t}\int_{-\infty}^{\infty}(t-s)^{-1}(t+1-s)^{-\alpha/2}e^{-\frac{(x-y-\lambda(t-s))^2}{\mu(t-s)}}\Theta_{\beta}(y,s;\lambda,\mu)\, dyds \right| \\
    & \leq
    \begin{dcases}
      C\Theta_{\gamma}(x,t;\lambda,\mu) & \text{if $\alpha \neq 1$}, \\
      C\Theta_{\gamma}(x,t;\lambda,\mu)\log(t+2) & \text{if $\alpha=1$},
    \end{dcases}
  \end{align}
  where $\gamma=\min(\alpha,1)+\beta-1$.
\end{lemma}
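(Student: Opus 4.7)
The plan is the standard Liu--Zeng heat-kernel convolution calculation: complete the square to separate the ``outer'' $(x,t)$-dependence from the ``inner'' $y$-variable, integrate in $y$ explicitly, and then estimate the remaining one-dimensional $s$-integral by splitting according to which time variable dominates. Concretely, setting $u=x-y-\lambda(t-s)$ and $v=y-\lambda(s+1)$, so that $u+v=x-\lambda(t+1)$, and using the elementary algebraic identity
\begin{equation}
  \frac{u^2}{a}+\frac{v^2}{b}=\frac{(u+v)^2}{a+b}+\frac{(bu-av)^2}{ab(a+b)}
\end{equation}
with $a=\mu(t-s)$, $b=\mu(s+1)$, the product of the moving Gaussian $e^{-u^2/a}$ with the Gaussian factor in $\Theta_{\beta}(y,s;\lambda,\mu)$ factors as $e^{-(x-\lambda(t+1))^2/(\mu(t+1))}$ times a $y$-Gaussian of variance $\mu(t-s)(s+1)/(t+1)$ centered at $y_0=(s+1)x/(t+1)$. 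Performing the $y$-integration gives $\sqrt{\pi\mu(t-s)(s+1)/(t+1)}$ and reduces the double integral to $C\,\Theta_{0}(x,t;\lambda,\mu)$ times
\begin{equation}
  I(t)\coloneqq\int (t-s)^{-1/2}(t+1-s)^{-\alpha/2}(s+1)^{(1-\beta)/2}(t+1)^{-1/2}\,ds.
\end{equation}

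For the first assertion, with integration range $[0,t/2]$, I would use the equivalences $(t-s)\asymp(t+1-s)\asymp(t+1)$ to pull the $t$-dependent factors outside, reducing matters to the model integral $\int_{0}^{t/2}(s+1)^{(1-\beta)/2}\,ds$. This scales as $(t+1)^{(3-\beta)/2}$ when $\beta<3$, as $\log(t+2)$ at the critical exponent $\beta=3$, and is $O(1)$ for $\beta>3$. Collecting the powers of $(t+1)$ then produces exactly $\Theta_{\alpha+\min(\beta,3)-1}(x,t;\lambda,\mu)$, with the advertised logarithmic correction appearing precisely at $\beta=3$.

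For the second assertion, with range $[t/2,t]$, the roles of $s$ and $t-s$ reverse: now $(s+1)\asymp(t+1)$, so $(s+1)^{(1-\beta)/2}$ can be pulled out as $(t+1)^{(1-\beta)/2}$, and the substitution $u=t-s$ reduces the remaining factor to $\int_{0}^{t/2}u^{-1/2}(u+1)^{-\alpha/2}\,du$. This integrand is always integrable near $u=0$, while at infinity it behaves like $u^{-(1+\alpha)/2}$; hence the integral is $O(1)$ for $\alpha>1$, logarithmic at $\alpha=1$, and of order $(t+1)^{(1-\alpha)/2}$ for $\alpha<1$. Assembling the factors yields $\Theta_{\min(\alpha,1)+\beta-1}$, with the log arising exactly at the critical value $\alpha=1$.

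The computation is essentially mechanical once the square has been completed; the only real book-keeping issue is tracking the $(t+1)^{-1/2}$ factor produced by the Gaussian convolution so that it combines correctly with the powers generated by the $s$-integration, and verifying that each of the six sub-cases yields the same unified formula $\gamma=\alpha+\min(\beta,3)-1$ (respectively $\gamma=\min(\alpha,1)+\beta-1$). I do not foresee any genuine obstacle here: unlike the other lemmas to be cited from~\cite{LZ97}, this one involves neither integration by parts nor a boundary term at $x=0$, so the argument transfers from~\cite{LZ97} verbatim with no modification required for the point-mass setting.
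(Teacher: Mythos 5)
Your proof is correct. The paper itself does not prove this lemma; it simply cites it as~\cite[Lemma~3.2]{LZ97}, so there is no ``paper's own proof'' to compare against. Your Gaussian semigroup calculation --- completing the square via the identity $u^2/a+v^2/b=(u+v)^2/(a+b)+(bu-av)^2/(ab(a+b))$ to factor out $\exp(-(x-\lambda(t+1))^2/(\mu(t+1)))$, integrating in $y$ to produce the factor $\sqrt{\pi\mu(t-s)(s+1)/(t+1)}$, and then estimating the remaining scalar $s$-integral by the now-standard case analysis --- is precisely the calculation underlying Liu--Zeng's original proof, and each of the six sub-cases is checked correctly (including the two critical exponents $\beta=3$ and $\alpha=1$ that produce the logarithms, and the uniform constant for $s+1\asymp t+1$ on $[t/2,t]$ regardless of the sign of $(1-\beta)/2$). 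The only thing worth adding for completeness is a one-line remark handling $t\lesssim 1$, where the near-singular factor $(t-s)^{-1/2}$ on $[0,t/2]$ is offset by the shrinking integration range and the estimate is trivial; otherwise the argument is complete and matches the expected route.
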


For $\lambda,\lambda' \in \mathbb{R}$ and $K>0$, let
\begin{align}
  & \chi_K(x,t;\lambda,\lambda') \\
  & \coloneqq \mathrm{char}\left\{ \min(\lambda,\lambda')(t+1)+K(t+1)^{1/2}\leq x\leq \max(\lambda,\lambda')(t+1)-K(t+1)^{1/2} \right\},
\end{align}
where $\mathrm{char}\{ S \}$ is the characteristic function of a set $S$. The following lemma is~\cite[Lemma~3.3]{LZ97}.

\begin{lemma}
  \label{LZlemma:3.3}
  Let $\lambda,\lambda' \in \mathbb{R}$ ($\lambda \neq \lambda'$), $\alpha \geq 0$, $\beta \geq 1$ and $\mu>0$. Then for any $\varepsilon>0$ and $K\geq |\lambda-\lambda'|$, we have
  \begin{align}
    & \left| \int_{0}^{t}\int_{-\infty}^{\infty}(t-s)^{-1}(t+1-s)^{-\alpha/2}e^{-\frac{(x-y-\lambda(t-s))^2}{\mu(t-s)}}\Theta_{\beta}(y,s;\lambda',\mu)\, dyds \right| \\
    & \leq C[\Theta_{\gamma}(x,t;\lambda,\mu+\varepsilon)+\Theta_{\gamma}(x,t;\lambda',\mu+\varepsilon) \\
    & \phantom{\leq C} \quad +|x-\lambda(t+1)|^{-(\beta-1)/2}|x-\lambda'(t+1)|^{-(\alpha+1)/2}\chi_K(x,t;\lambda,\lambda')] \\
    & \quad +
    \begin{dcases}
      0 & \text{if $\beta \neq 3$}, \\
      C\Theta_{\gamma}(x,t;\lambda,\mu+\varepsilon)\log(t+1) & \text{if $\beta=3$},
    \end{dcases} \\
    & \quad +
    \begin{dcases}
      0 & \text{if $\alpha \neq 1$}, \\
      C\Theta_{\gamma}(x,t;\lambda',\mu+\varepsilon)\log(t+1) & \text{if $\alpha=1$},
    \end{dcases}
  \end{align}
  where $\gamma=\min(\alpha,1)+\min(\beta,3)-1$.
\end{lemma}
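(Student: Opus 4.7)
The plan is to reduce the two-dimensional integral to a one-dimensional $s$-integral by first performing the Gaussian convolution in $y$. The product of the two Gaussians integrates to
\[
  \int_{-\infty}^{\infty} e^{-\frac{(x-y-\lambda(t-s))^2}{\mu(t-s)}} e^{-\frac{(y-\lambda'(s+1))^2}{\mu(s+1)}}\, dy = \sqrt{\tfrac{\pi\mu(t-s)(s+1)}{t+1}}\, \exp\Bigl(-\tfrac{\Delta(s)^2}{\mu(t+1)}\Bigr),
\]
where $\Delta(s) \coloneqq x - \lambda(t-s) - \lambda'(s+1) = (x-\lambda t - \lambda') + (\lambda-\lambda')s$ is linear in $s$, so the left-hand side to be bounded becomes
\[
  C(t+1)^{-1/2}\int_0^t (t-s)^{-1/2}(t+1-s)^{-\alpha/2}(s+1)^{-(\beta-1)/2}\exp\Bigl(-\tfrac{\Delta(s)^2}{\mu(t+1)}\Bigr)\,ds.
\]
As $s$ ranges over $[0,t]$, the moving center $\lambda(t-s)+\lambda'(s+1)$ sweeps between $\lambda t+\lambda'$ and $\lambda'(t+1)$, i.e.\ essentially between the two characteristic positions $\lambda(t+1)$ and $\lambda'(t+1)$; the unique zero of $\Delta$, when it lies in $[0,t]$, is $s_\ast = (x-\lambda t-\lambda')/(\lambda'-\lambda)$.

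Next I would split the $s$-integral at $t/2$. On $[0,t/2]$, I use $(t-s)^{-1/2}\sim(t+1)^{-1/2}$ and $(t+1-s)^{-\alpha/2}\sim(t+1)^{-\alpha/2}$; on $[t/2,t]$, I use $(s+1)^{-(\beta-1)/2}\sim(t+1)^{-(\beta-1)/2}$. Then I decompose by the location of $x$. If $|x-\lambda(t+1)|\le C(t+1)^{1/2}$, the $[0,t/2]$ part yields $\Theta_\gamma(x,t;\lambda,\mu+\varepsilon)$; the enlargement $\mu\mapsto\mu+\varepsilon$ absorbs, via a Young-type inequality, the $O(1)$ mismatch between the moving center at $s=0$ and $\lambda(t+1)$. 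When $\beta=3$ the residual integral $\int_0^{t/2}(s+1)^{-1}\,ds$ produces the advertised $\log(t+1)$ factor. Symmetrically, the $[t/2,t]$ part yields $\Theta_\gamma(x,t;\lambda',\mu+\varepsilon)$, with $\log(t+1)$ appearing when $\alpha=1$ (the borderline case for $\int_{t/2}^{t}(t-s)^{-1/2}(t+1-s)^{-1/2}\,ds$). When $x$ lies outside both characteristic cones, $\Delta(s)$ never vanishes on $[0,t]$ and the Gaussian delivers full decay in $|x-\lambda(t+1)|$ or $|x-\lambda'(t+1)|$ once the $\varepsilon$-enlargement is applied.

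The hard part is the cross term $|x-\lambda(t+1)|^{-(\beta-1)/2}|x-\lambda'(t+1)|^{-(\alpha+1)/2}\chi_K$, which arises precisely when $s_\ast\in(0,t)$—that is, $x$ sits strictly between the two characteristic lines and no Gaussian decay is available from either endpoint of the $s$-integral. In a window of size $O((t+1)^{1/2})$ around $s_\ast$ I would bound the Gaussian by its peak value and freeze the polynomial weights at $s_\ast$; outside this window the Gaussian decay dominates. Using the geometric relations $s_\ast+1\sim|x-\lambda(t+1)|/|\lambda-\lambda'|$ and $t+1-s_\ast\sim|x-\lambda'(t+1)|/|\lambda-\lambda'|$, the weights $(s+1)^{-(\beta-1)/2}$ and $(t-s)^{-1/2}(t+1-s)^{-\alpha/2}$ evaluated at $s_\ast$ combine with the $(t+1)^{1/2}$ from the window width and the overall $(t+1)^{-1/2}$ prefactor to yield exactly $|x-\lambda(t+1)|^{-(\beta-1)/2}|x-\lambda'(t+1)|^{-(\alpha+1)/2}$. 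The requirement $K\ge|\lambda-\lambda'|$ built into the definition of $\chi_K$ is precisely what keeps $s_\ast$ at distance $\gtrsim(t+1)^{1/2}$ from both endpoints of $[0,t]$, which legitimizes the peak-and-tail decomposition; the limiting cases where $x$ approaches either characteristic are then matched back to cases (i)--(ii) via the $\varepsilon$-enlargement, ensuring the whole estimate is a continuous interpolation between Gaussian-type and algebraic decay.
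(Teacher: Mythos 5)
The paper does not include a proof of this lemma at all; as stated in the preamble of Appendix~B, Lemma~\ref{LZlemma:3.3} is cited verbatim from Liu and Zeng~\cite[Lemma~3.3]{LZ97}. Your reconstruction is essentially the Liu--Zeng argument, and the structure is sound: the Gaussian composition in $y$ reducing to a one-dimensional $s$-integral against $\exp(-\Delta(s)^2/(\mu(t+1)))$ with $\Delta(s)$ linear in $s$ is correct (the $(t-s)^{1/2}(s+1)^{1/2}(t+1)^{-1/2}$ prefactor indeed converts the weights to $(t+1)^{-1/2}(t-s)^{-1/2}(t+1-s)^{-\alpha/2}(s+1)^{-(\beta-1)/2}$); the geometric identifications $s_*+1\sim|x-\lambda(t+1)|/|\lambda-\lambda'|$ and $t+1-s_*\sim|x-\lambda'(t+1)|/|\lambda-\lambda'|$ together with the $O(\sqrt{t+1})$ window produce exactly the cross-term exponents $(\beta-1)/2$ and $(\alpha+1)/2$; and the borderline logarithms come out as you say from $\int_0^{t/2}(s+1)^{-1}\,ds$ at $\beta=3$ and $\int_{t/2}^t(t-s)^{-1/2}(t+1-s)^{-1/2}\,ds$ at $\alpha=1$, correctly attached to the $\lambda$- and $\lambda'$-centered Gaussian terms respectively. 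The two things you gesture at but do not carry out---the uniform control of the $s$-integral's tails away from the window around $s_*$, and the fact that the Young-type trade $\mu\mapsto\mu+\varepsilon$ for the fixed center $\lambda(t+1)$ (resp.\ $\lambda'(t+1)$) is only legitimate over $s$-intervals of width $O(\sqrt{t+1})$, which is precisely why a separate non-Gaussian cross term must be retained when $s_*$ sits in the bulk of $[0,t]$, i.e.\ on $\chi_K$'s support---are the parts that occupy most of the space in Liu and Zeng's proof; they are routine but not instant. As a sketch of the argument, your proposal is correct.
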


\begin{remark}
  \label{Remark:LZlemma:3.3}
  A closer look at the proof of~\cite[Lemma~3.3]{LZ97} shows that --- as in Lemma~\ref{LZlemma:3.2} ---  if the domain of integration in $t$ is restricted to $[0,t/2]$, we can take away the $\log(t+1)$ factor when $\alpha=1$ in Lemma~\ref{LZlemma:3.3}. Similarly, if the domain of integration in $t$ is restricted to $[t/2,t]$, we can take away the $\log(t+1)$ factor when $\beta=3$ in Lemma~\ref{LZlemma:3.3}.
\end{remark}

The following lemma is a slightly generalized version of~\cite[Lemma~3.4]{LZ97}.

\begin{lemma}
  \label{LZlemma:3.4}
  Let $\lambda,\lambda' \neq 0$ ($\lambda \neq \lambda'$), $1\leq \alpha<3$ and $\mu,\mu'>0$. Let $k\geq 0$ be an integer and $L\coloneqq \partial_t+\lambda' \partial_x -(\mu/4)\partial_{x}^{2}$. Suppose that a function $h=h(x,t)$ satisfies the following inequalities for $(x,t)\in \mathbb{R}\times (0,\infty)$:
  \begin{align}
    \label{LZlemma:3.4:Assumptions}
    \begin{aligned}
      |h(x,t)| & \leq C\Theta_{\alpha}(x,t;\lambda',\mu'), \\
      |\partial_{x}^{k}h(x,t)| & \leq C\Theta_{\alpha+k}(x,t;\lambda',\mu'), \\
      |Lh(x,t)-\partial_x F_{\alpha+1}(x,t;\lambda',\mu')| & \leq C\Theta_{\alpha+2}(x,t;\lambda',\mu'), \\
      |\partial_{x}^{k}Lh(x,t)-\partial_x F_{\alpha+k+1}(x,t;\lambda',\mu')| & \leq C\Theta_{\alpha+k+2}(x,t;\lambda',\mu'),
    \end{aligned}
  \end{align}
  where $F_{\beta}(x,t;\lambda',\mu')$ is a function with $|F_{\beta}(x,t;\lambda',\mu')|\leq C\Theta_{\beta}(x,t;\lambda',\mu')$. Then for any $\varepsilon>0$ and $K\geq |\lambda-\lambda'|$, we have
  \begin{align}
    \label{LZlemma:3.4:Bound}
    \begin{aligned}
      & \left| \int_{0}^{t}\int_{-\infty}^{\infty}\partial_x \left\{ (t-s)^{-1/2}e^{-\frac{(x-y-\lambda(t-s))^2}{\mu(t-s)}} \right\} \partial_{x}^{k}h(y,s)\, dyds \right| \\
      & \leq C(t+1)^{-k/2}[\psi_{(\alpha+1)/2}(x,t;\lambda)+\Theta_{\min(\alpha,2)}(x,t;\lambda',\mu^*+\varepsilon) \\
      & \phantom{\leq C(t+1)^{-k/2}} \quad +|x-\lambda(t+1)|^{-\alpha/2}|x-\lambda'(t+1)|^{-1/2}\chi_K(x,t;\lambda,\lambda')],
    \end{aligned}
  \end{align}
  where $\mu^*=\max(\mu,\mu')$. Furthermore, when $k=0$, we only need to assume that~\eqref{LZlemma:3.4:Assumptions} holds for $(x,t)\in \mathbb{R}_* \times (0,\infty)$; in particular, $h$ is allowed to have discontinuity across $x=0$.
\end{lemma}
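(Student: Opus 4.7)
The plan is to follow the strategy of Liu--Zeng~\cite[Lemma~3.4]{LZ97} essentially verbatim, with modifications to accommodate the possible discontinuity of $h$ at $y=0$ when $k=0$. Write $K(x,y,t,s;\lambda,\mu)\coloneqq (t-s)^{-1/2} e^{-(x-y-\lambda(t-s))^2/(\mu(t-s))}$ and split the $s$-integration into $[0,t/2]$ and $[t/2,t]$, calling the resulting pieces $I_1$ and $I_2$. Note the basic identity
\begin{equation*}
  \partial_s K + \lambda \partial_y K + (\mu/4)\partial_y^2 K = 0 \quad (s<t),
\end{equation*}
which will power an integration by parts in $s$ on the piece $I_1$.

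For $I_2$, the factor $(s+1)\simeq (t+1)$ already provides the desired $(t+1)^{-k/2}$ gain. Move all the $k$ derivatives onto the kernel via $\partial_x K = -\partial_y K$ and integration by parts in $y$, apply $|\partial_y^k h|\leq C\Theta_{\alpha+k}(y,s;\lambda',\mu')$, and invoke Lemma~\ref{LZlemma:3.3} with $\alpha_{\mathrm{LZ}}=1$ and $\beta_{\mathrm{LZ}}=\alpha+k$ to produce exactly the three summands $\psi_{(\alpha+1)/2}(x,t;\lambda)$, $\Theta_{\min(\alpha,2)}(x,t;\lambda',\mu^*+\varepsilon)$, and the $\chi_K$ interaction term (with $\mu^*$ absorbing the leftover diffusion through the $\varepsilon$-slack permitted by that lemma). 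For $I_1$, after pushing derivatives onto $K$ by integrating by parts in $y$, use the identity above to integrate by parts in $s$: this converts $K\cdot h$ into an expression involving $K\cdot Lh$ plus boundary terms at $s=0$ and $s=t/2$, and then the hypothesis $Lh = \partial_x F_{\alpha+1} + O(\Theta_{\alpha+k+2})$ lets us move one more derivative onto $K$, trading the $\Theta_\alpha$-decay of $h$ for the $\Theta_{\alpha+1}$-decay of $F$ and the $\Theta_{\alpha+2}$-decay of the residual. The resulting convolutions are all covered by Lemmas~\ref{LZlemma:3.2} and~\ref{LZlemma:3.3}; the boundary term at $s=t/2$ pairs with the corresponding endpoint in $I_2$ and is harmless, and the boundary term at $s=0$ is a straight convolution with the initial data, again covered by Lemma~\ref{LZlemma:3.3}.

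The main obstacle, and the reason the case $k=0$ is singled out in the statement, is the jump at $y=0$: for $k=0$, moving $\partial_x K$ onto $h$ as $-\partial_y K$ and integrating by parts in $y$ produces the extra contribution
\begin{equation*}
  \int_0^t K(x,0,t,s;\lambda,\mu) \llbracket h \rrbracket(0,s)\, ds,
\end{equation*}
together with analogous jump contributions from the higher $\partial_y$-integrations by parts in $I_1$. Because $\lambda'\neq 0$, evaluating $\Theta_{\alpha+j}(y,s;\lambda',\mu')$ at $y=0$ yields a factor $e^{-(\lambda'(s+1))^2/(\mu'(s+1))}\leq Ce^{-s/C}$, so $|\llbracket h\rrbracket(0,s)|$ and $|\llbracket F_{\alpha+j+1}\rrbracket(0,s)|$ all decay exponentially in $s$. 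A direct estimate of the one-dimensional $s$-integral, together with $|K(x,0,t,s;\lambda,\mu)|\leq C(t-s)^{-1/2} e^{-(x-\lambda(t-s))^2/(\mu(t-s))}$, shows that each such jump contribution is bounded by $Ce^{-(|x|+t)/C}$, which is in turn majorized by $(t+1)^{-k/2}\Theta_{\min(\alpha,2)}(x,t;\lambda',\mu^*+\varepsilon)$ and hence absorbed into the claimed right-hand side. For $k\geq 1$, the hypotheses are posed on all of $\mathbb{R}$, so no jump terms arise and the argument simplifies to the proof in~\cite{LZ97}.
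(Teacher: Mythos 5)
You correctly identify the central observation that makes the $k=0$ variant go through: because $\lambda'\neq 0$, evaluating the bounds on $h$ at $y=\pm 0$ gives $|\llbracket h\rrbracket(0,s)|\leq Ce^{-s/C}$, so the jump term produced by integration by parts in $y$ should be controllable. But the execution has genuine gaps, and you end up bounding the jump term by the wrong quantity and comparing it against the wrong piece of the right-hand side.

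First, the time split matters more than you seem to appreciate. The paper splits at $s=t^{1/2}$, not $s=t/2$. This is not cosmetic. On $[0,t^{1/2}]$ the kernel is nonsingular in $s$ and the paper performs \emph{no} integration by parts in $y$ at all, so no jump terms arise there; the LZ97 calculations for this short-time piece apply verbatim. On $[t^{1/2},t]$, integration by parts in $y$ is carried out and produces the jump term, but there $s\geq t^{1/2}$ so $|\llbracket h\rrbracket(0,s)|\leq Ce^{-\sqrt{t}/C}$. Your split at $t/2$ together with the IBP-in-$s$-and-$y$ machinery on $[0,t/2]$ (which is what using the $Lh$ hypothesis requires) inevitably produces jump terms at $y=0$ for small $s$, where $\llbracket h\rrbracket(0,s)$ is of order one. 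The resulting contribution $\int_0^{t/2}(t-s)^{-1/2}e^{-(x-\lambda(t-s))^2/(\mu(t-s))}\llbracket h\rrbracket(0,s)\,ds$ is then of size roughly $\Theta_1(x,t;\lambda,\mu)$, i.e.\ $(t+1)^{-1/2}$ near $x=\lambda(t+1)$, which is \emph{not} dominated by $\psi_{(\alpha+1)/2}(x,t;\lambda)\simeq (t+1)^{-(\alpha+1)/4}$ when $\alpha>1$. This is a real obstruction, and it is exactly why the paper avoids IBP on the short-time piece.

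Second, your claimed bound $Ce^{-(|x|+t)/C}$ for the jump contribution is not correct, and the term you majorize it by is also the wrong one. On $[t^{1/2},t]$, the paper's careful split into $A_1=\{\,|\lambda|s\leq|x-\lambda t|/2\,\}$ and $A_2=\{\,|\lambda|s>|x-\lambda t|/2\,\}$ yields $|I_{2b}(x,t)|\leq C\sqrt{t}\,e^{-\sqrt{t}/C}\bigl(e^{-(x-\lambda t)^2/(Ct)}+e^{-|x-\lambda t|/C}\bigr)\leq C\psi_{(\alpha+1)/2}(x,t;\lambda)$; the $t$-decay is $\sqrt{t}\,e^{-\sqrt{t}/C}$, not $e^{-t/C}$, and the $x$-decay is centered at $\lambda(t+1)$ (the kernel's drift speed), not at $x=0$. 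Moreover, you then propose to absorb your exponential bound into $(t+1)^{-k/2}\Theta_{\min(\alpha,2)}(x,t;\lambda',\mu^*+\varepsilon)$, but that $\Theta$-term is Gaussian-localized at $x=\lambda'(t+1)$, a different characteristic; for $|x-\lambda'(t+1)|\gg 1$ the Gaussian in $\Theta$ beats any $e^{-|x|/C}$, so the proposed majorization fails. The jump term must be compared against $\psi_{(\alpha+1)/2}(x,t;\lambda)$, which is the right-hand-side term localized at the kernel's characteristic, and this is precisely the comparison the paper carries out.
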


\begin{proof}
  We only give a proof for the last remark on the case of $k=0$ (see~\cite[Lemma~3.4]{LZ97} for the proof when $k\geq 1$). Let
  \begin{align}
    I_1(x,t) & \coloneqq \int_{0}^{t^{1/2}}\int_{-\infty}^{\infty}\partial_x \left\{ (t-s)^{-1/2}e^{-\frac{(x-y-\lambda(t-s))^2}{\mu(t-s)}} \right\} h(y,s)\, dyds, \\
    I_2(x,t) & \coloneqq \int_{t^{1/2}}^{t}\int_{-\infty}^{\infty}\partial_x \left\{ (t-s)^{-1/2}e^{-\frac{(x-y-\lambda(t-s))^2}{\mu(t-s)}} \right\} h(y,s)\, dyds.
  \end{align}
  To bound $I_1(x,t)$, we do not need to conduct integration by parts, and the calculations in the proof of~\cite[Lemma~3.4]{LZ97} show that $I_1(x,t)$ is bounded by the right-hand side of~\eqref{LZlemma:3.4:Bound}. For $I_2(x,t)$, integration by parts yields,
  \begin{align}
    \label{LZlemma:3.4:I2_IBP}
    \begin{aligned}
      I_2(x,t)
      & \coloneqq \int_{t^{1/2}}^{t}\int_{-\infty}^{\infty}(t-s)^{-1/2}e^{-\frac{(x-y-\lambda(t-s))^2}{\mu(t-s)}}\partial_x h(y,s)\, dyds \\
      & \quad +\int_{t^{1/2}}^{t}(t-s)^{-1/2}e^{-\frac{(x-\lambda(t-s))^2}{\mu(t-s)}}\llbracket h \rrbracket(0,s)\, ds.
    \end{aligned}
  \end{align}
  The calculations in the proof of~\cite[Lemma~3.4]{LZ97} show that the first term on the right-hand side of~\eqref{LZlemma:3.4:I2_IBP} is bounded by the right-hand side of~\eqref{LZlemma:3.4:Bound}. Now, note that since $\lambda'\neq 0$, we have
  \begin{equation}
    |h(\pm 0,t)|\leq Ce^{-\frac{t}{C}}.
  \end{equation}
  Thus we only need to show that
  \begin{equation}
    I_{2b}(x,t)\coloneqq \int_{t^{1/2}}^{t}(t-s)^{-1/2}e^{-\frac{(x-\lambda(t-s))^2}{\mu(t-s)}}e^{-\frac{s}{C}}\, ds
  \end{equation}
  is bounded by the right-hand side of~\eqref{LZlemma:3.4:Bound}. Let
  \begin{equation}
    A_1\coloneqq \{ t^{1/2}\leq s\leq t \mid |\lambda|s\leq |x-\lambda t|/2 \},\quad A_2\coloneqq \{ t^{1/2}\leq s\leq t \mid |\lambda|s>|x-\lambda t|/2 \}.
  \end{equation}
  If $s\in A_1$, we have
  \begin{equation}
    |x-\lambda(t-s)|\geq |x-\lambda t|/2;
  \end{equation}
  and if $s\in A_2$, we have
  \begin{equation}
    e^{-\frac{s}{C}}\leq e^{-\frac{\sqrt{t}}{C}}e^{-\frac{|x-\lambda t|}{C}}.
  \end{equation}
  Thus
  \begin{align}
    |I_{2b}(x,t)|
    & \leq e^{-\frac{\sqrt{t}}{C}}e^{-\frac{(x-\lambda t)^2}{Ct}}\int_{t^{1/2}}^{t}(t-s)^{-1/2}\, ds \\
    & \quad +e^{-\frac{\sqrt{t}}{C}}e^{-\frac{|x-\lambda t|}{C}}\int_{t^{1/2}}^{t}(t-s)^{-1/2}\, ds \\
    & \leq C\psi_{(\alpha+1)/2}(x,t;\lambda).
  \end{align}
  This completes the proof.
\end{proof}

The following lemma is~\cite[Lemma~3.5]{LZ97}.

\begin{lemma}
  \label{LZlemma:3.5}
  Let $\lambda \in \mathbb{R}$, $\alpha,\beta \geq 0$ and $\mu>0$. Then we have
  \begin{align}
    & \left| \int_{0}^{t}\int_{-\infty}^{\infty}(t-s)^{-1}(t+1-s)^{-\alpha/2}e^{-\frac{(x-y-\lambda(t-s))^2}{\mu(t-s)}}(s+1)^{-\beta/2}\psi_{3/2}(y,s;\lambda)\, dyds \right| \\
    & \leq
    \begin{dcases}
      C(t+1)^{-\gamma/2}\psi_{3/2}(x,t;\lambda) & \text{if $\alpha \neq 1$ and $\beta \neq 3/2$}, \\
      C(t+1)^{-\gamma/2}\log(t+2)\psi_{3/2}(x,t;\lambda) & \text{if $\alpha=1$ or $\beta=3/2$},
    \end{dcases}
  \end{align}
  where $\gamma=\min(\alpha,1)+\min(\beta,3/2)-1$.
\end{lemma}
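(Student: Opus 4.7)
The key structural feature to exploit is that the Gaussian kernel and the profile $\psi_{3/2}(\,\cdot\,,s;\lambda)$ both travel along the same characteristic $x=\lambda t$. The first step is to remove this drift by changing variables $\tilde{x}\coloneqq x-\lambda(t+1)$ and $\tilde{y}\coloneqq y-\lambda(s+1)$, which turns the Gaussian into $e^{-(\tilde{x}-\tilde{y})^{2}/(\mu(t-s))}$ and the profile into $[\tilde{y}^{2}+(s+1)]^{-3/4}$. The problem then reduces to a pure heat-kernel convolution against a self-similar profile centered at the origin. The $s$-integral is split into the early region $[0,t/2]$, where the Gaussian is wider than $\psi_{3/2}(\,\cdot\,,s;\lambda)$, and the late region $[t/2,t]$, where the opposite holds.

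On $[0,t/2]$ one has $(t-s)^{-1}(t+1-s)^{-\alpha/2}\le C(t+1)^{-1-\alpha/2}$. The inner $\tilde{y}$-integral is controlled by a standard two-piece argument: the $L^{1}_{\tilde{y}}$-mass of $[\tilde{y}^{2}+(s+1)]^{-3/4}$ is $\le C(s+1)^{-1/4}$, and splitting the $\tilde{y}$-axis near $\tilde{y}=\tilde{x}/2$ yields
\[
\int_{-\infty}^{\infty}e^{-(\tilde{x}-\tilde{y})^{2}/(\mu(t-s))}[\tilde{y}^{2}+(s+1)]^{-3/4}\,d\tilde{y}\le C(s+1)^{-1/4}e^{-\tilde{x}^{2}/(C(t+1))}+C(t+1)^{1/2}|\tilde{x}|^{-3/2}.
\]
After multiplying by $(t-s)^{-1}(t+1-s)^{-\alpha/2}(s+1)^{-\beta/2}$ and integrating in $s$ using the elementary bound
\[
\int_{0}^{t/2}(s+1)^{-\beta/2-1/4}\,ds\le C\cdot
\begin{cases}(t+1)^{3/4-\beta/2}&\beta<3/2,\\ \log(t+2)&\beta=3/2,\\ 1&\beta>3/2,\end{cases}
\]
one obtains a bound $\le C(t+1)^{-\gamma/2}\psi_{3/2}(x,t;\lambda)$ (with $\log(t+2)$ at $\beta=3/2$), by using the dichotomy $|\tilde{x}|\le(t+1)^{1/2}$ versus $|\tilde{x}|>(t+1)^{1/2}$ to convert the Gaussian envelope and the $|\tilde{x}|^{-3/2}$-tail into $\psi_{3/2}(x,t;\lambda)$.

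On $[t/2,t]$ one has $(s+1)^{-\beta/2}\le C(t+1)^{-\beta/2}$. Here the Gaussian is narrower than $\psi_{3/2}(\,\cdot\,,s;\lambda)$, so splitting the $\tilde{y}$-integral around $\tilde{y}=\tilde{x}/2$ and using $[\tilde{y}^{2}+(s+1)]^{-3/4}\le (s+1)^{-3/4}$ on the inner piece and $[\tilde{y}^{2}+(s+1)]^{-3/4}\le 2^{3/2}|\tilde{x}|^{-3/2}$ on the outer piece, together with $(s+1)\sim(t+1)$, yields
\[
\int_{-\infty}^{\infty}e^{-(\tilde{x}-\tilde{y})^{2}/(\mu(t-s))}[\tilde{y}^{2}+(s+1)]^{-3/4}\,d\tilde{y}\le C(t-s)^{1/2}\psi_{3/2}(x,t;\lambda).
\]
Integrating gives $C(t+1)^{-\beta/2}\psi_{3/2}(x,t;\lambda)\int_{t/2}^{t}(t-s)^{-1/2}(t+1-s)^{-\alpha/2}\,ds$, and the change of variable $u=t-s$ reduces the last integral to $\int_{0}^{t/2}u^{-1/2}(u+1)^{-\alpha/2}\,du$, which is $O((t+1)^{(1-\min(\alpha,1))/2})$ with $\log(t+2)$ at $\alpha=1$. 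Summing the two regions yields the claimed bound with the correct $\gamma=\min(\alpha,1)+\min(\beta,3/2)-1$.

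The main obstacle is obtaining the two pointwise bounds for the $\tilde{y}$-integral with prefactors sharp enough to reproduce the logarithmic thresholds at $\alpha=1$ and $\beta=3/2$: these come from time integrals that are just at the boundary of convergence, so any slack in the convolution estimate would lose the correct exponent of $(t+1)$. A secondary nuisance is the careful bookkeeping of the far-field versus near-field dichotomies on $|\tilde{x}|$ relative to $(t+1)^{1/2}$ that is needed to fold both the Gaussian envelope and the algebraic tail back into $\psi_{3/2}(x,t;\lambda)$.
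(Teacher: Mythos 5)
The paper does not supply a proof of this statement at all: Lemma~\ref{LZlemma:3.5} is imported verbatim from~\cite[Lemma~3.5]{LZ97}, with only the one-line remark ``The following lemma is~\cite[Lemma~3.5]{LZ97}.'' So there is no in-paper argument to compare your proposal against; the relevant comparison would be with Liu--Zeng's original proof, which is not reproduced here.

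As a standalone verification, your proposal is essentially correct and follows the Liu--Zeng style of argument: remove the drift by setting $\tilde{x}=x-\lambda(t+1)$, $\tilde{y}=y-\lambda(s+1)$, split the time integral at $t/2$, and in each region bound the inner Gaussian convolution by a near-field/far-field decomposition of the $\tilde{y}$-axis relative to $\tilde{x}$. The inner bound on $[t/2,t]$, namely $\int e^{-(\tilde{x}-\tilde{y})^2/(\mu(t-s))}[\tilde{y}^2+(s+1)]^{-3/4}\,d\tilde{y}\le C(t-s)^{1/2}\psi_{3/2}(x,t;\lambda)$ (using $s+1\approx t+1$), is correct, and $\int_{t/2}^{t}(t-s)^{-1/2}(t+1-s)^{-\alpha/2}\,ds=\int_{0}^{t/2}u^{-1/2}(u+1)^{-\alpha/2}\,du$ indeed produces the threshold $\alpha=1$ with a logarithm. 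One bookkeeping point worth making explicit in the $[0,t/2]$ region: your displayed elementary $s$-integral $\int_{0}^{t/2}(s+1)^{-\beta/2-1/4}\,ds$ applies only to the Gaussian-envelope contribution, which carries the $(s+1)^{-1/4}$ factor from the $L^{1}_{\tilde{y}}$-mass of $[\tilde{y}^2+(s+1)]^{-3/4}$; the $|\tilde{x}|^{-3/2}$-tail contribution has no such factor and its $s$-integral is instead $\int_{0}^{t/2}(s+1)^{-\beta/2}\,ds$. This is not a fatal gap --- when $|\tilde{x}|>(t+1)^{1/2}$ the tail piece already lands directly on $|\tilde{x}|^{-3/2}\sim\psi_{3/2}(x,t;\lambda)$ without paying the $(t+1)^{3/4}$ conversion cost that the Gaussian envelope must pay, so the weaker $s$-integral is compensated exactly and the exponent still matches $\gamma$ (and produces no extra logarithm at $\beta=3/2$). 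But as written, the single displayed bound cannot be applied uniformly to both pieces, and the argument should spell out that the two pieces close for different (complementary) reasons, with the $\beta=3/2$ logarithm coming solely from the Gaussian-envelope piece.
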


The following lemma is~\cite[Lemma~3.6]{LZ97}.

\begin{lemma}
  \label{LZlemma:3.6}
  Let $\lambda,\lambda' \in \mathbb{R}$ ($\lambda \neq \lambda'$), $\alpha,\beta \geq 0$ and $\mu>0$. Then for any $K>2|\lambda-\lambda'|$, we have
  \begin{align}
    & \left| \int_{0}^{t}\int_{-\infty}^{\infty}(t-s)^{-1}(t+1-s)^{-\alpha/2}e^{-\frac{(x-y-\lambda(t-s))^2}{\mu(t-s)}}(s+1)^{-\beta/2}\psi_{3/2}(y,s;\lambda')\, dyds \right| \\
    & \leq C(t+1)^{-\gamma/2}[\psi_{3/2}(x,t;\lambda)+\psi_{3/2}(x,t;\lambda')] \\
    & \quad +C|x-\lambda(t+1)|^{-\min(\beta,5/2)/2-1/4}|x-\lambda'(t+1)|^{-\min(\alpha,1)/2-1/2}\chi_K(x,t;\lambda,\lambda') \\
    & \quad +
    \begin{dcases}
      0 & \text{if $\alpha \neq 1$ and $\beta \neq 3/2$}, \\
      C(t+1)^{-\gamma/2}\log(t+1)\psi_{3/2}(x,t;\lambda) & \text{if $\alpha \neq 1$ and $\beta=3/2$}, \\
      C(t+1)^{-\gamma/2}\log(t+1)[\psi_{3/2}(x,t;\lambda)+\psi_{3/2}(x,t;\lambda')] & \text{if $\alpha=1$},
    \end{dcases}
  \end{align}
  where $\gamma=\min(\alpha,1)+\min(\beta,3/2)-1$.
\end{lemma}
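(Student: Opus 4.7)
\textbf{Proof plan for Lemma~\ref{LZlemma:3.6}.} The plan is to follow the same general strategy used for Lemmas~\ref{LZlemma:3.2}--\ref{LZlemma:3.5} (as in~\cite{LZ97}), carefully tracking the interaction between the two distinct characteristic speeds $\lambda$ and $\lambda'$. First, I would split the $s$-integration into $[0,t/2]$ and $[t/2,t]$, so that on the first half $(t+1-s)\sim(t+1)$ is essentially constant, while on the second half $(s+1)\sim(t+1)$ is essentially constant. This isolates the time-decay factor $(t+1)^{-\gamma/2}$ in each piece, with $\gamma=\min(\alpha,1)+\min(\beta,3/2)-1$.

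Next, for each fixed $s$, I would decompose the $y$-integration into three zones: (a) $|y-(x-\lambda(t-s))|\leq\varepsilon\sqrt{\mu(t-s)}$, where the Gaussian kernel is concentrated; (b) $|y-\lambda'(s+1)|\leq\varepsilon\sqrt{s+1}$, where $\psi_{3/2}(y,s;\lambda')$ attains its peak; and (c) the complement. In zone~(a), one evaluates $\psi_{3/2}(y,s;\lambda')$ at $y=x-\lambda(t-s)$ and uses $\int e^{-(x-y-\lambda(t-s))^2/\mu(t-s)}\,dy\lesssim(t-s)^{1/2}$, which cancels against $(t-s)^{-1}$. In zone~(b), one evaluates the Gaussian at $y=\lambda'(s+1)$ and uses $\int\psi_{3/2}(y,s;\lambda')\,dy\lesssim(s+1)^{1/4}$. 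Zone~(c) gives rapidly decaying contributions that are easily absorbed.

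After the spatial integration, the remaining one-dimensional $s$-integral is estimated by the substitution $\sigma=x-\lambda(t-s)-\lambda'(s+1)=(x-\lambda'(t+1))-(\lambda-\lambda')(t-s)$, which linearly parametrises the distance between the two moving centres. This substitution reveals a resonance at the time $s^*$ when the heat-kernel centre coincides with the ridge of $\psi_{3/2}(\cdot,\cdot;\lambda')$. If $x$ lies strictly between the two characteristics (i.e.\ $x$ is in the support of $\chi_K(x,t;\lambda,\lambda')$), this resonance is crossed and produces the intermediate term $|x-\lambda(t+1)|^{-\min(\beta,5/2)/2-1/4}|x-\lambda'(t+1)|^{-\min(\alpha,1)/2-1/2}$; otherwise the heat-kernel centre and the $\psi_{3/2}$ ridge never meet and one gets clean decay factors centred at $\lambda$ or $\lambda'$, producing $\psi_{3/2}(x,t;\lambda)+\psi_{3/2}(x,t;\lambda')$. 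The logarithmic factors in the cases $\alpha=1$ or $\beta=3/2$ arise from borderline-integrable $\int ds/s$ singularities at the endpoints.

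The main obstacle is the sharp exponents in the $\chi_K$ term, which requires a delicate analysis of the $y$-integral and $s$-integral near the resonance $s=s^*$. Tracking how the exponents $-3/2$ of $\psi_{3/2}$, $-\alpha/2$ of the time factor, $-\beta/2$ of the time factor, and $-1/2$ from the $y$-integration combine through the change-of-variables requires a careful case analysis depending on whether $x-\lambda(t+1)$ and $x-\lambda'(t+1)$ are individually small or large relative to $\sqrt{t+1}$. The min-truncations $\min(\alpha,1)$ and $\min(\beta,3/2)$ reflect whether the corresponding time integral converges at the endpoint or saturates; these borderlines are exactly where the $\log(t+1)$ factors appear.
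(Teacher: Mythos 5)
The paper does not give its own proof of this lemma; it cites~\cite[Lemma~3.6]{LZ97} verbatim. Your proposed strategy --- split the $s$-integration at $t/2$ to isolate the time weights, decompose the $y$-integral into zones near the heat-kernel centre $x-\lambda(t-s)$ and the ridge $\lambda'(s+1)$ of $\psi_{3/2}$, and then analyse the resulting $s$-integral via the linear change of variables tracking the separation of the two moving centres --- is structurally aligned with the Liu--Zeng framework behind all of Lemmas~3.2--3.6, and you correctly identify where the $\chi_K$ cross-term and the logarithmic borderline cases come from. As a roadmap, this is the right shape.

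However, two of the zone estimates you write down are off, and the errors are precisely in the exponent bookkeeping that this lemma is about. First, you assert $\int\psi_{3/2}(y,s;\lambda')\,dy\lesssim(s+1)^{1/4}$ in zone~(b); the correct bound is $(s+1)^{-1/4}$. Indeed, with $u=y-\lambda'(s+1)$ and $v=u/\sqrt{s+1}$ one gets $\int[u^2+(s+1)]^{-3/4}\,du=(s+1)^{-1/4}\int(v^2+1)^{-3/4}\,dv$, and the $v$-integral is a finite constant. Plugging in a growing factor $(s+1)^{1/4}$ instead of a decaying $(s+1)^{-1/4}$ would shift the final time-decay rate $\gamma$ by $1$ and destroy the matching with the stated $\gamma=\min(\alpha,1)+\min(\beta,3/2)-1$. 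Second, in zone~(a) you say the $y$-integral $\lesssim(t-s)^{1/2}$ ``cancels against $(t-s)^{-1}$''; it only partially cancels, leaving $(t-s)^{-1/2}$, and this surviving factor is exactly what must be integrated against $(t+1-s)^{-\alpha/2}$ to produce the $\min(\alpha,1)$ truncation and the $\log$ factor at $\alpha=1$. Beyond these slips, the proposal never actually verifies the stated exponents $-\min(\beta,5/2)/2-1/4$ and $-\min(\alpha,1)/2-1/2$ in the $\chi_K$ term --- you defer that to ``careful case analysis'' --- so as written this is an outline that would need the incorrect zone bounds repaired and the resonance integral actually computed before one could confirm it reproduces the statement.
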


The following lemma is a slight modification of~\cite[Lemma~3.9]{LZ97}; the proof is completely analogous.

\begin{lemma}
  \label{LZlemma:3.9}
  Let $\lambda \in \mathbb{R}$ and $\mu>0$. Then we have
  \begin{equation}
    \left| \int_{0}^{t}e^{-\frac{t-s}{\mu}}(s+1)^{-1/4}\psi_{3/2}(x,s;\lambda)\, ds \right| \leq C(t+1)^{-1/4}\psi_{3/2}(x,t;\lambda).
  \end{equation}
\end{lemma}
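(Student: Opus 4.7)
The plan is to change variables to reduce the integral to a one-sided convolution in a single exponential weight, then establish a pointwise comparison of the integrand at time $t-r$ against its value at time $t$ with only polynomial loss in $r$, and finally absorb that loss using the exponential weight.

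First I would substitute $r = t - s$ to rewrite the left-hand side as
\begin{equation}
I(x,t) = \int_{0}^{t} e^{-r/\mu}\, (t-r+1)^{-1/4}\, \psi_{3/2}(x, t-r;\lambda)\, dr,
\end{equation}
so the goal reduces to proving the pointwise bound
\begin{equation}
(t-r+1)^{-1/4}\, \psi_{3/2}(x, t-r;\lambda) \leq C(1+r)^{7/4}\, (t+1)^{-1/4}\, \psi_{3/2}(x, t;\lambda), \quad 0\leq r\leq t.
\end{equation}
Once this is available, integrating against $e^{-r/\mu}$ gives $I(x,t) \leq C(t+1)^{-1/4}\psi_{3/2}(x,t;\lambda)$, since $\int_{0}^{\infty} e^{-r/\mu}(1+r)^{7/4}\, dr$ is finite.

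The pointwise bound splits into two elementary comparisons. For the prefactor I would use the identity $t+1 = (t-r+1) + r$ together with the trivial bound $t-r+1 \geq 1$ (valid since $r\leq t$) to get $t+1 \leq (t-r+1)(1+r)$, hence $(t-r+1)^{-1/4} \leq (1+r)^{1/4}(t+1)^{-1/4}$. For $\psi_{3/2}$, expanding $x-\lambda(t+1) = (x-\lambda(t-r+1)) - \lambda r$ yields
\begin{equation}
(x-\lambda(t+1))^{2} + (t+1) \leq 2(x-\lambda(t-r+1))^{2} + 2\lambda^{2}r^{2} + (t-r+1)(1+r),
\end{equation}
and the troublesome $\lambda^{2}r^{2}$ term is absorbed using the trivial lower bound $(x-\lambda(t-r+1))^{2}+(t-r+1)\geq 1$. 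Taking the $-3/4$ power produces $\psi_{3/2}(x,t-r;\lambda) \leq C(1+r)^{3/2}\psi_{3/2}(x,t;\lambda)$, and multiplying the two comparisons gives the claim with exponent $1/4 + 3/2 = 7/4$.

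There is no serious obstacle. The guiding intuition is that $e^{-r/\mu}$ concentrates the integrand near $r=0$, so the answer should be governed by the integrand's value at $s=t$; the pointwise comparison is just a quantitative version of that, and the only care required is to track the polynomial exponents in $r$ so that they are swallowed by the exponential. The constant $C$ depends on $\mu$ and $\lambda$ but not on $x$ or $t$.
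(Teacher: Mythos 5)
Your argument is correct. The substitution $r=t-s$, the pointwise comparison
\[
(t-r+1)^{-1/4}\,\psi_{3/2}(x,t-r;\lambda)\leq C(1+r)^{7/4}(t+1)^{-1/4}\,\psi_{3/2}(x,t;\lambda)
\]
(obtained from $t+1\leq(t-r+1)(1+r)$, the split $x-\lambda(t+1)=(x-\lambda(t-r+1))-\lambda r$, and absorbing $2\lambda^2 r^2$ via $(x-\lambda(t-r+1))^2+(t-r+1)\geq 1$), and the observation that $\int_0^\infty e^{-r/\mu}(1+r)^{7/4}\,dr<\infty$ all check out, with the resulting constant depending on $\mu$ and $\lambda$ as you note.

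The paper itself does not supply a proof here --- it states that the lemma is a slight modification of Lemma~3.9 of Liu--Zeng and that the proof is ``completely analogous,'' deferring to that reference. The Liu--Zeng lemmas of this family are typically proved by a case analysis on the size of $|x-\lambda(t+1)|$ relative to $(t+1)^{1/2}$ and by splitting the time interval, which produces the weights $\psi_{3/2}$ piece by piece. Your route instead proves a single uniform pointwise comparison of the integrand at time $t-r$ against its value at time $t$, losing only a polynomial factor in $r$ which the exponential weight then absorbs; this avoids case splitting entirely and makes the mechanism --- that $e^{-(t-s)/\mu}$ localizes the integral near $s=t$ --- explicit. The trade-off is that your constant carries an explicit $\lambda$-dependence from absorbing $\lambda^2 r^2$, which is harmless here since $\lambda\in\{\pm c\}$ is fixed.
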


\providecommand{\bysame}{\leavevmode\hbox to3em{\hrulefill}\thinspace}
\providecommand{\MR}{\relax\ifhmode\unskip\space\fi MR }
% \MRhref is called by the amsart/book/proc definition of \MR.
\providecommand{\MRhref}[2]{%
  \href{http://www.ams.org/mathscinet-getitem?mr=#1}{#2}
}
\providecommand{\href}[2]{#2}

\end{document}